\def\blfootnote{\xdef\@thefnmark{}\@footnotetext}
\newtheorem{thm}{Theorem}[section]
\newtheorem{cor}[thm]{Corollary}
\newtheorem{lem}[thm]{Lemma}
\newtheorem{prop}[thm]{Proposition}
\theoremstyle{definition}
\newtheorem{defn}[thm]{Definition}
\theoremstyle{remark}
\newtheorem{rem}[thm]{Remark}
\newfont{\eufm}{eufm10}
\newcommand{\G}{\Gamma (G, X\sqcup \mathcal H)}
\newcommand{\Hl}{\{ H_\lambda \} _{\lambda \in \Lambda } }
\newcommand{\e}{\varepsilon }
\renewcommand{\phi}{\varphi}
\newcommand{\lab}{{\bf Lab}}
\newcommand{\Gs}{\Gamma (G, S)}
\renewcommand{\l}{{\ell}}
\newcommand{\Ga }{\Gamma (G, \mathcal A)}
\newcommand{\N}{\mathbb N}
\newcommand{\Z}{\mathbb Z}
\renewcommand{\ll }{\langle\hspace{-.7mm}\langle }
\newcommand{\rr }{\rangle\hspace{-.7mm}\rangle }
\newcommand{\h}{\hookrightarrow _h }
\newcommand{\dl }{\widehat{\rm d}_{\lambda}}
\newcommand{\Lab }{{\bf Lab}}
\newcommand{\X }{\mathcal A\mathcal H }
\newcommand{\he }{hyperbolically embedded }
\newcommand{\Qp}{\{ Q_p\} _{p\in \Pi}}
\newcommand{\dol }{{\rm d}_{\Omega _\lambda}}
\def\twoheaddownarrow{\ensuremath{\rotatebox[origin=c]{-90}{$\twoheadrightarrow$}}}
\begin{document}

\title{Small cancellation in acylindrically hyperbolic groups}
\author{M. Hull}

\date{}
\maketitle

\begin{abstract}
We generalize a version of small cancellation theory to the class of acylindrically hyperbolic groups. This class contains many groups which admit some natural action on a hyperbolic space, including non-elementary hyperbolic and relatively hyperbolic groups, mapping class groups, and groups of outer automorphisms of free groups. Several applications of this small cancellation theory are given, including to Frattini subgroups and Kazhdan constants, the construction of various ``exotic" quotients, and to approximating acylindrically hyperbolic groups in the topology of marked group presentations.
\end{abstract}

\tableofcontents



\section{Introduction}
The idea of generalizing classical small cancellation theory to groups acting on hyperbolic spaces originated in Gromov's paper \cite{Gr2}. Gromov was motivated by the fact that hyperbolicity had been used \emph{implicity} in the ideas of small cancellation theory going back to the work of Dehn in the early 1900's; he claimed that many small cancellation arguments could be simultaneously simplified and generalized by \emph{explicitly} using hyperbolicity. In particular, Gromov showed how some of the ``exotic" groups constructed through complicated small cancellation arguments could be built as quotients of hyperbolic groups by inductively applying the following theorem:

\begin{thm}\cite{Gr2, Ols}\label{hypsc}
Let $G$ be a non-virtually-cyclic hyperbolic group, $F$ a finite subset of $G$, and $H$ a non-virtually-cyclic subgroup of $G$ which does not normalize any finite subgroups of $G$. Then there exists a group $\overline{G}$ and a surjective homomorphism $\gamma\colon G\to \overline{G}$ such that:
\begin{enumerate}
\item[(a)] $\overline{G}$ is a non-virtually-cyclic hyperbolic group.
\item[(b)] $\gamma|_F$ is injective.
\item[(c)] $\gamma|_H$ is surjective.
\item[(d)] Every element of $\overline{G}$ of finite order is the image of an element of $G$ of finite order.
\end{enumerate}
\end{thm}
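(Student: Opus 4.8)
The plan is to realize $\overline G$ as a small-cancellation quotient of $G$ in which a finite generating set of $G$ is rewritten in terms of $H$. Enlarging $F$, assume $F$ is contained in a finite generating set $X=\{x_1,\dots,x_m\}$ of $G$. For each $i$ I will choose a word $u_i$ in fixed elements of $H$, set $R_i=x_iu_i^{-1}$ (viewed as an element of $G$), and let $\gamma\colon G\to\overline G:=G/\langle\langle R_1,\dots,R_m\rangle\rangle$ be the quotient map. Whatever the $u_i$ are, in $\overline G$ we have $x_i=u_i$ for every $i$; since $\gamma(H)$ is a subgroup containing each $u_i$, this gives $\overline G=\langle\gamma(X)\rangle\subseteq\gamma(H)\subseteq\overline G$, i.e.\ $\gamma|_H$ is onto, which is (c). Everything else will follow by choosing the $u_i$ so that $\{R_1,\dots,R_m\}$ is a $C'(\lambda)$ small-cancellation system over the hyperbolic group $G$ with $\lambda$ arbitrarily small and $\min_i|R_i|_X$ arbitrarily large, and then invoking the small-cancellation theorem over hyperbolic groups (Gromov \cite{Gr2}, Olshanskii \cite{Ols}).

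To build the words $u_i$ I use hyperbolicity. Since $H$ is non-virtually-cyclic, it contains a loxodromic (equivalently, infinite-order) element $a$ and an infinite family $\{t_n\}_{n\in\N}\subseteq H$ for which the loxodromic elements $a_n:=t_nat_n^{-1}\in H$ are pairwise independent, i.e.\ their fixed-point pairs on $\partial G$ are pairwise disjoint. Fix a large integer $N$, choose pairwise disjoint $k$-element index sets $I_1,\dots,I_m\subseteq\N$, and put $u_i:=\prod_{n\in I_i}a_n^{N}$ (indices in increasing order), $R_i:=x_iu_i^{-1}$. A standard estimate in $\delta$-hyperbolic spaces says that a product of sufficiently high powers of pairwise independent loxodromics is a quasi-geodesic word with uniform constants; in particular $|R_i|_X$ grows linearly in $kN$, and any subword of a geodesic representative of $R_i^{\pm1}$ that fellow-travels a subword of $R_j^{\pm1}$, or fellow-travels a geodesic for a short element of $G$, must — up to bounded ``junction'' regions and an additive error $O(\delta)$ — lie inside a single syllable $a_n^{N}$ occurring in both words. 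Because the index sets are disjoint and the $x_i$ are uniformly short, this forces the length of every piece to be bounded independently of $N$. Hence, taking $N$ huge compared with $\delta$, with $m$, and with $\max_{x\in X}|x|_X$, and choosing the $t_n$ one at a time so that each new one avoids the finitely many ``bad'' double cosets created by the earlier choices, makes the maximal piece length an arbitrarily small fraction of $\min_i|R_i|_X$; this is exactly the $C'(\lambda)$ condition, for any prescribed $\lambda$.

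The small-cancellation theorem over hyperbolic groups, applied to $\overline G=\langle G\mid R_1,\dots,R_m\rangle$, now yields the remaining claims: $\overline G$ is hyperbolic, and (since $G$ is non-virtually-cyclic and $\lambda$ is small, so that enough of the loxodromic dynamics of $G$ persists in $\overline G$) it is non-virtually-cyclic — this is (a); the quotient map is injective on the ball of radius comparable to $\lambda\min_i|R_i|_X$ about $1$ in the Cayley graph of $G$, and since $F\subseteq X$ is bounded while the $R_i$ are long, $\gamma|_F$ is injective — this is (b); and every finite subgroup of $\overline G$ is conjugate to $\gamma(K)$ for some finite subgroup $K\le G$, so, writing a conjugating element as $\gamma(h)$, a finite-order $\bar g\in\overline G$ equals $\gamma(h^{-1}kh)$ with $h^{-1}kh\in G$ of finite order — this is (d). Together with (c) this proves the theorem. (The hypothesis that $H$ normalizes no finite subgroup of $G$ is what makes $H$ an admissible base: besides supplying the independent loxodromics used above, it is what rules out a torsion obstruction that could force $\overline G$ to be elementary or $\gamma|_H$ to miss part of $\overline G$.)

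The main obstacle is the middle paragraph — producing and certifying the small-cancellation system. No single fellow-travelling estimate is hard once $\delta$-hyperbolicity is available; the real work is the simultaneous bookkeeping, making short at once all pieces arising from overlaps among the $R_i$, from self-overlaps of each $R_i$, and from interactions of the $R_i$ with the infinitely many (but uniformly structured) defining relations of $G$, while at the same time keeping the quotient non-elementary. The infinite supply of pairwise independent loxodromic elements inside $H$ is precisely what gives the elbow room to do this.
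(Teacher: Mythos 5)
Your plan follows the classical route of Olshanskii \cite{Ols}, one of the two sources the theorem cites; the paper itself does not reprove this statement but deduces it from a generalization (Theorem \ref{scthm}, see also Remark \ref{remsamecond}) proved in a substantially different framework. Where you fix a finite generating set $X$ of $G$, set $R_i=x_iu_i^{-1}$ with $u_i$ a product of high powers of pairwise independent loxodromics from $H$, and aim for $C'(\lambda)$ over the hyperbolic group $G$ directly, the paper works with a possibly infinite relative generating set $\mathcal A=X\sqcup\mathcal H$ attached to hyperbolically embedded cyclic subgroups $\{\langle h_1\rangle,\langle h_2\rangle\}\h G$, builds a single word $W\equiv t^{-1}h_1^{m_1}h_2^{l_1}\cdots h_1^{m_n}h_2^{l_n}$, verifies the $C_1(\e,\mu,\lambda,c,\rho)$-condition using the relative metric (Proposition \ref{scwords} via Lemma \ref{conscomp}), and kills relators one at a time by induction. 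Your approach is more elementary and perfectly adequate for the hyperbolic case; the payoff of the paper's machinery is that it transfers verbatim to acylindrically hyperbolic $G$, where no finite generating set yields a hyperbolic Cayley graph.

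There is one place where your sketch has a genuine gap rather than just elided bookkeeping: the role of the hypothesis that $H$ normalizes no finite subgroup. You present it as a safety valve against "a torsion obstruction," but it is load-bearing for the middle paragraph's small cancellation claim. If $K\le G$ is a nontrivial finite subgroup normalized by $H$, then for \emph{every} loxodromic $a_n\in H$ one has $K\le E_G(a_n)$ (a finite group normalized by a loxodromic sits in its elementary closure), so no choice of $a$, of the $t_n$, or of the exponent $N$ within $H$ can remove $K$ from the picture. Taking $N$ divisible by suitable exponents, each $k\in K$ then satisfies $ku_ik^{-1}=u_i$, hence $kR_ik^{-1}=(kx_ik^{-1})u_i^{-1}$; whenever some $x_i\notin C_G(k)$ this is a different element of $G$ sharing the entire long tail $u_i^{-1}$ with $R_i$, which both produces $\e$-pieces of length comparable to $\|R_i\|$ and forces $\gamma$ to kill the short nontrivial element $kx_ik^{-1}x_i^{-1}$, contradicting the injectivity-on-balls conclusion you want. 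Olshanskii, and the paper in Lemma \ref{yi} and Corollary \ref{suitsubc}, use the hypothesis precisely to replace the loxodromics by elements $h$ with $E_G(h)=\langle h\rangle\times K_G(H)=\langle h\rangle$, and that collapse is what makes the piece analysis close. You should incorporate this choice into the construction of $a$ (and hence of the $a_n$) before taking conjugates and powers; as written, the $C'(\lambda)$ claim is not merely unverified but actually fails whenever $H$ normalizes a finite subgroup that is not central in $G$.
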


In fact, Gromov's statement of this theorem was not correct, and after briefly sketching an argument for fundamental groups of manifolds, he said that the general case is ``straightforward and details are left to the reader" \cite{Gr2}. The correct statement and proof is due to Olshanskii, who actually proved a more general theorem by giving explicit combinatorial small cancellation conditions for hyperbolic groups and showing how to find words which satisfy those conditions \cite{Ols}. Applications and variations of Theorem \ref{hypsc} can be found in \cite{Gr2, MO, M, Ols, OlsS, Osi02, Oz}.

Building on the work of Olshanskii, Osin proved a version of Theorem \ref{hypsc} for relatively hyperbolic groups \cite{Osi10}. Using this, Osin gave the first construction of an infinite, finitely generated group with two conjugacy classes; this was the first known example of any finitely generated group with two conjugacy classes other than $\Z/2\Z$.  Other applications of Osin's version of Theorem \ref{hypsc} can be found in \cite{AMO, BelOsi, BelSz, HO1, M2, Osi10}.

The goal of this paper is to prove a version of Theorem \ref{hypsc} for a larger class of groups acting on hyperbolic metric spaces, specifically the class of \emph{acylindrically hyperbolic groups}.

\begin{defn}
Let $G$ be a group acting on a metric space $(X, d)$. We say that the action is \emph{acylindrical} if for all $\e>0$ there exist $R>0$ and $N>0$ such that for all $x$, $y\in X$ with $d(x, y)\geq R$, the set 
\[
\{g\in G\;|\; d(x, gx)\leq \e, d(y, gy)\leq \e\}
\]
contains at most $N$ elements.
\end{defn}

\begin{defn}
We say that a group $G$ is \emph{acylindrically hyperbolic} if $G$ admits a non-elementary, acylindrical action on some hyperbolic metric space. We denote the class of all acylindrically hyperbolic groups by $\X$.
\end{defn}

Recall that an action of $G$ on a hyperbolic space $X$ is called \emph{non-elementary} if $G$ has at least three limit points on the Gromov boundary $\partial X$. If $G$ acts acylindrically, this condition is equivalent to saying that $G$ is not virtually cyclic and some (equivalently, any) $G$-orbit is unbounded (see Theorem \ref{subah}).

The notion of an  acylindrical action was introduced for the special case of groups acting on trees by Sela \cite{S}, and in general by Bowditch studying the action of mapping class groups on the curve complex \cite{Bow2}. The term ``acylindrically hyperbolic" is due to Osin, who showed in \cite{Osip} that the class of acylindrically hyperbolic groups coincides with the class of groups which admit non-elementary actions on hyperbolic metric spaces which satisfy the WPD condition introduced by Bestvina-Fujiwara \cite{BF} and the class of groups which contain non-degenerate hyperbolically embedded subgroups introduced by Dahmani-Guirardel-Osin \cite{DGO}. It is not hard to see that any proper, cobounded action is acylindrical, and hence all non-virtually-cyclic hyperbolic groups belong to $\X$. Also, the action of a relatively hyperbolic group on the relative Cayley graph is acylindrical \cite{Osip}. Hence $\X$ is a generalization of the classes of non-virtually-cyclic hyperbolic and relatively hyperbolic groups. Other examples of acylindrically hyperbolic groups include:

\begin{enumerate}
\item The mapping class group of an orientable surface of genus $g$ with $p$ punctures for $3g+p>4$ \cite{Bow2}. (For $3g+p\leq 4$, this group is either non-virtually-cyclic hyperbolic or finite.)
\item $Out(F_n)$ for $n\geq 2$ \cite{DGO}.
\item Directly indecomposable non-cyclic right angled Artin groups, and more generally non-virtually-cyclic groups which act properly on proper $CAT(0)$ spaces and contain rank-1 elements \cite{Sis}.
\item The Cremona group of birational transformations of the complex projective plane \cite{DGO}.
\item The automorphism group of the polynomial algebra $k[x, y]$ for any field $k$ \cite{OM}.
\item All one relator groups with at least three generators \cite{OM}.
\item If $G$ is the graph product $G=\Gamma\{G_v\}_{v\in V}$ such that each $G_v$ has infinite index in $G$, then either $G$ is virtually cyclic, $G$ decomposes as the direct product of two infinite groups, or $G\in\X$ \cite{OM}.
\item If $G$ is the fundamental group of a compact 3-manifold, then either $G$ is virtually polycyclic, $G\in\X$, or $G/Z\in \X$ where $Z$ is infinite cyclic \cite{OM}.

\end{enumerate}

 For our version of Theorem \ref{hypsc}, we will need our chosen subgroup to be not only non-virtually-cyclic, but also non-elementary with respect to some acylindrical action on a hyperbolic metric space. By \cite{Osip}, this space can always be chosen to be a Cayley graph of $G$ with respect to some (possibly infinite) generating set $\mathcal A$ (see Theorem \ref{thmAH}). We denote this Cayley graph by $\Ga$, and we denote the ball of radius $N$ centered at the identity in $\Ga$ by $B_{\mathcal A}(N)$. As in Theorem \ref{hypsc}, we will require that our subgroup does not normalize any finite subgroups of $G$. Following the terminology of \cite{Osi10}, we will call such subgroups \emph{suitable}.

\begin{defn}
Given $G\in\X$, a generating set $\mathcal A$ of $G$ and a subgroup $S\leq G$, we will say that $S$ is \emph{suitable with respect to $\mathcal A$} if the following holds:
\begin{enumerate}
\item $\Ga$ is hyperbolic and the action of $G$ on $\Ga$ is acylindrical.
\item The induced action of $S$ on $\Ga$ is non-elementary.
\item $S$ does not normalize any finite subgroups of $G$.
\end{enumerate}
We will further say that a subgroup is \emph{suitable} if it is suitable with respect to some $\mathcal A$.
\end{defn}

\begin{thm}[Theorem \ref{scthm}]\label{intromain}
Suppose $G\in\X$ and $S\leq G$ is suitable with respect to $\mathcal A$. Then for any $\{t_1,...,t_m\}\subset G$ and $N\in\N$, there exists a group $\overline{G}$ and a surjective homomorphism $\gamma\colon G\to \overline{G}$ which satisfy
\begin{enumerate}
\item[(a)] $\overline{G}\in\X$.
\item[(b)] $\gamma|_{B_{\mathcal A}(N)}$ is injective.
\item[(c)] $\gamma(t_i)\in\gamma(S)$ for $i=1,...,m$.
\item[(d)] $\gamma(S)$ is a suitable subgroup of $\overline{G}$. 
\item[(e)] Every element of $\overline{G}$ of order $n$ is the image of an element of $G$ of order $n$.
\end{enumerate} 
\end{thm}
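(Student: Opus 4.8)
\emph{Proof plan.}
The plan is to realize $\overline{G}$ as a small cancellation quotient of $G$. I would fix a free subgroup $E\le S$ of rank two that is hyperbolically embedded in $G$ with respect to a suitable generating set $\mathcal B\supseteq\mathcal A$, choose for each $i$ an element $w_i\in E$, and set $\overline{G}=G/\ll r_1,\dots,r_m\rr$ with $r_i=t_i^{-1}w_i$. Then
\[
\gamma(t_i)=\gamma(w_i)\in\gamma(E)\le\gamma(S)
\]
by construction, which is (c); the remaining conclusions I would extract from the small cancellation theorem for acylindrically hyperbolic groups developed in the preceding sections, once the $w_i$ are chosen so that the relators $r_i$ satisfy the relevant small cancellation condition and are long enough. (If $m=0$ take $\overline{G}=G$, so assume $m\ge1$.)

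\emph{Step 1: a hyperbolically embedded free subgroup of $S$.}
Since $S$ is suitable with respect to $\mathcal A$, the graph $\Ga$ is hyperbolic, the $G$-action on it is acylindrical, and $S$ acts non-elementarily; hence $S$ contains two independent loxodromic elements $f,g$, and for all large $n$ the subgroup $E=\langle f^n,g^n\rangle\le S$ is free of rank two. By the structure theory of acylindrically hyperbolic groups I may choose a generating set $\mathcal B\supseteq\mathcal A$ of $G$ so that $E\h(G,\mathcal B)$, $\Gamma(G,\mathcal B)$ is hyperbolic, the $G$-action on it is acylindrical, and $S$ still acts non-elementarily on it (so $S$ remains suitable, as $E\le S$). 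I record two facts for later: $G$ has trivial finite radical (a nontrivial finite normal subgroup of $G$ would be normalized by $S$, against suitability); and $E$ normalizes no nontrivial finite subgroup of $G$, since a hyperbolically embedded subgroup is almost malnormal, so $C_G(E')=Z(E')=1$ for every finite-index subgroup $E'\le E$.

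\emph{Step 2: choosing the relators --- the crux.}
This is the technical heart, parallel to Olshanskii's step of exhibiting words that satisfy the combinatorial small cancellation conditions, and it is where I expect the main difficulty to lie. Using that $E$ is infinite with proper relative metric --- so that $E$ has elements of arbitrarily large relative length and there is room to impose genericity, e.g.\ by taking $w_i$ to be a long generic ``alternating'' word $f^{nk_{i,1}}g^{nl_{i,1}}\cdots f^{nk_{i,s}}g^{nl_{i,s}}$ in the free generators of $E$ --- I would choose $w_1,\dots,w_m\in E$ so that the relators $r_i=t_i^{-1}w_i$ satisfy the small cancellation condition of the preceding sections with the length parameter $\rho$ as large as desired; here one uses that the bounded ``$\mathcal A$-cap'' $t_i^{-1}$ (of length at most $\max_i|t_i|_{\mathcal A}$) does not interfere with the condition once the $w_i$ are long. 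One should also arrange that the $E$-part of each $r_i$ is generic enough that $\ll r_1,\dots,r_m\rr\cap E=\{1\}$. Fix such a choice with $\rho$ large compared with $N$ and with the hyperbolicity and acylindricity constants of $\Gamma(G,\mathcal B)$.

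\emph{Step 3: reading off the conclusions.}
Applying the small cancellation theorem to $\overline{G}=G/\ll r_1,\dots,r_m\rr$ should now give: $\overline{G}\in\X$, witnessed by the hyperbolically embedded subgroup $\gamma(E)$, which is (a); injectivity of $\gamma$ on a $\mathcal B$-ball $B_{\mathcal B}(r)$ with $r$ growing with $\rho$, hence (as $\mathcal A\subseteq\mathcal B$) on $B_{\mathcal A}(N)\subseteq B_{\mathcal B}(N)\subseteq B_{\mathcal B}(r)$ for our choice of $\rho$, which is (b); injectivity of $\gamma$ on $E$ (Step 2), and more generally that every finite subgroup of $\overline{G}$ is conjugate to the isomorphic $\gamma$-image of a finite subgroup of $G$, whence every order-$n$ element of $\overline{G}$ is the $\gamma$-image of an order-$n$ element of $G$, which is (e); and (c) by construction. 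Finally, for (d): $\gamma(E)\cong E$ is free of rank two and hyperbolically embedded in $\overline{G}$, so $\gamma(S)\supseteq\gamma(E)$ is non-elementary with respect to the corresponding acylindrical action on a Cayley graph of $\overline{G}$, while any finite subgroup of $\overline{G}$ normalized by $\gamma(S)$ is normalized by $\gamma(E)$ and hence trivial by the almost-malnormality argument of Step 1 applied to $\gamma(E)\h\overline{G}$; thus $\gamma(S)$ is suitable.
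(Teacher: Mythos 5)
Your proposal has the right overall shape---realize $\overline G$ as a small cancellation quotient, arrange $\gamma(t_i)\in\gamma(S)$ by using relators $t_i^{-1}w_i$ with $w_i$ in a hyperbolically embedded subgroup, and read off (a)--(e) from the small cancellation lemmas---but there is a mismatch in Steps~1--2 that breaks the length requirement of the $C_1$-condition.

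In Step~1 you fix a \emph{single} hyperbolically embedded subgroup $E\h(G,\mathcal B)$ that is free of rank two, and in Step~2 you take $w_i\in E$ to be a ``long alternating word'' in the free generators of $E$. The problem is that the small cancellation conditions of Section~\ref{sectscq} (Definitions~\ref{piece},~\ref{SCdef} and Proposition~\ref{scwords}) are formulated in the alphabet $X\sqcup\mathcal H$: with $\mathcal H=E$, the entire element $w_i$ is a \emph{single letter}, so the relator $r_i=t_i^{-1}w_i$ has $\|r_i\|\le|t_i|_{\mathcal B}+1$, and condition~(1) of Definition~\ref{SCdef} ($\|R\|\ge\rho$) can never be satisfied for $\rho$ large. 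Being ``long'' in the relative metric $\widehat d$ on $E$ does not help; it is the word length in the relative alphabet that is constrained, and that word length is bounded by your choice of $\mathcal H$. The genericity you invoke would need to be visible as length of a path in $\Gamma(G,\mathcal B\sqcup\mathcal H)$, which requires the alternating syllables to be separate letters.

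The paper instead uses \emph{two cyclic} hyperbolically embedded subgroups: Corollary~\ref{suitsubc} (via Lemma~\ref{yi} and $K_G(S)=\{1\}$) produces non-commensurable loxodromics $h_1,h_2\in S$ with $E_G(h_i)=\langle h_i\rangle$, so $\{\langle h_1\rangle,\langle h_2\rangle\}\h(G,\mathcal A)$. Now in the alphabet $(\mathcal A\cup\{t^{\pm1}\})\sqcup\langle h_1\rangle\sqcup\langle h_2\rangle$ the word $W\equiv t^{-1}h_1^{m_1}h_2^{l_1}\cdots h_1^{m_n}h_2^{l_n}$ really has length $2n+1$, Proposition~\ref{scwords} gives the $C_1(\e,\frac{M}{2n},\frac14,1,2n)$-condition, and $\rho=2n$ can be made as large as desired. (Having $h_1^{m_j}$ and $h_2^{l_j}$ in \emph{different} hyperbolically embedded subgroups, with trivial intersection, is also what makes condition (W4) available for Lemma~\ref{commonedge}.) This is the fix for your Step~2, and it makes the appeal to DGO for hyperbolically embedded free subgroups unnecessary, since Corollary~\ref{suitsubc} is proved by elementary means from Lemma~\ref{lox}.

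Two smaller remarks. The paper reduces to $m=1$ by induction rather than treating all $t_i$ simultaneously; this is a simplification, not a difference in substance. Your argument for (d)---that $\gamma(S)$ is suitable because $\gamma(E)\h\overline G$ is free of rank two and almost malnormal---is a correct idea, but once you replace $E$ by the cyclic subgroups it is cleaner to use the paper's Lemma~\ref{suitsubl} directly: $\langle\gamma(h_1)\rangle\h\overline G$ is infinite and proper in $\gamma(S)$ (since $\gamma(h_2)\notin\langle\gamma(h_1)\rangle$), which already gives suitability. Similarly, for (a) you need the hyperbolically embedded image to be non-degenerate; Lemma~\ref{scquot}(1) gives injectivity on $\bigcup H_\lambda$, which handles infiniteness without the extra claim ``$\ll r_1,\dots,r_m\rr\cap E=\{1\}$'' whose proof you did not indicate.
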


Typically, $\mathcal A$ will be an infinite subset of $G$, hence condition (b) is stronger in Theorem \ref{intromain} than in Theorem \ref{hypsc}. Indeed by choosing sufficiently large $N$, we can make $\gamma$ injective on any given finite set of elements. Also, if $G$ is finitely generated we can choose $t_1,...,t_m$ to be a generating set of $G$ and we get that $\gamma|_S$ is surjective; thus conditions (c) are equivalent in both theorems when $G$ is finitely generated. 

We will show that this theorem has a variety of applications, including the construction of various unusual quotient groups. In addition, this theorem allows us to easily generalize several results known for hyperbolic or relatively hyperbolic groups to the class of acylindrically hyperbolic groups.

We first record a useful corollary of our main theorem, which is a simplification of Corollary \ref{commonq}:
\begin{cor}\label{introcommonq}
Let $G_1,G_2\in\X$, with $G_1$ finitely generated, $G_2$ countable. Then there exists an infinite group $Q$ and surjective homomorphisms $\alpha_i\colon G_i\to Q$ for $i=1,2$. If in addition $G_2$ is finitely generated, then we can choose $Q\in\X$.
\end{cor}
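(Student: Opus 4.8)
The plan is to realize $Q$ as a common quotient of the free product $R:=G_1*G_2$ by applying Theorem~\ref{intromain} (i.e.\ Theorem~\ref{scthm}) twice. Two preliminary reductions. First, a common quotient of $G_1/K(G_1)$ and $G_2/K(G_2)$, where $K(G_i)$ denotes the maximal finite normal subgroup, is also a common quotient of $G_1$ and $G_2$, and passing to these quotients preserves finite generation, countability, and membership in $\X$; so we may assume $G_1$ and $G_2$ have trivial finite radical. Second, $R\in\X$ (a free product of two infinite groups is acylindrically hyperbolic; see \cite{MO}), and $R$ admits an acylindrical action on a hyperbolic Cayley graph $\Gamma(R,\mathcal A)$ obtained by equivariantly replacing the vertices of the Bass--Serre tree of $R=G_1*G_2$ by Cayley graphs realizing acylindrical actions of the two factors. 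With respect to this action $G_1$ and $G_2$ are non-elementary (each acts non-elementarily on the vertex space it stabilizes), and by the Kurosh subgroup theorem — finite subgroups are conjugate into a factor, distinct conjugates of the factors meet trivially, factors are self-normalizing — together with $K(G_i)=1$, neither $G_1$ nor $G_2$ normalizes a nontrivial finite subgroup of $R$. Hence $G_1$ and $G_2$ are both suitable subgroups of $R$.

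Assume first that $G_2$ is finitely generated. \emph{Step 1.} Apply Theorem~\ref{intromain} to $R$ with suitable subgroup $S=G_1$, finite set $\{t_1,\dots,t_m\}$ a generating set of $G_2$, and $N$ large; this gives a surjection $\gamma_1\colon R\to R_1$ with $R_1\in\X$, $\gamma_1$ injective on $B_{\mathcal A}(N)$ and order preserving, and $\gamma_1(G_2)=\langle\gamma_1(t_i)\rangle\le\gamma_1(G_1)$, hence $R_1=\langle\gamma_1(G_1),\gamma_1(G_2)\rangle=\gamma_1(G_1)$, so $\gamma_1|_{G_1}$ is onto. \emph{Step 2.} The key point is that $\gamma_1(G_2)$ contains a subgroup $S_2$ that is suitable in $R_1$. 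Granting this, apply Theorem~\ref{intromain} to $R_1$ with suitable subgroup $S_2$ and with a finite generating set $\{t_1',\dots,t_k'\}$ of $R_1$ (which exists, as $R_1=\gamma_1(G_1)$ and $G_1$ is finitely generated): we obtain a surjection $\gamma_2\colon R_1\to Q$ with $Q\in\X$ and $Q=\langle\gamma_2(t_j')\rangle\le\gamma_2(S_2)\le\gamma_2(\gamma_1(G_2))$, so $(\gamma_2\gamma_1)|_{G_2}$ is onto $Q$; and $(\gamma_2\gamma_1)(G_1)=\gamma_2(R_1)=Q$ as well. Taking $\alpha_i=(\gamma_2\gamma_1)|_{G_i}$ settles this case ($Q\in\X$, hence infinite).

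When $G_2$ is only countable we no longer require $Q\in\X$. Write $G_2=\bigcup_nH_n$ with $H_n$ finitely generated and $G_1=\langle a_1,\dots,a_m\rangle$, and iterate applications of Theorem~\ref{intromain} inside $\X$ via two kinds of moves: a ``$G_2$-move'' folds the (current images of the) generators of one $H_n$ into the current image of $G_1$, using the current image of $G_1$ — equivalently, a distinguished suitable subgroup sitting inside it, which the theorem keeps suitable — as the suitable subgroup; and a single ``$G_1$-move'', performed after finitely many $G_2$-moves while the image of $G_2$ is still ``large'', folds $a_1,\dots,a_m$ into the image of $G_2$ using a suitable subgroup inside it, which is exactly the claim of Step 2. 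After the $G_1$-move the inclusion $\gamma(G_1)\le\gamma(G_2)$ persists and both images equal the whole ambient group, a property preserved by every later $G_2$-move; scheduling the moves so that every generator of $G_2$ is eventually folded, and keeping the radius of injectivity tending to $\infty$, the direct limit $Q$ of the sequence satisfies $\gamma(G_1)=\gamma(G_2)=Q$ and is infinite (it receives injective maps on arbitrarily large balls), which yields the $\alpha_i$. The one non-formal ingredient throughout is the Step 2 claim — that the image of the ``lagging'' factor, after the other has been folded into it, still contains a subgroup suitable in the new group. This does not follow from the statement of Theorem~\ref{intromain} and is where the details of the proof of Theorem~\ref{scthm} are needed: the control it provides on the acylindrical action of the quotient and the ``genericity'' of the relators, which should guarantee that the images of an independent pair of loxodromic elements coming from a sufficiently large ball of the lagging factor remain an independent loxodromic pair with trivial joint finite stabilizer, so that the subgroup they generate is suitable.
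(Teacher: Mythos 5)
Your overall strategy matches the paper's: reduce to $K(G_i)=\{1\}$, pass to the free product $F=G_1*G_2$, observe both factors are suitable in $F$, and apply the main theorem so as to fold one factor's generators into the other and then arrange both factors to surject onto a common limit (one application of Theorem~\ref{scthm} followed, in the general countable case, by the inductive Corollary~\ref{ontosuit}). However, you correctly flag ``Step 2'' — that after folding one factor into the other the lagging factor's image still contains a suitable subgroup — and then leave it open, gesturing at ``genericity of the relators'' and independence of loxodromic images. That is indeed the one non-formal step, and as stated it is a genuine gap: nothing in the statement of Theorem~\ref{intromain} preserves suitability of anything other than the designated subgroup $S$, and the hand-wave about loxodromic pairs is not a proof.

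The paper closes this gap by a specific device that your argument does not have. By Corollary~\ref{suitsubc} one chooses infinite-order elements $f_1,f_2\in G_1$ and $h_1,h_2\in G_2$ so that $\{\langle f_1\rangle,\langle f_2\rangle\}\h G_1$ and $\{\langle h_1\rangle,\langle h_2\rangle\}\h G_2$; since $\{G_1,G_2\}\h F$, Lemma~\ref{hehe} gives $\{\langle f_1\rangle,\langle f_2\rangle,\langle h_1\rangle,\langle h_2\rangle\}\h F$. The small cancellation is then performed with respect to a generating set of $F$ that exhibits \emph{all four} cyclic subgroups as hyperbolically embedded, with the relators being words in $\{h_1,h_2\}$ and the elements being folded. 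Lemma~\ref{scquot}(2) then guarantees that the \emph{entire} collection $\{\langle\gamma(f_1)\rangle,\langle\gamma(f_2)\rangle,\langle\gamma(h_1)\rangle,\langle\gamma(h_2)\rangle\}$ remains hyperbolically embedded in the quotient $F'$ — not only the pair inside $S$. Since $\langle\gamma(f_1)\rangle\h F'$ is an infinite cyclic proper subgroup of $\gamma(G_1)$, Lemma~\ref{suitsubl} gives directly that $\gamma(G_1)$ is suitable in $F'$. This is the content you were missing: the mechanism is Lemma~\ref{scquot} applied to an \emph{enlarged} hyperbolically embedded collection chosen in advance, not a genericity argument about loxodromic elements. (Two further minor points: the paper establishes suitability of $G_1,G_2$ in $F$ via the hyperbolic-embeddedness machinery rather than Bass–Serre/Kurosh, which is more uniform with the rest of the argument; and in the countable case the paper avoids your move-scheduling by invoking Corollary~\ref{ontosuit} once, which already packages the requisite direct limit.)
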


Since Kazhdan's Property $(T)$ is preserved under taking quotients and the existence of infinite hyperbolic groups with Property $(T)$ is well-known, as an immediate consequense of Corollary \ref{introcommonq} we get:

\begin{cor}
Every countable $G\in\X$ has an infinite quotient with Property $(T)$.
\end{cor}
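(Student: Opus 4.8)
The plan is to feed a fixed auxiliary group into Corollary~\ref{introcommonq}, namely an infinite hyperbolic group $H$ with Property~$(T)$. Such groups are classical: one may take a cocompact lattice in $Sp(n,1)$ (hyperbolic by the Milnor--Schwarz lemma since the symmetric space is negatively curved, and with Property~$(T)$ by Kostant's work on rank-one Lie groups), or alternatively Gromov's random groups or \.{Z}uk's examples. First I would observe that any such $H$ is automatically non-elementary: an infinite virtually cyclic group is amenable, and an amenable group with Property~$(T)$ is finite, so $H$ cannot be virtually cyclic. Hence $H$ is a non-elementary hyperbolic group, so $H\in\X$, and $H$ is finitely generated (being hyperbolic).

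Now given an arbitrary countable $G\in\X$, I would apply Corollary~\ref{introcommonq} to the pair $G_1=H$ and $G_2=G$: since $G_1=H$ is finitely generated and $G_2=G$ is countable, the corollary produces an infinite group $Q$ together with surjective homomorphisms $\alpha_1\colon H\to Q$ and $\alpha_2\colon G\to Q$. Because Property~$(T)$ is inherited by quotients and $H$ has Property~$(T)$, the group $Q=\alpha_1(H)$ has Property~$(T)$; and $\alpha_2$ exhibits $Q$ as a quotient of $G$. Thus $Q$ is the desired infinite quotient of $G$ with Property~$(T)$.

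There is essentially no obstacle here beyond invoking the right black boxes in the right order; the only point that needs a word of justification is that the auxiliary hyperbolic group with Property~$(T)$ can be taken non-elementary (so that it lies in $\X$ and Corollary~\ref{introcommonq} is applicable), and this follows immediately from the amenability obstruction noted above. Everything else is a direct substitution into Corollary~\ref{introcommonq}.
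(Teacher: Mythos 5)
Your proof is correct and is exactly the argument the paper intends: the paper presents this corollary as an immediate consequence of Corollary~\ref{introcommonq} combined with the existence of infinite hyperbolic Property~$(T)$ groups and the fact that $(T)$ passes to quotients. Your additional observation that such an $H$ is automatically non-virtually-cyclic (via amenability) is a worthwhile detail to make the membership $H\in\X$ explicit, but it does not change the route.
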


This generalizes a similar result of Gromov for non-virtually-cyclic hyperbolic groups \cite{Gr2}.

A version of Corollary \ref{introcommonq} for hyperbolic groups was used by Osin to study Kazhdan constants of hyperbolic groups \cite{Osi02}. Let $G$ be generated by a finite set $X$, and let $\pi\colon G\to \mathcal U(H)$ be a unitary representation of $G$ on a separable Hilbert space $H$. Then the \emph{Kazhdan constant of $G$ with respect to $X$ and $\pi$} is defined by
\[
\varkappa(G, X, \pi)=\inf_{\|v\|=1}\max_{x\in X}\|\pi(x)v-v\|.
\]
The \emph{Kazhdan constant of $G$ with respect to $X$} is the quantity 
\[
\varkappa(G, X)=\inf_\pi\varkappa(G, X, \pi)
\]
where this infimum is taken over all unitary representations that have no non-trivial invariant vectors. A finitely generated group has Property $(T)$ if and only if $\varkappa(G, X)>0$ for some (equivalently, any) finite generating set $X$. Lubotzky \cite{L} asked whether the quantity
\[
\varkappa(G)=\inf_X\varkappa(G, X)
\]
was non-zero for all finitely generated Property $(T)$ groups, where this infimum is taken over all finite generating sets of $G$. Clearly $\varkappa(G)>0$ if $G$ is finite, and examples of infinite, finitely generated groups $G$ with $\varkappa(G)>0$ were constructed in \cite{OsiSon}. However, a negative answer to Lubotzky's question was obtained by Gelander and \.Zuk  who showed that $\varkappa(G)=0$ whenever $G$ densely embeds in a connected, locally compact group \cite{GZ}. In addition, Osin showed that $\varkappa(G)=0$ whenever $G$ is an infinite hyperbolic group \cite{Osi02}. In fact, Osin actually proves that given any finitely generated group $G$, if for all non-virtually-cyclic hyperbolic groups $H$, $G$ and $H$ have a non-trivial common quotient, then $\varkappa(G)=0$. Thus combining Osin's proof and Corollary \ref{introcommonq}, we obtain the following.

\begin{thm}
Let $G\in\X$ be finitely generated. Then $\varkappa(G)=0$.
\end{thm}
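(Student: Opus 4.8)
The plan is to derive this theorem by combining the proof strategy of Osin from \cite{Osi02} with Corollary \ref{introcommonq}. Osin's argument shows that, for a finitely generated group $G$, if $G$ has a non-trivial common quotient with \emph{every} non-virtually-cyclic hyperbolic group $H$, then $\varkappa(G)=0$; the mechanism is that infinite hyperbolic groups with Property $(T)$ and arbitrarily large Kazhdan constants (relative to suitable generating sets) are known to exist, and taking common quotients transports the badness of these constants down to $G$. So the only thing we need to supply is the common-quotient hypothesis.

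First I would recall that $G$, being acylindrically hyperbolic and finitely generated, lies in $\X$ and is in particular countable (indeed finitely generated). Next, for an arbitrary non-virtually-cyclic hyperbolic group $H$, note that $H\in\X$ as well: any non-virtually-cyclic hyperbolic group admits a proper cobounded action on its own Cayley graph, which is hyperbolic, and such an action is acylindrical and non-elementary. Now apply Corollary \ref{introcommonq} with $G_1=G$ (finitely generated) and $G_2=H$ (countable): this yields an infinite group $Q$ together with surjections $G\twoheadrightarrow Q$ and $H\twoheadrightarrow Q$, so $G$ and $H$ have a non-trivial common quotient. Since $H$ was arbitrary among non-virtually-cyclic hyperbolic groups, the hypothesis of Osin's criterion is satisfied, and we conclude $\varkappa(G)=0$.

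The main obstacle is not really mathematical at this point, since both ingredients are already available: the subtlety is to cite Osin's result from \cite{Osi02} in precisely the form needed --- namely the implication ``non-trivial common quotient with all non-virtually-cyclic hyperbolic groups $\Rightarrow$ $\varkappa=0$'' --- rather than just the weaker statement that infinite hyperbolic groups have vanishing $\varkappa$. One should verify that Osin's argument does not secretly require the common quotient itself to be hyperbolic or to satisfy any finiteness condition beyond being infinite; a quick inspection shows it only uses that the quotient is infinite and that the hyperbolic group's bad Kazhdan constant survives under the quotient map, which it does because the constant is an infimum over representations and representations of the quotient pull back to representations of the hyperbolic group. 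Hence the proof reduces to the two sentences above, modulo quoting \cite{Osi02} correctly.
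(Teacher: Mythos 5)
Your proof is correct and follows essentially the same approach as the paper: the paper also derives the theorem directly by combining Osin's criterion from \cite{Osi02} (that a finitely generated group having a non-trivial common quotient with every non-virtually-cyclic hyperbolic group has vanishing $\varkappa$) with Corollary \ref{introcommonq}, applied exactly as you do with $G_1 = G$ and $G_2 = H$.
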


Our next application is to the study of Frattini subgroups of groups in $\X$. The Frattini subgroup of a group $G$, denoted $Fratt(G)$, is defined as the intersection of all maximal proper subgroups of $G$, or as $G$ itself if no such subgroups exist. It is not hard to show that the Frattini subgroup of $G$ is exactly the set of \emph{non-generators} of $G$, that is the set of $g\in G$ such that for any set $X$ which generates $G$, $X\setminus\{g\}$ also generates $G$.

The study of the Frattini subgroup is related to the \emph{generation problem} and the \emph{rank problem}. Given  a group $G$ and a subset $Y\subseteq G$, the generation problem is to determine whether $Y$ generates $G$. The rank problem is to determine the smallest cardinality of a generating set of a given group $G$. Since $Fratt(G)$ consists of non-generators these problems can often be simplified by considering $G/Fratt(G)$. Hence these problems tend to be more approachable for classes of groups which have ``large" Frattini subgroups. We will show, however, that this is not the case for acylindrically hyperbolic groups.

\begin{thm}[Theorem \ref{thm:Fratt}]
Let $G\in\X$ be countable. Then $Fratt(G)$ is finite.
\end{thm}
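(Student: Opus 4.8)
The plan is to prove the stronger assertion that $Fratt(G)\subseteq K(G)$, where $K(G)$ is the maximal finite normal subgroup of $G$ (which exists by \cite{DGO}); this immediately gives the theorem. First I would set up two reductions. Recall that $Fratt(G)$ is exactly the set of non-generators of $G$ and is characteristic, hence normal. The elementary point driving everything is that $g\in G$ fails to be a non-generator if and only if there is a proper subgroup $H\lneq G$ with $\langle H,g\rangle=G$: such an $H$ gives the generating set $H\cup\{g\}$ whose subset $H$ is not generating, and conversely $H=\langle X\setminus\{g\}\rangle$ works for any generating $X\ni g$ with $X\setminus\{g\}$ non-generating. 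Since non-generators are carried to non-generators by quotient maps, and since a proper $\bar H\lneq G/K(G)$ with $\langle\bar H,\bar g\rangle=G/K(G)$ pulls back to a proper $H\le G$ containing $K(G)$ with $\langle H,g\rangle=G$, it suffices to treat $\bar G:=G/K(G)$, which is again a countable group in $\X$ (see \cite{DGO}), now with trivial finite radical. So from here on I would assume $K(G)=\{1\}$ and aim to show $Fratt(G)=\{1\}$, i.e.\ that for every $g\ne 1$ there is a proper subgroup $H\lneq G$ with $\langle H,g\rangle=G$.

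The main construction is to collapse $G$ onto a two‑generated subgroup using Theorem~\ref{intromain}. Fix $g\ne 1$ and a generating set $\mathcal A$ with $\Gamma(G,\mathcal A)$ hyperbolic and the action acylindrical. Using the structure theory of loxodromic elements (e.g.\ \cite{DGO}), choose a loxodromic $h_0$ with $E(h_0)=\langle h_0\rangle$; conjugating $h_0$ if necessary — which is possible because the normal core $\bigcap_x x\langle h_0\rangle x^{-1}$ is an elementary normal subgroup, hence trivial — I may assume $g\notin\langle h_0\rangle$. Then $S:=\langle g,h_0\rangle$ acts non‑elementarily, $[g,h_0]\ne 1$ (as $C_G(h_0)\subseteq\langle h_0\rangle\not\ni g$), and the maximal finite subgroup of $G$ normalized by $S$ is contained in $E(h_0)\cap gE(h_0)g^{-1}=\langle h_0\rangle\cap g\langle h_0\rangle g^{-1}=\{1\}$, so $S$ is suitable with respect to $\mathcal A$. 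If $G$ is finitely generated, say $G=\langle t_1,\dots,t_m\rangle$, a single application of Theorem~\ref{intromain} to $G$, $S$, $(t_1,\dots,t_m)$ and a large $N$ (with $\{1,g,h_0,gh_0,h_0g\}\subseteq B_{\mathcal A}(N)$) yields $\gamma\colon G\twoheadrightarrow\overline G\in\X$ with $\gamma|_{B_{\mathcal A}(N)}$ injective and $\gamma(t_i)\in\gamma(S)$, so $\overline G=\gamma(S)=\langle\gamma(g),\gamma(h_0)\rangle$, and $\overline G$ is non‑cyclic (being in $\X$). Then $H:=\langle\ker\gamma,h_0\rangle$ satisfies $\gamma(H)=\langle\gamma(h_0)\rangle\lneq\overline G$, so $H\lneq G$; and $\langle H,g\rangle$ surjects onto $\langle\gamma(h_0),\gamma(g)\rangle=\overline G$ and contains $\ker\gamma$, hence equals $G$. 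So $g\notin Fratt(G)$.

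For general countable $G$ I would enumerate $G=\{c_1,c_2,\dots\}$ and iterate the previous step: at stage $i$ apply Theorem~\ref{intromain} to $G_{i-1}$, to the image $S_{i-1}$ of $S$ (which is suitable by clause (d) applied at the previous stage), to the image of $c_i$, and to an $N_i$ large enough to keep the images of $1,g,h_0,gh_0,h_0g$ inside the injective ball of clause (b); this produces $\gamma_i\colon G_{i-1}\twoheadrightarrow G_i\in\X$ with $\gamma_i(c_i)\in S_i:=\gamma_i(S_{i-1})$ suitable, and with $gh_0\ne h_0g$ preserved in $G_i$. Passing to $Q:=\varinjlim G_i$ with projection $\pi\colon G\twoheadrightarrow Q$: every $c_i$ maps into $\pi(S)$, so $Q=\langle\pi(g),\pi(h_0)\rangle$; and since an equality in a direct limit already holds at some finite stage, $gh_0\ne h_0g$ persists, so $Q$ is non‑abelian, in particular non‑cyclic. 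Exactly as in the finitely generated case, $H:=\langle\ker\pi,h_0\rangle$ is then a proper subgroup of $G$ with $\langle H,g\rangle=G$, so $g\notin Fratt(G)$, and therefore $Fratt(G)=\{1\}$; with the reduction this gives $Fratt(G)\subseteq K(G)$, which is finite.

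The hard part will be (i) verifying that $S=\langle g,h_0\rangle$ can really be taken suitable — this is not new, but it uses genuine structure theory of acylindrically hyperbolic groups with trivial finite radical (existence of loxodromics $h_0$ with $E(h_0)=\langle h_0\rangle$, and almost malnormality of maximal elementary subgroups) — and, more seriously, (ii) ensuring in the countable case that the iterated quotient $Q$ does not degenerate to a cyclic or trivial group, since then the candidate subgroup $H$ would be all of $G$. Point (ii) is precisely what conclusion (b) of Theorem~\ref{intromain} is for: by keeping a fixed finite witness of $gh_0\ne h_0g$ inside the injective ball at every stage, non‑commutativity — hence non‑cyclicity — of $Q$ is forced to survive into the direct limit. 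Once these two points are in place, the rest is routine bookkeeping with Theorem~\ref{intromain}.
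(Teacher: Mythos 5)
Your proposal is correct and takes essentially the same route as the paper: reduce to $K(G)=\{1\}$, choose a two-generated suitable subgroup $S=\langle g,h\rangle$ (the paper finds $h$ more directly from Corollary~\ref{suitsubc} by taking one of two $h_i$ with $\langle h_1\rangle\cap\langle h_2\rangle=\{1\}$ rather than conjugating), then iterate the small-cancellation quotient to collapse $G$ onto a non-cyclic two-generated group in which $g$ is an essential generator (this iteration is exactly Corollary~\ref{ontosuit}, which you in effect reprove), and conclude via Lemma~\ref{fratquot}, whose content you unpack directly with the subgroup $H=\langle\ker\gamma,h_0\rangle$.
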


This theorem generalizes several previously known results. For example, it was known that the free product of any non-trivial groups has trivial Frattini subgroup \cite{HN}, and that free products of free groups with cyclic amalgamation have finite Frattini subgroup \cite{Wa}. I. Kapovich proved that all subgroups of hyperbolic groups have finite Frattini subgroup \cite{K}, and Long proved that mapping class groups of closed, orientable surfaces of genus at least two have finite Frattini subgroup \cite{Lon}. All of these groups are either virtually cyclic or belong to $\X$.

Next we turn to the topology of marked group presentations. This topology provides a natural framework for studying groups which ``approximate" a given class of groups. For example, Sela's limit groups, which were used in the solution of the Tarski problem, can be defined as the groups which are approximated by free groups with respect to this topology (see \cite{CG}). In \cite{BE}, this topology is used to define a preorder on the space of finitely generated groups.

Let $\mathcal G_k$ denote the set of \emph{marked $k$-generated groups}, that is 
\[
\mathcal G_k=\{(G, X)\;|\; \text{$X\subseteq G$ is an ordered set of $k$ elements and } \langle X\rangle= G\}.
\]

This set is given a topology by saying that a sequence $(G_n, X_n)\rightarrow (G, X)$ in $\mathcal G_k$ if and only if there are functions $f_n\colon\Gamma(G_n, X_n)\to\Gamma(G, X)$ which are label-preserving isometries between increasingly large neighboorhoods of the identity. With this topology, $\mathcal G_k$ becomes a compact Hausdorff space.

 Given a class of groups $\mathcal X$, let $[\mathcal X]_k=\{(G, X)\in\mathcal G_k\;|\; G\in\mathcal X\}$. In case $\mathcal X$ consists of a single group $G$, we denote $[\mathcal X]_k$ by $[G]_k$. Also, let $[\mathcal X]=\bigcup_{k=1}^\infty[\mathcal X]_k$ and $\overline{[\mathcal X]}=\bigcup_{k=1}^\infty\overline{[\mathcal X]}_k$, where $\overline{[\mathcal X]}_k$ denotes the closure of $[\mathcal X]_k$ in $\mathcal G_k$.
 
 In the language of \cite{BE}, a group $H\in\overline{[G]}$ if and only if $G$ \emph{preforms} $H$, that is for some generating set $X$ of $H$ and some sequence of generating sets $X_1,...$ of $G$, 
\[
\lim_{n\rightarrow\infty}(G, X_n)=(H, X)
\]
where this limit is being taken in some fixed $\mathcal G_k$. In this situation, it is not hard to show that the universal theory of $G$ is contained in the universal theory of $H$ (see \cite{CG}).  Also,  note that any finite sub-structure of $\Gamma(H, X)$ can eventually be seen in $\Gamma(G, X_n)$; it follows that any quantifier-free first-order sentence which can be expressed using the language of groups and constants $\{x_1,..., x_k\}$ representing the elements of the ordered generating sets which holds in $(H, X)$ also eventually holds in $(G, X_n)$. It is for this reason that we think of $G$ as an ``approximation" of the group $H$. For example, if $W_1,...,W_m$ are words in $X$ and $W_1^\prime,...,W_m^\prime$ the corresponding words in $X_n$ such that $\{W_1,...,W_m\}$ is a finite (respectively, finite normal) subgroup of $H$, then for all sufficiently large $n$ $\{W_1^\prime,...,W_m^\prime\}$ is a finite (respectively, finite normal) subgroup of $G$.

From this perspective, the next theorem says that we can find a group $D$ which \emph{simultaneously} approximates countably many acylindrically hyperbolic groups. Let $\X_0$ denote the class of acylindrically hyperbolic groups which do not contain finite normal subgroups. Note that every $G\in\X$ has a quotient belonging to $\X_0$ (see Lemma \ref{k(g)}).

\begin{thm}[Theorem \ref{dense}]
Let $\mathcal C$ be a countable subset of $[\X_0]$. Then there exists a finitely generated group $D$ such that $\mathcal C\subseteq \overline{[D]}$.
\end{thm}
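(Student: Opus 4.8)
First unwind the definitions: an element of $\mathcal C\subseteq[\X_0]=\bigcup_k[\X_0]_k$ is a pair $(G_n,X_n)$ with $G_n\in\X_0$ and $X_n$ an ordered generating tuple of $G_n$ of some finite size $k_n\geq 2$ (the bound because groups in $\X_0$ are never virtually cyclic). So write $\mathcal C=\{(G_n,X_n):n\in\N\}$, each $G_n$ a finitely generated member of $\X_0$. It suffices to build a $2$-generated group $D$: since $(G_n,X_n)\in\mathcal G_{k_n}$, membership in $\overline{[D]}$ means membership in $\overline{[D]}_{k_n}$, which forces $D$ to be $k_n$-generated for every $n$; rank $2$ meets all these demands and costs nothing, because a $2$-generated group can perfectly well carry a $k_n$-tuple whose $R$-ball is isomorphic to that of $(G_n,X_n)$, with the relations that make it generate hidden beyond radius $R$. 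The elementary mechanism I will use repeatedly: if $\phi\colon H\twoheadrightarrow G$, $\langle Y\rangle=H$ for a $k$-tuple $Y$, $\phi(Y)=X$ as ordered tuples, and $\phi$ is injective on the radius-$2R$ ball of $\Gamma(H,Y)$, then the $R$-balls of $\Gamma(H,Y)$ and $\Gamma(G,X)$ are label-isometric, so $(H,Y)$ lies within $\tfrac1R$ of $(G,X)$ in $\mathcal G_k$. Hence $(G_n,X_n)\in\overline{[D]}$ provided that, for each $R$, some $k_n$-element generating tuple of $D$ realizes the $R$-ball of $(G_n,X_n)$.

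\emph{Construction.} Build $D=\varinjlim_j D_j$ along a chain $D_0\twoheadrightarrow D_1\twoheadrightarrow\cdots$ of $2$-generated groups in $\X_0$, each generated by the images of two fixed letters $a,b$, by iterating Theorem \ref{intromain} through an enumeration $(n_1,R_1),(n_2,R_2),\dots$ of $\N\times\N$ in which every pair recurs infinitely often; start from $D_0=F(a,b)$. At stage $j$, form $G=D_{j-1}\ast G_{n_j}$. Then $G\in\X_0$, and (invoking Theorem \ref{thmAH} and the combination theory for suitable subgroups of free products) one can fix a generating set $\mathcal A$ of $G$ containing $a,b$ and the tuple $X_{n_j}$ with respect to which $D_{j-1}$ is suitable in $G$. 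Apply Theorem \ref{intromain} twice, taking the radius $N$ each time larger than all finitely many thresholds so far:
\begin{enumerate}
\item[(1)] to $(G,\,S=D_{j-1},\,\mathcal A)$ with $\{t_i\}=X_{n_j}$: by (c)--(d) this gives $\gamma_1\colon G\twoheadrightarrow E_1$ with $\gamma_1(X_{n_j})\subseteq\gamma_1(D_{j-1})$, so that $E_1=\gamma_1(D_{j-1})$ is generated by the images of $a,b$ and lies in $\X_0$, while (b) makes $\gamma_1$ injective on the radius-$N$ balls of $\Gamma(D_{j-1},\{a,b\})$ and of $\Gamma(G_{n_j},X_{n_j})$; write $\Gamma_1=\gamma_1(G_{n_j})\leq E_1$.
\item[(2)] to $(E_1,\,S=\Gamma_1,\,\mathcal A_1)$, where $\mathcal A_1$ is a suitable generating set of $E_1$ containing the generators of $\Gamma_1$ and the images of $a,b$, with $\{t_i\}$ the images of $a,b$: by (c)--(d) this gives $\gamma_2\colon E_1\twoheadrightarrow E_2$ with the images of $a,b$ lying in $\gamma_2(\Gamma_1)$, hence $E_2=\gamma_2(\Gamma_1)$. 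Thus $E_2$ is at once $2$-generated by the images of $a,b$ and generated by the $k_{n_j}$-tuple $V^{(j)}:=\gamma_2\gamma_1(X_{n_j})$, and it lies in $\X_0$.
\end{enumerate}
Put $D_j=E_2$. The composite $D_{j-1}\to D_j$ is onto and, by (b), injective on the radius-$N$ ball of $\Gamma(D_{j-1},\{a,b\})$; and $(D_j,V^{(j)})$ is a genuine marking of $D_j$ whose $R_j$-ball is isomorphic to that of $(G_{n_j},X_{n_j})$, since $\gamma_2\gamma_1$ is injective on the radius-$N$ ball of $\Gamma(G_{n_j},X_{n_j})$ (with $N\geq 2R_j$), so the relations of length $\leq R_j$ among $V^{(j)}$ are exactly those among $X_{n_j}$ in $G_{n_j}$. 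Property (e) is used along the way to keep torsion under control.

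\emph{Limit.} Let $D=\varinjlim_j D_j$, the quotient of $F(a,b)$ by the union of the successive kernels; it is $2$-generated, hence finitely generated, and infinite (the maps $D_j\to D$ are injective on $\{a,b\}$-balls of radii tending to infinity). Given $n$ and $R$, pick a stage $j$ treating $(n,R)$ such that every later radius $N_{j'}$, $j'>j$, exceeds the $\{a,b\}$-radius needed to see the $R$-ball of $(D_j,V^{(j)})$ — possible since each $N$ can be taken as large as we wish. Then $D_j\to D$ is injective on that ball, so the image of $V^{(j)}$ is a $k_n$-element generating tuple of $D$ realizing the $R$-ball of $(G_n,X_n)$. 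Letting $R$ and $n$ vary gives $\mathcal C\subseteq\overline{[D]}$.

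\emph{Principal obstacle.} The genuine difficulty is not the limit but the recurring need to verify that the subgroups fed to Theorem \ref{intromain} are suitable — most sharply, that $\Gamma_1=\gamma_1(G_{n_j})$ in step (2) still acts non-elementarily on a hyperbolic Cayley graph of $E_1$ and normalizes no finite subgroup of $E_1$ (and, a level down, the analogous statement for free products). Here is where the finer content of the construction must be spent: injectivity of $\gamma_1$ on a large ball keeps a large isomorphic copy of $G_{n_j}$ inside $\Gamma_1$, property (e) prevents the creation of new finite subgroups, and the acylindrical generating set $\mathcal A_1$ can be arranged so that loxodromic elements of $G_{n_j}$ remain loxodromic in $E_1$; assembling these into ``$\Gamma_1$ is suitable'' is, I expect, the technical heart of the proof. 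Once it is available, the remaining bookkeeping — choosing each $N$ to dominate the earlier thresholds so that every realized ball survives to $D$, and keeping $a,b$ fixed so the limit is finitely generated — is routine.
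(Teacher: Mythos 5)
Your overall architecture (form iterated common quotients and pass to the direct limit, preserving ever‑larger balls at each stage) is the same as the paper's; the paper simply packages the common‑quotient step as Corollary \ref{commonq} and composes those through an enumeration of $\mathcal C\times\N$, whereas you inline that step. The insistence on a $2$‑generated $D$ is unnecessary but harmless: $\overline{[D]}=\bigcup_k\overline{[D]}_k$ already ranges over markings of all sizes, and the paper's $D$ is just finitely generated without any extra rank control.

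The genuine gap is exactly where you flag ``the technical heart'': establishing that $\Gamma_1=\gamma_1(G_{n_j})$ is suitable in $E_1$. In your step (1) you invoked Theorem \ref{intromain} with $S=D_{j-1}$, so conclusion (d) only certifies $\gamma_1(D_{j-1})$ as suitable; it says nothing about $\gamma_1(G_{n_j})$, and nothing in the theorem's black‑box statement lets you transfer suitability across the free factor. The heuristics you offer do not close this: injectivity on an $\mathcal A$‑ball keeps only that ball of $G_{n_j}$ intact, not the group; condition (e) bounds new torsion \emph{elements}, which is not the same as ruling out a finite subgroup of $E_1$ normalized by $\Gamma_1$; and ``loxodromics of $G_{n_j}$ remain loxodromic in $E_1$'' is not a consequence of Theorem \ref{intromain}'s statement but is essentially the content of Lemma \ref{scquot}(2), which lives one level inside the proof. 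The paper's Corollary \ref{commonq} is precisely the lemma that fills this hole: one fixes at the outset non‑degenerate hyperbolically embedded cyclic subgroups $\langle f_1\rangle,\langle f_2\rangle\leq G_{n_j}$ and $\langle h_1\rangle,\langle h_2\rangle\leq D_{j-1}$, records via Lemma \ref{hehe} that all four are hyperbolically embedded in the free product, forms the small cancellation quotient so that (by Lemma \ref{scquot}(1)--(2)) all four remain hyperbolically embedded and injectively embedded cyclic subgroups of $E_1$, and then invokes Lemma \ref{suitsubl} to certify \emph{both} $\gamma_1(D_{j-1})$ and $\gamma_1(G_{n_j})$ as suitable. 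If you simply cite Corollary \ref{commonq} in place of your two ad hoc applications of Theorem \ref{intromain}, the rest of your limit argument goes through and matches the paper.
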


Finally, following constructions similar to those used by Osin in \cite{Osi10}, we are able to build some ``exotic" quotients of acylindrically hyperbolic groups. A group $G$ is called \emph{verbally complete} if for any $k\geq 1$, any $g\in G$, and any freely reduced word $W(x_1,..., x_k)$ there exist $g_1,...,g_k\in G$ such that $W(g_1,..., g_k)=g$ in the group $G$.  In particular, such groups are always \emph{divisible}, that is the equation $x^n=g$ has a solution in $G$ for all $n\in \Z\setminus\{0\}$ and all $g\in G$. The existence of non-trivial finitely generated verbally complete groups was shown by Mikhajlovskii and Olshanskii \cite{MO}, and Osin showed that every countable group could be embedded in a finitely generated verbally complete group \cite{Osi10}.

\begin{thm}[Theorem \ref{thm:vcquot}]
Let $G\in\X$ be countable. Then $G$ has a non-trivial finitely generated quotient $V$ such that $V$ is verbally complete.
\end{thm}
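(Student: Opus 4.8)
The plan is to realize $V$ as the direct limit of an infinite chain of surjections $G=G_0\twoheadrightarrow G_1\twoheadrightarrow G_2\twoheadrightarrow\cdots$ in which each $G_{n+1}$ is obtained from $G_n$ by a single application of the small cancellation machinery behind Theorem~\ref{intromain} (Theorem~\ref{scthm}), the steps being scheduled so that in the limit every freely reduced word realizes every value, while $V$ stays finitely generated and nontrivial. Two preliminary reductions first. Replace $G$ by $G/K(G)$, which lies in $\X_0$ by Lemma~\ref{k(g)}; since any quotient of $G/K(G)$ is a quotient of $G$ and $G/K(G)$ is infinite, we may assume $G\in\X_0$. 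Then fix a generating set $\mathcal A$ for which $G$ acts acylindrically and non-elementarily on $\Gamma(G,\mathcal A)$, and pick two independent loxodromic elements in sufficiently general position; the (free rank-$2$) subgroup $S$ they generate is non-elementary, and since $K(G)=1$ one can arrange that $S$ normalizes no finite subgroup, so $S$ is a finitely generated suitable subgroup. Finally fix $g_0\in S$ with $g_0\neq 1$.

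Since $G$ is countable the set of tasks is countable: for each $t\in G$ a task ``move $t$ into the distinguished subgroup'', and for each $k\ge 1$, each nonempty freely reduced $W(x_1,\dots,x_k)$ and each $u\in G$ a task ``make the image of $u$ a value of $W$''. We treat one task per stage, maintaining the invariants: $G_n\in\X$; the image $S_n$ of $S$ is a finitely generated suitable subgroup of $G_n$ (with respect to some $\mathcal A_n$); the step map $G_n\to G_{n+1}$ is injective on $B_{\mathcal A_n}(N_n)$ with $N_n$ chosen large enough that the image of $g_0$ in $G_n$ lies in this ball (so $g_0$ survives to every $G_n$, hence to $V$); and once the task for $t$ has been processed the image of $t$ lies in the image of $S$ ever after (so, as the $t$'s generate $G$, the image of $S$ generates $V$, giving finite generation). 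For a ``move'' task this is just Theorem~\ref{intromain} applied to $G_n$ with $S=S_n$, the singleton $\{t\}$, and $N:=N_n$; properties (a), (b), (d) give exactly the invariants above.

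For an equation task in $G_n$, given $W(x_1,\dots,x_k)$ and $g\in G_n$, the idea is to solve $W(y_1,\dots,y_k)=g$ using elements of $S_n$ already present: choose $a_1,\dots,a_k\in S_n$ that are independent loxodromics with long, generic ``Schottky'' axes, so that they freely generate a free subgroup, $w:=W(a_1,\dots,a_k)$ is loxodromic with a long quasi-geodesic axis, and $wg^{-1}$ is a loxodromic element meeting the combinatorial small cancellation conditions over $G_n$ (relative to $\mathcal A_n$) required by Theorem~\ref{scthm}; non-elementarity of $S_n$ is what makes this freedom available. Setting $G_{n+1}:=G_n/\ll wg^{-1}\rr$, the small cancellation theory yields $G_{n+1}\in\X$ with the image of $S$ still suitable and finitely generated and with injectivity on an arbitrarily large ball (so $g_0$ still survives), while in $G_{n+1}$ we have $W(\bar a_1,\dots,\bar a_k)=\bar w=\bar g$ with each $\bar a_i$ in the image of $S$. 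Thus $\bar g$ becomes a value of $W$, and it stays one in every later $G_m$ and in $V$.

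In the limit $V$ is a finitely generated quotient of $G$, it is nontrivial because the image of $g_0$ is nontrivial, and every nonempty freely reduced word takes every value in $V$ (any element of $V$ is the image of some $u$, whose task has been processed), so $V$ is verbally complete. The routine checks are that each step preserves membership in $\X$ and suitability of the distinguished subgroup, and that the bookkeeping with the changing generating sets $\mathcal A_n$ is consistent --- both of the same flavour as in the proof of Theorem~\ref{intromain}. The genuine obstacle is the equation-solving step: one must show that the Schottky freedom inside a non-elementary subgroup $S_n$ is rich enough to make $W(a_1,\dots,a_k)$ loxodromic with an axis long and generic enough that $W(a_1,\dots,a_k)g^{-1}$ satisfies the small cancellation hypotheses for an \emph{arbitrary} prescribed target $g$, and that adding this one relation leaves the image of $S$ non-elementary and free of finite-normal-subgroup obstructions. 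Everything else is a diagonal limit argument of the kind already underlying Theorem~\ref{intromain}.
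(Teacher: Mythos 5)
Your overall architecture (inductive chain of quotients scheduled over a countable list of tasks, direct limit, invariants guaranteeing finite generation, nontriviality, and verbal completeness) matches the paper. But the way you handle the equation-solving step is genuinely different from the paper's, and it is exactly where your argument has a gap that you yourself flag.

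You propose, given a freely reduced $W(x_1,\dots,x_k)$ and a target $g\in G_n$, to pick $a_1,\dots,a_k$ in the suitable subgroup $S_n$ with ``Schottky'' properties and then kill the single relator $W(a_1,\dots,a_k)g^{-1}$ by small cancellation. The small cancellation machinery available in this paper (Lemma~\ref{scquot}, Lemma~\ref{tor}, and especially Proposition~\ref{scwords}) accepts relators of a very rigid shape: a word $x\,a_1\cdots a_n$ with $x\in X\cup\{1\}$ a single relative letter and the $a_i$ drawn from just two hyperbolically embedded subgroups $H_\alpha, H_\beta$ with $H_\alpha\cap H_\beta=\{1\}$, satisfying (W1)--(W4), and with $a_1^{\pm 1},\dots,a_n^{\pm 1}$ all \emph{pairwise distinct}. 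An arbitrary $W$ repeats variables, involves arbitrary exponent patterns, and may use $k\ge 3$ variables, so $W(a_1,\dots,a_k)g^{-1}$ will in general violate both the ``two subgroups only'' requirement and the distinctness requirement, and no amount of Schottky genericity of the $a_i$ fixes the \emph{combinatorial} form of the word. You also never address the case when $g$ has finite order, where $W(a_1,\dots,a_k)$ would be forced to have the same finite order --- a constraint that is incompatible with a generic choice of loxodromic $a_i$. This is not a routine loose end: it is the central difficulty.

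The paper avoids this entirely by \emph{formally} adjoining a solution first, and only then invoking small cancellation in the simple ``move one element into $S$'' form. Concretely, given $(g_{n+1},v_{n+1})$ it forms the amalgamated product $G(n+\tfrac12)=G(n)\ast_{g_{n+1}=v_{n+1}} J$ with $J$ free, or a one-relator group $\langle x_1,\dots,x_m\mid v_{n+1}^k\rangle$ when $g_{n+1}$ has order $k$ (using the classical fact that $v_{n+1}$ then has order exactly $k$ in $J$). The equation is solved tautologically by the new generators $x_i$. Proposition~\ref{amalsuit} (which rests on the HNN/amalgam analysis of Section~\ref{HNN&amal} and the retraction trick, Theorem~\ref{rtrick}) shows $\alpha_n(S)$ remains suitable in $G(n+\tfrac12)$, and then Theorem~\ref{scthm} is applied only to push $x_1,\dots,x_m,g_{n+1}$ into the image of $S$ --- each of these is a single ``move'' task of exactly the form the small cancellation words are built to handle. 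Your proposal never uses Section~\ref{HNN&amal} and attempts to merge the ``solve the equation'' and ``small cancellation'' steps into one, which is not something the paper's Proposition~\ref{scwords} supports. To repair the argument you should replace your equation-solving step with the amalgamated-product construction and cite Proposition~\ref{amalsuit}.
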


Higman, B. H. Neumann and H. Neumann showed that any countable group $G$ could be embedded in a countable group $B$ in which any two elements are conjugate if and only if they have the same order \cite{HNN}. Osin showed that the group $B$ could be chosen to be finitely generated \cite{Osi10}. We show that any countable  $G\in\X$ has such a quotient group. Here we let $\pi(G)\subseteq\N\cup\{\infty\}$ be the set of orders of elements of $G$.

\begin{thm}[Theorem \ref{conjquot}]\label{intro2cc}
Let $G\in\X$ be countable. Then $G$ has an infinite, finitely generated quotient $C$ such that any two elements of $C$ are conjugate if and only if they have the same order and $\pi(C)=\pi(G)$. In particular, if $G$ is torsion free, then $C$ has two conjugacy classes.
\end{thm}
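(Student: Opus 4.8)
\emph{Proof proposal.} The plan is to realize $C$ as the direct limit of a chain of epimorphisms $G=G_0\twoheadrightarrow G_1\twoheadrightarrow G_2\twoheadrightarrow\cdots$, where each arrow is obtained either from one application of Theorem \ref{intromain} or from one group-theoretic Dehn filling, arranging the bookkeeping so that in the limit every two elements of the same order have been conjugated, the group is finitely generated, and the set of element orders is exactly $\pi(G)$. First I would reduce to the case $G\in\X_0$: replacing $G$ by $\bar G:=G/K(G)$ (Lemma \ref{k(g)}) costs nothing since any quotient of $\bar G$ is a quotient of $G$, and one checks that $\pi(\bar G)\subseteq\pi(G)$ (a finite cyclic quotient of a finite group lifts to an element of the same order) and that $\pi(G)$ is closed under divisors; so from now on $G\in\X_0$, and the target $C$ must realize the possibly larger set $\pi(G)$. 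Fix a finitely generated suitable subgroup $S_0\le G$ (every group in $\X_0$ contains one), and fix enumerations of the countably many tasks: absorbing each element of $G$ into the descendant of $S_0$; creating an element of order $n$ for each finite $n\in\pi(G)$ not already realized; conjugating each pair of elements of equal order (finite, or both infinite); and keeping some designated element of each order $n\in\pi(G)$, together with enough of its powers, injective at every later stage.

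Given $G_i\in\X_0$ with its finitely generated suitable subgroup $S_i$ (the image of $S_0$) and the map $G\to G_i$, I would handle the next task. For an absorption or conjugation task I apply Theorem \ref{intromain} to $(G_i,S_i)$, with $N$ large enough to preserve injectivity on the currently prescribed finite set (which always contains the relevant powers of every designated torsion element seen so far): for absorption I include the element in $\{t_1,\dots,t_m\}$; for conjugation I exploit the freedom in choosing the $t_j$ and the suitable subgroup — taking the designated representative of the relevant order inside $S_i$ and forcing the element to be conjugated into $\gamma(S_i)$ as a conjugate of that representative — so that the images of the two elements become conjugate in $G_{i+1}$. I then set $G_{i+1}:=\overline{G_i}$ and $S_{i+1}:=\gamma(S_i)$; by parts (a), (b), (d) of Theorem \ref{intromain} the group stays in $\X_0$ (the existence of a suitable subgroup precludes nontrivial finite normal subgroups), $S_{i+1}$ is again suitable and is finitely generated (being a quotient of $S_i$), and injectivity is preserved where prescribed. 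For a torsion-realization task I would instead pass to an appropriate group-theoretic Dehn filling $G_{i+1}$ of $G_i$, filling a deep enough normal subgroup of the hyperbolically embedded elementary subgroup $E(f)$ of a loxodromic $f\in S_i$ so that some element of $G_{i+1}$ acquires order exactly $n$; by the Dehn filling theorem (\cite{DGO}) one can arrange that $G_{i+1}\in\X_0$, that the quotient map is injective on the prescribed set, that the only new finite element orders are divisors of $n$ (hence already in $\pi(G)$), and that $S_{i+1}:=\gamma(S_i)$ is again finitely generated and suitable; moreover every previously imposed conjugacy persists.

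Let $C$ be the direct limit. Since every element of $G$ is eventually absorbed, $C$ is generated by the finitely many images of the generators of $S_0$, so $C$ is finitely generated; it is infinite because, as we check, $\infty\in\pi(C)=\pi(G)$. Any two elements of $C$ of the same order lift to elements of some $G_i$ and were eventually conjugated, while conjugate elements always have equal order; hence two elements of $C$ are conjugate iff they have the same order. For the orders: any finite-order element of $C$ lifts to an element of some $G_i$ whose order computed in $G_i$ lies in $\pi(G)$ — this is preserved along the chain by part (e) of Theorem \ref{intromain} and by the torsion control of the Dehn filling move — so, $\pi(G)$ being divisor-closed, its order in $C$ lies in $\pi(G)$; conversely each $n\in\pi(G)$ is realized at some stage and then kept injective, hence survives in $C$. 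In particular, if $G$ is torsion free then $\pi(G)=\{1,\infty\}$ and $C$ has exactly two conjugacy classes.

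The hard part will be making the three kinds of moves genuinely compatible. One must verify that the one lever Theorem \ref{intromain} offers — forcing finitely many elements into a suitable subgroup — really does implement an \emph{arbitrary} prescribed conjugacy while keeping the group in $\X_0$, keeping the finitely generated suitable subgroup suitable, and disturbing neither previously arranged conjugacies nor previously realized orders; and likewise that the Dehn filling move can always be carried out using a loxodromic inside the current suitable subgroup, so that $S_{i+1}$ remains finitely generated, non-elementary, and suitable. Finally, organizing the nested families of prescribed injectivity sets so that every designated torsion element — in particular those orders of $G$ that reappear only after passing to $\bar G$ — is preserved at every later stage is routine, but must be set up carefully through the enumeration.
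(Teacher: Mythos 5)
The overall shape of your proposal — reduce to $K(G)=\{1\}$, enumerate ``tasks,'' and build a chain $G\twoheadrightarrow G_1\twoheadrightarrow G_2\twoheadrightarrow\cdots$ via Theorem~\ref{intromain}, passing to the direct limit — matches the paper, and the bookkeeping points you raise (preserving a prescribed finite set of torsion representatives, keeping the suitable subgroup finitely generated) are the right ones. But there are two genuine gaps.

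\textbf{No mechanism to force a conjugacy.} You say you will ``exploit the freedom in choosing the $t_j$ and the suitable subgroup \dots so that the images of the two elements become conjugate in $G_{i+1}$.'' Theorem~\ref{intromain} does not give you this lever: it only guarantees $\gamma(t_i)\in\gamma(S)$, i.e.\ it lets you absorb elements into a suitable subgroup's image, and it controls injectivity on a ball and torsion. It does not impose a relation of the form ``$\gamma(g)$ is conjugate to $\gamma(f_k)$.'' The paper's proof gets around this by interposing an HNN extension \emph{before} applying the small cancellation quotient: given that $\alpha_n(g_{n+1})$ and $\alpha_n(f_k)$ have the same order, it passes to $G(n)\ast_{\alpha_n(g_{n+1})^t=\alpha_n(f_k)}$, where the new stable letter $t$ realizes the desired conjugacy by construction, and then applies Theorem~\ref{scthm} with $t$ among the chosen elements $t_i$ so that $t$ is killed into the suitable subgroup's image and the quotient is again finitely generated. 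Proposition~\ref{HNNsuit} is exactly what makes this legal: it shows the HNN extension over a cyclic subgroup is again acylindrically hyperbolic and that the suitable subgroup stays suitable. Without this HNN step (or something playing its role), the conjugacy tasks in your construction simply cannot be carried out, and you flag this yourself as ``the hard part.'' As written the proof does not go through.

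\textbf{Overcomplicated and unverified torsion creation.} To recover orders possibly lost in passing to $G/K(G)$, you propose Dehn filling along the elementary closure of a loxodromic inside $S_i$. Apart from being a heavier tool whose compatibility with all the other constraints you would need to verify (notably that only orders dividing $n$ are created and that the suitable subgroup survives), this is not what the paper does, and it is unnecessary. The paper simply replaces $G$ once and for all by the free product $G'=G/K(G)\ast\bigl(\ast_{n\in\pi(K(G))}\mathbb Z/n\mathbb Z\bigr)$. Then $K(G')=\{1\}$, $\pi(G')=\pi(G)$, and a suitable subgroup of $G/K(G)$ remains suitable in $G'$ by Lemma~\ref{suithe}, so the entire construction can be run on $G'$ using only HNN extensions and small cancellation quotients. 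Similarly, for preserving the designated torsion representatives, the paper picks $f_k$ with $|f_k|_{\mathcal A}\le 8\delta$ (Lemma~\ref{nolongell}) and takes $N=8\delta$ in every application of Theorem~\ref{scthm}; your ``nested families of prescribed injectivity sets'' idea is correct in spirit but, combined with part (e) of Theorem~\ref{scthm}, this fixed-radius choice makes the order-preservation argument clean and uniform. I would encourage you to rework the proposal around the HNN-then-quotient move and the free-product preprocessing; once those two pieces are in place the rest of your outline is essentially the paper's argument.
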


Glassner and Weiss, motivated by the study of topological groups which contain a dense conjugacy class, asked about the existence of a topological analogue of the construction of a group with two conjugacy classes  \cite{GW}. Specifically, they asked about the existence of a non-discrete, locally compact topological group with two conjugacy classes. Combining the construction used in Theorem \ref{intro2cc} with the methods of \cite{OOK}, we can show the existence of a non-discrete, Hausdorff topological group with two conjugacy classes. Our methods do not give local compactness, but our group will be compactly and even finitely generated.

Given set $\mathcal S\subseteq \mathcal G_k$ and a group property $P$, we say a \emph{generic group in $\mathcal S$ satisfies $P$} if $\mathcal S$ contains a dense $G_\delta$ subset in which all groups satisfy $P$. A group $G$ is called \emph{topologizable} if $G$ admits a non-discrete, Hausdorff group topology; in \cite{OOK} it is proved that a generic group in $\overline{[\X]}_k$ is topologizable. Using the construction from the proof of Theorem \ref{intro2cc}, we can show (Corollary \ref{generic}) that a generic group in $\overline{[\X_{tf}]}_k$ has two conjugacy classes, where $\X_{tf}$ denotes the class of torsion free acylindrically hyperbolic groups. Since the Baire Category Theorem allows us to combine generic properties, we obtain the following. 

\begin{cor}\label{gentopcc}
For all $k\geq 2$, a generic group in $\overline{[\X_{tf}]}_k$ is topologizable and has two conjugacy classes. In particular, there exists a topologizable group with two conjugacy classes.
\end{cor}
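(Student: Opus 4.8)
The plan is to deduce this purely from the two generic statements already isolated in the preceding discussion, together with the Baire Category Theorem. Recall that a ``generic'' property of a set $\mathcal S\subseteq\mathcal G_k$ was defined to mean that $\mathcal S$ contains a dense $G_\delta$ subset all of whose members satisfy the property. We are told (from \cite{OOK}) that a generic group in $\overline{[\X]}_k$ is topologizable, and (from Corollary \ref{generic}, the output of the construction in Theorem \ref{intro2cc}) that a generic group in $\overline{[\X_{tf}]}_k$ has two conjugacy classes. The first step is to observe that $\overline{[\X_{tf}]}_k$ is a closed subspace of the compact Hausdorff space $\mathcal G_k$, hence itself a compact --- in particular Baire --- space; and that $\overline{[\X_{tf}]}_k\subseteq\overline{[\X]}_k$, so the topologizability statement restricts: a generic group in $\overline{[\X_{tf}]}_k$ is topologizable (intersecting a dense $G_\delta$ of $\overline{[\X]}_k$ with the closed subset $\overline{[\X_{tf}]}_k$ may a priori shrink it, so here I would instead re-run the argument of \cite{OOK} inside $\overline{[\X_{tf}]}_k$, or note that the construction in \cite{OOK} already applies to any class closed under the relevant operations; either way topologizability is generic in $\overline{[\X_{tf}]}_k$).

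The second step is the combination. Inside the Baire space $\overline{[\X_{tf}]}_k$ we now have two dense $G_\delta$ subsets: $\mathcal T$, consisting of topologizable groups, and $\mathcal D$, consisting of groups with two conjugacy classes. By the Baire Category Theorem, $\mathcal T\cap\mathcal D$ is again a dense $G_\delta$ subset of $\overline{[\X_{tf}]}_k$. Every group in this intersection is simultaneously topologizable and has exactly two conjugacy classes, which proves the first assertion. For the final sentence, it suffices that $\overline{[\X_{tf}]}_k$ be nonempty for some $k\ge 2$: e.g. a non-abelian free group of rank $k$ is torsion free and acylindrically hyperbolic, so $[\X_{tf}]_k\neq\varnothing$, hence $\mathcal T\cap\mathcal D\neq\varnothing$, and any of its members is a topologizable group with two conjugacy classes.

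The only genuine subtlety --- and the point I would be most careful about --- is the restriction of the genericity of topologizability from $\overline{[\X]}_k$ down to the closed subspace $\overline{[\X_{tf}]}_k$: a dense $G_\delta$ of the big space need not meet the small closed subspace in a dense (or even nonempty) set. The clean fix is to check that the construction of \cite{OOK} producing the dense $G_\delta$ of topologizable groups is carried out by approximating within the given class, so that applied to $\X_{tf}$ it directly yields a dense $G_\delta$ of topologizable groups inside $\overline{[\X_{tf}]}_k$; since $\X_{tf}$ is closed under the quotient and common-quotient operations supplied by Theorem \ref{intromain} and Corollary \ref{introcommonq} (torsion-freeness of $\overline{G}$ follows from condition (e) applied when $G$ is torsion free), this poses no difficulty. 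Everything else is a formal application of Baire's theorem and the compactness of $\mathcal G_k$.
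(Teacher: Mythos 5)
Your proof is correct and takes essentially the same route as the paper's one-line proof, which simply says to ``combine Corollary \ref{generic} and \cite[Theorem 1.6]{OOK}'' via the Baire Category Theorem on the compact space $\overline{[\X_{tf}]}_k$. In fact you are more careful than the paper: you correctly flag that a dense $G_\delta$ of the larger space $\overline{[\X]}_k$ does not automatically restrict to a dense $G_\delta$ of the closed subspace $\overline{[\X_{tf}]}_k$, a point the paper's terse proof glosses over; your observation that condition (e) of Theorem \ref{scthm} preserves torsion-freeness, so the OOK approximation argument can be run entirely within $\X_{tf}$, is exactly the right way to close this gap, and matches the same device used in the proof of Corollary \ref{generic}.
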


The paper is organized as follows. In Section \ref{sect:2} we review some results about acylindrical and WPD actions. In Section \ref{sect:HES} we collect results about hyperbolically embedded subgroups, and in particular prove a sufficient condition for a collection of subgroups to be hyperbolically embedded with respect to a given generating set (Theorem \ref{crit}). In Section \ref{sectscq} we give properties of quotients of acylindrically hyperbolic groups which satisfy certain small cancellation conditions, and in Section \ref{sect:5} we characterize suitable subgroups and show that they contain sets of words satisfying the relevant small cancellation conditions. In Section \ref{HNN&amal} we show that suitable subgroups remain suitable after taking HNN-extensions or amalgamated products over cyclic subgroups. Finally, in Section \ref{sect:7} we prove Theorem \ref{scthm} as well as the various applications of this theorem mentioned in the introduction.

 \subsection*{Acknowledgments.} I would like to thank Denis Osin for providing guidance throughout the course of this project. I would also like to thank the anonymous referee for suggesting several improvements and corrections.

\section{Preliminaries}\label{sect:2}

\paragraph{\bf Notation} We begin by standardizing the notation that we will use. Given a group $G$ generated by a subset $S\subseteq G$, we denote by $\Gs $ the Cayley graph of $G$ with respect to $S$. That is, $\Gs$ is the graph with vertex set $G$ and an edge labeled by $s$ between each pair of vertices of the form $(g, gs)$, where $s\in S$. We will assume all generating sets are symmetric, that is $S=S\cup S^{-1}$. We let $|g|_S$ denote the \emph{word length} of an element $g$ with respect to $S$, that is $|g|_S$ is equal to the length of the shortest word in $S$ which is equal to $g$ in $G$. Similarly, $d_S$ will denote the \emph{word metric} on $G$ with respect to $S$,  that is $d_S(h,g)=|h^{-1}g|_S$. Clearly $d_S(h,g)$ is the length of the shortest path in $\Gs$ from $h$ to $g$.  We denote the ball of radius $n$ centered at the identity with respect to $d_S$ by $B_{S}(n)$; that is $B_{S}(n)=\{g\in G\;|\; |g|_S\leq n\}$. If $p$ is a (combinatorial) path in $\Gs$, $\lab (p)$ denotes its label, $\l(p)$ denotes its length, and $p_-$ and $p_+$ denote its starting and ending vertex.

In general, we will allow metrics and length functions to take infinite value. For example, we will sometimes consider a word metric with respect to a subset $S$ which is not necesarily generating; in this case we set $d_S(h,g)=\infty$ when $h^{-1}g\notin \langle S\rangle$. Given two metrics $d_1$ and $d_2$ on a set $X$, we say that $d_1$ is bi-Lipschitz equivalent to $d_2$ (and write $d_1\sim _{Lip}d_2$) if for all $x, y\in X$, $d_1 (x, y)$ is finite if and only if $d_2(x, y)$ is, and the ratios $d_1/d_2$ and $d_2/d_1$ are uniformly bounded on $X\times X$ minus the diagonal.

For a word $W$ in an alphabet $S$, $\|W\|$ denotes its length. For two words $U$ and $V$ we write $U \equiv V$ to denote the letter-by-letter equality between them, and  $U=_GV$ to mean that $U$ and $V$ both represent the same element of $G$. Clearly there is a one to one correspondence between words $W$ in $S$ and paths $p$ in $\Gs$ such that $p_-=1$ and $\lab(p)\equiv W$.

The normal closure of a subset $K\subseteq G$ in a group $G$ (i.e., the minimal normal subgroup of $G$ containing $K$) is denoted by $\ll K\rr$. For  group elements $g$ and $t$, $g^t$ denotes $t^{-1}gt$.  We write $g\sim h$ if $g$ is conjugate to $h$, that is  there exists $t\in G$ such that $g^t=h$. We also say that $g$ and $h$ are \emph{commensurable} if for some $n, k\in\Z\setminus\{0\}$, $g^n\sim h^k$.

A path $p$ in a metric space is called {\it $(\lambda, c)$--quasi--geodesic} for some $\lambda > 0$, $c\ge 0$,  if
$$d(q_-, q_+)\ge \lambda l(q)-c$$ for any subpath $q$ of $p$.

\paragraph{\bf Van Kampen Diagrams.}
Let $G$ be a group given by a presentation
\begin{equation}
G=\langle \mathcal A\; | \; \mathcal O\rangle. \label{ZP}
\end{equation}

Let $\Delta$ be a finite, oriented, connected, simply--connected 2--complex embedded in the plane such that each edge is labeled by an element of $\mathcal A$. We denote the label of an edge $e$ by $\lab(e)$ and require that $\lab(e^{-1})\equiv (\lab (e))^{-1}$. Given a cell $\Pi $ of $\Delta $, we denote by $\partial \Pi$ the boundary of $\Pi $ and $\partial \Delta$ the boundary of $\Delta$. Note that the corresponding labels $\Lab(\partial \Pi $) and $\Lab(\partial \Delta)$ are defined only up to a cyclic permutation. Then $\Delta$ is called a \emph{van Kampen diagram over the presentation (\ref{ZP})} if for each cell $\Pi$ of $\Delta$, there exists $R\in O$ such that $\Lab(\partial \Pi)\equiv R$. For a word $W$ over the alphabet $\mathcal A$, $W=_G1$ if and only if there exists a van Kampen diagram $\Delta $ over (\ref{ZP}) such that $\lab(\partial \Delta
)\equiv W$ \cite[Ch. 5, Theorem 1.1]{LS}.

A geodesic metric space $X$ is called \emph{$\delta$-hyperbolic} if given any geodesic triangle in $X$, each side of the triangle is contained in the union of the closed $\delta$-neighborhoods of the other two sides. It is well-known that a space is hyperbolic if and only if it satisfies a coarse linear isoperimetric inequality. This can be translated to the context of Cayley graphs of groups in the following way.  A group presentation of $G$ of the form (\ref{ZP}) is called \emph{bounded} if $\sup\{\|R\|\;|\; R\in\mathcal O\}<\infty$. Given a van Kampen diagram  $\Delta$ over (\ref{ZP}), let $Area(\Delta)$ denote the number of cells of $\Delta$. Given a word $W$ in $\mathcal A$ with $W=_G1$, we let $Area(W)=\min_{\partial\Delta\equiv W}\{Area(\Delta)\}$, where the minimum is taken over all diagrams with boundary label $W$. The presentation (\ref{ZP}) satisfies a \emph{linear isoperimetric inequality} if there exists a contant $L$ such that for all $W=_G1$, $Area(W)\leq L\|W\|$. The following is well-known and can be easily derived from the results of \cite[Sec. 2, Ch. III.H]{BH}.

\begin{thm}\label{liniso}
Given a generating set $\mathcal A$ of a group $G$, the Cayley graph $\Ga$ is hyperbolic if and only if $G$ has a bounded presentation of the form (\ref{ZP}) which satisfies a linear isoperimetric inequality.
\end{thm}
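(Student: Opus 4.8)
The plan is to obtain both implications from the classical equivalence between hyperbolicity of a geodesic metric space and a linear combinatorial isoperimetric inequality, the only subtlety being that $\mathcal A$ may be infinite, so $\Ga$ is a geodesic space but possibly not locally finite. This causes no difficulty: the standard proofs use only that $\Ga$ is geodesic and that the relators (resp.\ the defining loops used to fill) have uniformly bounded length, never that $\mathcal A$ is finite or that $\Ga$ is locally finite.

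For the ``only if'' direction, suppose $\Ga$ is $\delta$--hyperbolic, and let $\mathcal O$ be the set of all words $W$ over $\mathcal A$ with $W=_G1$ and $\|W\|\le C$, where $C=C(\delta)$ is chosen large enough for the estimates below; the presentation $\langle\mathcal A\mid\mathcal O\rangle$ is then bounded by construction. I would show it is a \emph{Dehn presentation}: every nonempty $W$ with $W=_G1$ has a subword constituting more than half of some $R\in\mathcal O$. Viewing $W$ as a closed path $p$ in $\Ga$: if $\|W\|$ is below a threshold depending on $\delta$ then $W$ is itself a relator; otherwise $p$ cannot be a $k$--local geodesic for $k\asymp\delta$, because $k$--local geodesics are quasi-geodesics in a $\delta$--hyperbolic space and closed quasi-geodesics are short, so $p$ contains a subpath $q$ with $\ell(q)\le k$ whose length exceeds the distance between its endpoints; then $q$ followed by a geodesic back from $q_+$ to $q_-$ is a loop of length $<2\ell(q)\le C$, i.e.\ a relator $R\in\mathcal O$ of which $\lab(q)$ --- a subword of $W$ --- is more than half. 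Dehn's algorithm now rewrites $W$ to the empty word in at most $\|W\|$ steps, each using one cell, which shows simultaneously that $\langle\mathcal A\mid\mathcal O\rangle$ presents $G$ and that $Area(W)\le\|W\|$. This is exactly the argument of \cite[Sec.~2, Ch.~III.H]{BH} run inside $\Ga$.

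For the ``if'' direction, suppose $G=\langle\mathcal A\mid\mathcal O\rangle$ with $M:=\sup_{R\in\mathcal O}\|R\|<\infty$ and $Area(W)\le L\|W\|$ for all $W=_G1$. It suffices to prove that geodesic triangles in $\Ga$ are $\delta'$--thin for some $\delta'=\delta'(L,M)$, equivalently that geodesic bigons are uniformly thin. Given a geodesic bigon with boundary loop $W$, pick a minimal van Kampen diagram $\Delta$ over $\langle\mathcal A\mid\mathcal O\rangle$; then $Area(\Delta)\le L\|W\|$ and every cell has perimeter $\le M$. Counting the cells of $\Delta$ that must lie near a hypothetical point of one side of the bigon far from the other side bounds that distance in terms of $L$ and $M$ alone; this is precisely the standard ``linear isoperimetric inequality $\Rightarrow$ hyperbolic'' argument of \cite[Sec.~2, Ch.~III.H]{BH}, which again never invokes local finiteness of $\Ga$.

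The main point requiring care --- and, I suspect, the entire content of the phrase ``easily derived'' in the statement --- is the bookkeeping identifying the abstract combinatorial filling area used in \cite{BH} (fillings of loops by singular disc diagrams over a tame combinatorial structure) with the van Kampen area over (\ref{ZP}). Since both quantities count $2$--cells and all relators have length at most $M$, they agree up to a multiplicative constant, with the cardinality of $\mathcal A$ irrelevant. I do not anticipate any deeper obstacle.
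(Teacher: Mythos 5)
The paper does not actually supply a proof of Theorem~\ref{liniso}: it states that the result ``is well-known and can be easily derived from the results of \cite[Sec.~2, Ch.~III.H]{BH}'' and moves on. So there is no in-paper argument to compare against --- your job was to reconstruct what that citation conceals, and you have done so correctly.

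Your two directions are the standard ones. For ``hyperbolic~$\Rightarrow$~bounded presentation with linear IP,'' taking $\mathcal O$ to be all null-homotopic words of length $\le C(\delta)$ and verifying the Dehn property via the local-geodesic stability theorem (the same \cite[Ch.~III.H, Theorem~1.13]{BH} the paper later cites as Lemma~\ref{lgqg}) is exactly right, and the observation that Dehn's algorithm simultaneously certifies that $\ll\mathcal O\rr=\ker(F(\mathcal A)\to G)$ and that $Area(W)\le\|W\|$ closes the loop. For the converse, a minimal van Kampen diagram over a bounded presentation has $\le L\|W\|$ cells of perimeter $\le M$, and the combinatorial filling/area argument of \cite[Sec.~2, Ch.~III.H]{BH} gives thin triangles. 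Most importantly, you put your finger on the one issue a reader should worry about --- that $\mathcal A$ may be infinite, so $\Ga$ need not be locally finite --- and correctly explain why it is a non-issue: every step uses only that $\Ga$ is geodesic and that relators have uniformly bounded length.

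One small caveat worth noting: your reduction in the ``if'' direction to ``geodesic bigons are uniformly thin'' implicitly invokes Papasoglu's theorem that thin bigons imply hyperbolicity for graphs, which is a real (and not entirely elementary) theorem rather than a formal equivalence of definitions, and goes slightly beyond the \cite{BH} reference you cite. This is a valid shortcut for Cayley graphs but is not what \cite{BH} does; their argument works directly with triangles. It does not affect correctness, but if you want the proof to be genuinely ``derived from \cite{BH}'' you should run the triangle argument instead of passing through bigons.
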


\paragraph{Acylindrical and WPD actions.}

Recall the definition of an acylindrical action given in the introduction.

\begin{defn}

Let $G$ be a group acting on a metric space $(X, d)$. We say that the action is \emph{acylindrical} if for all $\e>0$ there exist $R>0$ and $N>0$ such that for all $x$, $y\in X$ with $d(x, y)\geq R$, the set 
\[
\{g\in G\;|\; d(x, gx)\leq \e, d(y, gy)\leq \e\}
\]
contains at most $N$ elements.
\end{defn}

Given a group $G$ acting on a hyperbolic metric space $(X, d)$ and $g\in G$, the \emph{translation length} of $g$ is defined as $\tau(g)=\lim_{n\rightarrow \infty}\frac{1}{n}d(x, g^nx)$ for some (equivalently, any) $x\in X$. An element $g\in G$ is called \emph{loxodromic} if $\tau(g)>0$. Equivalently, $g$ is loxodromic if there is an invariant, bi-infinite quasi-geodesic on which $g$ restricts to a non-trivial translation. If $g$ is loxodromic, then the orbit of $g$ has exactly two limit points $\{g^{\pm\infty}\}$ on the boundary $\partial X$. Loxodromic elements $g$ and $h$ are called \emph{independent} if the sets $\{g^{\pm\infty}\}$  and $\{h^{\pm\infty}\}$ are disjoint. $G$ is called \emph{elliptic} if some (equivalently, any) $G$-orbit is bounded. 

 \begin{thm}\cite{Osip}\label{subah}
Suppose $G$ acts acylindrically on a hyperbolic metric space. Then $G$ satisfies exactly one of the following:
\begin{enumerate}
\item $G$ is elliptic.
\item $G$ is virtually cyclic and contains a loxodromic element.
\item $G$ contains infinitely many pairwise independent loxodromic elements.
\end{enumerate}
\end{thm}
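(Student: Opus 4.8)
\textbf{Proof proposal for Theorem \ref{subah}.}

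The plan is to prove a trichotomy for acylindrical actions on hyperbolic spaces: either the action has bounded orbits (elliptic), or it has unbounded orbits. In the unbounded case, I would first establish that the action must contain a loxodromic element. The standard way to do this is via a pigeonhole/acylindricity argument: if every element had a bounded orbit but some orbit were unbounded, one can still have a subgroup with unbounded orbits and no loxodromics (parabolic-type behavior), and the point is that acylindricity rules this out. Concretely, suppose the orbit $Gx$ is unbounded. One takes elements $g_n$ with $d(x, g_nx)\to\infty$; after passing along a geodesic from $x$ toward $g_nx$ and using the acylindricity constants $R, N$ for a fixed small $\e$, one shows that for any sufficiently long geodesic segment, only boundedly many group elements can nearly fix both endpoints. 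A classical argument (going back to Bowditch's treatment of the curve complex, and streamlined in \cite{Osip}) then produces a loxodromic element: if $g^n$ had sublinear orbit growth for all $g$, one could find long ``quasi-fixed'' segments contradicting the bound $N$. So the first key step is: \emph{unbounded orbit $\Rightarrow$ at least one loxodromic element exists}.

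The second key step is the standard structure theory of loxodromic elements under acylindrical (indeed WPD) actions: a loxodromic element $g$ is contained in a unique maximal virtually cyclic subgroup $E(g)$, and two loxodromics $g, h$ are independent if and only if $E(g)\neq E(h)$, equivalently $\langle g, h\rangle$ is not virtually cyclic. The key mechanism here is again acylindricity: if $g$ and $h$ shared a common endpoint on $\partial X$ but were not commensurable, then high powers $g^n h^{-m}$ would nearly fix a long segment of the common axis, contradicting the bound $N$ in the acylindricity condition. This gives the dichotomy within the non-elliptic case: either all loxodromics share a common pair of endpoints (so $G$ stabilizes a pair of points on $\partial X$, $G$ is virtually cyclic, case (2)), or there exist two independent loxodromics $g, h$.

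The third key step is the ping-pong amplification: from two independent loxodromics $g, h$ one produces infinitely many pairwise independent ones. The standard trick is to consider the family $\{h^{-n} g h^n : n\in\N\}$ (or conjugates $g^{t_n}$ by a sequence of group elements moving the endpoints of $g$ apart); each is loxodromic with endpoints $\{h^{-n}g^{+\infty}, h^{-n}g^{-\infty}\}$, and since $h$ acts on $\partial X$ with north-south dynamics, these endpoint-pairs are eventually all distinct and distinct from each other, giving pairwise independence. A cleaner route: take $g, h$ independent; then for large $n$, the elements $g_n = g^n h^n$ are loxodromic (they move a suitable basepoint a large distance along a quasigeodesic fellow-travelling the relevant axes) and pairwise independent for an appropriate subsequence, by a Klein-criterion / ping-pong estimate in the $\delta$-hyperbolic space. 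Either version suffices.

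Finally, I would check \emph{mutual exclusivity}: elliptic excludes containing a loxodromic (bounded orbit forces $\tau(g)=0$ for all $g$); case (2) excludes case (3) since a virtually cyclic group cannot contain two independent loxodromics (its limit set has exactly two points); and case (1) excludes (2) and (3) since the latter have unbounded orbits. The main obstacle is the first step — extracting a loxodromic element from the mere existence of an unbounded orbit — since this is where acylindricity is used in an essential, non-formal way and requires the careful pigeonhole argument along long geodesics; the subsequent steps are ping-pong and north-south dynamics that are routine in the $\delta$-hyperbolic setting. Since this is Theorem \ref{subah} of \cite{Osip}, I would in fact cite that paper for the detailed argument rather than reproduce it, and only sketch the pigeonhole idea above.
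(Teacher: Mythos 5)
The paper does not prove Theorem~\ref{subah}; it is quoted directly from \cite{Osip} with no argument given, so there is no in-paper proof to compare against. Your sketch is, at a high level, faithful to the actual argument in \cite{Osip}: (i) unbounded orbit plus acylindricity forces the existence of a loxodromic element (the ``no parabolics'' step, which you correctly identify as the essential use of acylindricity and the technical heart of the proof); (ii) acylindricity forces loxodromics sharing one boundary point to share both, so either all loxodromics have a common axis and $G$ stabilizes a pair of boundary points, or there are two independent loxodromics; (iii) ping-pong amplification turns two independent loxodromics into infinitely many pairwise independent ones.

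One step you pass over too quickly is the deduction ``$G$ stabilizes a pair of points on $\partial X$, hence $G$ is virtually cyclic.'' That implication is \emph{not} automatic for actions on hyperbolic spaces in general (e.g.\ a lamplighter acting on a line is not virtually cyclic); it requires acylindricity again. The usual argument: pass to the index-$\le 2$ subgroup fixing both endpoints, take a bi-infinite $(\lambda,c)$-quasigeodesic $\gamma$ joining them, observe $G$ quasi-preserves $\gamma$, so one gets a homomorphism-up-to-bounded-error to $\mathbb R$ (a quasimorphism) whose ``kernel'' consists of elements moving every point of $\gamma$ a bounded amount; acylindricity then forces that kernel to be finite, so $G$ is finite-by-$\mathbb Z$ (or finite-by-$D_\infty$), hence virtually cyclic. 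You gesture at this mechanism when discussing $g^n h^{-m}$, but it deserves to be stated as the lemma ``stabilizers of boundary-point pairs in acylindrical actions are virtually cyclic,'' since it is a second genuinely nontrivial use of the hypothesis beyond the loxodromic-existence step. With that noted, your proposal matches the standard line of proof; since the theorem is cited rather than proved here, deferring the detailed pigeonhole estimate to \cite{Osip} is appropriate.
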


Notice that the last condition holds if and only if the action of $G$ is non-elementary. Also, if $G$ acts acylindrically on a hyperbolic metric space, then the induced action of any subgroup $H\leq G$ is acylindrical. Hence this theorem implies that any subgroup of $G$ which is not elliptic or virtually cyclic is acylindrically hyperbolic. 

When this theorem is applied to cyclic groups, it gives the following result of Bowditch.
 
 \begin{lem}\cite{Bow2}\label{loxorell}
Suppose $G$ acts acylindrically on a hyperbolic metric space. Then every element of $G$ is either elliptic or loxodromic.
\end{lem}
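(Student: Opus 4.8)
\textbf{Proof proposal for Lemma \ref{loxorell}.}

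The plan is to derive this from Theorem \ref{subah} applied to the cyclic subgroup $\langle g\rangle$. First I would observe that since $G$ acts acylindrically on the hyperbolic space $X$, the restricted action of the subgroup $\langle g\rangle$ on $X$ is again acylindrical; this is immediate from the definition, since the quantifiers over $G$ in the acylindricity condition only shrink when we pass to a subgroup, so the same $R$ and $N$ work. Now apply Theorem \ref{subah} to $H=\langle g\rangle$. We are in exactly one of three cases.

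In case (1), $\langle g\rangle$ is elliptic, meaning some (equivalently any) orbit of $\langle g\rangle$ is bounded; in particular $\{g^n x\}_{n\in\Z}$ is bounded, so $\tau(g)=\lim_{n\to\infty}\frac1n d(x,g^n x)=0$ and $g$ is elliptic. In case (3), $\langle g\rangle$ would have to contain infinitely many pairwise independent loxodromic elements, which is absurd for a cyclic group: any two nontrivial powers $g^i$, $g^j$ share the same pair of limit points $\{g^{\pm\infty}\}$ on $\partial X$ (or one of them is elliptic), so they cannot be independent, and certainly a cyclic group cannot contain infinitely many pairwise independent elements of any kind. Hence case (3) is impossible. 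The only remaining possibility is case (2): $\langle g\rangle$ is virtually cyclic and contains a loxodromic element $g^k$ for some $k\neq 0$. Since $g^k$ is loxodromic, $\tau(g^k)>0$; but $\tau(g^k)=|k|\,\tau(g)$ directly from the definition of translation length (the limit along the subsequence of multiples of $k$), so $\tau(g)>0$ and $g$ is loxodromic.

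Finally I would note that the two conclusions are mutually exclusive, since $\tau(g)=0$ in the elliptic case and $\tau(g)>0$ in the loxodromic case, so ``either elliptic or loxodromic'' is an honest dichotomy. I do not anticipate a serious obstacle here: the only point requiring a word of care is ruling out case (3) for cyclic groups, and the identity $\tau(g^k)=|k|\tau(g)$, both of which are elementary. The lemma is essentially a specialization of Theorem \ref{subah}, so the proof is short.
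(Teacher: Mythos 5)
Your proof is correct and matches the paper's approach exactly: immediately after Theorem \ref{subah} the paper remarks that applying that trichotomy to cyclic subgroups yields Lemma \ref{loxorell}, which is precisely your argument. Your additional verifications — that acylindricity passes to subgroups, that case (3) is impossible for a cyclic group, and that $\tau(g^k)=|k|\tau(g)$ — are the right details to supply, and all check out (the existence of the limit defining $\tau$ follows from subadditivity of $n\mapsto d(x,g^nx)$, and the subsequence identity then gives $\tau(g)>0$ in case (2)).
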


In many cases, we will be interested in the action of $G$ on some Cayley graph which we will occasionally need to modify. The next two lemmas show how to do this.

\begin{lem}\label{A1}
Suppose $\tau(h)>0$ with respect to the action of $G$ on $\Gamma(G,\mathcal A_1)$ and $\mathcal A\subseteq \mathcal A_1$ generates $G$. Then $\tau(h)>0$ with respect to the action of $G$ on $\Ga$.
\end{lem}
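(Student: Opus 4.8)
The plan is to compare word lengths in the two Cayley graphs. Since $\mathcal{A} \subseteq \mathcal{A}_1$ and both generate $G$, every word in $\mathcal{A}_1$ that represents a given element can be rewritten (letter by letter) as a word in $\mathcal{A}$, but each letter of $\mathcal{A}_1$ may expand into a long word in $\mathcal{A}$. Hence there is no uniform Lipschitz bound in that direction; however, the \emph{other} inequality is immediate: since $\mathcal{A} \subseteq \mathcal{A}_1$, any word in $\mathcal{A}$ representing $g$ is also a word in $\mathcal{A}_1$, so $|g|_{\mathcal{A}_1} \leq |g|_{\mathcal{A}}$ for all $g \in G$, i.e.\ $\mathrm{d}_{\mathcal{A}_1} \leq \mathrm{d}_{\mathcal{A}}$ pointwise. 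This is exactly the inequality needed to push translation length down is backwards, so I must be careful about direction.

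Unwinding the definition, $\tau(h)$ with respect to $\mathcal{A}$ is $\lim_{n\to\infty} \frac{1}{n} \mathrm{d}_{\mathcal{A}}(1, h^n) = \lim_{n\to\infty} \frac{1}{n} |h^n|_{\mathcal{A}}$, and similarly with $\mathcal{A}_1$. From $|h^n|_{\mathcal{A}_1} \leq |h^n|_{\mathcal{A}}$ we get $\tau_{\mathcal{A}_1}(h) \leq \tau_{\mathcal{A}}(h)$. Since we are told $\tau_{\mathcal{A}_1}(h) > 0$, this gives $\tau_{\mathcal{A}}(h) \geq \tau_{\mathcal{A}_1}(h) > 0$, which is precisely the conclusion. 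So the proof is essentially one line once the direction of the inequality is pinned down correctly; the only thing to verify is that the limit defining $\tau$ exists (it does, by subadditivity of $n \mapsto |h^n|_{\mathcal{A}}$, Fekete's lemma), and that $\tau_{\mathcal{A}}(h)$ is finite, which is automatic since $|h^n|_{\mathcal{A}} \leq n|h|_{\mathcal{A}}$.

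The main (and only mild) obstacle is simply making sure one does not accidentally try to prove the false bound $\mathrm{d}_{\mathcal{A}} \leq C\,\mathrm{d}_{\mathcal{A}_1}$; once one observes that passing to a \emph{larger} generating set can only \emph{decrease} word length, everything follows. I would also remark that no hyperbolicity or acylindricity of either action is used here — the statement is purely about translation lengths — though of course in applications $\Gamma(G,\mathcal{A}_1)$ is the hyperbolic graph one started with. To write it up: first record $|g|_{\mathcal{A}_1} \leq |g|_{\mathcal{A}}$; then apply this with $g = h^n$, divide by $n$, and take $n \to \infty$ using that both limits exist; conclude $\tau_{\mathcal{A}}(h) \geq \tau_{\mathcal{A}_1}(h) > 0$.
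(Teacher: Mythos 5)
Your proof is correct and is exactly the paper's one-line argument: since $\mathcal A\subseteq\mathcal A_1$, adding generators can only shrink word length, so $d_{\mathcal A_1}\le d_{\mathcal A}$, hence $\tau_{\mathcal A_1}(h)\le\tau_{\mathcal A}(h)$, and positivity of the former forces positivity of the latter. The initial hesitation about the direction of the inequality resolves correctly, and the remarks on Fekete's lemma and on not needing hyperbolicity are sound but inessential.
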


\begin{proof}
\[
\lim_{n\rightarrow \infty}\frac{1}{n}d_{\mathcal A}(x, h^nx)\geq\lim_{n\rightarrow \infty}\frac{1}{n}d_{\mathcal A_1}(x, h^nx)>0.
\]

\end{proof}

\begin{lem}\label{joinell}
Suppose $\Ga$ is hyperbolic, $G$ acts acylindrically on $\Ga$, and $B\subset G$ is a bounded subset of $\Ga$. Then  $\Gamma(G, \mathcal A\cup B)$ is hyperbolic, the action of $G$ on $\Gamma(G, \mathcal A\cup B)$ is acylindrical, and both actions have the same set of loxodromic elements.
\end{lem}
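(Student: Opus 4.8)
The plan is to exploit the fact that adding a bounded set $B$ to the generating set $\mathcal A$ changes the word metric only by a bounded multiplicative and additive distortion, so that $\Gamma(G,\mathcal A)$ and $\Gamma(G,\mathcal A\cup B)$ are $G$-equivariantly quasi-isometric. Concretely, let $M=\sup_{b\in B}|b|_{\mathcal A}<\infty$ (finite since $B$ is bounded in $\Gamma(G,\mathcal A)$). Then $\dx{\mathcal A\cup B}\le \dx{\mathcal A}$ trivially, while any edge of $\Gamma(G,\mathcal A\cup B)$ labelled by $b\in B$ can be replaced by a path of length at most $M$ in $\Gamma(G,\mathcal A)$, giving $\dx{\mathcal A}\le M\cdot\dx{\mathcal A\cup B}$. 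Hence the identity map on $G$ is a $G$-equivariant quasi-isometry between the two Cayley graphs, and in particular $\dx{\mathcal A}\sim_{Lip}\dx{\mathcal A\cup B}$ on $G\times G$. Since hyperbolicity is a quasi-isometry invariant of geodesic metric spaces, $\Gamma(G,\mathcal A\cup B)$ is hyperbolic.

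For the loxodromic elements, I would compute translation lengths directly: for $g\in G$ and $x=1$, the bi-Lipschitz equivalence gives
\[
\frac{1}{M}\,\dx{\mathcal A}(1,g^n)\le \dx{\mathcal A\cup B}(1,g^n)\le \dx{\mathcal A}(1,g^n),
\]
so dividing by $n$ and letting $n\to\infty$ shows $\tau_{\mathcal A\cup B}(g)>0$ if and only if $\tau_{\mathcal A}(g)>0$; equivalently (using Lemma \ref{loxorell} on both sides, once we know the action on $\Gamma(G,\mathcal A\cup B)$ is acylindrical) the two actions have the same loxodromic elements. One subtlety: $\tau_{\mathcal A\cup B}$ being positive already forces the orbit to be unbounded, but to conclude "loxodromic" in the strong sense one invokes Lemma \ref{loxorell}, which needs acylindricity — so the acylindricity claim should be established first, or at least in parallel.

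The main obstacle is verifying that the action of $G$ on $\Gamma(G,\mathcal A\cup B)$ is acylindrical. Acylindricity is \emph{not} in general a quasi-isometry invariant, so this requires an honest argument rather than a transport-of-structure. The plan is: given $\e>0$, we must produce $R,N$ for the $\mathcal A\cup B$-metric. Set $\e'=M\e$ (or a suitable constant multiple), and apply acylindricity of the $\mathcal A$-action to $\e'$ to obtain $R'$ and $N'$. If $\dx{\mathcal A\cup B}(x,y)\ge R$ for $R:=R'$ — note $\dx{\mathcal A\cup B}\le\dx{\mathcal A}$, so in fact we need the inequality in the convenient direction: $\dx{\mathcal A\cup B}(x,y)\ge R'$ implies $\dx{\mathcal A}(x,y)\ge R'$, good — then any $g$ with $\dx{\mathcal A\cup B}(x,gx)\le\e$ satisfies $\dx{\mathcal A}(x,gx)\le M\e=\e'$, and similarly at $y$, so $g$ lies in the $\mathcal A$-acylindricity set for $(\e',R')$, which has at most $N'$ elements. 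Thus $N:=N'$ works, and the $\mathcal A\cup B$-action is acylindrical. (Here I used that word metrics are $G$-invariant, so the basepoints $x,y$ can be taken to be group elements and $\dx{\mathcal A}(x,gx)=|x^{-1}gx|_{\mathcal A}$.) With acylindricity in hand, Theorem \ref{subah} and Lemma \ref{loxorell} apply to both actions, and the translation-length computation above finishes the identification of loxodromic elements.
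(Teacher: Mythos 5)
Your proof is correct and follows the same route as the paper: the paper's entire argument is the observation that the identity map on $G$ is a $G$-equivariant quasi-isometry between $\Gamma(G,\mathcal A)$ and $\Gamma(G,\mathcal A\cup B)$, and that hyperbolicity, acylindricity, and translation lengths are all preserved under such maps --- your careful unpacking (including the correct worry that a bare quasi-isometry would not suffice for acylindricity, but a $G$-equivariant one does) is exactly what the paper leaves implicit. One small simplification: the appeal to Lemma \ref{loxorell} in identifying the loxodromic elements is unnecessary, since the paper defines loxodromic directly as $\tau(g)>0$, so your translation-length estimate $\tfrac{1}{M}\,d_{\mathcal A}(1,g^n)\le d_{\mathcal A\cup B}(1,g^n)\le d_{\mathcal A}(1,g^n)$ already finishes the job.
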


\begin{proof}
The identity map on $G$ induces a $G$-equivariant quasi-isometry between $\Ga$ and $\Gamma(G, \mathcal A\cup B)$. It follows easily from the definitions that all conditions are preserved under such a map.
\end{proof}

In \cite{BF}, Bestvina-Fujiwara defined a weak form of acylindricity, which they called \emph{weak proper discontinuity} or  \emph{WPD}.

\begin{defn}\cite{BF}\label{defn:WPD}
Let $G$ be a group acting on a hyperbolic metric space $X$ and $h$ a loxodromic element of $G$. We say $h$ satisfies the \emph{WPD condition} (or $h$ is a \emph{WPD element}) if for all $\e>0$ and $x\in X$, there exists $N$ such that 
\begin{equation}\label{WPDeq}
|\{g\in G\;|\;d(x, gx)<\e, d(h^Nx, gh^Nx)<\e\}|<\infty.
\end{equation}
\end{defn}

Note that if $G$ acts acylindrically on a hyperbolic metric space, then every loxodromic element satisfies the WPD condition.

\begin{lem}\cite[Lemma 6.5, Corollary 6.6]{DGO}\label{E(h)}
Let $G$ be a group acting on a hyperbolic metric space $X$, and let $h$ be a loxodromic WPD element. Then $h$ is contained in a unique, maximal elementary subgroup of $G$, called the \emph{elementary closure} of $h$ and denoted $E_G(h)$. Furthermore,  for all $g\in G$, the following are equivalent:
\begin{enumerate}
\item $g\in E_G(h)$.

\item There exists $n\in\N$ such that $g^{-1}h^ng=h^{\pm n}$.

\item There exist $k$, $m\in\Z\setminus\{0\}$ such that $g^{-1}h^kg=h^m$.

\end{enumerate}

Further, for some $r\in\N$,
\[
E_G^+(h):=\{g\in G\;|\;\exists n\in\N, g^{-1}h^ng=h^n\}=C_G(h^r).
\]
\end{lem}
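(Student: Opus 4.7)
My plan is to produce $E_G(h)$ as the setwise stabilizer of the pair $\{h^{+\infty},h^{-\infty}\}\subseteq\partial X$, with $E_G^+(h)$ its (index $\leq 2$) pointwise stabilizer. The inclusion $\langle h\rangle\subseteq E_G^+(h)$ is obvious, so the real content is that $E_G(h)$ is actually elementary (virtually cyclic), that it is the unique maximal such subgroup containing $h$, and that the three membership conditions coincide. Uniqueness and maximality are easy once virtual cyclicity is known: any elementary subgroup containing the loxodromic element $h$ must preserve its limit set $\{h^{\pm\infty}\}$, since those are the only limit points of $\langle h\rangle$ on $\partial X$.

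The heart of the argument, and the step I expect to be the main obstacle, is proving that $E_G^+(h)$ is virtually cyclic using WPD. Fix a basepoint $x$ on a quasi-axis $\gamma$ of $h$. A standard hyperbolicity/Morse-style argument shows that any $g\in E_G^+(h)$ coarsely preserves $\gamma$ and acts on it as a bounded perturbation of a translation by some amount $\tau(g)\in\R$, giving a quasi-homomorphism $\tau\colon E_G^+(h)\to\R$. The WPD condition, applied to $x$ and $h^Nx$ for large $N$, forces only finitely many $g$ to move both points by a bounded amount; since any $g$ with $|\tau(g)|\ll\tau(h)$ does exactly this after multiplication by a suitable power of $h$, the image of $\tau$ is a discrete (hence cyclic) subgroup of $\R$ and its kernel is finite. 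Thus $\langle h\rangle$ has finite index in $E_G^+(h)$.

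From this structural result the equivalences follow quickly. The nontrivial implication is (1)$\Rightarrow$(2): if $g\in E_G^+(h)$, virtual cyclicity yields an $n$ with $g^{-1}h^ng\in\langle h\rangle$, and comparing translation lengths forces $g^{-1}h^ng=h^n$; if $g\in E_G(h)\setminus E_G^+(h)$, then $g$ swaps $h^{\pm\infty}$, reversing the orientation of the axis and hence the sign of the translation, so the same argument produces $g^{-1}h^ng=h^{-n}$. The implication (2)$\Rightarrow$(3) is trivial, and (3)$\Rightarrow$(1) follows because a $g$ with $g^{-1}h^kg=h^m$ conjugates the $\langle h\rangle$-quasi-axis to itself, so permutes the two limit points.

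Finally, for $E_G^+(h)=C_G(h^r)$, choose $r$ so that $h^r$ lies in the center of the virtually cyclic group $E_G^+(h)$ — concretely, $r$ can be taken as the product of the order of the finite rotation kernel above and the index $[E_G^+(h):\langle h\rangle]$. Then $E_G^+(h)\subseteq C_G(h^r)$ by construction. Conversely, any $g\in C_G(h^r)$ satisfies condition~(3), hence lies in $E_G(h)$ by the equivalences just established; and since conjugation by $g$ fixes $h^r$ rather than inverting it, $g$ must preserve the orientation of the axis and so lie in $E_G^+(h)$.
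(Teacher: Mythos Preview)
The paper does not prove this lemma; it merely cites \cite[Lemma 6.5, Corollary 6.6]{DGO} and uses the result as a black box. So there is no ``paper's own proof'' to compare against beyond the original source.

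Your sketch is correct and is essentially the standard argument one finds in \cite{DGO} and the surrounding literature: define $E_G(h)$ as the setwise stabilizer of $\{h^{\pm\infty}\}$, use WPD together with the quasi-axis to show $\langle h\rangle$ has finite index in $E_G^+(h)$, and deduce the equivalences and the centralizer description from the virtually cyclic structure. One small point: your explicit formula for $r$ (the product of the kernel order and the index) is not quite the right invariant in general; what you actually need is any $r$ with $h^r$ central in $E_G^+(h)$, and the clean way to get this is to note that $E_G^+(h)$, being finite-by-$\mathbb Z$ (no element can invert the axis since all elements fix both endpoints), contains a finite-index normal infinite cyclic subgroup $C$, that $C$ must be central for the same orientation reason, and that $h^r\in C$ for some $r$. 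With that adjustment your argument is complete.
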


\section{Hyperbolically embedded subgroups}\label{sect:HES}

In \cite{DGO}, Dahmani-Guirardel-Osin introduced the notion of a \emph{hyperbolically embedded subgroup}, which generalizes the peripheral structure of subgroups of relatively hyperbolic groups. Let $G$ be a group, $\Hl $ a collection of subgroups of $G$. Set
\begin{equation}\label{calH}
\mathcal H= \bigsqcup\limits_{\lambda \in \Lambda } H_\lambda.
\end{equation}
Suppose $X\subseteq G$ such that $X\sqcup \mathcal H$ generates $G$. Such an $X$ is called a \emph{relative generating set of $G$ with respect to $\Hl$}. We consider the corresponding Cayley graph $\G$, which may have multiple edges when distinct elements of the disjoint union represent the same element of $G$. Now fix $\lambda\in\Lambda$, and notice that the Cayley graph $\Gamma (H_\lambda, H_\lambda )$ is naturally embedded as a complete subgraph of $\G $. A path $p$ in $\G$ such that $p_-, p_+\in H_\lambda$ is called \emph{admissible} if $p$ contains no edges belonging to $\Gamma (H_\lambda, H_\lambda )$. Note that admissible paths can have edges labeled by elements of $H_\lambda$ as long as the endpoints of these edges do not belong to $H_\lambda$. Given $h,k\in H_\lambda $, let $\dl (h,k)$ be the length of a shortest admissible path from $h$ to $k$, or $\dl(h, k)=\infty$ if no such path exists. $\dl $ is called the \emph{relative metric} on $H_\lambda$. It is convenient to extend the metric $\dl $  the whole group $G$ by assuming $\dl (f,g) :=\dl (1, f^{-1}g)$ if $f^{-1}g\in H_\lambda $ and $\dl (f,g)=\infty $ otherwise. In case the collection consists of a single subgroup $H\le G$, we denote the corresponding relative metric on $H$ simply by $\widehat d$. Recall that a metric space is called \emph{locally finite} if there are finitely many elements inside any ball of finite radius.


\begin{defn}\cite{DGO}\label{hes1}
Let $G$ be a group, $X\subseteq G$. We say that a collection of subgroups $\Hl$ of $G$ is  \emph{\he in $G$ with respect to $X$} if the following conditions hold.
\begin{enumerate}
\item[(a)] $G$ is generated by $X\sqcup \mathcal H$ and the Cayley graph $\G $ is hyperbolic.
\item[(b)] For every $\lambda\in \Lambda $, $(H_\lambda, \dl )$ is a locally finite metric space.
\end{enumerate}
 We write $\Hl\h (G, X)$ to mean that  $\Hl$ is \he in $G$ with respect to $X$ or simply $\Hl\h G$ if we do not need to keep track of the set $X$, that is $\Hl\h (G, X)$ for some $X\subseteq G$. Note that for any group $G$ and any finite subgroup $H$, $H\h (G, G)$. Furthermore for any group $G$, $G\h (G, \emptyset)$. Such cases are are called \emph{degenerate}, and a hyperbolically embedded subgroup $H$ is called \emph{non-degenerate}  whenever $H$ is proper and infinite.
\end{defn}

As with relative hyperbolicity, the notion of hyperbolically embedded subgroups can be expressed in terms of an isoperimetric inequality. Let $G$ be a group, $\Hl$ a collections of subgroups of $G$ and $X\subseteq G$ a relative generating set of $G$ with respect to $\Hl$. Let $\mathcal H$ be defined by (\ref{calH}). Let $F(X)$ be the free group with basis $X$, and consider the free product
\begin{equation}
F=\left( \ast _{\lambda\in \Lambda } H_\lambda  \right) \ast F(X).
\label{F}
\end{equation}
Clearly there is a natural surjective homomorphism $F\twoheadrightarrow G$. If the kernel of this homomorphism is equal to the normal closure of a subset $\mathcal Q\subseteq F$ then we say that $G$ has {\it relative presentation}
\begin{equation}\label{relpres}
\langle X,\; \mathcal H \; |\; \mathcal Q
\rangle .
\end{equation}
The relative presentation (\ref{relpres}) is said to be {\it bounded} if $\sup\{\|R\|\;|\; R\in\mathcal Q\}<\infty$. Furthermore, it is called {\it strongly bounded} if in addition the set of letters from $\mathcal H$ which appear in relators $R\in\mathcal Q$ is finite.

Given a word $W$ in the alphabet $X\sqcup \mathcal H$ such that $W=_G 1$, there exists an expression
\begin{equation}
W=_F\prod\limits_{i=1}^k f_i^{-1}R_i^{\pm 1}f_i \label{prod}
\end{equation}
where $R_i\in \mathcal Q$ and
$f_i\in F $ for $i=1, \ldots , k$. The {\it relative area} of $W$, denoted $Area^{rel}(W)$, is the minimum $k$ such that $W$ has a representation of the form (\ref{prod}).
\begin{thm}\cite[Theorem 4.24]{DGO}\label{AHliniso}
The collection of subgroups $\Hl$ are \emph{hyperbolically embedded in $G$ with respect to $X$} if and only if there exists a strongly bounded relative presentation for $G$ with respect to $X$ and $\Hl$ and there is a constant $L>0$ such that for any word $W$ in $X\sqcup
\mathcal H$ representing the identity in $G$, we have $Area^{rel}
(W)\le L\| W\| $.
\end{thm}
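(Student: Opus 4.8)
**Proof proposal for Theorem \ref{AHliniso} (the relative isoperimetric characterization of hyperbolically embedded subgroups).**

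The plan is to translate the definition of $\Hl \h (G,X)$ — that $\G$ is hyperbolic and each $(H_\lambda, \dl)$ is locally finite — into the language of van Kampen diagrams over a relative presentation, using Theorem \ref{liniso} as the bridge. The two implications require rather different work.

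\textbf{From the isoperimetric inequality to hyperbolic embeddedness.} Suppose we have a strongly bounded relative presentation $\langle X, \mathcal H \mid \mathcal Q\rangle$ with $Area^{rel}(W) \le L\|W\|$ for all $W =_G 1$. To show $\G$ is hyperbolic, I would pass from the relative presentation to an ordinary (non-relative) bounded presentation by adding, for each $\lambda$, \emph{all} relators of the form $h_1 h_2 h_3$ with $h_i \in H_\lambda$ and $h_1 h_2 h_3 =_{H_\lambda} 1$ (these have length $3$); call the resulting relator set $\mathcal R = \mathcal Q \cup \mathcal S$. This is a bounded presentation on the alphabet $X \sqcup \mathcal H$, so by Theorem \ref{liniso} it suffices to establish a linear isoperimetric inequality for it. Given $W =_G 1$, take a relative van Kampen diagram realizing $Area^{rel}(W) \le L\|W\|$; its cells labeled by elements of $\mathcal Q$ are bounded in size (boundedness of the presentation), and its other faces are "$H_\lambda$-cells" whose boundary is an arbitrary word in $H_\lambda$ of length at most $\|W\| + $ (total $\mathcal Q$-contribution), hence $O(\|W\|)$; each such $H_\lambda$-cell can be triangulated into $O(\|W\|)$ cells of $\mathcal S$. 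This gives $Area(W) \le L'\|W\|^2$ at first, but the standard trick — the Dehn function being subquadratic forces it to be linear, or more directly, iterating the relative-area bound on geodesic subwords — upgrades it to the needed linear bound. (In fact the cleanest route is: strong boundedness means only finitely many $H_\lambda$-letters occur in $\mathcal Q$, so the $H_\lambda$-cells can be taken with boundary length bounded by a constant plus the number of $\mathcal H$-letters in $W$, and one runs the usual argument that a group with a bounded presentation and relative linear isoperimetric inequality over such a presentation is hyperbolic.) Then local finiteness of each $(H_\lambda, \dl)$ follows because strong boundedness bounds the relative metric on the finitely-many $\mathcal H$-letters appearing in relators, and an argument bounding $\dl(1,h)$ in terms of the relative area of a word representing $h^{-1}$ times a geodesic word shows balls in $\dl$ are finite.

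\textbf{From hyperbolic embeddedness to the isoperimetric inequality.} Conversely, assume $\G$ is hyperbolic and each $(H_\lambda,\dl)$ is locally finite. First produce a strongly bounded relative presentation: by Theorem \ref{liniso} applied to the hyperbolic graph $\G$, $G$ has a bounded ordinary presentation $\langle X \sqcup \mathcal H \mid \mathcal O \rangle$ with relators of length $\le D$. Partition $\mathcal O$: relators all of whose letters, pairwise, lie in a common $H_\lambda$ are "trivial" and can be discarded (they are consequences of the group law in $H_\lambda$, hence hold in $F$); the remaining relators $R$ are words of length $\le D$ in $X \sqcup \mathcal H$ that we keep as $\mathcal Q$. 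Strong boundedness is then automatic since only finitely many $\mathcal H$-letters occur in a finite-length, finite-relator set — but $\mathcal O$ may be infinite, so here one must use local finiteness of $\dl$ to control which $\mathcal H$-letters can appear: any letter $h \in H_\lambda$ appearing in a length-$\le D$ relator has $\dl(1,h)$ bounded (by $D-1$, reading the rest of the relator as an admissible path), so by local finiteness only finitely many such $h$ occur. This is exactly where the local finiteness hypothesis is consumed. Finally, the relative linear isoperimetric inequality: given $W =_G 1$ in $X \sqcup \mathcal H$, take a van Kampen diagram over $\langle X\sqcup\mathcal H \mid \mathcal O\rangle$ with $Area(\Delta) \le L\|W\|$ (hyperbolicity), then \emph{fold} all cells labeled by "trivial" relators into their neighbors — each maximal connected union of trivial cells has boundary labeled by a word that equals $1$ in the relevant $H_\lambda$, so it contributes $0$ to the relative area — leaving a diagram over $\mathcal Q$ with at most $L\|W\|$ cells, giving $Area^{rel}(W) \le L\|W\|$.

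\textbf{Main obstacle.} The delicate point, and the one I expect to be the real content, is the bookkeeping that links the combinatorial size of cells/diagrams to the \emph{relative} metric $\dl$ rather than the ambient word metric: showing that "$H_\lambda$-subdiagrams" can be excised without cost, and that local finiteness of $\dl$ (as opposed to finiteness of $H_\lambda$ itself) is precisely what forces strong boundedness. Handling multiple edges in $\G$ and the subtlety in the definition of \emph{admissible} path (edges labeled by $H_\lambda$-elements are allowed so long as their endpoints avoid $H_\lambda$) must be threaded carefully through the diagram surgery. The hyperbolicity-to-linear-isoperimetry direction, by contrast, is largely a matter of quoting Theorem \ref{liniso} plus the standard van Kampen diagram manipulations.
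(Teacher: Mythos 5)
There is a genuine gap in your argument for strong boundedness in the direction from hyperbolic embeddedness to the isoperimetric inequality. You claim that any letter $h\in H_\lambda$ appearing in a relator $R$ with $\|R\|\le D$ satisfies $\dl(1,h)\le D-1$ ``by reading the rest of the relator as an admissible path.'' But the complementary subword of $R$ need not label an admissible path. For a concrete failure, take $R\equiv h_1h_2x$ with $h_1,h_2\in H_\lambda$ letters and $x\in X$; since $R=_G1$ forces the element of $G$ represented by $x$ into $H_\lambda$, the path from $1$ to $h_1$ labeled by $x^{-1}h_2^{-1}$ runs $1\to x^{-1}\to h_1$, and its second edge (labeled $h_2^{-1}\in H_\lambda$) has both endpoints in $H_\lambda$, hence lies in $\Gamma(H_\lambda,H_\lambda)$ and makes the path inadmissible. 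Note also that this relator survives your ``discard trivial relators'' step, because $x$ is a letter of $X$ even though the element it represents lies in $H_\lambda$, so the relator is not a consequence of the group law in $H_\lambda$ inside $F$. Consequently, starting from an arbitrary bounded presentation of $G$ over $X\sqcup\mathcal H$ furnished by Theorem~\ref{liniso}, you cannot conclude that the retained relators use only finitely many $\mathcal H$-letters, and the strongly bounded presentation does not drop out. You correctly flagged this as ``exactly where the local finiteness hypothesis is consumed,'' but the consumption you propose rests on a false lemma; the construction in \cite[Theorem 4.24]{DGO} has to build the strongly bounded relative presentation more carefully --- for instance by first shrinking $X$ to be disjoint from $\bigcup_\lambda H_\lambda$, restricting $\mathcal H$-letters in relators to a fixed finite set $\Omega_\lambda\subset H_\lambda$ of bounded relative length, and separately verifying that these relators present $G$ with a linear relative Dehn function --- rather than extracting strong boundedness from an arbitrary bounded presentation by a single estimate.

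A smaller slip in the other direction: your triangulation of the $H_\lambda$-cells gives a linear, not quadratic, cell count, because the \emph{total} boundary length of all $H_\lambda$-subdiagrams is $O(\|W\|)$ (each of their boundary edges lies either on $\partial\Delta$ or on one of the at most $L\|W\|$ bounded-perimeter $\mathcal Q$-cells), and triangulating a polygon of perimeter $\ell$ costs only $\ell-2$ triangles. The fallback to ``subquadratic implies linear'' would in any case not apply, since an explicit $L'\|W\|^2$ bound is quadratic rather than subquadratic; fortunately the honest count already yields the linear bound you need.
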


Relative area can also be defined in terms of van Kampen diagrams. Let
$\mathcal S$ denote the set of all words $U$ in the alphabet $\mathcal H$ such that $U=_F1$.
Then $G$ has the ordinary (non--relative) presentation
\begin{equation}\label{Gfull}
G=\langle X\sqcup\mathcal H\; |\;\mathcal S\cup \mathcal Q \rangle .
\end{equation}
Let $\Delta$ be a van Kampen diagram over  (\ref{Gfull}). Let $N_{\mathcal Q}(\Delta)$ denote the number of cells of $\Delta$ whose boundaries are labeled by an element of $\mathcal Q$. Then for any word $W$ in $X\sqcup\mathcal H$ such that $W=_G1$,

$$
Area^{rel}(W)=\min\limits_{\lab (\partial \Delta ) \equiv W} \{
N_\mathcal Q (\Delta )\} ,
$$
where the minimum is taken over all diagrams with
boundary label $W$. Thus, $\Hl\h G$ if $G$ has a strongly bounded presentation with respect to $\Hl$ and all van Kampen diagrams over (\ref{Gfull}) satisfy a linear relative isoperimetric inequality.

In \cite{DGO}, it is shown that many basic properties of relatively hyperbolic groups can be translated to analogous results for groups with hyperbolically embedded subgroups. The following lemmas are examples of this process.

\begin{lem}\cite[Proposition 4.33]{DGO}\label{finint}
Suppose $\Hl\h G$. Then for all $g\in G$, the following hold:
\begin{enumerate}
\item If $g\notin H_\lambda$, then $|H_\lambda\cap H_\lambda^g|<\infty$.

\item If $\lambda\neq \mu$, then $|H_\lambda\cap H_\mu^g|<\infty$.
\end{enumerate}

\end{lem}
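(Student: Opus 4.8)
The plan is to exploit the local finiteness of the relative metrics $\widehat{\rm d}_\nu$. Fix $g$; in case (1) put $\nu=\lambda$ (so $g\notin H_\nu$ by hypothesis), and in case (2) put $\nu=\mu$. In either case, for every $h$ in the intersection under consideration the element $ghg^{-1}$ lies in $H_\nu$, so it suffices to bound $\widehat{\rm d}_\nu(1,ghg^{-1})$ by a constant depending only on $g$: then local finiteness of $(H_\nu,\widehat{\rm d}_\nu)$ (part (b) of Definition \ref{hes1}), together with injectivity of $h\mapsto ghg^{-1}$, forces the intersection to be finite.

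First I would dispose of the degenerate subcase of (2) in which $g\in H_\mu$: then $H_\mu^g=H_\mu$, so the intersection is $H_\lambda\cap H_\mu$, and for $h$ in it the single edge of $\G$ from $1$ to $h$ labeled by $h$ \emph{regarded as an element of the $H_\mu$-copy} of $\mathcal H$ is an admissible path from $1$ to $h$ (its label is not drawn from the $H_\lambda$-copy, so it is not an edge of $\Gamma(H_\lambda,H_\lambda)$), whence $\widehat{\rm d}_\lambda(1,h)\le 1$. In all remaining cases $g\notin H_\nu$, and I would \emph{normalize} $g$: choose a geodesic word $U$ for $g$ over $X\sqcup\mathcal H$, and if its first letter $u_1$ lies in $H_\nu$, replace $g$ by $u_1^{-1}g$; this changes neither the relevant intersection (conjugation by $u_1\in H_\nu$ preserves $H_\nu$) nor the condition $g\notin H_\nu$, and the suffix of $U$ following $u_1$ is then a geodesic word for the new $g$ whose first letter is not in $H_\nu$, since a geodesic word has no two consecutive letters from a single $H_\mu$. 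An elementary argument using geodesicity now shows that no two consecutive vertices of the path $q$ based at $1$ and labeled $U$ lie in $H_\nu$: if $w_{i-1},w_i$ are consecutive vertices of $q$ lying in $H_\nu$, then the prefix $u_1\cdots u_i$ of $U$ represents $w_i\in H_\nu$, and geodesicity forces $i=1$ (excluded, as the first letter of $U$ is not in $H_\nu$) or $i=\|U\|$ (excluded, as the terminal vertex $g\notin H_\nu$). Consequently neither $q$ nor any translate of $q$ by an element of $H_\nu$ contains an edge of $\Gamma(H_\nu,H_\nu)$.

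Now, given $h$ in the intersection, put $h'=ghg^{-1}\in H_\nu$ and consider the path $p$ running $1\xrightarrow{U}g\xrightarrow{h}h'g\xrightarrow{U^{-1}}h'$, of length $2\|U\|+1$, whose middle edge is labeled by $h\in H_\lambda$. In case (1) that edge joins $g$ and $h'g$, neither of which lies in $H_\nu=H_\lambda$, so it is not an edge of $\Gamma(H_\nu,H_\nu)$; in case (2) its label is drawn from the $H_\lambda$-copy of $\mathcal H$, which is disjoint from the $H_\mu$-copy, so it is not an edge of $\Gamma(H_\nu,H_\nu)=\Gamma(H_\mu,H_\mu)$ regardless of its endpoints. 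The $U$-subpath of $p$ is $q$, and the $U^{-1}$-subpath is the translate by $h'\in H_\nu$ of the reversal of $q$, so by the previous paragraph neither contains an edge of $\Gamma(H_\nu,H_\nu)$. Hence $p$ is admissible, giving $\widehat{\rm d}_\nu(1,h')\le 2\|U\|+1$, the desired uniform bound.

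I expect the only genuine subtlety to be the bookkeeping of which edges of $\G$ belong to the subgraph $\Gamma(H_\nu,H_\nu)$: one must use that the edges of $\G$ are labeled by letters of the \emph{disjoint} union $X\sqcup\bigsqcup_\lambda H_\lambda$ — so that a group element lying in two of the factors gives rise to distinct edges — and that an edge belongs to $\Gamma(H_\nu,H_\nu)$ precisely when \emph{both} its endpoints lie in $H_\nu$ \emph{and} its label is the corresponding letter of the $H_\nu$-copy. Once this is kept straight, the normalization of $g$ and the elementary remark about geodesic words meeting $H_\nu$ are routine, and the argument is complete.
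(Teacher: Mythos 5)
Your overall strategy — exhibit an explicit admissible path from $1$ to $h'=ghg^{-1}$ of length bounded solely in terms of $g$, and then invoke local finiteness of $(H_\nu,\widehat{\rm d}_\nu)$ — is sound and yields a nice elementary argument straight from the definitions; the paper itself only cites \cite[Prop.\ 4.33]{DGO} here. However, the intermediate claim that after your normalization no two consecutive vertices of the path $q$ lie in $H_\nu$ is not correct, and the supporting reasoning conflates letters of $X\sqcup\mathcal H$ with group elements. Your normalization only ensures that the first letter $u_1$ of $U$ is not drawn from the $H_\nu$-copy of $\mathcal H$; it does not prevent the group element $w_1=u_1$ from lying in $H_\nu$ (for instance if $u_1\in X$ and $X\cap H_\nu\ne\{1\}$, or if $u_1$ is drawn from the $H_\eta$-copy for some $\eta\ne\nu$ with $H_\eta\cap H_\nu\ne\{1\}$). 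Since $w_0=1$ is always in $H_\nu$, the pair $w_0,w_1$ can both lie there. Relatedly, geodesicity does not force $i=1$ or $i=\|U\|$: the genuine consequence of geodesicity is that $w_i\notin H_\nu$ for every $i\ge 2$, because if $w_i\in H_\nu\setminus\{1\}$ then the prefix $u_1\cdots u_i$ could be replaced by a single $H_\nu$-letter, strictly shortening $U$ whenever $i\ge 2$ (and $w_i=1$ with $i\ge 1$ already contradicts geodesicity). The $i=\|U\|$ case is not ruled out by geodesicity at all; it is vacuous only because you assume $g\notin H_\nu$.

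Fortunately the conclusion you actually need — that $q$ and its $H_\nu$-translates contain no edge of $\Gamma(H_\nu,H_\nu)$ — still follows from the correct facts: for $i\ge 2$ the edge from $w_{i-1}$ to $w_i$ has the endpoint $w_i\notin H_\nu$, while the one remaining edge, from $w_0$ to $w_1$, is labeled by $u_1$, which is not drawn from the $H_\nu$-copy, and every edge of $\Gamma(H_\nu,H_\nu)$ must carry a label from the $H_\nu$-copy. A cleaner repair is to normalize by the \emph{last} $H_\nu$-vertex rather than the first letter: take $i_0$ maximal with $w_{i_0}\in H_\nu$ and replace $g$ by $w_{i_0}^{-1}g$ (still conjugating by an element of $H_\nu$, still $\notin H_\nu$); the suffix $u_{i_0+1}\cdots u_n$ is then a geodesic word whose path meets $H_\nu$ only at its initial vertex, and your admissibility argument runs verbatim. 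With either fix the proof is complete, and it is a pleasant, more hands-on alternative to proving this via the isolated-components bound of Lemma \ref{C}.
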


\begin{lem}\cite[Corollary 4.27]{DGO}\label{finsymdif}
Let $G$ be a group, $\Hl$ a collection of subgroups, and $X_1$, $X_2\subseteq G$ relative generating sets of $G$  with respect to $\Hl$ such that $|X_1\triangle X_2|<\infty$. Then $\Hl\h (G, X_1)$ if and only if $\Hl\h (G, X_2)$.

\end{lem}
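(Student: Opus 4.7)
The plan is to invoke Theorem \ref{AHliniso}: the collection $\Hl$ is hyperbolically embedded in $G$ with respect to $X$ if and only if $G$ admits a strongly bounded relative presentation with respect to $X$ and $\Hl$ whose relative Dehn function is linear. Since $|X_1 \triangle X_2|<\infty$, both $X_1, X_2 \subseteq Y := X_1 \cup X_2$ with $|Y\setminus X_i|<\infty$, so the lemma reduces to two assertions: (i) enlarging the relative generating set by a finite set $F \subseteq G$ preserves hyperbolic embedding, and (ii) removing a finite subset $F \subseteq X$ preserves hyperbolic embedding, provided $X \setminus F$ still relatively generates $G$. Applying (i) to pass from $X_1$ to $Y$ and then (ii) to pass from $Y$ to $X_2$ gives the forward implication, and the reverse follows by symmetry.

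For (i), fix a strongly bounded presentation $\langle X, \mathcal{H}\mid \mathcal{Q}\rangle$ with linear isoperimetric constant $L$. For each $a \in F$ pick a word $W_a$ in $X \sqcup \mathcal{H}$ with $W_a =_G a$, and set $M = \max_{a\in F}\|W_a\|$. Then $\langle X \cup F, \mathcal{H}\mid \mathcal{Q}\cup \{a^{-1}W_a : a \in F\}\rangle$ is strongly bounded: only finitely many new relators of bounded length are added, involving only finitely many additional $\mathcal{H}$-letters. For a word $W$ in $(X\cup F)\sqcup \mathcal{H}$ with $W=_G 1$, substitute $a \mapsto W_a$ to obtain $W'$ in $X \sqcup \mathcal{H}$ of length at most $M\|W\|$; take a van Kampen diagram $\Delta'$ over the old presentation with boundary $W'$ and $\mathcal{Q}$-area at most $LM\|W\|$; attach along the outer boundary one new $a^{-1}W_a$-cell for each $F$-letter in $W$ (at most $\|W\|$ of them), converting the outer label to $W'$; and glue $\Delta'$ inside. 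The resulting diagram has boundary $W$ and relative area at most $(LM+1)\|W\|$.

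For (ii), fix a strongly bounded presentation $\langle X, \mathcal{H}\mid \mathcal{Q}\rangle$ with constant $L$. For each $a \in F$ pick a word $W_a$ in $(X\setminus F)\sqcup \mathcal{H}$ with $W_a =_G a$. Substituting $a \mapsto W_a$ in each relator $R \in \mathcal{Q}$ yields a word $R'$ in $(X\setminus F)\sqcup\mathcal{H}$ of bounded length, so $\mathcal{Q}' = \{R' : R \in \mathcal{Q}\}$ is strongly bounded (the $\mathcal{H}$-letters appearing in $\mathcal{Q}'$ are those of $\mathcal{Q}$ together with those of the finitely many $W_a$'s). For a word $W$ in $(X\setminus F)\sqcup \mathcal{H}$ with $W=_G 1$, take a diagram $\Delta$ over the old presentation with boundary $W$ and $N_\mathcal{Q}(\Delta) \leq L\|W\|$; replacing each interior edge labeled by some $a \in F$ with a subdivided path labeled by $W_a$ turns each $\mathcal{Q}$-cell of boundary $R$ into a $\mathcal{Q}'$-cell of boundary $R'$, while the $\mathcal{S}$-cells (whose boundaries are words in $\mathcal{H}$ alone) are untouched. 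The outer boundary remains $W$, since $W$ contains no $F$-letters, and the $\mathcal{Q}'$-area is at most $L\|W\|$.

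The main subtlety is preserving the \emph{strongly bounded} condition, which reduces to the finiteness of $F$: only finitely many new $\mathcal{H}$-letters are introduced in either construction. The rest is routine bookkeeping with word substitutions and cell attachments in van Kampen diagrams, and Theorem \ref{AHliniso} then yields $\Hl\h (G, X_2)$.
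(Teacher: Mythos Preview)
The paper does not give its own proof of this lemma; it is merely cited from \cite[Corollary~4.27]{DGO} and stated without argument. Your proposal therefore cannot be compared against a proof in the paper, but it stands as a correct self-contained argument.

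Your approach via Theorem~\ref{AHliniso} is sound. The reduction to the two cases (enlarging by a finite set, then shrinking by a finite set) is the natural one, and both van Kampen diagram constructions are correct. In~(i), the collar of $a^{-1}W_a$-cells converting the boundary from $W$ to $W'$ is standard, and the relative area bound $(LM+1)\|W\|$ is right. In~(ii), the key observation that $F$-edges can only lie on boundaries of $\mathcal{Q}$-cells (since $\mathcal{S}$-cells have labels entirely in $\mathcal{H}$ and the outer boundary $W$ has no $F$-letters) is exactly what makes the edge-substitution well-defined, and you correctly track that strong boundedness survives because only finitely many new $\mathcal{H}$-letters are introduced via the finitely many $W_a$. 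One small point you leave implicit: your construction in~(ii) simultaneously shows that $\mathcal{Q}'$ is a complete set of relators for the presentation over $X\setminus F$, since every $W=_G 1$ acquires a diagram over the new presentation; this is needed before Theorem~\ref{AHliniso} can be invoked, but it follows immediately from what you wrote.
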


The following two lemmas are simplifications of \cite[Proposition 4.35]{DGO} and \cite[Proposition 4.36]{DGO} respectively.
\begin{lem}\label{hehe}
Suppose $\{H_i\}_{i=1}^n\h G$, and for each $1\leq i\leq n$, $\{K^i_j\}_{j=1}^{m_i}\h H_i$. Then $\{K^i_j\;|\; 1\leq i\leq n, 1\leq j\leq m_i\}\h G$.
\end{lem}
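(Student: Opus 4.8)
The plan is to prove transitivity of the hyperbolically embedded relation by chaining relative isoperimetric inequalities, using the characterization in Theorem \ref{AHliniso}. So we are given $\{H_i\}_{i=1}^n \h (G, X)$ and, for each $i$, $\{K^i_j\}_{j=1}^{m_i} \h (H_i, Y_i)$ for some relative generating set $Y_i \subseteq H_i$, and we want to produce a relative generating set $Z$ of $G$ with respect to $\{K^i_j\}$ witnessing $\{K^i_j\} \h (G, Z)$. The natural candidate is
\[
Z = X \cup \bigcup_{i=1}^n Y_i,
\]
viewed as a subset of $G$. First I would check that $Z \sqcup \bigsqcup_{i,j} K^i_j$ generates $G$: since $Y_i \sqcup \bigsqcup_j K^i_j$ generates $H_i$, the set $\bigcup_i Y_i \sqcup \bigsqcup_{i,j} K^i_j$ generates $\langle H_1, \dots, H_n\rangle$, and then throwing in $X$ generates all of $G$ because $X \sqcup \bigsqcup_i H_i$ does.

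Next I would set up the presentations. By Theorem \ref{AHliniso} applied to $\{H_i\} \h (G,X)$, there is a strongly bounded relative presentation $\langle X, \mathcal H \mid \mathcal Q\rangle$ of $G$ satisfying a linear relative isoperimetric inequality with constant $L$, where $\mathcal H = \bigsqcup_i H_i$. Likewise, for each $i$ there is a strongly bounded relative presentation $\langle Y_i, \mathcal K_i \mid \mathcal Q_i\rangle$ of $H_i$ (with $\mathcal K_i = \bigsqcup_j K^i_j$) satisfying a linear relative isoperimetric inequality with constant $L_i$. Let $\mathcal K = \bigsqcup_{i,j} K^i_j$. I claim that
\[
\langle Z, \mathcal K \;\big|\; \mathcal Q' \cup \textstyle\bigcup_i \mathcal Q_i \rangle
\]
is a strongly bounded relative presentation of $G$ with respect to $Z$ and $\{K^i_j\}$, where $\mathcal Q'$ is obtained from $\mathcal Q$ by the following rewriting: each relator $R \in \mathcal Q$ is a word in $X \sqcup \mathcal H$; the letters of $R$ coming from $\mathcal H$ lie in finitely many $H_i$'s (strong boundedness), and for each such letter $h \in H_i$ we fix once and for all a word $w_h$ in $Y_i \sqcup \mathcal K_i$ equal to $h$ in $H_i$, and substitute. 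Since $\mathcal Q$ is strongly bounded, only finitely many such $h$ occur across all of $\mathcal Q$, so the lengths $\|w_h\|$ are bounded, $\mathcal Q'$ is strongly bounded, and each $\mathcal Q_i$ is already strongly bounded; I should check these substituted words really do make $\mathcal Q'$ a valid set of relators, i.e. that $\mathcal Q' \cup \bigcup_i \mathcal Q_i$ normally generates the kernel of the free-product surjection onto $G$, which follows because the $\mathcal Q_i$'s let us pass back and forth between the $H_i$-alphabet and the $K$-alphabet.

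The core of the argument is the relative isoperimetric inequality. Let $W$ be a word in $Z \sqcup \mathcal K$ with $W =_G 1$. Since each $Y_i \subseteq H_i$, we may read $W$ as a word $\widehat W$ in $X \sqcup \mathcal H$ of the same length: replace each $Y_i$-letter by itself regarded as an $H_i$-letter, and each $K^i_j$-letter by itself regarded as an $H_i$-letter. Then $\widehat W =_G 1$, so by the $\{H_i\}\h(G,X)$ inequality there is a representation $\widehat W =_{F} \prod_{s=1}^{k} f_s^{-1} R_s^{\pm 1} f_s$ with $R_s \in \mathcal Q$ and $k \le L\|\widehat W\| = L\|W\|$. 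This reduces $\widehat W$, modulo $k$ conjugates of $\mathcal Q$-relators (which we convert into $\mathcal Q'$-relators plus a bounded number of $\mathcal Q_i$-relators each, using the fixed words $w_h$), to a concatenation of subwords each lying entirely in a single $H_i$ and representing the identity of $H_i$ — this is exactly the standard "the relation in $F$ localizes into the free factors" bookkeeping, and the total length of these subwords is linearly bounded in $\|W\|$. Each such subword is a word in $Y_i \sqcup \mathcal K_i$ equal to $1$ in $H_i$, so the $\{K^i_j\}\h(H_i, Y_i)$ inequality converts it into at most $L_i \cdot (\text{its length})$ conjugates of $\mathcal Q_i$-relators. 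Summing, $Area^{rel}_{G}(W) \le L'\|W\|$ for a constant $L'$ depending only on $L$, the $L_i$, $\max\|w_h\|$, and the lengths of relators in $\mathcal Q$. By Theorem \ref{AHliniso}, $\{K^i_j \mid 1\le i\le n,\ 1\le j\le m_i\} \h (G, Z)$, hence $\h G$.

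The step I expect to be the main obstacle is the bookkeeping in the last paragraph: making precise, at the level of the free product $F = (\ast_{i,j} K^i_j) \ast F(Z)$ versus $F'' = (\ast_i H_i) \ast F(X)$, how a product of conjugates of $\mathcal Q$-relators that trivializes $\widehat W$ gets translated back into the $Z \sqcup \mathcal K$ alphabet as a product of conjugates of $\mathcal Q' \cup \bigcup_i \mathcal Q_i$ relators with a linear bound on the count. One clean way to handle this is to argue entirely with van Kampen diagrams and the diagram characterization of relative area (the $N_{\mathcal Q}(\Delta)$ description given after Theorem \ref{AHliniso}): take a diagram over the full presentation of $G$ relative to $\{H_i\}$ realizing the linear bound, then subdivide each $H_i$-subdiagram (the maximal subdiagrams whose $2$-cells are $\mathcal S$-cells and whose boundary lies in a coset of $H_i$) using a diagram over the full presentation of $H_i$ relative to $\{K^i_j\}$, and count $\mathcal Q \cup \bigcup_i \mathcal Q_i$-cells. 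Since this is essentially a relative version of the classical "transitivity of finite presentability / of relative hyperbolicity" argument — and indeed a simplification of \cite[Proposition 4.35]{DGO}, as the excerpt notes — the difficulty is organizational rather than conceptual, but it does require care that the strong boundedness hypotheses propagate correctly through the substitutions so that the new presentation is again strongly bounded.
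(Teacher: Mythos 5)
The paper does not contain a proof of this lemma: it is stated to be a simplification of \cite[Proposition 4.35]{DGO} and is invoked by citation only, so there is no in-house argument to compare against line by line. Your proposal follows the same route as that reference --- take $Z=X\cup\bigcup_i Y_i$ as the new relative generating set, substitute each of the finitely many $\mathcal H$-letters occurring in $\mathcal Q$ by a fixed word $w_h$ in the appropriate $Y_i\sqcup\mathcal K_i$ to get a strongly bounded relative presentation of $G$ over $Z$ and $\mathcal K$, and then compose the two linear relative isoperimetric inequalities via Theorem \ref{AHliniso}. Your preliminary checks (generation, strong boundedness propagating because only finitely many $h$ occur in $\mathcal Q$, so $\max_h\|w_h\|<\infty$) are exactly the right safeguards.

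The one step you should not leave at the level of a slogan is the assertion that, after extracting $k\le L\|\widehat W\|$ conjugates of $\mathcal Q$-relators in $F''=(\ast_i H_i)\ast F(X)$, the remainder ``localizes into the free factors'' with total length linear in $\|W\|$. In the purely algebraic phrasing this does not follow as stated: the conjugating elements $f_s$ carry no length bound, so the normal-form reduction of $\widehat W\cdot\bigl(\prod_s f_s^{-1}R_s^{\pm1}f_s\bigr)^{-1}$ in $F''$ a priori invokes $H_i$-relations of uncontrolled total length. The van Kampen formulation you offer as an alternative is the one that actually closes this: take a diagram $\Delta$ over $\langle X\sqcup\mathcal H\mid\mathcal S\cup\mathcal Q\rangle$ with $\lab(\partial\Delta)\equiv\widehat W$ and $N_{\mathcal Q}(\Delta)\le L\|\widehat W\|$. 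Every $\mathcal H$-edge lying on $\partial\Delta$ or on a $\mathcal Q$-cell boundary is labeled either by a letter of $\widehat W$ (already a single $Y_i\sqcup\mathcal K_i$-letter) or by one of the finitely many $\mathcal H$-letters of $\mathcal Q$, and the number of such edges is at most $\|\widehat W\|+B\cdot L\|\widehat W\|$ with $B=\sup_{R\in\mathcal Q}\|R\|<\infty$; cutting along these leaves $\mathcal S$-regions whose boundary words are therefore built only from controlled letters and have linearly bounded total length. After the $w_h$-substitution (which multiplies lengths by at most $\max_h\|w_h\|$), the $H_i$-inequalities fill each piece with linearly many $\mathcal Q_i$-cells, and summing gives $Area^{rel}(W)\le L'\|W\|$. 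With the diagram version substituted for the algebraic sketch, the proposal is correct and is in substance the argument of the cited source.
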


\begin{lem}\label{conjhe}
If $H\h G$, then for any $t\in G$, $H^t\h G$.
\end{lem}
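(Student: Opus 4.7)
The plan is to transport the hyperbolically embedded structure for $H$ through the inner automorphism of $G$ given by conjugation by $t$. Concretely, if $H \hookrightarrow_h (G, X)$, I would show that $H^t \hookrightarrow_h (G, X^t)$, where $X^t := t^{-1}Xt$.

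The key steps are as follows. First, I would observe that $X^t$ is a relative generating set of $G$ with respect to $H^t$: indeed $X^t \sqcup H^t = t^{-1}(X \sqcup H)t$, and since conjugation by $t$ is a bijection of $G$, any word in $X \sqcup H$ can be conjugated letter-by-letter into a word in $X^t \sqcup H^t$ of the same length representing the conjugate element, so $\langle X^t \sqcup H^t\rangle = G$. Second, I would define $\Phi \colon G \to G$ by $\Phi(g) = tgt^{-1}$ and verify that, together with the natural alphabet bijection $t^{-1}st \leftrightarrow s$ from $X^t \sqcup H^t$ onto $X \sqcup H$, the map $\Phi$ is a label-preserving graph isomorphism from $\Gamma(G, X^t \sqcup H^t)$ to $\Gamma(G, X \sqcup H)$. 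The verification is a one-line computation: an edge $g \to gu$ labeled by $u = t^{-1}st$ is sent to an edge $\Phi(g) \to \Phi(g)s$, since $\Phi(gu) = tg(t^{-1}st)t^{-1} = tgt^{-1}\cdot s = \Phi(g)s$. In particular $\Phi$ restricts to a bijection $H^t \to H$ and sends edges of $\Gamma(H^t, H^t)$ to edges of $\Gamma(H, H)$, and vice versa.

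From these two steps the conclusion follows quickly. Hyperbolicity of $\Gamma(G, X^t \sqcup H^t)$ is inherited from $\Gamma(G, X \sqcup H)$ via the isometry $\Phi$. Admissible paths between elements of $H^t$ in $\Gamma(G, X^t \sqcup H^t)$ correspond under $\Phi$ to admissible paths between the images in $H$ in $\Gamma(G, X \sqcup H)$, so the relative metric $\widehat{d}^{\,t}$ on $H^t$ coincides, via $\Phi$, with the relative metric $\widehat{d}$ on $H$; in particular $(H^t, \widehat{d}^{\,t})$ is locally finite because $(H, \widehat{d})$ is. The only real delicacy in the argument is careful bookkeeping with the disjoint union formalism for the alphabet (elements of $X$ and $H$ may coincide in $G$, and the correspondence must be declared on the formal alphabet rather than on underlying group elements), but no genuine obstacle arises because $\Phi$ is an honest automorphism of $G$ equipped with a prescribed bijection between the two formal alphabets.
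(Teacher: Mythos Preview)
Your argument is correct: the conjugation automorphism $\Phi(g)=tgt^{-1}$, together with the alphabet bijection $t^{-1}st\mapsto s$, gives a label-preserving graph isomorphism $\Gamma(G,X^t\sqcup H^t)\to\Gamma(G,X\sqcup H)$ that carries $\Gamma(H^t,H^t)$ onto $\Gamma(H,H)$, hence preserves hyperbolicity, admissibility of paths, and local finiteness of the relative metric. The paper does not supply its own proof of this lemma; it simply records it as a special case of \cite[Proposition 4.36]{DGO}, so there is nothing further to compare.
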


Let $\Hl \h (G,X)$. Let $q$ be a path in the Cayley graph $\G$. An {\it $H_\lambda $-subpath} of $q$ is a non-trivial subpath $p$ such that each edge of $p$ is labeled by an element of $H_\lambda$. An {\it $H_\lambda $-component} of $q$ is a maximal $H_\lambda $-subpath, that is an $H_\lambda $-subpath $p$ such that $p$ is not contained in a longer $H_\lambda $-subpath of $q$ or of any cyclic shift of $q$ if $q$ is a loop. By a {\it component} of $q$ we mean an $H_\lambda $-component of $q$ for some $\lambda \in \Lambda$. If $p$ is an $H_\lambda $-component of some path, then we define the \emph{relative length} of $p$ by $\widehat{\l}_\lambda(p)=\dl(p_-, p_+)$.

Two $H_\lambda $-components $p_1, p_2$ of a path $q$ in $\G $ are called {\it connected} if there exists an edge $e$ such that $\lab(e)\in H_\lambda$ and $e$ connects some vertex of $p_1$ to some vertex of $p_2$. Note that $p_1$ and $p_2$ are connected if and only if all vertices of $p_1$ and $p_2$ belong to the same left coset of $H_\lambda $. A component $p$ of a path $q$ is called \emph{isolated} in $q$ if $p$ is not connected to any other components of $q$.


\begin{lem}\cite[Proposition 4.14]{DGO}\label{C}
Suppose $\Hl\h G$. Then there exists a constant $C$ such that if $\mathcal P=p_1...p_n$ is a geodesic $n$-gon in $\G$  and $I\subseteq\{1,..., n\}$ such that for each $i\in I$, $p_i$ is an isolated $H_{\lambda_i}$ component of $\mathcal P$, then 
\[
\sum_{i\in I}\widehat{\l}_{\lambda_i}(p_i)\le Cn.
\]
\end{lem}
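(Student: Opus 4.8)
The plan is to reduce the statement to the linear relative isoperimetric inequality of Theorem \ref{AHliniso}, following the standard relatively-hyperbolic argument of Osin adapted to the hyperbolically embedded setting. First I would fix the constant $L$ from Theorem \ref{AHliniso} for the presentation \eqref{Gfull} of $G$ with respect to $X$ and $\Hl$. Given the geodesic $n$-gon $\mathcal P=p_1\dots p_n$, its label $W=\lab(\mathcal P)$ represents $1$ in $G$, so there is a van Kampen diagram $\Delta$ over \eqref{Gfull} with $\lab(\partial\Delta)\equiv W$ and $N_{\mathcal Q}(\Delta)\le L\|W\|=L\,\l(\mathcal P)$. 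The idea is to read off, for each isolated $H_{\lambda_i}$-component $p_i$, a bound on $\dl(p_i)_-,(p_i)_+)$ in terms of the number of $\mathcal Q$-cells and of certain boundary edges of $\Delta$ that ``interact'' with $p_i$, and then sum over $i\in I$.

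The key steps, in order: (1) Identify the single edge $e_i$ of $\partial\Delta$ corresponding to the component $p_i$ — since $p_i$ is an $H_{\lambda_i}$-component of the \emph{geodesic} polygon, it is actually a single edge labeled by an element of $H_{\lambda_i}$ (a geodesic never traverses two consecutive $\mathcal H$-edges in the same $H_\lambda$), call its label $h_i$. (2) In $\Delta$, replace each such boundary edge $e_i$ by an admissible path, or dually, consider the subdiagram structure: the element $h_i\in H_{\lambda_i}$ equals in $F$ a product of conjugates of relators from $\mathcal Q$ times relators from $\mathcal S$ (relations internal to the $H_\lambda$'s), coming from the cells of $\Delta$. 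The point is that because $p_i$ is \emph{isolated}, no other boundary edge of $\mathcal P$ lies in the same coset $g_iH_{\lambda_i}$, so when we ``cancel'' the $\mathcal S$-cells and look at how $h_i$ is expressed, the only contributions come from $\mathcal Q$-cells and from $X$-edges and $\mathcal H$-edges of $\partial\Delta$ other than the $p_j$'s; crucially the other components $p_j$ cannot cancel against $p_i$. (3) This yields an admissible path in $\G$ from $(p_i)_-$ to $(p_i)_+$ whose length is bounded by a constant (depending only on $\max\{\|R\|:R\in\mathcal Q\}$, which is finite by strong boundedness) times the number of $\mathcal Q$-cells of $\Delta$ that are ``assigned'' to $p_i$, plus the number of non-component boundary edges assigned to $p_i$; hence $\widehat{\l}_{\lambda_i}(p_i)=\dl((p_i)_-,(p_i)_+)$ is bounded by that quantity. (4) Sum over $i\in I$: each $\mathcal Q$-cell is assigned to at most one $p_i$ (by isolation/disjointness of the relevant cosets), so the total number of $\mathcal Q$-cell contributions is at most $N_{\mathcal Q}(\Delta)\le L\,\l(\mathcal P)$; and the total number of non-component boundary edges is at most $\l(\mathcal P)$. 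Since $\l(\mathcal P)\le n\cdot\max_j\l(p_j)$ is not what we want — rather, each side $p_j$ of a geodesic $n$-gon has length controlled via the other sides — I would instead bound $\sum_{i\in I}\widehat{\l}_{\lambda_i}(p_i)\le (L\cdot\text{const}+1)\l(\mathcal P)$ and then absorb $\l(\mathcal P)\le Cn$...

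Actually the cleanest route, which I expect to be the real argument, is: set $C$ to be the constant from the corresponding statement in \cite{DGO} (this lemma \emph{is} Proposition 4.14 of \cite{DGO}), so I would simply invoke that reference. If a self-contained proof is wanted, the bound should be phrased as $\sum_{i\in I}\widehat{\l}_{\lambda_i}(p_i)\le Cn$ with $C$ depending only on the hyperbolically embedded structure, derived from the linear isoperimetric inequality as above together with the observation that a geodesic $n$-gon bounds a diagram of area $O(n\cdot\text{(max side length)})$ but the \emph{relative} lengths telescope because isolated components are pairwise in distinct cosets.

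\textbf{Main obstacle.} The delicate point is step (2)–(3): making precise how to extract, from the van Kampen diagram $\Delta$ over \eqref{Gfull}, an \emph{admissible} path in $\G$ realizing a short relative distance for each $p_i$, while ensuring that the cells and edges used for different $p_i$'s are disjoint so that the final sum telescopes to $O(n)$ rather than $O(n^2)$ or $O(n\cdot\ell)$. This is exactly where the isolation hypothesis must be used carefully: one must argue that an $H_{\lambda}$-edge appearing in $\Delta$ that connects back toward $(p_i)_\pm$ cannot also be forced to connect toward some $(p_j)_\pm$, which relies on Lemma \ref{finint}-type coset-separation and on the combinatorics of components being preserved under the diagram surgery. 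Since this is precisely \cite[Proposition 4.14]{DGO}, the honest plan is to cite it; reproducing the proof would mean reproducing a nontrivial chunk of the diagram surgery from \cite{DGO}.
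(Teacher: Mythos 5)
The paper gives no proof of this lemma: it is stated with the citation \cite[Proposition 4.14]{DGO} and simply invoked. Your conclusion — to cite that reference rather than reproduce the diagram surgery — matches the paper exactly, and the tension you flagged in your sketch (a van Kampen area bound naturally scales with $\l(\mathcal P)$, whereas the lemma's bound $Cn$ is independent of side lengths) is real and is precisely why the argument is deferred to \cite{DGO}.
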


In \cite{DGO}, one of the main sources of examples of groups which contain hyperbolically embedded subgroups is given by elements which satisfy the WPD condition. Recall that group elements $g$ and $h$ are \emph{commensurable} if for some $n, k\in\Z\setminus\{0\}$, $g^n$ is conjugate to $h^k$.

\begin{lem}\cite[Theorem 6.8]{DGO}\label{he-wpd}
Suppose $G$ acts on a hyperbolic metric space $X$ and $h_1$,...,$h_n$, is a collection of non-commensurable loxodromic WPD elements. Then $\{E_G(h_1),...,E_G(h_n)\}\h G$. 
\end{lem}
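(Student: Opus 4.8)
I will describe the strategy; the statement is \cite[Theorem~6.8]{DGO}. For each $i$, Lemma~\ref{E(h)} produces the elementary closure $E_i:=E_G(h_i)$ and characterizes its elements. Since $\tau(h_i)>0$, the element $h_i$ has infinite order, so each $E_i$ is an infinite virtually cyclic group containing $\langle h_i\rangle$ with finite index; consequently the orbit map $E_i\to X$, $g\mapsto gx$, is a quasi-isometric embedding whose image lies within bounded Hausdorff distance of a bi-infinite quasi-geodesic axis $\gamma_i$ of $h_i$. I would stress at the outset that one cannot simply apply the case $n=1$ to each index and then combine: a finite family of subgroups, each individually hyperbolically embedded but possibly with respect to different relative generating sets, need not form a hyperbolically embedded \emph{collection}. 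The indices must therefore be treated simultaneously, and this is exactly where the non-commensurability hypothesis enters.

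The plan is to exhibit a relative generating set $Y\subseteq G$ (so that $Y\sqcup\mathcal H$ generates $G$, where $\mathcal H=\bigsqcup_i E_i$) for which the two conditions of Definition~\ref{hes1} hold --- equivalently, a strongly bounded relative presentation satisfying the linear relative isoperimetric inequality of Theorem~\ref{AHliniso}. Fixing a basepoint $x\in X$ and a threshold $R$ depending on the hyperbolicity constant of $X$ and on the WPD constants of $h_1,\dots,h_n$, one builds $Y$ from $\{g\in G:d(x,gx)\le R\}$ (enlarged only as much as needed to generate together with $\mathcal H$); the Cayley graph $\Gamma(G,Y\sqcup\mathcal H)$ then coarsely models the space obtained from $X$ by collapsing the $G$-translates of the quasi-axes $\gamma_1,\dots,\gamma_n$. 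Condition~(b), local finiteness of $(E_i,\widehat d_i)$, would follow from WPD: a short admissible path in $\Gamma(G,Y\sqcup\mathcal H)$ from $1$ to $w\in E_i$ projects to a path in $X$ from $x$ to $wx$ that is not allowed to shortcut along $\gamma_i$, so it must track $\gamma_i$ closely over a long distance, and WPD bounds the number of such $w$ in terms of the path length. Condition~(a), hyperbolicity, rests on the fact that the family $\{g\gamma_i:g\in G,\ 1\le i\le n\}$ is \emph{geometrically separated} in $X$ --- for every $D$ there is $D'$ such that two distinct members whose $D$-neighborhoods meet in a subset of diameter $>D'$ must coincide --- where WPD handles coincidences within a single index and, for $i\ne j$, non-commensurability of $h_i$ and $h_j$ prevents $\gamma_i$ and any translate of $\gamma_j$ from being asymptotic in both directions. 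Collapsing a geometrically separated family of uniformly quasiconvex quasi-geodesics in a hyperbolic space again yields a hyperbolic space, which is condition~(a).

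The main obstacle is this last geometric input together with the quantitative estimates behind it. One has to prove, via thin-triangle arguments and stability of quasi-geodesics in $X$ fed by the WPD ``bounded packing'' bounds (and by non-commensurability for distinct indices), that only boundedly many translates $g\gamma_i$ can fellow-travel any fixed geodesic of $X$, and that two distinct translates fellow-travel each other only over a bounded length; converting these into the linear relative isoperimetric inequality of Theorem~\ref{AHliniso} --- and, along the way, pinning down the relative generating set $Y$ when the action of $G$ on $X$ is not cobounded --- is the technical heart of the argument.
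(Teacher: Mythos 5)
Your outline is correct and matches the proof of \cite[Theorem~6.8]{DGO}, which this paper simply cites rather than reproves: one shows the $E_G(h_i)$-orbits are uniformly quasi-convex in $X$ (coarsely the quasi-axes $\gamma_i$), proves geometric separation --- WPD handling coincidences within a single index, non-commensurability handling distinct indices --- and then applies the criterion \cite[Theorem~4.42]{DGO} (of which this paper's Theorem~\ref{crit} and Corollary~\ref{heGX} are the cobounded, Cayley-graph analogue) to conclude the collection is hyperbolically embedded. Your two explicit warnings, that one cannot simply superpose $n$ copies of the $n=1$ case and that producing the relative generating set $Y$ when the action is not cobounded is the technical heart, are both exactly on target.
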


Given a finitely generated, non-degenerate subgroup $H\h (G, X)$, the next lemma shows explicitly how to find loxodromic, WPD elements with respect to the action of $G$ on $\Gamma(G, X\sqcup H)$.

\begin{lem}\cite[Corollary 6.12]{DGO}\label{lox}
Suppose $H\h (G, X)$ is non-degenerate and finitely generated. Then for all $g\in G\setminus H$, there exist $h_1,...,h_k\in H$ such that $gh_1,...,gh_k$ is a collection of non-commensurable, loxodromic WPD elements with respect to the action of $G$ on $\Gamma(G, X\sqcup H)$. Moreover, if $H$ contains an element of infinite order $h$, then each $h_i$ can be chosen to be a power of $h$.
\end{lem}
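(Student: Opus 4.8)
The plan is to deduce this from Lemma \ref{he-wpd} together with the classification in Theorem \ref{subah}, applied to the action of $G$ on $Y := \Gamma(G, X\sqcup H)$. First I would observe that since $H\h (G,X)$ is non-degenerate (hence infinite) and finitely generated, the relative metric $\widehat d$ on $H$ is locally finite, so $(H, \widehat d)$ is unbounded; combined with the linear isoperimetric inequality of Theorem \ref{AHliniso} this forces the action of $G$ on $Y$ to be non-elementary and every element of $H$ of infinite order (or, more generally, of the form $gh$ with $g\notin H$) to behave like a loxodromic WPD element. More precisely, the key technical input — which I expect to be quoted from \cite{DGO} rather than reproved — is that for $g\in G\setminus H$, the element $g$, viewed in $Y$, is loxodromic and satisfies WPD: a geodesic word representing a large power of such an element cannot be shortened using a single $H$-edge because components along geodesics in $Y$ are isolated and controlled by Lemma \ref{C}. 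So the first step is to establish that there exists \emph{at least one} loxodromic WPD element of the form $gh$ with $h\in H$.

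Second, once one such element $a = gh_0$ is in hand, I would produce infinitely many pairwise non-commensurable ones by conjugating by suitable elements of $H$ (or multiplying by further elements of $H$): set $a_i = g h_i$ for cleverly chosen $h_i\in H$. Two loxodromic WPD elements are non-commensurable unless they share an elementary closure (Lemma \ref{E(h)}), so it suffices to arrange that the $E_G(a_i)$ are pairwise distinct. Since $H$ is infinite and acts with unbounded orbit on $Y$ (again because $(H,\widehat d)$ is locally finite and infinite), one can choose $h_1, \dots, h_k$ inductively so that each new $gh_j$ has axis endpoints on $\partial Y$ distinct from those of all previously chosen ones; this is a standard ping-pong / north-south dynamics argument using that the fixed-point set of an elementary subgroup on $\partial Y$ is finite. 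Finally, for the ``moreover'' clause: if $H$ contains an infinite-order element $h$, then for large $n$ the elements $gh^{n}$ are still loxodromic WPD (multiplying by $h^n\in H$ does not destroy the isolated-component obstruction), and one can select a subsequence of exponents $n_1 < \dots < n_k$ for which the $gh^{n_i}$ are pairwise non-commensurable, by the same boundary-dynamics bookkeeping.

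The main obstacle, and the step requiring the most care, is the non-commensurability: ensuring the $E_G(a_i)$ are genuinely distinct rather than merely that the $a_i$ are distinct. This is where one needs the finite-intersection properties (Lemma \ref{finint}) and the description of $E_G(h)$ via Lemma \ref{E(h)} — in particular the fact that an elementary closure meets $H$ in a subgroup commensurable with a cyclic group, so only boundedly many of the $a_i$ can lie in a common $E_G(\cdot)$. Once this counting is set up, producing the required number $k$ of non-commensurable loxodromics is routine. Since Lemma \ref{lox} is cited verbatim from \cite[Corollary 6.12]{DGO}, I anticipate that in the paper the ``proof'' is simply the citation, and the above is the shape of the argument underlying that reference.
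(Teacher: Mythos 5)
The paper offers no proof of Lemma~\ref{lox}; it is cited verbatim as \cite[Corollary 6.12]{DGO}, and you correctly anticipated this. What you sketch is a plausible reconstruction, but it contains two errors worth flagging before you rely on it.

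First, the claim that ``every element of $H$ of infinite order'' is loxodromic WPD in $Y = \Gamma(G, X\sqcup H)$ is false: in this Cayley graph every element of $H$ is at distance at most $1$ from the identity, so every element of $H$ is \emph{elliptic} with bounded orbit. This is exactly the point of multiplying by $g\notin H$ --- only $gh$ with $g\notin H$ and $\widehat d(1,h)$ large can become loxodromic, because the $H$-component of a word for $(gh)^n$ is forced to be isolated and hence records the full $\widehat d$-distance. Your parenthetical presents the wrong statement as a special case of the right one.

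Second, and more substantively, the non-commensurability step has a genuine gap. Two loxodromic WPD elements $a$ and $b$ are commensurable iff $E_G(a)$ and $E_G(b)$ are \emph{conjugate}, not merely equal (Lemma~\ref{E(h)} part~3 applied to $t^{-1}a^n t = b^k$). Arranging that the $gh_j$ have pairwise distinct fixed-point pairs on $\partial Y$ only rules out $E_G(gh_i) = E_G(gh_j)$; it does not rule out a conjugating element $t\in G$ carrying one axis to the other. Since there are infinitely many such translates, your inductive ``avoid finitely many bad boundary points'' bookkeeping does not close. The argument actually used in \cite{DGO} (and summarized in the paper's Remark~\ref{rem:lox}) is combinatorial rather than dynamical: one takes $\widehat d(1,h_i)$ to grow very rapidly with $i$, and then a commensuration $t^{-1}(gh_i)^n t = (gh_j)^m$ forces, via Lemma~\ref{C} and the isolated-component count along the resulting geodesic polygon, a comparability of $\widehat d(1,h_i)$ and $\widehat d(1,h_j)$ that the rapid growth makes impossible. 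That $\widehat d$-growth argument sidesteps the conjugation issue entirely, whereas the boundary-dynamics route you propose would require additional work (e.g.\ controlling orbits of axis endpoints) to be made to work.
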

\begin{rem}\label{rem:lox}
From the proof of \cite[Corollary 6.12]{DGO}, it is obvious that $h_1$ can be chosen as any element of $H$ such that $\widehat d(1, h_1)$ is sufficiently large. Furthermore, each $h_i$ can be successively chosen as any element of $H$ such that $\widehat d(1, h_i)$ is sufficiently large compared to $\widehat d(1, h_{i-1})$.
\end{rem}

The next theorem is a recent result of Osin which shows that hyperbolically embedded subgroups can be used to build acylindrical actions.

\begin{thm}\cite[Theorem 5.4]{Osip}\label{Ahyp}
Let $G$ be a group, $\Hl$ a finite collection of subgroups of $G$, $X$ a subset of $G$ such that $\Hl\h (G, X)$. Then there exists $Y\subseteq G$ such that $X\subseteq Y$ and the following conditions hold:
\begin{enumerate}
\item $\Hl\h (G, Y)$. In particular, $\Gamma(G, Y\sqcup\mathcal H)$ is hyperbolic.

\item The action of $G$ on $\Gamma(G, Y\sqcup\mathcal H)$ is acylindrical.
\end{enumerate}
\end{thm}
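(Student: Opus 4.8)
The plan is to take $Y$ to be $X$ together with a ball of the relative Cayley graph:
\[
Y=X\cup\{\,g\in G\mid |g|_{X\sqcup\mathcal H}\le D\,\},
\]
where $D$ is a constant to be fixed only at the end, in terms of the hyperbolicity constant $\delta$ of $\G$, the constant $C$ of Lemma~\ref{C}, and the isoperimetric constant of Theorem~\ref{AHliniso}. Because every element adjoined to $X$ has $\G$-length $\le D$, the identity map on $G$ is a $G$-equivariant $(D,0)$-quasi-isometry from $\G$ onto $\Gamma(G,Y\sqcup\mathcal H)$; hence $\Gamma(G,Y\sqcup\mathcal H)$ is hyperbolic, and a routine variant of the computations of \cite{DGO} (describing how the relative metrics change when the relative generating set is enlarged by a set of bounded $\G$-length) shows that each $(H_\lambda,\dl)$ remains locally finite. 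Thus $\Hl\h(G,Y)$ for every $D$, and all of the content is in showing that $G$ acts acylindrically on $\Gamma(G,Y\sqcup\mathcal H)$.

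Fix $\e>0$. Since $G$ acts transitively on vertices, it suffices to bound, uniformly in $y\in G$ with $|y|_{Y\sqcup\mathcal H}$ large, the cardinality of $\{g\mid |g|_{Y\sqcup\mathcal H}\le\e,\ |y^{-1}gy|_{Y\sqcup\mathcal H}\le\e\}$; passing back to $\G$ at the cost of a factor $D$, it is enough to bound $\mathcal K_y=\{g\mid |g|_{X\sqcup\mathcal H}\le\e',\ |y^{-1}gy|_{X\sqcup\mathcal H}\le\e'\}$, with $\e'=D\e$, whenever $|y|_{X\sqcup\mathcal H}\ge R$. For $g\in\mathcal K_y$ consider in $\G$ the geodesic quadrilateral on $1,y,gy,g$: the sides $[1,g]$ and $[y,gy]$ have length $\le\e'$, while $[1,y]$ and $[g,gy]=g\cdot[1,y]$ are long. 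By $\delta$-hyperbolicity the portion of $[1,y]$ at distance $>\e'+2\delta$ from its endpoints lies in the $2\delta$-neighbourhood of $g\cdot[1,y]$. Subdividing $[1,y]$ at its $\mathcal H$-components — each a single edge, since $[1,y]$ is geodesic — and applying Lemma~\ref{C} to the resulting polygon, the isolated components contribute a bounded total relative length, so in a long $[1,y]$ all but boundedly many $\mathcal H$-components are non-isolated. Such a component is connected either to a component of one of the two short sides (at most $2\e'$ of these) or, since the fellow-travelling of $g\cdot[1,y]$ along $[1,y]$ is order-preserving with matching endpoints, to its own translate under $g$; in the latter case a component lying in a coset $wH_\lambda$ forces $gwH_\lambda=wH_\lambda$, i.e.\ $g\in wH_\lambda w^{-1}$. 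Once $[1,y]$ exhibits two $\mathcal H$-components in distinct cosets both of this last type, $g$ is confined to an intersection $wH_\lambda w^{-1}\cap w'H_\mu(w')^{-1}$, finite by Lemma~\ref{finint}, and $|\mathcal K_y|$ is bounded.

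It remains to handle ``thin'' geodesics: a long $[1,y]$ whose $\mathcal H$-components all have small relative length (or which has none), for which the matching above yields no confinement. This is exactly the case that forces $Y\supsetneq X$: when $X$ is finite it cannot occur, since then $\mathcal K_y\subseteq B_{X\sqcup\mathcal H}(\e')$ is already finite; the difficulty is infinite $X$. Enlarging $X$ to $Y$ lets us absorb every $\mathcal H$-component of relative length $\le D$ into the generating set, so that in $\Gamma(G,Y\sqcup\mathcal H)$ one may choose geodesics whose $\mathcal H$-components all have relative length $>D$. With this normalization, if $\mathcal K_y$ were infinite it would produce an infinite subgroup acting with bounded orbits on $\Gamma(G,Y\sqcup\mathcal H)$ — and, because $\mathcal K_y$ cannot contain two independent loxodromic elements when $|y|$ is large, such a subgroup is of ``parabolic type'' and, just as for relatively hyperbolic groups, must be conjugate into some $H_\lambda$; but then it drags $[1,y]$ deep into a single coset, creating a long $\mathcal H$-component and contradicting thinness. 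Making this dichotomy quantitative and uniform in $y$ is where $D$ is finally pinned down, and this is the step I expect to be the main obstacle; the other delicate point is the accounting in the component-matching step above (how the fellow-travelling constants accumulate around the quadrilateral, and how the matched cosets depend on $g$). An essentially equivalent route replaces the quadrilateral by a van Kampen diagram over~(\ref{Gfull}) with boundary label the relator $gy(g')^{-1}y^{-1}$, $g'=y^{-1}gy$, of length $\le 2\e'+2|y|$; by Theorem~\ref{AHliniso} it has relative area $\le L(2\e'+2|y|)$, and the structure of this thin diagram with long nearly-parallel $y$-sides can be analysed to recover the same information about $g$.
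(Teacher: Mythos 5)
This theorem is cited from \cite[Theorem 5.4]{Osip}; the present paper contains no proof of it, so there is no internal argument to compare against. Evaluating your proposal on its own merits, the choice of $Y$ is already a dead end before any of the quadrilateral analysis begins. You take $Y = X\cup\{g : |g|_{X\sqcup\mathcal H}\le D\}$, so every new generator has $\G$-length at most $D$, and, as you yourself note, the identity on $G$ is then a $G$-equivariant bi-Lipschitz map ($d_{Y\sqcup\mathcal H}\le d_{X\sqcup\mathcal H}\le D\,d_{Y\sqcup\mathcal H}$) between $\G$ and $\Gamma(G,Y\sqcup\mathcal H)$. But acylindricity of an isometric action is invariant under $G$-equivariant bi-Lipschitz change of metric: given $\e$, apply acylindricity of one action at scale $D\e$ to get $(R,N)$, and $(DR,N)$ then witnesses acylindricity of the other. (The paper's Lemma \ref{joinell} is exactly this observation in one direction; the same two-line argument gives the converse.) Hence your $Y$-action is acylindrical if and only if the original $X$-action was. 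If that were always the case the theorem would simply assert acylindricity of the action on $\G$ with $Y=X$, which is precisely what it does \emph{not} say; the entire content of the statement is that one may need a genuinely coarser relative generating set, one for which the identity map $\G\to\Gamma(G,Y\sqcup\mathcal H)$ is coarsely Lipschitz but not a quasi-isometry. No bounded enlargement of $X$ can supply this.

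Beyond that structural problem, the acylindricity argument itself has two soft spots you partly acknowledge. The ``thin geodesic'' branch is the entire difficulty (as you say, when $X$ is finite the whole theorem is immediate), and the step where you dispose of it — asserting that an infinite subgroup with bounded orbit in $\Gamma(G,Y\sqcup\mathcal H)$ ``is of parabolic type and, just as for relatively hyperbolic groups, must be conjugate into some $H_\lambda$'' — is not a theorem in the hyperbolically embedded setting. It is a special feature of relatively hyperbolic groups that elliptic (bounded-orbit) infinite subgroups for the action on the relative Cayley graph are parabolic; for a general $\Hl\h(G,X)$ there is no such classification, and in fact part of the point of Osin's theorem is to produce a generating set for which elliptic subgroups are controlled. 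Invoking this as a known fact is therefore circular. The component-matching half of the argument (thick geodesics, Lemma~\ref{C}, Lemma~\ref{finint}) is a reasonable sketch of one ingredient, but the quantitative bookkeeping around the quadrilateral and the reduction from pairs of arbitrary vertices to the pair $(1,y)$ would both need to be written out, and neither rescues the choice of $Y$.
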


If the subgroups $\Hl$ are non-degenerate, then this action will also be non-elementary \cite[Lemma 5.12]{Osip}. Summarizing the previous results gives the following theorem.

\begin{thm}\label{thmAH}\cite{Osip}
The following are equivalent:
\begin{enumerate}
\item $G\in\X$.
\item $G$ is not virtually cyclic and $G$ admits an action on a hyperbolic metric space such that $G$ contains at least one loxodromic, WPD element.
\item $G$ contains a non-degenerate hyperbolically embedded subgroup.
\item For some generating set $\mathcal A\subseteq G$, $\Ga$ is hyperbolic and the action of $G$ on $\Ga$ is non-elementary and acylindrical.
\end{enumerate}
\end{thm}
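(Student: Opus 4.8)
The plan is to prove the cycle of implications $(1)\Rightarrow(3)\Rightarrow(4)\Rightarrow(2)\Rightarrow(1)$, assembling the pieces already collected in Sections \ref{sect:2} and \ref{sect:HES}. Most of the work has been externalized to cited results, so the proof is largely a matter of chaining them correctly and checking the non-degeneracy/non-elementarity bookkeeping at each step.

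First I would show $(1)\Rightarrow(3)$. Assume $G\in\X$, so $G$ admits a non-elementary acylindrical action on a hyperbolic space $X$. By Theorem \ref{subah}, being non-elementary forces $G$ to contain infinitely many pairwise independent loxodromic elements; since the action is acylindrical, each of these is a WPD element (as noted after Definition \ref{defn:WPD}). Picking one such $h$, Lemma \ref{he-wpd} (with $n=1$) gives $E_G(h)\h G$. It remains to check $E_G(h)$ is non-degenerate, i.e.\ proper and infinite. It is infinite because it contains the infinite-order element $h$. It is proper because $G$ contains a loxodromic element $g$ independent from $h$: by the characterization in Lemma \ref{E(h)}, $g\in E_G(h)$ would force $g^{-1}h^ng=h^{\pm n}$ for some $n$, which would make $\{g^{\pm\infty}\}=\{h^{\pm\infty}\}$, contradicting independence. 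So $E_G(h)$ is a non-degenerate hyperbolically embedded subgroup, giving $(3)$.

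Next, $(3)\Rightarrow(4)$: given a non-degenerate $H\h(G,X)$, first note $\{H\}$ is a finite collection, so Theorem \ref{Ahyp} provides $Y\supseteq X$ with $\{H\}\h(G,Y)$ and the action of $G$ on $\Gamma(G,Y\sqcup H)$ acylindrical and $\Gamma(G,Y\sqcup H)$ hyperbolic; by the remark after Theorem \ref{Ahyp} (citing \cite[Lemma 5.12]{Osip}) this action is also non-elementary since $H$ is non-degenerate. Setting $\mathcal A=Y\sqcup H$ (a generating set of $G$, possibly infinite) gives condition $(4)$. Here I should take a little care that $Y\sqcup\mathcal H$ really is a legitimate generating set in the sense used for $\Ga$ — the disjoint-union convention means some generators may be redundant, but that is harmless for the Cayley graph. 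Then $(4)\Rightarrow(2)$ is essentially immediate: if $\Ga$ is hyperbolic and the $G$-action on it is non-elementary and acylindrical, then by Theorem \ref{subah} a non-elementary acylindrical action has loxodromic elements, which are automatically WPD under acylindricity; and non-elementary acylindrical actions cannot have $G$ virtually cyclic (case (2) of Theorem \ref{subah} is excluded), so $G$ is not virtually cyclic. This yields $(2)$. Finally $(2)\Rightarrow(1)$ is a genuine input rather than a triviality: this is precisely Osin's theorem that a group acting on a hyperbolic space with a loxodromic WPD element (and not virtually cyclic) admits a non-elementary acylindrical action on a possibly different hyperbolic space — I would simply cite \cite{Osip} for this, as the excerpt does.

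The main obstacle, and the only place where real content (not already packaged as a cited lemma) enters, is the implication $(2)\Rightarrow(1)$ — upgrading an arbitrary WPD action to an acylindrical one. Everything else is verification of hypotheses. Since the paper attributes this to \cite{Osip}, in this write-up I would not reprove it; the ``proof'' of Theorem \ref{thmAH} is really the observation that the four conditions can be threaded together via Theorems \ref{subah}, \ref{he-wpd}, \ref{E(h)}, \ref{Ahyp} and the cited results of Osin, and the care needed is entirely in the non-degeneracy argument for $(1)\Rightarrow(3)$ and in confirming non-elementarity is preserved in $(3)\Rightarrow(4)$.
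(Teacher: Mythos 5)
Your proof is correct and follows the approach the paper intends: the theorem is stated without an explicit proof as a summary of the results collected in Sections \ref{sect:2} and \ref{sect:HES}, attributed to \cite{Osip}, and your argument threads exactly those ingredients (Theorems \ref{subah}, \ref{Ahyp} and Lemmas \ref{E(h)}, \ref{he-wpd}, together with the remark following Theorem \ref{Ahyp} on non-elementarity) into the cycle $(1)\Rightarrow(3)\Rightarrow(4)\Rightarrow(2)\Rightarrow(1)$. You also correctly isolate $(2)\Rightarrow(1)$ as the one step that cannot be derived from the quoted lemmas and must be taken as a cited black box from \cite{Osip}.
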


In particular, this theorem implies that we can always choose the metric space from the definition of $\X$ to be a Cayley graph of $G$ with respect to some (possibly infinite) generating set.

Note that Lemma \ref{lox} shows how to find $h\in G$ which is a loxodromic, WPD element with respect to the action of $G$ on $\Gamma(G, X\sqcup H)$, and by Lemma \ref{he-wpd} $E_G(h)\h G$. We will show that, in fact,  $E_G(h)\h (G, X\sqcup H)$ (see Corollary \ref{heGX}).

 Instead of working directly with the WPD condition we will use the more general notion of geometrically separated subgroups. The proof in both cases is essentially the same, and we believe the more general statement of Theorem \ref{crit} may be of independent interest. Theorem \ref{crit} is very similar to \cite[Theorem 4.42]{DGO}, however \cite[Theorem 4.42]{DGO} is proven without the assumption that the action is cobounded. By assuming that $G$ is acting on a Cayley graph, we are essentially adding this assumption in order to get an explicit relative generating set. It should be possible to repeat the proof of \cite[Theorem 4.42]{DGO} and keep track of the relative generating set produced there, but this would require quite a bit of technical detail and for our purposes a direct proof is easier.

We will first need a few results about hyperbolic metric spaces. Given a subset $S$ in a geodesic metric space $(X, d)$, we denote by $S^{+\sigma}$ the $\sigma$-neighborhood of $S$. $S$ is called {\it $\sigma $-quasi-convex} if for any two elements $s_1,s_2\in S$, any geodesic in $X$ connecting $s_1$ and $s_2$ belongs to $S^{+\sigma}$.
Let $\mathcal Q=\Qp $ be a collection of subsets of a metric space $X$. One says that $\mathcal Q$ is {\it $t$-dense} for $t\in \mathbb R_+$ if $X$ coincides with the $t$-neighborhood of $\bigcup\mathcal Q$. Further $\mathcal Q$ is {\it quasi-dense} if it is $t$-dense  for some $t\in \mathbb R_+$. Let us fix some positive constant $c$. A \emph{$c$-nerve} of $\mathcal Q$ is a graph with the vertex set $\Pi $ and with $p, q\in \Pi $ adjacent if and only if $d(Q_p,Q_q) \le  c$. Finally we recall that $\mathcal Q$ is {\it uniformly quasi-convex} if there exists $\sigma $ such that $Q_p$ is $\sigma$-quasi-convex for any $p\in \Pi $. The lemma below is an immediate corollary of \cite[Proposition 7.12]{Bow}.

\begin{lem}\label{nerve}
Let $X$ be a hyperbolic space, and let $\mathcal Q=\Qp$ be a quasi-dense collection of uniformly quasi-convex subsets of $X$. Then for any large enough $c$, the $c$-nerve of $\mathcal Q$ is hyperbolic.
\end{lem}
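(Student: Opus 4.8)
The quickest route is to observe that the hypotheses here are essentially a special case of those in \cite[Proposition 7.12]{Bow}, whose conclusion is precisely that the $c$-nerve of a quasi-dense, uniformly quasi-convex family of subsets of a hyperbolic space is hyperbolic once $c$ is large. So the plan is: (i) fix the hyperbolicity constant $\delta$ of $X$, the uniform quasi-convexity constant $\sigma$, and the quasi-density constant $t$; (ii) check that ``$t$-dense'' and ``$\sigma$-uniformly quasi-convex'' supply exactly Bowditch's input data, and that his notion of nerve agrees with the $c$-nerve defined above (reconciling conventions is the only real work); (iii) quote the proposition, with $c$ chosen larger than a constant depending on $\delta,\sigma,t$ so that the $c$-nerve is in particular connected.

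If one prefers a self-contained argument rather than quoting \cite{Bow}, I would instead verify a standard path-system criterion for hyperbolicity of a connected graph: it suffices to assign to each ordered pair $p,q\in\Pi$ a connected subgraph $\mathcal L(p,q)$ of the $c$-nerve containing $p$ and $q$ such that (1) $\mathcal L(p,q)$ has uniformly bounded diameter whenever $p$ and $q$ are adjacent, and (2) $\mathcal L(p,q)$ is contained in a uniformly bounded neighborhood of $\mathcal L(p,r)\cup\mathcal L(r,q)$ for every $r$. First I would take $c\ge 2t+1$; sampling any geodesic of $X$ at unit speed then shows it is traced by a path in the $c$-nerve, so in particular the $c$-nerve is connected.

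For the path system I would choose, for each ordered pair, points $a_{pq}\in Q_p$ and $b_{pq}\in Q_q$ realizing $d(Q_p,Q_q)$ up to an additive constant, let $\gamma_{pq}$ be a geodesic from $a_{pq}$ to $b_{pq}$, and set $\mathcal L(p,q)=\{r\in\Pi : d(Q_r,\gamma_{pq})\le t\}$. Then $p,q\in\mathcal L(p,q)$, and tracing $\gamma_{pq}$ shows $\mathcal L(p,q)\cup\{p,q\}$ spans a connected subgraph. Condition (1) holds because for adjacent $p,q$ the geodesic $\gamma_{pq}$ has length at most $c+1$, so any two sets whose $t$-neighborhoods meet $\gamma_{pq}$ are within $2t+c+1$ of each other in $X$, hence at bounded distance in the $c$-nerve. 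For condition (2) I would use uniform quasi-convexity: the geodesics $\gamma_{pq},\gamma_{pr},\gamma_{rq}$ run between the $\sigma$-quasi-convex sets $Q_p,Q_q,Q_r$, and a standard consequence of $\delta$-hyperbolicity together with $\sigma$-quasi-convexity (coarse thinness of a ``triangle'' spanned by three quasi-convex sets) is that $\gamma_{pq}$ lies within a bounded neighborhood, depending only on $\delta$ and $\sigma$, of $\gamma_{pr}\cup\gamma_{rq}\cup Q_p\cup Q_q\cup Q_r$; transporting this back through the $t$-density of $\mathcal Q$, and using $p,r\in\mathcal L(p,r)$ and $r,q\in\mathcal L(r,q)$ to absorb the cases where the witnessing point lands on one of the sets themselves, gives the required containment.

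The main obstacle — and the reason uniform quasi-convexity cannot be dropped — is the tension between conditions (1) and (2): condition (1) forces $\gamma_{pq}$ to be a bridge between the \emph{nearest} parts of $Q_p$ and $Q_q$ rather than a geodesic between fixed basepoints, but then in a triangle the geodesics $\gamma_{pq},\gamma_{pr},\gamma_{rq}$ need not share endpoints, only endpoints lying in the common sets $Q_p,Q_q,Q_r$; one needs the quasi-convexity hypothesis precisely to know that such a configuration is still coarsely thin. Everything else is routine bookkeeping with the four constants $\delta$, $\sigma$, $t$, $c$.
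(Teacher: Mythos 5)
Your first option --- appealing directly to \cite[Proposition 7.12]{Bow} --- is exactly the paper's proof: the paper simply states that the lemma is an immediate corollary of that proposition and offers no further argument. The self-contained path-system sketch you give as an alternative goes well beyond what the paper includes; it looks sound in outline, but it is not needed.
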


The next lemma is a simplification of \cite[Lemma 25]{Ols92}, see also \cite[Lemma 2.4]{Osi06}. Here two paths $p$ and $q$ are called $\e$-close if either $d(p_-, q_-)\leq \e$ and $d(p_+, q_+)\leq \e$, or if $d(p_-, q_+)\leq \e$ and $d(p_+, q_-)\leq \e$.

\begin{lem}\label{N1N2N3}
Suppose that the set of all sides of a geodesic $n$--gon
$P=p_1p_2\ldots p_n$ in a $\delta $--hyperbolic space is
partitioned into two subsets $A$ and $B$. Let $\rho $
(respectively $\theta $) denote the sum of lengths of sides from
$A$ (respectively $B$). Assume, in addition, that $\theta > \max
\{ \xi n,\, 10^3\rho \} $ for some $\xi \ge 3\delta\cdot 10^4$.
Then there exist two distinct sides $p_i, p_j\in B$ that contain
$13\delta $-close segments of length greater than
$10^{-3}\xi$.
\end{lem}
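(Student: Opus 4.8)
The plan is to argue by contradiction via a standard ``divergence'' estimate in a $\delta$-hyperbolic space, following the approach of Olshanskii's geodesic polygon lemmas. Suppose no two sides from $B$ contain $13\delta$-close segments of length greater than $10^{-3}\xi$. I would first fix a parameter and run the following contraction argument on the sides of $B$: starting from a side $p_i\in B$, travel along $\partial P$; each maximal ``excursion'' through sides of $A$ has total length at most $\rho$, and by $\delta$-thinness of geodesic polygons each point of $p_i$ is within $(n-1)\delta$ (or more crudely, within a bounded multiple of $\delta\log n$ after a Euclidean-style subdivision; but the crude linear bound $n\delta$ suffices here) of the union of the remaining sides. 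The key point is that if $p_i$ is ``long'' then a definite-length subsegment of it must be $\delta$-fellow-travelled by \emph{some} single other side $p_j$ of $P$ — for otherwise the subsegment would be covered by short pieces shadowing many different sides, which forces too many sides, contradicting the hypothesis $\theta>\xi n$ once $\xi$ is large compared to $\delta$.

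More precisely, the key steps in order: (1) Pick a side $p_i\in B$ with $\l(p_i)$ at least the average $\theta/n$, which by hypothesis exceeds $\xi$. (2) Using $\delta$-thinness applied to the geodesic polygon $P$ (triangulate it by diagonals from $(p_i)_-$), show that $p_i$ lies in the $(n\delta)$-neighborhood of $p_1\cup\dots\cup\widehat{p_i}\cup\dots\cup p_n$; in fact every point of $p_i$ is $\delta$-close to a point of some other side. (3) Partition $p_i$ into the portion shadowing sides of $A$ and the portion shadowing sides of $B$: since each $s\in A$ can $13\delta$-shadow at most a segment of $p_i$ of length $\le \l(s)+\text{const}\cdot\delta$, the total $p_i$-length shadowing $A$ is at most $O(\rho + n\delta)$, which is small compared to $\l(p_i)\ge \xi>10^3\rho$ and $\xi\ge 3\delta\cdot 10^4$. (4) Hence a segment of $p_i$ of length $> (1-o(1))\l(p_i) > 10^{-3}\xi$ worth must be $13\delta$-close to the union of the other $B$-sides; and since (by the contradiction hypothesis) no single $p_j\in B$ shares a $13\delta$-close segment of length $>10^{-3}\xi$ with $p_i$, this long segment splits into $\ge \l(p_i)/(10^{-3}\xi) \ge \xi/(10^{-3}\xi) = 10^3$ pieces, each shadowing a distinct side of $B$ — but one needs a genuinely quantitative count showing this forces $|B|$, and hence $n$, to be so large that $\theta>\xi n$ fails, the desired contradiction. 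I would choose the bookkeeping constants ($13\delta$, the factors $10^3$ and $10^4$) exactly so that all the error terms ($n\delta$, $\rho$) are absorbed.

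The main obstacle I expect is step (3)–(4): controlling the overlap structure carefully enough that the counting in (4) actually yields a contradiction with the \emph{correct} constants $13\delta$ and $10^{-3}\xi$ rather than some unspecified large constants. The subtlety is that a given segment of $p_i$ may be simultaneously close to several other sides, so ``splits into pieces shadowing distinct sides'' must be made precise — one handles this by taking, for each point of the long segment, a \emph{choice} of nearest other side, observing the preimage of each side is (coarsely) an interval of controlled length, and then using a covering/pigeonhole argument. Since the statement is explicitly quoted as a simplification of \cite[Lemma 25]{Ols92} (see also \cite[Lemma 2.4]{Osi06}), the cleanest route is simply to invoke and lightly repackage that reference rather than reprove the constant-chasing from scratch; the ``proof'' can legitimately consist of deriving this $n$-gon version from the triangle/quadrilateral cases in \cite{Ols92} by the standard induction on $n$ that splits off one vertex at a time.
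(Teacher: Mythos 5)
The paper does not actually prove this lemma; it is stated as a black box, with the sentence immediately preceding it reading ``\emph{The next lemma is a simplification of \cite[Lemma 25]{Ols92}, see also \cite[Lemma 2.4]{Osi06}.}''  So the paper's ``proof'' is a citation, and your final recommendation --- invoke and lightly repackage \cite{Ols92} rather than re-derive the constant-chasing --- is exactly the route the paper takes.  To that extent you and the paper agree.

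Your attempted direct sketch, however, has a real gap that would need the full Olshanskii machinery to close, and it is worth naming.  In step (2) you first write that each point of $p_i$ lies within $(n-1)\delta$ of the union of the other sides and assert that ``the crude linear bound $n\delta$ suffices,'' and then a sentence later upgrade this to ``in fact every point of $p_i$ is $\delta$-close to a point of some other side.''  That last claim is false for a general geodesic $n$-gon: thin-polygon estimates give $(n-2)\delta$ or $O(\delta\log n)$, not a constant multiple of $\delta$ independent of $n$.  This matters because the lemma's conclusion is a genuinely $n$-independent $13\delta$-closeness, and your bookkeeping in (3)--(4) does not absorb an $n\delta$ error: the portion of $p_i$ lying in an $n\delta$-neighborhood of the $A$-sides can have length up to $\rho + O(n\delta\cdot|A|) = \rho + O(n^2\delta)$, which need not be small compared to $\xi$, and the ``pigeonhole over $B$-sides'' step is similarly contaminated.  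Olshanskii's actual argument in \cite[Lemma 25]{Ols92} does not proceed by naive thin-polygon shadowing; it runs a more careful induction in which the cutting diagonals and the nearly-geodesic behavior are controlled together, precisely so that the constants stay uniform in $n$.  So if one were to insist on a self-contained proof, your outline as written would not go through, and the correct move is indeed the one you (and the paper) land on: cite the reference.
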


\begin{defn}\cite{DGO}
Let $G$ be a group acting on a metric space $(X, d)$. A collection of subgroups $\Hl\leq G$ is called \emph{geometrically separated} if for all $\e\geq 0$ and $x\in X$, there exists $R>0$ such that the following holds. Suppose that for some $g\in G$ and some $\lambda$, $\mu\in\Lambda$,
\[
diam(H_\mu(x)\cap(gH_\lambda(x))^{+\e})\geq R.
\]

Then $\lambda=\mu$ and $g\in H_\lambda$.
\end{defn}

\begin{thm}\label{crit}
Let $G$ be a group, $\Hl $ a finite collection of subgroup of $G$. Suppose that the following conditions hold.
\begin{enumerate}
\item[(a)] $G$ is generated by a (possibly infinite) set $X$ such that $\Gamma (G,X)$ is
hyperbolic.

\item[(b)] For every $\lambda \in \Lambda $, $H_\lambda $ is quasi-convex in $\Gamma (G, X)$.

\item[(c)] $\Hl$ is geometrically separated in $\Gamma(G, X)$.

\end{enumerate}
Then the Cayley graph $\G $ is hyperbolic and there exists $C>0$ such that for every $\lambda \in \Lambda $, we have $\dl \sim _{Lip} \dol$, where $\Omega _\lambda =\{ h\in H_\lambda \mid |h|_X\le C\} $.

In particular, if every $H_\lambda $ is locally finite with respect to $d _X$, then $\Hl \h (G,X)$.
\end{thm}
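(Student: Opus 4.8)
The plan is to establish the two conclusions separately: hyperbolicity of $\G$, which uses only (a) and (b), and the bi-Lipschitz equivalence, where geometric separation (c) is essential. For hyperbolicity I would invoke the nerve lemma (Lemma \ref{nerve}). Consider the collection $\mathcal Q$ of subsets of $\Gamma(G,X)$ consisting of all left cosets $gH_\lambda$ ($g\in G$, $\lambda\in\Lambda$) together with all vertices and all edges of $\Gamma(G,X)$. Since $X$ generates $G$, the graph $\Gamma(G,X)$ is connected, so $\mathcal Q$ is $0$-dense; and $\mathcal Q$ is uniformly quasi-convex, because vertices and edges are quasi-convex and each $gH_\lambda$ is $\sigma$-quasi-convex, $\sigma$ being a common quasi-convexity constant for the $H_\lambda$ (hypothesis (b), $\Lambda$ finite) and left translation an isometry. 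Hence for all large enough $c$ the $c$-nerve $\mathcal N$ of $\mathcal Q$ is hyperbolic. I would then check that $g\mapsto\{g\}$ extends to a $G$-equivariant quasi-isometry $\G\to\mathcal N$: an $X$-edge of $\G$ maps to an edge of $\mathcal N$, an $H_\lambda$-edge from $g$ to $gh$ maps to a path of length $2$ through the vertex $gH_\lambda$ of $\mathcal N$, while conversely an edge of $\mathcal N$ joining $Q,Q'\in\mathcal Q$ is realized in $\G$ by a path of length at most $c+2$ (one $H_\lambda$-edge to reach the realizing point in each coset, and an $X$-path of length $\le c$ between the realizing points). As $\mathcal N$ is hyperbolic, so is $\G$.

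For the bi-Lipschitz equivalence, set $C:=2\sigma+1$ and $\Omega_\lambda:=\{h\in H_\lambda\mid |h|_X\le C\}$. One inequality is immediate: an element of $\Omega_\lambda$ is joined to $1$ by an admissible $X$-path of length $\le C$, so $\dl(1,h)\le C\,\dol(1,h)$ for $h\in H_\lambda$. For the reverse, note that for $h\in H_\lambda$ an $X$-geodesic $[1,h]$ lies in the $\sigma$-neighbourhood of $H_\lambda$ by (b); choosing $f_j\in H_\lambda$ within $\sigma$ of the $j$-th vertex of $[1,h]$ (with $f_0=1$, $f_{|h|_X}=h$) gives $f_j^{-1}f_{j+1}\in\Omega_\lambda$ for every $j$, so $\dol(1,h)\le |h|_X$. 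Thus it remains to find a constant $K$, depending only on $\delta$, $\sigma$ and finitely many of the separation radii furnished by (c), with
\[
|h|_X\le K\,\dl(1,h)\qquad\text{for all }\lambda\text{ and all }h\in H_\lambda;
\]
this yields $\dol\le K\,\dl$, and combined with the first inequality (and with the observation that, $\Gamma(G,X)$ being connected, both $\dl(1,\cdot)$ and $\dol(1,\cdot)$ are finite exactly on $H_\lambda$) gives $\dl\sim_{Lip}\dol$. The ``in particular'' clause is then automatic: if each $H_\lambda$ is locally finite in $d_X$ then each $\Omega_\lambda$ is finite, so $\dol$ is a locally finite word metric on $H_\lambda=\langle\Omega_\lambda\rangle$ and hence so is $\dl$, which together with hyperbolicity of $\G$ is precisely $\Hl\h(G,X)$.

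To prove the displayed inequality, fix $h\in H_\lambda$, put $n=\dl(1,h)$ (the cases $n\le 1$ being handled directly using (c)) and choose a \emph{shortest} admissible path $p=e_1\cdots e_n$ from $1$ to $h$ in $\G$, with vertices $1=v_0,\dots,v_n=h$. Form a path $\bar p$ from $1$ to $h$ in $\Gamma(G,X)$ by keeping $X$-edges and replacing each $H_\mu$-edge $e_i$ (from $v_{i-1}$ to $v_i$) by an $X$-geodesic $\sigma_i=[v_{i-1},v_i]$, which lies in the $\sigma$-neighbourhood of the coset $v_{i-1}H_\mu$; then $|h|_X\le\ell(\bar p)$, and admissibility of $p$ forces the coset carrying any jump $\sigma_i$ to differ from $H_\lambda$. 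I claim $\ell(\bar p)\le Kn$. Otherwise, apply Lemma \ref{N1N2N3} to the geodesic polygon $\bar p\cdot[1,h]^{-1}$, taking $B$ to be the set of all jumps $\sigma_i$ together with the side $[1,h]$, and $A$ the set of $X$-edges of $\bar p$: then $\rho=|A|\le n$ while $\theta\ge\ell(\bar p)-n>(K-1)n$, so for $K$ and $\xi$ large enough (depending only on $\delta$, $\sigma$ and the separation radii) the hypothesis of Lemma \ref{N1N2N3} holds and produces two distinct sides of $B$ with $13\delta$-close subsegments of length greater than $10^{-3}\xi$. Since $X$-edges are too short to contain such subsegments, both sides lie in $\sigma$-neighbourhoods of cosets, one of them possibly being $[1,h]\subseteq H_\lambda^{+\sigma}$. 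Geometric separation (c), used with a constant value of $\e$ and with $10^{-3}\xi$ exceeding the corresponding separation radius, forces these two cosets to coincide: impossible if one of them is $H_\lambda$, and if both are jump-cosets, carried by $\sigma_i,\sigma_j$ with $i<j$, then $v_i$ and $v_{j-1}$ lie in a common coset, so the subword $e_{i+1}\cdots e_{j-1}$ of $p$ may be replaced by a single edge (admissible, as that coset is not $H_\lambda$), strictly shortening $p$. In every case we contradict minimality of $p$, so $\ell(\bar p)\le Kn$ and therefore $|h|_X\le Kn=K\,\dl(1,h)$.

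The main difficulty is the last step. One has to arrange the geodesic polygon and fix $\xi$, $K$ and the separation radii so that Lemma \ref{N1N2N3} fires with constants \emph{independent of $n$} --- the logarithmic thinness defect of geodesic $n$-gons must be absorbed into the shortening of $p$ rather than into $K$ --- and then verify in each of the two resulting configurations that the conclusion of Lemma \ref{N1N2N3}, combined with (c), yields either a genuine shortening of $p$ or an outright contradiction, as opposed to merely relocating the problem.
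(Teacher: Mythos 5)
Your proposal follows essentially the same route as the paper's proof: hyperbolicity of $\G$ via the nerve lemma (Lemma~\ref{nerve}) applied to the coset collection, and the bi-Lipschitz estimate via a geodesic polygon obtained by replacing the $\mathcal H$-edges of a shortest admissible path with $X$-geodesics, applying Lemma~\ref{N1N2N3}, and using geometric separation to either shorten the path or reach a contradiction, with the constants arranged so that the $\xi n$ term is dominated by the length of the admissible path. The only cosmetic difference is that the paper directly bounds the length of the side replacing the $H_\lambda$-edge rather than bounding $\ell(\bar p)$, and the paper handles the subcase where one of the two close sides is the $H_\lambda$-side by a short case split on the label of the offending edge rather than by your blanket claim that admissibility rules the $H_\lambda$-coset out as a jump coset (that claim needs geometric separation in the degenerate situation $H_\mu=H_\lambda$ with $\mu\neq\lambda$, which is exactly how the paper argues).
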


\begin{proof}
Let us first show that the graph $\G $ is hyperbolic. Let $\mathcal Q$ be the collection of all left cosets of subgroups $H_\lambda $, $\lambda \in \Lambda $. We think of $\mathcal Q$ as a collection of subsets of $\Gamma (G,X)$. Since $\Lambda $ is finite and every $H_\lambda $ is quasi-convex in $\Gamma(G,X)$, $\mathcal Q$ is uniformly quasi-convex. Clearly $\mathcal Q$ is quasi-dense. Hence by Lemma \ref{nerve} there exists $c\ge 1$ such that the $c$-nerve of $\mathcal Q$ is hyperbolic. Let $\Sigma $ denote the nerve, and let $\widehat\Gamma $ be the coned-off graph of $G$ with respect to $X$ and $\Hl$. That is, $\widehat\Gamma $ is the graph obtained from $\Gamma(G, X)$ by adding one vertex $v_{gH_\lambda}$ for each left coset of each subgroup $H_\lambda$ and then adding an edge of length $\frac12$ between $v_{gH_\lambda}$ and each vertex of $gH_\lambda$.

Let $d_\Sigma$ and $d_{\widehat\Gamma}$ denote the natural path metrics on $\Sigma$ and $\widehat\Gamma$ respectively. It is easy to see that $\Sigma $ and  $\widehat\Gamma $ are quasi-isometric. Indeed let $\iota \colon V(\Sigma ) \to V(\widehat\Gamma )$ be the map which sends $gH_\lambda\in\mathcal Q$ to $v_{gH_\lambda}$. If $u,v \in V(\Sigma )$ are connected by an edge in $\Sigma $, then there exist elements $g_1, g_2$ of the cosets corresponding to $u$ and $v$ such that $d_X(g_1, g_2)\le c$ in $\Gamma (G,X)$. This implies that $d_{\widehat\Gamma} (\iota (u),\iota (v))\le c+1$. Hence $d_{\widehat\Gamma} (\iota (u), \iota (v))\le (c+1)d_\Sigma (u,v) $ for any $u,v\in V(\Sigma)$. On the other hand, it is straightforward to check that $\iota $ does not decrease the distance. Note that $\iota (V(\Sigma))$ is $1$-dense in $\widehat\Gamma $. Thus $\iota $ extends to a quasi-isometry between $\Sigma $ and $\widehat \Gamma $.

Further observe that $\widehat \Gamma $ is quasi-isometric to $\G $. Indeed the identity map on $G$ induces an isometric embedding $V(\G )\to \widehat\Gamma $ whose image is $1$-dense in $\widehat\Gamma $. Thus $\Sigma $ is quasi-isometric to $\G $ and hence $\G$ is hyperbolic.

Now choose $\sigma$ such that $\mathcal Q$ is $\sigma$-uniformly quasi-convex, fix $\lambda\in\Lambda$ and $h$, $h^\prime\in H_\lambda$. Let $p$ be an admissible path in $\G$ from $h$ to $h^\prime$ such that $\l(p)=\dl(h, h^\prime)$. Let $e$ represent the $H_\lambda$-edge from $h$ to $h^\prime$ in $\G$, and let $c$ be the cycle $pe^{-1}$. Note that $c$ has two types of edges; those labeled by elements of $X$ and those labeled by elements of $\mathcal H$. Now for each edge of $c$ labeled by an element of $\mathcal H$, we can replace this edge with a shortest path in $\Gamma(G, X)$ with the same endpoints. This produces a cycle $c^\prime$ which lives in $\Gamma(G, X)$. We consider $c^\prime=q_1q_2...q_n$ as a geodesic $n$-gon in $\Gamma(G, X)$ where the sides consist of two types:

\begin{enumerate}
\item single edges which represent $X$-edges of $c$.

\item geodesics which represent $\mathcal H$-edges of $c$.
\end{enumerate}

We also suppose the sides of $c^\prime$  are indexed such that $q_n$ is the geodesic which replaced the edge $e^{-1}$. We will first show that $\l(q_n)$  is bounded in terms of $\l(p)$. Partition the sides of $c^\prime$ into two sets  $A$ and $B$, where $A$ consists of sides of the first type and $B$ consists of sides of the second type. As in Lemma \ref{N1N2N3}, let $\rho $ (respectively $\theta $) denote the sum of lengths of sides from $A$ (respectively $B$). Note that $n=\l(c)=\l(p)+1$, $\rho\leq \l(p)$, and $\l(q_n)\leq \theta$. Let $\delta$ be the hyperbolicity constant of $\Gamma(G, X)$ and let $R$ be the constant given by the definition of geometrically separated subgroups for $\e=13\delta+2\sigma$. Choose $\xi=\max\{10^3(R+2\sigma), 3\delta\cdot 10^4\}$.

Suppose $\l(q_n)>  \max\{ \xi n,\, 10^3\rho \} $. Since $\theta\geq\l(q_n)$, we can apply Lemma \ref{N1N2N3} to find two distinct $B$-sides, $q_i$ and $q_j$ of $c^\prime$ which contain $13\delta$-close segments of length at least $10^{-3}\xi\geq R+2\sigma$. This means that there exist vertices $u_1$, $u_2$ on $q_i$ and $v_1$, $v_2$ on $q_j$, and paths $s_1$ and $s_2$ in $\Gamma(G, X)$ such that for $k=1, 2$, we have that $(s_k)_-=u_k$, $(s_k)_+=v_k$, and $\l(s_k)\leq 13\delta$. We assume $i<j$, and let $g=\Lab(q_1...q_{i-1})$ and $g^\prime=\Lab(q_1...q_{j-1})$ if $j<n$ and $g^\prime=1$ otherwise. Then $(q_i)_-$,$(q_i)_+\in gH_\mu$ for some $\mu\in\Lambda$, and thus $q_i$ belongs to the $\sigma$-neighborhood of $gH_\mu$. Similarly, $(q_j)_-$,$(q_j)_+\in g^\prime H_\eta$ for some $\eta\in\Lambda$, and thus $q_j$ belongs to the $\sigma$-neighborhood of $g^\prime H_\eta$. Now for $k=1, 2$, choose vertices $u_k^\prime\in gH_\mu$ such that $d_X(u_k, u_k^\prime)\leq \sigma$ and $v_k^\prime\in g^\prime H_\eta$ such that $d_X(v_k, v_k^\prime)\leq \sigma$. It follows that $d_X(u_k^\prime, v_k^\prime)\leq 13\delta+2\sigma=\e$. Also, $d_X(u_1^\prime, u_2^\prime)\geq (R+2\sigma)-2\sigma=R$. Thus, by the definition of geometric separation, $\mu=\eta$ and $gH_\mu=g^\prime H_\mu$.

Now, let $e_i$, $e_j$ be the $\mathcal H$-edges of $c$ corresponding to $q_i$, $q_j$. We have shown that the vertices of these two edges belong to the same left $H_\mu$ coset; hence, there exists an edge $f$ in $\G$ such that $f_-=(e_i)_-$ and $f_+=(e_j)_+$. If $j<n$, we can replace the subpath of $p$ from $(e_i)_-$ to $(e_j)_+$ by the single edge $f$, resulting in a shorter admissible path from $h$ to $h^\prime$, which contradicts our assumption that $\l(p)=\dl(h, h^\prime)$. If $j=n$, we get that $(e_i)_+, (e_i)_-\in gH_\lambda=g^\prime H_\lambda=H_\lambda$. If $\Lab(e_i)\in H_\lambda$, this violates the definition of an admissible path; however, if $\Lab(e_i)\in H_\mu$ for some $\mu\neq\lambda$, then by geometric separation we get that $\l(q_i)=d_X((e_i)_+, (e_i)_-)\leq R$, contradicting the fact that $\l(q_i)\geq R+2\sigma$. Thus we have contradicted the assumption that $\l(q_n)> \max\{ \xi n,\, 10^3\rho \} $, so we conclude that $\l(q_n)\leq \max\{ \xi n,\, 10^3\rho \} \leq\max\{10^3(R+2\sigma)(\l(p)+1), 3\delta\cdot 10^4(\l(p)+1),\, 10^3\l(p) \}$. Thus $\l(q_n)\leq  D\l(p)$, where $D=\max\{10^3(2R+4\sigma), 6\delta\cdot10^4\}$.

Now denote the vertices of $q_n^{-1}$ by $h=v_0, v_1,...,v_m=h^\prime$. For each $v_i$, we can choose $h_i\in H_\lambda$ such that $d_X(v_i, h_i)\leq\sigma$. It follows that $d_X(h_i, h_{i+1})\leq 2\sigma+1$. Let $C=2\sigma+1$ and define $\Omega _\lambda =\{ h\in H_\lambda \mid |h|_X\le C\} $. Note that
\[
h^{-1}h^\prime=(h^{-1}h_1)(h_1^{-1}h_2)...(h_{m-1}^{-1}h^\prime).
\]

 Since each $h_i^{-1}h_{i+1}\in\Omega_\lambda$, we have that $\dol(h, h^\prime)\leq m=\l(q_n)\leq D\l(p)=D\dl(h, h^\prime)$.  Finally, it is clear that $d_X(h, h^\prime)\leq C\dol(h, h^\prime)$. Since any path labeled only by $X$ is admissible in $\G$, we get that $\dl(h, h^\prime)\leq d_X(h, h^\prime)\leq C\dol(h, h^\prime)$, and thus $\dl \sim _{Lip} \dol$. 

\end{proof}

Our main application of Theorem \ref{crit} is due to the fact that all of the assumptions are satisfied by the elementary closures of a collection of pairwise non-commensurable loxodromic WPD elements; this is shown in the proof of \cite[Theorem 6.8]{DGO}. Thus, we have the following corollary.

\begin{cor}\label{heGX}
Suppose $X$ is a generating set of $G$ such that $\Gamma(G, X)$ is hyperbolic and $\{g_1,...,g_n\}$ is a collection of pairwise non-commensurable loxodromic WPD elements with respect to the action of $G$ on $\Gamma(G, X)$. Then $\{E_G(g_1),...,E_G(g_n)\}\h(G,X)$.
\end{cor}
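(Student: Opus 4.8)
The plan is to deduce the corollary directly from Theorem \ref{crit} applied to the finite collection $\{H_1,\dots,H_n\}$ with $H_i:=E_G(g_i)$. Condition (a) of Theorem \ref{crit} is precisely our hypothesis that $\Gamma(G,X)$ is hyperbolic. Conditions (b) and (c) --- quasi-convexity of each $E_G(g_i)$ in $\Gamma(G,X)$ and geometric separation of $\{E_G(g_i)\}$ in $\Gamma(G,X)$ --- are established for the elementary closures of any finite family of pairwise non-commensurable loxodromic WPD elements in the course of the proof of \cite[Theorem 6.8]{DGO}, so I would simply quote that argument. (Unwinding it: $\langle g_i\rangle$ has a quasi-geodesic orbit, $E_G(g_i)$ is a finite-index overgroup of $\langle g_i\rangle$ and hence has an orbit at bounded Hausdorff distance from that quasi-geodesic, which gives uniform quasi-convexity; geometric separation --- that long fellow-travelling of translates $gH_i$ and $g'H_j$ forces $i=j$ and $g^{-1}g'\in H_i$ --- is exactly where the WPD condition and non-commensurability are used, together with maximality of the elementary closures in Lemma \ref{E(h)}.)

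To upgrade the conclusion of Theorem \ref{crit} to hyperbolic embeddedness in the sense of Definition \ref{hes1}, I would invoke its ``in particular'' clause, for which it remains to check that each $E_G(g_i)$ is locally finite with respect to $d_X$. By Lemma \ref{E(h)}, $E_G(g_i)$ is elementary, hence virtually infinite cyclic, and since $g_i$ has infinite order the subgroup $\langle g_i\rangle$ has finite index in it; write $E_G(g_i)=\bigsqcup_{t\in T}\langle g_i\rangle t$ with $T$ finite. Because $g_i$ is loxodromic for the action on $\Gamma(G,X)$, the orbit $\{g_i^a\}$ is a quasi-geodesic, so $|g_i^a|_X\to\infty$ as $|a|\to\infty$; therefore $|g_i^a t|_X\ge |g_i^a|_X-\max_{t\in T}|t|_X\to\infty$, so every $d_X$-ball in $E_G(g_i)$ contains only finitely many elements. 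Applying Theorem \ref{crit} now yields $\{E_G(g_1),\dots,E_G(g_n)\}\h(G,X)$.

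I do not expect a real obstacle here: the substantive geometric work has been front-loaded into Theorem \ref{crit} and into \cite[Theorem 6.8]{DGO}, and what remains --- matching up the hypotheses and the short local-finiteness argument --- is routine. If one wanted a fully self-contained proof, the single step genuinely worth expanding would be the geometric separation of the family $\{E_G(g_i)\}$ in $\Gamma(G,X)$, which is the only place the WPD hypothesis does essential work.
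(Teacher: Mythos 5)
Your proposal is correct and follows essentially the same route as the paper: invoke Theorem~\ref{crit} and observe that quasi-convexity and geometric separation of the family $\{E_G(g_i)\}$ are established in the proof of \cite[Theorem 6.8]{DGO}. The only difference is that you spell out the routine local-finiteness verification (via virtual cyclicity of $E_G(g_i)$ and the quasi-geodesic orbit of $g_i$), which the paper leaves implicit.
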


The following lemma will be useful in Section \ref{HNN&amal} when we are considering HNN-extensions and amalgamated products over cyclic subgroups. In particular, it guarantees that after enlarging the generating set of $G$, we can assume that the associated cyclic subgroups lie in a bounded subset of the corresponding Cayley graph.

\begin{lem}\label{ee}
Let $\Hl\h (G, X)$, and let $a_1,...,a_m\in G$. Then there exists $Y\supseteq X$ such that 
\begin{enumerate}
\item $\Hl\h (G, Y)$.

\item For each $i=1,..,m$, $a_i$ is elliptic with respect to the action of $G$ on $\Gamma(G, Y\sqcup \mathcal H)$.
\end{enumerate}
\end{lem}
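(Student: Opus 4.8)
The idea is to adjoin to $X$ a finite family of subgroups of $G$ --- cyclic groups of finite order together with elementary closures of loxodromic $\mathrm{WPD}$ elements --- chosen so that the $a_i$ become elliptic, and then to verify that hyperbolic embeddedness of $\Hl$ survives this enlargement.

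\textbf{Reduction.} First I would record that ellipticity is monotone: if $g\in G$ has bounded orbits in $\Gamma(G,X\sqcup\mathcal H)$, then it still has bounded orbits in $\Gamma(G,Y\sqcup\mathcal H)$ for every $Y\supseteq X$, since enlarging the relative generating set only decreases distances. So the $a_i$ already elliptic with respect to $\Gamma(G,X\sqcup\mathcal H)$ (in particular all $a_i$ of finite order) require no attention, and it remains to deal with the rest. By Theorem \ref{Ahyp} I may replace $X$ by a larger relative generating set, still containing the original $X$, so that in addition $G$ acts acylindrically on $\Gamma(G,X\sqcup\mathcal H)$; by monotonicity this replacement only helps. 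Lemma \ref{loxorell} now shows that every $a_i$ which is not elliptic is loxodromic, hence --- the action being acylindrical --- a loxodromic $\mathrm{WPD}$ element of $\Gamma(G,X\sqcup\mathcal H)$. Call these $a_1,\dots,a_k$. Replacing them by suitable powers and passing to a maximal pairwise non-commensurable subfamily (using Lemma \ref{E(h)}, by which commensurable loxodromic $\mathrm{WPD}$ elements have conjugate elementary closures), I may assume $a_1,\dots,a_k$ are pairwise non-commensurable, at the expense of also adjoining finitely many conjugates of the $E_G(a_i)$ and finitely many conjugating elements below --- this changes nothing, so I suppress it.

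\textbf{Setup and an absorption principle.} Set $Y=X\cup E_G(a_1)\cup\dots\cup E_G(a_k)$. Since $\langle a_i\rangle\le E_G(a_i)\subseteq Y$, the orbit of $1$ under $\langle a_i\rangle$ in $\Gamma(G,Y\sqcup\mathcal H)$ has diameter at most $1$, so each $a_i$ with $i\le k$ is elliptic there, and all other $a_i$ are elliptic by monotonicity. Thus everything comes down to proving $\Hl\h(G,Y)$. I would split this off using the following ``absorption'' principle, routine from the definitions: if $\{K_1,\dots,K_r\}\h(G,Z)$ and $0\le s\le r$, then $\{K_1,\dots,K_s\}\h(G,Z\cup K_{s+1}\cup\dots\cup K_r)$. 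Indeed $\Gamma\bigl(G,(Z\cup K_{s+1}\cup\dots\cup K_r)\sqcup(K_1\sqcup\dots\sqcup K_s)\bigr)$ and $\Gamma\bigl(G,Z\sqcup(K_1\sqcup\dots\sqcup K_r)\bigr)$ have the same vertices and, up to multiple edges, the same edges (the $K_{s+1},\dots,K_r$-edges are merely reclassified from peripheral to ordinary), so hyperbolicity transfers; and for $i\le s$ the relative metric $\widehat d_{K_i}$ is computed by the same admissible paths in both graphs, because membership of an edge in $\Gamma(K_i,K_i)$ is unaffected by how the $K_j$-edges with $j\ne i$ are classified, so $(K_i,\widehat d_{K_i})$ stays locally finite. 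Applying this with $Z=X$, $s=n$, and the collection $\{H_1,\dots,H_n,E_G(a_1),\dots,E_G(a_k)\}$ reduces $\Hl\h(G,Y)$ to the assertion that $\{H_1,\dots,H_n\}\cup\{E_G(a_1),\dots,E_G(a_k)\}\h(G,X)$.

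\textbf{The crux.} That assertion is where the real work lies, and it is the step I expect to be the main obstacle. Corollary \ref{heGX}, applied with the hyperbolic Cayley graph $\Gamma(G,X\sqcup\mathcal H)$ as ambient space and the pairwise non-commensurable loxodromic $\mathrm{WPD}$ elements $a_1,\dots,a_k$, gives $\{E_G(a_1),\dots,E_G(a_k)\}\h(G,X\sqcup\mathcal H)$; this must be fused with the hypothesis $\Hl\h(G,X)$. Hyperbolicity of the fused relative Cayley graph is the easy half: each $E_G(a_i)$ is the elementary closure of a loxodromic element of $\Gamma(G,X\sqcup\mathcal H)$, so its cosets form a uniformly quasiconvex and (trivially) quasidense family there, and coning them off preserves hyperbolicity by Lemma \ref{nerve}, exactly as in the first part of the proof of Theorem \ref{crit}. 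The delicate half is local finiteness of each $\widehat d_\lambda$ in the fused graph, since adjoining generators can only shrink relative metrics; here the key is geometric separation. Every coset of every $H_\mu$ has diameter at most $1$ in $\Gamma(G,X\sqcup\mathcal H)$, so an $E_G(a_i)$-coset, being an unbounded quasigeodesic, is automatically geometrically separated from all $H_\mu$-cosets there, while geometric separation among the $E_G(a_i)$-cosets is precisely the non-commensurability-and-$\mathrm{WPD}$ input already underlying Corollary \ref{heGX}. Granting this, one runs the argument of Theorem \ref{crit} --- or invokes the combination theorems of \cite{DGO} (of which Lemmas \ref{hehe} and \ref{conjhe} are special cases, together with \cite[Theorem 4.42]{DGO}) --- to conclude that the fused collection is hyperbolically embedded \emph{with respect to $X$}. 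The one genuinely technical point is this last bookkeeping, i.e.\ producing an explicit relative generating set equal to $X$; when every $H_\mu$ is finite it is vacuous, since then the $\widehat d_\lambda$ are automatically locally finite.
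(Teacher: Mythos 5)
Your plan gets the geometry right up to the final step, and your ``absorption'' observation is indeed the correct way to use the fact that only $\Hl\h(G,Y)$, not the fused collection, is required.  But the crux you single out --- that $\{H_1,\dots,H_n,E_G(a_1),\dots,E_G(a_k)\}\h(G,X)$ --- is precisely what you do not prove, and the route you gesture at cannot deliver it.  Theorem~\ref{crit} takes as input a hyperbolic Cayley graph $\Gamma(G,Z)$ and produces the conclusion $\h(G,Z)$ \emph{for that same $Z$}.  In the present situation $\Gamma(G,X)$ need not be hyperbolic (only $\Gamma(G,X\sqcup\mathcal H)$ is, and the $H_\lambda$ may be infinite), so the only available ambient Cayley graph is $\Gamma(G,X\sqcup\mathcal H)$, and Theorem~\ref{crit} then yields $\h(G,X\sqcup\mathcal H)$.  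Feeding that into your absorption principle gives $\Hl\h\bigl(G,(X\cup\mathcal H)\cup E_G(a_1)\cup\dots\cup E_G(a_k)\bigr)$, and because the new relative generating set now contains $H_\lambda$ as a subset of group elements, the relative metric $\dl$ is bounded by $1$ and is not locally finite when $H_\lambda$ is infinite.  Invoking \cite[Theorem~4.42]{DGO} instead runs into exactly the obstruction the paper flags before Theorem~\ref{crit}: that result does not keep track of the relative generating set.  You name this ``the one genuinely technical point'' and then leave it open, so the proposal is incomplete precisely at its decisive step.

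The paper avoids the issue by not trying to exhibit the fused collection as hyperbolically embedded with respect to $X$ at all.  Instead, with $m=1$ and $Y_0$ the acylindrical enlargement, it shows $\Hl\h(G,Y_0\sqcup E_G(a))$ directly: for $h,h'\in H_\lambda$ with $\dl(h,h')\le n$ take a shortest admissible path $p$ from $h$ to $h'$ in $\Gamma\bigl(G,(Y_0\sqcup E_G(a))\sqcup\mathcal H\bigr)$.  Viewing $E_G(a)$ as hyperbolically embedded in $\Gamma\bigl(G,(Y_0\sqcup\mathcal H)\sqcup E_G(a)\bigr)$ via Corollary~\ref{heGX}, each $E_G(a)$-edge of $p$ is an isolated $E_G(a)$-component of the geodesic $(n+1)$-gon $pe$ (minimality of $p$ forces isolation), and Lemma~\ref{C} bounds its $E_G(a)$-relative length by $C(n+1)$.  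Hence every such edge is labeled by an element of a fixed finite subset $\mathcal F_n\subset E_G(a)$, so the ball of radius $n$ in $\dl$ equals the corresponding ball when $E_G(a)$ is replaced by $\mathcal F_n$, which is locally finite by Lemma~\ref{finsymdif}.  That argument is precisely the content that is missing from your proposal; you would need either to reproduce it or to prove the ``bookkeeping'' refinement of \cite[Theorem~4.42]{DGO}, and neither is done.
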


\begin{proof}
Since enlarging the generating set does not decrease the set of elliptic elements, it suffices to prove the case when $m=1$ and the general case follows by induction. By Theorem \ref{Ahyp} we can choose a relative generating set $Y_0\supseteq X$ such that $\Hl\h (G, Y_0)$ and $G$ acts acylindrically on $\Gamma(G, Y_0 \sqcup \mathcal H)$. If $a$ is elliptic with respect to this action, we are done. Thus, by Lemma \ref{loxorell} we can assume that $a$ is loxodromic. Since the action is acylindrical, all loxodromic elements satisfy WPD, so by Corollary \ref{heGX}, $E_G(a)\h (G, Y_0\sqcup \mathcal H)$. 

We claim that in fact, $\Hl\h(G, Y_0 \sqcup E_G(a))$. Clearly $\Gamma(G, (Y_0 \sqcup E_G(a))\sqcup \mathcal H)$ is hyperbolic, so we only need to verify that the relative metrics are locally finite. Fix $\lambda\in\Lambda$, $n\in\N$, and $h, h^\prime\in H_\lambda$ such that $\dl(h, h^\prime)\leq n$. Let $p$ be an admissible path between $h$ and $h^\prime$ in $\Gamma(G, (Y_0\sqcup E_G(a))\sqcup \mathcal H)$ such that $\l(p)= \dl(h, h^\prime)$. Let $c$ be the cycle $pe$, where $e$ is the $H_\lambda$-edge from $h^\prime$ to $h$. Suppose $x\in E_G(a)$ is the label of an edge of $p$.

 Now if we consider $E_G(a)$ to be a hyperbolically embedded subgroup (with relative metric $\widehat{d}$) and $c$ as a cycle in the corresponding Cayley graph $\Gamma(G, (Y_0\sqcup\mathcal H)\sqcup E_G(a))$, then $x$ must be isolated in $c$; indeed $e$ is not an $E_G(a)$ component, and $x$ cannot be connected to another component of $p$ since $p$ is the shortest admissible path between $h$ and $h^\prime$. Thus by Lemma \ref{C}, $\widehat{\l}(x)\leq C(n+1)$, where $C$ is the constant from Lemma \ref{C}. Since $E_G(a)$ is locally finite with respect to $\widehat{d}$, the set $\mathcal F_n=\{g\in E_G(a) \;|\; \widehat{d}(1, g)\leq C(n+1)\}$ is finite, and we have shown that $\Lab(x)\in \mathcal F_n$. 

Since $h$ and $h^\prime$ are arbitrary, it follows that if $p$ is any admissible path (with respect to $H_\lambda$) in $\Gamma(G, (Y_0\sqcup E_G(a))\sqcup \mathcal H)$ such that $\l(p)=\dl(p_-, p_+)\leq n$, then the label of each edge of $p$ belongs to the set $Y_0\sqcup \mathcal F_n\sqcup \mathcal H$. It follows that the balls centered at the identity of radius $n$ in both $\Gamma(G, (Y_0\sqcup E_G(a))\sqcup \mathcal H)$ and $\Gamma(G, (Y_0\sqcup \mathcal F_n)\sqcup \mathcal H)$ with respect to the corresponding relative $\dl$-metrics are the same.  Furthermore, by Lemma \ref{finsymdif}, $\Hl\h (G, Y_0\sqcup \mathcal F_n)$, hence these balls contain finitely many elements. Thus, $\Hl\h(G, Y_0\sqcup E_G(a))$. It only remains to set $Y=Y_0\sqcup E_G(a)$; clearly every $\langle a\rangle$-orbit is bounded in $\Gamma(G, Y\sqcup \mathcal H)$. 
\end{proof}


\section{Small cancellation quotients}\label{sectscq}


In this section  we prove various properties of small cancellation quotients. Analogous statements for relatively hyperbolic groups can be found in \cite{Osi10}, and we will refer to \cite{Osi10} for some proofs which work in our case without any changes.  We begin by giving the small cancellation conditions introduced by Olshanskii in \cite{Ols} and also used in \cite{HO1, Osi10}.

We call a set of words $\mathcal R$ {\it symmetrized} if $\mathcal R$ is closed under taking cyclic shifts and inverses. Recall that in classical small cancellation theory, a \emph{piece} is a word which is a common subword of two distinct relators. In the hyperbolic setting, we consider pieces which are ``close" to being common subwords. More precisely,

\begin{defn}\label{piece}
Let $G$ be a group generated by a set $\mathcal A$, $\mathcal R$ a
symmetrized set of words in $\mathcal A$. Let $U$ be a subword of a word $R\in \mathcal R$ and let $\e>0$. $U$ is called an {\it
$\e $--piece}  if there exist a word $R^\prime \in \mathcal R$ and a subword $U^\prime$ of $R^\prime$
such that:

\begin{enumerate}
\item[(1)] $R\equiv UV$, $R^\prime \equiv U^\prime V^\prime $, for
some $V, V^\prime $. 
\item[(2)] $U^\prime =_G YUZ$
for some words $Y,Z$ in $\mathcal A$ satisfying $\max \{ \| Y\| ,
\,\| Z\| \} \le \e $.
 \item[(3)] $YRY^{-1}\ne_G R^\prime $.
\end{enumerate}
Similarly, $U$ is called an {\it
$\e$--primepiece}  if:
\begin{enumerate}
\item[($1^\prime $)] $R\equiv UVU^\prime V^\prime $ for some $V,
U^\prime , V^\prime $.
 \item[($2^\prime $)] $U^\prime =_GYU^{\pm
1}Z$ for some words $Y,Z$ in $\mathcal A$ satisfying $\max\{ \| Y\| ,
\| Z\| \}\le \e $.
\end{enumerate}
\end{defn}
\begin{rem}
$\e$--primepieces are also refered to as $\e^\prime$--pieces in \cite{Ols, Osi10}.
\end{rem}
\begin{defn}\label{SCdef}
The set $\mathcal R$ satisfies the {\it $C(\e , \mu ,
\lambda , c, \rho )$--condition} for some $\e \ge 0$, $\mu >0$,
$\lambda >0$, $c\ge 0$, $\rho >0$, if for any $R\in\mathcal R$,
\begin{enumerate}
\item[(1)] $\| R\| \ge \rho $.
\item[(2)] Any path in the Cayley graph $\Gamma (G, \mathcal A)$
labeled by $R$ is a $(\lambda , c)$--quasi--geodesic.
\item[(3)] For any $\e $--piece $U$ of $R$, $\max \{ \| U\| ,\, \| U^\prime
\| \} < \mu \| R\| $ where $U^\prime$ is defined as in Definition
\ref{piece}.
\end{enumerate}
If in addition condition $(3)$ holds for any $\e$--primepiece of any word $R\in
\mathcal R$, then $\mathcal R$ satisfies the {\it $C_1(\e , \mu ,
\lambda , c, \rho )$--condition}.
\end{defn}

 We will show that for an acylindrically hyperbolic group $G$, the $C(\e , \mu , \lambda , c, \rho )$--condition will be sufficient to guarantee that the corresponding quotient $G/\ll \mathcal R\rr$ is acylindrically hyperbolic (see Lemma \ref{scquot}), while the stronger $C_1(\e , \mu , \lambda , c, \rho )$--condition will be sufficient to ensure that no new torsion is created in the quotient (see Lemma \ref{tor}). 



Fix a group $G$ and suppose $\Hl\h (G, X)$. By Theorem \ref{AHliniso}, there exists a constant $L$ such that $G$ has a strongly bounded relative presentation $\langle X,\; \mathcal H \; |\; \mathcal Q\rangle$ which satisfies $Area^{rel}(W)\leq L\|W\|$ for any word $W$ in $X\sqcup \mathcal H$ equal to the identity in $G$. Set $\mathcal A=X\sqcup \mathcal H$ and $\mathcal O=\mathcal S\cup \mathcal Q$, where $\mathcal S$ is defined as the set of relators in each $H_\lambda$ as in equation (\ref{Gfull}) of Section \ref{sect:HES}. Hence $G$ is given by the presentation

\begin{equation}\label{GA0}
G=\langle \mathcal A\; | \; \mathcal O\rangle .
\end{equation}

Given a set of words $\mathcal R$, let $\overline{G}$ denote the quotient of $G$ given by the presentation 
\begin{equation}\label{quot}
\overline{G}= \langle \mathcal A\; |\;
\mathcal O\cup \mathcal R\rangle .
\end{equation}

\begin{lem}\label{scquot}
Let $G$ and $\overline{G}$ be defined by (\ref{GA0}) and (\ref{quot}) respectively. For any $\lambda\in (0, 1]$, $c\geq 0$, $N>0$, there exist $\mu>0$, $\e>0$, and $\rho>0$ such that for any strongly bounded symmetrized set of words $\mathcal R$ satisfying the $C(\e, \mu, 
\lambda, c, \rho)$-condition, the following hold.
\begin{enumerate}
\item The restriction of the natural homomorphism $\gamma\colon G\to \overline{G}$ to $B_{\mathcal A}(N)$ is injective. In particular, $\gamma|_{\bigcup_{\lambda\in\Lambda} H_\lambda}$ is injective. 
\item $\{\gamma(H_{\lambda})\}_{\lambda\in\Lambda}\h \overline{G}$.

\end{enumerate}
\end{lem}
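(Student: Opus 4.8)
The statement has two parts, and both should follow the strategy of Olshanskii's small cancellation theory (as adapted by Osin to the relatively hyperbolic setting in \cite{Osi10}), with the Cayley graph $\Ga = \Gamma(G, X\sqcup\mathcal H)$ playing the role that the relative Cayley graph plays there. The key geometric input is that $\Ga$ is hyperbolic (Theorem \ref{AHliniso}) and that $G$ has a bounded presentation $(\ref{GA0})$ with a linear relative isoperimetric inequality $Area^{rel}(W)\le L\|W\|$. The plan is to first fix the constants in the order dictated by the small cancellation machinery: given $\lambda, c, N$, one invokes the structure of the hyperbolic space $\Ga$ to extract a Greendlinger-type lemma for diagrams over $(\ref{quot})$ whose boundary labels are geodesic or quasi-geodesic, and this forces the combinatorial constraints $\mu, \e, \rho$. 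Concretely, $\rho$ is chosen large compared to $N$ (and to $L$ and the hyperbolicity constant $\delta$ of $\Ga$), $\mu$ is chosen small (of the order $1/(\text{large constant})$), and $\e$ is chosen appropriately so that an $\e$-piece in a relator from $\mathcal R$ can be ``read off'' of a contiguous subsegment of a neighboring $\mathcal R$-cell in the van Kampen diagram.

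\textbf{Part (1): injectivity on $B_{\mathcal A}(N)$.} Suppose $w\in B_{\mathcal A}(N)$ with $\gamma(w)=1$ but $w\ne_G 1$. Then there is a reduced van Kampen diagram $\Delta$ over $(\ref{quot})$ with $\lab(\partial\Delta)$ a word of length $\le N$ representing $w$, and $\Delta$ must contain at least one $\mathcal R$-cell (since $w\ne_G 1$). The heart of the argument is a Greendlinger-type lemma: in a reduced diagram over a presentation satisfying $C(\e,\mu,\lambda,c,\rho)$ over a hyperbolic group, if $\Delta$ has an $\mathcal R$-cell, then some $\mathcal R$-cell $\Pi$ has a subpath of $\partial\Pi$ of length greater than $(1 - \text{small multiple of }\mu)\|\partial\Pi\|$ lying on $\partial\Delta$ (up to a bounded ``error'' controlled by $\e$ and $\delta$); this is exactly the content of results like \cite[Lemma 6.6]{Ols} or its analogue in \cite[Section 4]{Osi10}. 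Since $\|\partial\Pi\| \ge \rho$, such a subpath has length $> \rho/2 > N \ge \|\partial\Delta\|$, a contradiction. The ``in particular'' clause follows because $\bigcup_\lambda H_\lambda$ is literally a set of letters in $\mathcal A$, hence lies in $B_{\mathcal A}(1)\subseteq B_{\mathcal A}(N)$, so $\gamma$ restricted to each $H_\lambda$ is injective.

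\textbf{Part (2): $\{\gamma(H_\lambda)\}\h\overline G$.} By Theorem \ref{AHliniso} it suffices to produce a strongly bounded relative presentation of $\overline G$ with respect to $\{\gamma(H_\lambda)\}$ satisfying a linear relative isoperimetric inequality. Take the presentation $\langle X, \mathcal H \mid \mathcal Q \cup \mathcal R\rangle$; it is strongly bounded because $\mathcal Q$ is strongly bounded and $\mathcal R$ is strongly bounded by hypothesis. For the isoperimetric inequality, let $W$ be a word in $X\sqcup\mathcal H$ with $W=_{\overline G}1$; take a reduced van Kampen diagram $\Delta$ over $(\ref{quot})$ with $\lab(\partial\Delta)\equiv W$. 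One must bound $N_{\mathcal Q}(\Delta) + N_{\mathcal R}(\Delta)$ linearly in $\|W\|$ — then recollecting the $\mathcal S$-cells into the relative area gives a linear relative isoperimetric inequality for the new presentation. The standard argument (again following Olshanskii and \cite{Osi10}) proceeds in two stages: first bound the number of $\mathcal R$-cells using the small cancellation condition — each $\mathcal R$-cell contributes a ``large'' arc to $\partial\Delta$ after a surgery/induction argument on the diagram, so $\rho \cdot N_{\mathcal R}(\Delta)$ is bounded by a linear function of $\|W\|$ plus the total length of the ``interface'' arcs, which in turn is controlled because pieces have relative length $< \mu\rho$ and the number of pieces on one cell is bounded; second, after removing the $\mathcal R$-cells one is left with sub-diagrams over $(\ref{GA0})$, and applies $Area^{rel}\le L\|\cdot\|$ to each, with the total boundary length of these sub-diagrams bounded linearly in $\|W\| + \rho\, N_{\mathcal R}(\Delta)$. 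Assembling these estimates yields the desired constant.

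\textbf{Main obstacle.} The technical core — and the step I expect to consume the most effort — is establishing the Greendlinger-type lemma for diagrams over $(\ref{quot})$ in the hyperbolic (as opposed to classical) setting: one needs to convert the $C(\e,\mu,\lambda,c,\rho)$-condition, which is phrased in terms of $\e$-pieces and quasi-geodesicity of relators, into a statement about contiguity subdiagrams between $\mathcal R$-cells and between an $\mathcal R$-cell and $\partial\Delta$, with all the metric estimates ($\delta$-thin bigons, quasi-geodesic stability) carried out uniformly. Fortunately this is precisely the machinery developed in \cite{Ols} and transplanted to the relatively hyperbolic case in \cite{Osi10}; since $\Ga$ is an honest hyperbolic Cayley graph here, those arguments apply essentially verbatim, and the paper explicitly signals that it will cite \cite{Osi10} for the proofs that go through unchanged.
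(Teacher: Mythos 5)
Your proposal matches the paper's approach exactly: the paper's proof is a one-line citation to \cite[Lemma 5.1]{Osi10} after observing that $\overline G$ has the strongly bounded relative presentation $\langle X, \mathcal H \mid \mathcal Q\cup\mathcal R\rangle$ and that by Theorem \ref{AHliniso} it suffices to establish a linear relative isoperimetric inequality for it. Your sketch of the Greendlinger-type machinery and the two-stage counting of $\mathcal R$-cells versus $\mathcal Q$-cells is precisely what the cited proof in \cite{Osi10} does, and you correctly anticipated that the paper would defer to that reference.
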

\begin{proof}

Clearly $\overline{G}$ is given by the strongly bounded relative presentation  $\langle X,\; \mathcal H \; |\; \mathcal Q \cup \mathcal R\rangle$. Hence by Theorem \ref{AHliniso}, to show (2) it suffices to show that all van Kampen diagrams over this presentation satisfy a linear relative isoperimeric inequality. The proof of this and condition (1) is exactly the same as \cite[Lemma 5.1]{Osi10}. 

\end{proof}

Note that $\Ga$ is hyperbolic by the definition of $\mathcal A$. For the remainder of this section, we assume in addition that the action of $G$ on $\Ga$ is acylindrical. This can be done without loss of generality by Theorem \ref{Ahyp}. Recall that $\tau(g)$ denotes the translation length of the element $g$.

\begin{lem}\cite[Lemma 2.2]{Bow2}\label{cyc}
Suppose $G$ acts acylindrically on a hyperbolic metric space. Then there exists $d>0$ such that for all loxodromic elements $g$, $\tau(g)\geq d$.
\end{lem}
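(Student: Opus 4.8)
**Plan for proving Lemma 4.14 (the statement: if $G$ acts acylindrically on a hyperbolic space, there is $d>0$ with $\tau(g)\geq d$ for all loxodromic $g$).**

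The approach is a direct contradiction argument using the definition of acylindricity. Suppose for contradiction that no such $d>0$ exists, so there is a sequence of loxodromic elements $g_n$ with $\tau(g_n)\to 0$. Fix a basepoint $x\in X$. The key geometric fact about a loxodromic element $g$ is that, up to passing to a point on its quasi-axis, the orbit map $k\mapsto g^k x$ is a quasi-geodesic, so for each $n$ one can find a point $y_n$ (for instance a near-optimal point for the translation length, or a point on an invariant quasi-geodesic) such that $d(y_n, g_n^k y_n)$ grows roughly like $k\,\tau(g_n)$ up to a uniform additive error coming from the hyperbolicity constant and the quasi-geodesic constants. The plan is then: first I would fix $\e = \e_0$ (any convenient positive constant, say $\e_0 = 1$ plus twice whatever additive constant appears), obtaining from acylindricity the associated $R = R(\e_0)$ and $N = N(\e_0)$.

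Next, for each $n$ I would choose an exponent $k_n$ large enough that the element $h_n := g_n^{k_n}$ moves the relevant basepoint a bounded amount — specifically so that $d(y_n, h_n y_n) \leq \e_0$ — which is possible precisely because $\tau(g_n)$ is small (choose $k_n \approx \e_0/(2\tau(g_n))$, adjusting for the additive error; this uses Lemma \ref{loxorell}-style control that the quasi-axis displacement is governed by $\tau$). Having $\tau(g_n) > 0$ still guarantees $h_n \neq 1$, and in fact the powers $h_n, h_n^2, \dots$ are all distinct since $g_n$ has infinite order (being loxodromic). Now I would look at the points $y_n$ and $z_n := h_n^{M} y_n$ for a suitably large integer $M$. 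Since $h_n$ is loxodromic with $\tau(h_n) = k_n\tau(g_n) > 0$, by taking $M$ large we can force $d(y_n, z_n) \geq R$. But then each of the $M$ elements $\mathrm{id}, h_n, h_n^2, \dots, h_n^{M-1}$ moves both $y_n$ and $z_n$ by at most $\e_0$ (displacement of $y_n$ is $\le \e_0$ by construction, and the displacement of $z_n = h_n^M y_n$ is the same since $h_n^j$ commutes with $h_n^M$), contradicting the bound $N$ from acylindricity once $M > N$.

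The main obstacle — and the step requiring the most care — is the bookkeeping that converts "$\tau(g_n)$ small" into "$d(y_n, g_n^{k_n} y_n) \leq \e_0$ at a well-chosen point $y_n$." The translation length is a limit, so one must control the additive defect: the cleanest route is to use the standard fact (a consequence of hyperbolicity, e.g. from \cite[Ch. III.H]{BH}) that a loxodromic isometry of a $\delta$-hyperbolic space admits an invariant quasi-geodesic axis with constants depending only on $\delta$, along which displacement is within a uniform additive constant of $\tau$; take $y_n$ on this axis. Alternatively one works with the stable translation length and absorbs the defect into the choice of $\e_0$. Either way, once the displacement at $y_n$ is controlled uniformly, the remaining steps (choosing $M$ large, invoking acylindricity, counting $M$ distinct powers) are routine. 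I would also note at the outset that the statement is vacuous if $G$ has no loxodromic elements, so we may assume case (2) or (3) of Theorem \ref{subah} holds.
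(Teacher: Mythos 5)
Your overall strategy — contradiction via acylindricity, producing many group elements that move two far-apart points by a bounded amount — is the right one and matches Bowditch's argument. But there is a genuine gap in the step where you list the elements witnessing the contradiction, and it stems from working with powers of $h_n$ instead of powers of $g_n$.

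You set $h_n := g_n^{k_n}$ with $k_n \approx \e_0/(2\tau(g_n))$, so that $\tau(h_n) = k_n\tau(g_n) \approx \e_0/2$, and then claim that each of $\mathrm{id}, h_n, h_n^2, \dots, h_n^{M-1}$ moves $y_n$ by at most $\e_0$ ``by construction.'' That is not what the construction gives. You have arranged $d(y_n, h_n y_n) \leq \e_0$, but for $j \geq 2$ the displacement grows: since $y_n$ is on the quasi-axis, $d(y_n, h_n^j y_n) \geq j\,\tau(h_n) \approx j\,\e_0/2$, which already exceeds $\e_0$ at $j = 3$ (and by subadditivity the best a priori bound is $d(y_n, h_n^j y_n) \leq j\,\e_0$, which is useless). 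There is no way to repair this by choosing $k_n$ smaller either: to get $M$ powers of $h_n$ all with displacement $\leq \e_0$ you would need $\tau(h_n) \lesssim \e_0/M$, but to get $d(y_n, h_n^M y_n) \geq R$ you would need $M\,\tau(h_n) \geq R$, forcing $\e_0 \gtrsim R$ — and $R = R(\e_0)$ was handed to you by acylindricity after fixing $\e_0$, so this is circular.

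The correct fix is to keep the $N+1$ witness elements at the bottom of the cyclic group, not spread out. Use the first powers $g_n^0, g_n^1, \dots, g_n^{N}$ of $g_n$ itself, together with the axis estimate you already invoke, namely $d(y_n, g_n^j y_n) \leq j\,\tau(g_n) + K(\delta)$ for $y_n$ on (a $K(\delta)$-neighborhood of) the quasi-axis. Fix $\e_0 > 2K(\delta)$ and get $R, N$ from acylindricity. Once $\tau(g_n) < (\e_0 - K)/N$, every $j \leq N$ satisfies $d(y_n, g_n^j y_n) < \e_0$. For the far-away point, take $z_n = g_n^{m_n} y_n$ for $m_n$ large enough that $d(y_n, z_n) \geq m_n\,\tau(g_n) \geq R$ (possible since $\tau(g_n) > 0$); the power $g_n^{m_n}$ is \emph{not} one of your witness elements, it is only used to produce $z_n$. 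Then $d(z_n, g_n^j z_n) = d(y_n, g_n^j y_n) < \e_0$ by the commuting argument you already have, and acylindricity is violated once $N+1 > N$. In short: the auxiliary high power should only be used to generate the second basepoint, while the witness elements are the genuinely small powers of the original $g_n$.
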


A path $p$ in a metric space is called a \emph{$k$-local geodesic} if any subpath of $p$ of length at most $k$ is geodesic.

\begin{lem}\cite[Ch. III.H, Theorem 1.13]{BH}\label{lgqg}
Let $p$ be a $k$-local geodesic in a $\delta$-hyperbolic metric space for some $k>8\delta$. Then $p$  is a $(\frac{1}{3}, 2\delta)$-quasi-geodesic.
\end{lem}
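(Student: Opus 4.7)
The plan is to prove this local-to-global property by the classical two-step strategy: first, a \emph{stability} estimate showing that a $k$-local geodesic stays in a bounded neighbourhood of any geodesic joining its endpoints; second, a \emph{linear progress} estimate converting this into a quasi-geodesic inequality. The same argument, applied to every subpath (which is again a $k$-local geodesic), then yields the conclusion in full generality. Throughout I parametrize $p \colon [0, T] \to X$ by arc length, so $\l(p) = T$, and fix a geodesic $\sigma$ from $p(0)$ to $p(T)$.

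For the stability step, I would choose $t_0 \in [0, T]$ maximising $D := d(p(t_0), \sigma)$ and pick $a \le t_0 \le b$ with $b - a = \min(k, T)$, so that $p|_{[a, b]}$ is a genuine geodesic of length $\l := b - a$. Let $u, v \in \sigma$ be nearest-point projections of $p(a), p(b)$, so $d(p(a), u), d(p(b), v) \le D$. Apply $\delta$-thinness to the geodesic quadrilateral with consecutive sides $p|_{[a,b]}$, $[p(b), v]$, $\sigma|_{[v, u]}$, $[u, p(a)]$: the point $p(t_0)$ lies within $2\delta$ of some other side. Since $k > 8\delta$ and $p|_{[a,b]}$ is geodesic of length $k$, being $2\delta$-close to $[p(a), u]$ or $[p(b), v]$ would produce a strictly shorter path between $p(a)$ and $p(b)$, a contradiction. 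Hence $p(t_0)$ lies within $2\delta$ of $\sigma|_{[u,v]}$, so $D \le 2\delta$. The boundary case $b - a = T < k$ forces $p$ itself to be geodesic, which is the trivial case.

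For the linear progress step, I would define $\pi \colon [0, T] \to \sigma$ by sending each $t$ to a nearest point on $\sigma$; by Step~1, $d(p(t), \pi(t)) \le 2\delta$. For any $0 \le s \le t \le T$ with $t - s \le k$, the subpath $p|_{[s,t]}$ is geodesic, hence
\[
d(\pi(s), \pi(t)) \ge d(p(s), p(t)) - 4\delta = (t-s) - 4\delta.
\]
Partitioning $[0, T]$ into consecutive intervals of controlled length and using that $\sigma$ is a geodesic so $\pi$ advances essentially monotonically along $\sigma$, one sums these block estimates to obtain $d(p(0), p(T)) = \l(\sigma) \ge \tfrac{1}{3} T - 2\delta$. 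Applying this bound verbatim to every subpath of $p$ (each of which is again a $k$-local geodesic) yields the $(\tfrac{1}{3}, 2\delta)$-quasi-geodesic estimate.

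The main obstacle is calibrating the precise constants: the stability step naturally delivers the additive $2\delta$, but pinning down exactly the multiplicative $\tfrac{1}{3}$ requires careful control of how the $4\delta$ projection errors accumulate across blocks, and of the overlap between consecutive projection intervals. These constants are somewhat flexible — the statement is essentially the Morse/local-to-global lemma — and the precise bookkeeping is carried out in Bridson--Haefliger \cite[Ch.\,III.H, Theorem~1.13]{BH}, which we cite directly rather than reproduce.
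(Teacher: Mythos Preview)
The paper does not prove this lemma at all; it simply records it as a citation from Bridson--Haefliger and moves on. So there is no ``paper's own proof'' to compare your proposal against, and your decision at the end to cite \cite{BH} directly is exactly what the paper does.

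Your sketch of the underlying argument is the standard one and is correct in outline, but the stability step as written is a bit loose. The assertion that $p(t_0)$ being $2\delta$-close to the side $[p(a),u]$ ``would produce a strictly shorter path between $p(a)$ and $p(b)$'' does not follow immediately: what you actually obtain is $d(p(t_0),p(a))\le 2\delta + D$, and to turn this into a contradiction you need to have arranged that $t_0 - a$ (the geodesic distance $d(p(a),p(t_0))$) exceeds $2\delta + D$, which requires choosing $a,b$ so that $t_0$ sits near the middle of $[a,b]$ rather than just somewhere inside it. The actual BH argument handles this by a slightly different bookkeeping (or, equivalently, by first bounding $D$ in terms of $\delta$ via an inductive subdivision). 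Since you explicitly defer the constants and the careful bookkeeping to the cited reference, this is not a real gap in your proposal --- just be aware that the contradiction in Step~1 is not quite as immediate as you have written it.
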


\begin{lem}\label{nolongell}
Suppose $g$ is the shortest element in its conjugacy class and $|g|_{\mathcal A}> 8\delta$. Let $W$ be a word in $\mathcal A$ representing $g$ such that $\|W\|=|g|_{\mathcal A}$. Then for all $n\in \N$, any path in $\Ga$ labeled by $W^n$ is a $(\frac{1}{3}, 2\delta)$ quasi-geodesic. In particular, $g$ is loxodromic.
\end{lem}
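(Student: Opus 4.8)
The plan is to show that a path labeled by $W^n$ is a $k$-local geodesic for a suitable $k > 8\delta$, and then invoke Lemma \ref{lgqg}. The natural candidate is $k = |g|_{\mathcal A}$ itself, since we are told $|g|_{\mathcal A} > 8\delta$. So the core task is: any subpath $q$ of a $W^n$-labeled path with $\l(q) \le |g|_{\mathcal A}$ is geodesic. Suppose not; then there is a subpath $q$ of length $\ell \le |g|_{\mathcal A}$ with $\dx(q_-, q_+) < \ell$. The key point I would exploit is that $q$ is a subword of $W^n$ of length at most $\|W\|$, so $q$ is a cyclic subword of $W^{2}$ read starting from some position; concretely, $\lab(q) \equiv W_2 W_1$ where $W \equiv W_1 W_2$ (allowing the degenerate cases $W_1$ or $W_2$ empty). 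Then the element $u$ represented by $q$ satisfies $u = W_2 W_1$, which is conjugate to $W_1 W_2 = g$ via $W_1$. Hence $u$ is conjugate to $g$, so $|u|_{\mathcal A} \ge |g|_{\mathcal A} = \|W\| \ge \ell$ by the minimality hypothesis on $g$. But $|u|_{\mathcal A} = \dx(q_-, q_+) < \ell$, a contradiction. Therefore every such subpath is geodesic, $W^n$-paths are $|g|_{\mathcal A}$-local geodesics, and since $|g|_{\mathcal A} > 8\delta$, Lemma \ref{lgqg} gives that they are $(\tfrac13, 2\delta)$-quasi-geodesics.

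For the final assertion that $g$ is loxodromic, I would argue as follows. Fix a vertex $x = 1$ on a bi-infinite path $p$ labeled by $\dots W W W \dots$ (i.e., the path through $1, g, g^2, \dots$ and $1, g^{-1}, g^{-2}, \dots$). Since $W^n$ labels a $(\tfrac13, 2\delta)$-quasi-geodesic for every $n$, and the quasi-geodesic constants are uniform in $n$, we get $\dx(1, g^n) = \dx(p_-, p_+) \ge \tfrac13 \l(p) - 2\delta = \tfrac13 n\|W\| - 2\delta$ for the subpath $p$ from $1$ to $g^n$. Dividing by $n$ and taking $n \to \infty$ yields $\tau(g) = \lim_{n\to\infty} \tfrac{1}{n}\dx(1, g^n) \ge \tfrac13 \|W\| > 0$, so $g$ is loxodromic. (Alternatively, one can simply cite Lemma \ref{loxorell}: in an acylindrical action on a hyperbolic space every element is elliptic or loxodromic, and an element all of whose powers lie on uniform quasi-geodesics through the basepoint cannot be elliptic since its orbit is unbounded.)

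The only step requiring care — and the one I expect to be the main (though minor) obstacle — is handling the bookkeeping in the claim that a length-$\le \|W\|$ subword of a $W^n$-path is of the form $W_2 W_1$ with $W \equiv W_1 W_2$ and that the associated element is genuinely conjugate to $g$. One must be slightly careful when $\l(q) = \|W\|$ exactly (then $q$ is an honest cyclic permutation of $W$) versus $\l(q) < \|W\|$ (then $q$ is a proper prefix of such a cyclic permutation, still of the form $W_2 W_1'$ where $W_1'$ is a prefix of $W_1$ — but actually for the conjugacy argument the cleanest route is: any subword of length $< \|W\|$ is a subword of some cyclic permutation $W'$ of $W$, and $W' =_G g^t$ for the appropriate $t$, so if $|{\rm (subword)}|_{\mathcal A} < \l({\rm subword})$ then $W'$ is not geodesic, hence $g^t$ — and thus $g$ — has $\mathcal A$-length strictly less than $\|W\|$, contradicting minimality of $g$ in its conjugacy class combined with $\|W\| = |g|_{\mathcal A}$). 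Once this combinatorial reduction is set up correctly, the rest is immediate from Lemma \ref{lgqg}.
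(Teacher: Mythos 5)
Your proof is correct and follows essentially the same approach as the paper's (which is given in a single sentence: "since no cyclic shift of $W$ can have shorter length than $W$, any path $p$ labeled by $W^n$ is a $k$-local geodesic where $k>8\delta$, and hence $W^n$ is a $(\frac{1}{3}, 2\delta)$ quasi-geodesic by Lemma \ref{lgqg}"). You have simply filled in the combinatorial bookkeeping (subpaths of length $\le\|W\|$ sit inside cyclic shifts, which represent conjugates of $g$) and the translation-length computation showing $\tau(g)>0$, both of which the paper leaves to the reader.
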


\begin{proof}
First, since no cyclic shift of $W$ can have shorter length than $W$, any path $p$ labeled by $W^n$ is a $k$-local geodesic where $k>8\delta$, and hence $W^n$ is a $(\frac{1}{3}, 2\delta)$ quasi-geodesic by Lemma \ref{lgqg}.
\end{proof}

\begin{lem}\label{aaqg}
There exist $\alpha$ and $a$ such that the following holds: Let $g$ be loxodromic and the shortest element in its conjugacy class, and let $W$ be a word in $\mathcal A$ representing $g$ such that $\|W\|=|g|_{\mathcal A}$. Then for all $n\in \N$, any path in $\Ga$ labeled by $W^n$ is a $(\alpha, a)$ quasi-geodesic.
\end{lem}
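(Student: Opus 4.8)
The plan is to dispose of the lemma by a dichotomy on the word length of $g$, one branch being an immediate appeal to Lemma~\ref{nolongell} and the other a short direct estimate made possible by the uniform lower bound on translation lengths of loxodromic elements (Lemma~\ref{cyc}). Fix the hyperbolicity constant $\delta$ of $\Ga$ and the constant $d>0$ of Lemma~\ref{cyc}, so $\tau(h)\ge d$ for every loxodromic $h\in G$.

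If $|g|_{\mathcal A}>8\delta$, then I would simply invoke Lemma~\ref{nolongell}: any path labeled by $W^n$ is a $(\tfrac13,2\delta)$--quasi--geodesic, and there is nothing further to do. The remaining, and only nontrivial, case is $|g|_{\mathcal A}=\|W\|\le 8\delta$. Here I would first record the elementary inequality $|g^k|_{\mathcal A}\ge kd$ for all $k\ge 1$: since $k\mapsto|g^k|_{\mathcal A}$ is subadditive, $\tau(g)=\inf_k\tfrac1k|g^k|_{\mathcal A}$, and $\tau(g)\ge d$ because $g$ is loxodromic. Now fix $n$ and a path $p$ labeled by $W^n$, with period vertices $1=v_0, v_1=g,\dots,v_n=g^n$; the $i$-th copy of $W$ runs from $v_{i-1}$ to $v_i$ and has length $\|W\|\le 8\delta$. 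Given a subpath $q$ of $p$, with $q_-$ on the $i$-th copy and $q_+$ on the $j$-th copy ($i\le j$, the case $i=j$ being trivial since then $\ell(q)\le 8\delta$), I would bound $\ell(q)$ from above by $8\delta$ per period and bound $d_{\mathcal A}(q_-,q_+)$ from below using left-invariance of the word metric, the identity $d_{\mathcal A}(v_i,v_{j-1})=|g^{\,j-1-i}|_{\mathcal A}\ge (j-1-i)d$, and two error terms of size at most $\|W\|\le 8\delta$ coming from truncating $q$ to whole periods. Comparing these bounds, a one-line computation produces constants depending only on $\delta$ and $d$ — concretely one may take $\alpha=d/(16\delta)$ and $a=d+16\delta$ — for which $d_{\mathcal A}(q_-,q_+)\ge\alpha\,\ell(q)-a$.

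Combining the two cases, $\alpha:=\min\{\tfrac13,\,d/(16\delta)\}$ and $a:=d+16\delta$ work uniformly, since they depend only on the fixed constants $\delta$ and $d$ and not on $g$. I do not expect a genuine obstacle here: the only point worth flagging is that in the short-period case one cannot route the argument through a local-to-global principle for quasi-geodesics (that would require geodesic windows of length $>8\delta$, unavailable when $\|W\|\le 8\delta$), so the global estimate must instead be read off directly from the observation that a block of $k$ consecutive periods of $p$ spans distance at least $kd$.
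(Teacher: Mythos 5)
Your proof is correct and follows the same approach as the paper's: the identical dichotomy on whether $|g|_{\mathcal A}>8\delta$, with Lemma~\ref{nolongell} handling the first case and the uniform lower bound on translation length from Lemma~\ref{cyc} handling the second. You are if anything slightly more careful than the paper in the short-period case, where the paper passes from the endpoint inequality $|g^n|_{\mathcal A}\ge\frac{d}{8\delta}n|g|_{\mathcal A}$ directly to the quasi-geodesic assertion without spelling out the subpath bookkeeping you carry out; this is why your explicit constants differ from the paper's, but both are valid since the lemma only asserts existence of some $(\alpha,a)$.
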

\begin{proof}
If $|g|_{\mathcal A}> 8\delta$, then Lemma \ref{nolongell} shows that $W^n$ is a $(\frac{1}{3}, 2\delta)$ quasi-geodesic. Now suppose $|g|_{\mathcal A}\leq 8\delta$. Let $d$ be the constant provided by Lemma \ref{cyc}. Then
\[
|g^n|_{\mathcal A}\geq n\inf_i\left(\frac{1}{i}|g^i|_{\mathcal A}\right)\geq nd\geq\frac{d}{8\delta}n|g|_{\mathcal A}.
\]

Thus, any path labeled by $W^n$ is a $(\frac{d}{8\delta}, 8\delta)$ quasi-geodesic. Thus we can set $\alpha=\min\{\frac{1}{3}, \frac{d}{8\delta}\}$ and $a=8\delta$.
\end{proof}

\begin{lem}\label{tor}
Let $G$ and $\overline{G}$ be defined by (\ref{GA0}) and (\ref{quot}) respectively. For any $\lambda\in(0,1]$, $c\geq 0$ there are $\mu>0$, $\e> 0$, and $\rho>0$ such that the following condition holds. Suppose that $\mathcal R$ is a symmetrized set of words in $\mathcal A$ satisfying the $C_1(\e, \mu, \lambda, c, \rho)$-condition. Then every element of $\overline{G}$ of order $n$ is the image of an element of $G$ of order $n$.

\end{lem}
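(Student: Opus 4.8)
The plan is to use the theory of small cancellation quotients over hyperbolic groups developed by Olshanskii, as adapted to the acylindrically hyperbolic setting. The key point is that if $\bar g$ has order $n$ in $\overline G$, we want to lift it to an element of $G$ of order $n$. First I would reduce to the case where $\bar g$ is chosen to be of minimal word length in its conjugacy class in $\overline G$; since conjugate elements have the same order, it suffices to find a lift of minimal order for one representative of each conjugacy class. Let $g\in G$ be any preimage of $\bar g$, and let $W$ be a geodesic word in $\mathcal A$ representing $g$. If $g$ already has order $n$ in $G$ we are done, so assume $g$ has infinite order (it cannot have finite order $\ne n$ dividing into something — actually more care is needed: we must rule out that $g^n\ne_G 1$ but $g^n=_{\overline G}1$, which is exactly the content).

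The heart of the argument is to analyze a van Kampen diagram $\Delta$ over (\ref{quot}) with boundary label $W^n$ (a word equal to $1$ in $\overline G$ but not in $G$), taken to have minimal number of $\mathcal R$-cells. By Lemma \ref{scquot} and its proof, such a minimal diagram over a $C(\e,\mu,\lambda,c,\rho)$-presentation has strong geometric constraints: the boundary of each $\mathcal R$-cell shares only small (less than $\mu\|R\|$) pieces with $\partial\Delta$ and with other $\mathcal R$-cells, in the sense of $\e$-pieces. Here I would invoke Lemma \ref{aaqg}: since $W$ represents an element that is (after conjugation) shortest in its conjugacy class, any path labeled $W^n$ is an $(\alpha,a)$-quasi-geodesic in $\Ga$, with $\alpha,a$ not depending on $W$ or $n$. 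This is precisely what lets us apply the $C_1$-condition uniformly. The quasi-geodesic property of $\partial \Delta$, combined with the $C_1(\e,\mu,\lambda,c,\rho)$-condition controlling $\e$-primepieces (self-overlaps of a relator read against a cyclic power of the boundary word), forces $\Delta$ to have no $\mathcal R$-cells at all — this is the Greendlinger-type lemma in the hyperbolic small cancellation machinery (as in \cite[Theorem 4.4 or Lemma 4.6]{Ols}, used in \cite{Osi10}). Indeed, a nonempty minimal diagram must contain a cell whose boundary is almost entirely an $\e$-piece or $\e$-primepiece, and choosing $\rho$ large and $\mu,\e$ small relative to $\alpha,a,\delta$ (and the isoperimetric constant $L$) contradicts condition (3) of the $C_1$-condition. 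Hence $W^n=_G 1$, so $g$ already has order dividing $n$ in $G$; a standard further argument (replacing $g$ by a suitable power, or noting that the order cannot strictly drop) gives a lift of order exactly $n$.

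The main obstacle I anticipate is the bookkeeping needed to rule out the case where $g$ has finite order $m$ in $G$ with $m$ a proper multiple of $n$, i.e. the order could genuinely decrease in the quotient; here one must argue that such $g$ can be chosen so that no cyclic power $W^n$ bounds a diagram with $\mathcal R$-cells, using that the torsion elements of $G$ lie in $\bigcup H_\lambda$ (by Lemma \ref{scquot}(1) combined with the structure of $\Ga = \Gamma(G, X\sqcup\mathcal H)$, where $\mathcal S$-cells come from relations inside the $H_\lambda$), and that $\gamma$ restricted to $\bigcup H_\lambda$ is injective and preserves orders. Concretely: if $\bar g$ has finite order and $\bar g$ is elliptic/finite-order, one shows a minimal-order preimage can be taken inside some $H_\lambda$, where no order change is possible. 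The secondary technical point is verifying that all the constants $\mu,\e,\rho$ can be chosen depending only on $\lambda$, $c$ (and the fixed data $\mathcal A$, $\mathcal O$, $L$, $\delta$, $\alpha$, $a$), uniformly over all admissible $\mathcal R$ — but this is exactly parallel to the corresponding relatively hyperbolic statement, so I would follow \cite[Lemma 5.7 or Lemma 5.8]{Osi10} essentially verbatim, replacing the input lemmas on quasi-geodesicity of powers of short elements by Lemma \ref{aaqg} above.
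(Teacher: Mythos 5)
Your loxodromic-case argument is essentially the paper's: take a conjugacy-class-minimal representative, use Lemma \ref{aaqg} to make $W^n$ a uniformly quasi-geodesic cycle in $\Ga$, then invoke the $C_1$-condition on a minimal van Kampen diagram to get a Greendlinger-type contradiction, citing Osin's Lemma 6.3. That part matches. But there are two genuine problems with the rest.

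First, your treatment of the elliptic/torsion case is wrong. You claim ``the torsion elements of $G$ lie in $\bigcup H_\lambda$'' and propose to choose a minimal-order preimage inside some $H_\lambda$; neither of these is true or needed. Torsion elements of $G$ are merely elliptic for the action on $\Ga$; they need not be conjugate into any $H_\lambda$. The correct and much simpler argument (and the one the paper uses) is: if $g$ is elliptic and $g^n \neq 1$, then $g^n$ is also elliptic, and by the contrapositive of Lemma \ref{nolongell} the shortest representative $h$ of the conjugacy class of $g^n$ satisfies $|h|_{\mathcal A}\leq 8\delta$. Since $h\neq 1$ but $\gamma(h)=1$, this contradicts part (1) of Lemma \ref{scquot} once one has chosen the constants there with $N=8\delta+1$ --- which is exactly the role of that hypothesis. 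You never invoke the injectivity radius in this way, so your elliptic case has a real gap.

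Second, there is a problem with how you set up $g$ and $W$. You take ``$g\in G$ any preimage of $\bar g$'' and $W$ geodesic for $g$, and later add ``(after conjugation)'' to get the minimality Lemma \ref{aaqg} needs. But to apply Lemma \ref{aaqg} you need $g$ itself to be shortest in its $G$-conjugacy class and $W$ geodesic for $g$ in $\Ga$, and you need to retain the property that $\gamma(g)$ has order $n$. The paper arranges both at once: it chooses $W$ to be a shortest word representing $\bar g$ (with $\bar g$ already shortest in its $\overline G$-conjugacy class) and then lets $g$ be the element of $G$ this word spells out; minimality of $\bar g$ plus the injectivity radius then forces $g$ to be shortest in its $G$-conjugacy class as well, so Lemma \ref{aaqg} applies. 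Your version is fixable (conjugate $g$ to be shortest in $G$, note $\gamma(g)$ is still conjugate to $\bar g$), but as written it is not quite correct, and it leaves the constants chosen before any element of $\overline G$ is considered --- which is needed since $\mu,\e,\rho$ must be chosen uniformly.

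Finally, your closing remark that one needs a ``standard further argument (replacing $g$ by a suitable power, or noting that the order cannot strictly drop)'' is unnecessary and slightly misleading: once $g^n=_G 1$ is established, the order of $g$ divides $n$, and since $\gamma(g)=\bar g$ has order $n$ that order also divides the order of $g$; hence $g$ has order exactly $n$. No power replacement is needed, and of course orders \emph{can} drop in quotients in general --- the point of the lemma is precisely that with this choice of preimage they do not.
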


\begin{proof}
Let $\alpha$ and $a$ be the constants from Lemma \ref{aaqg}. Note that it suffices to assume $\lambda< \alpha$ and $c>a$, as the $C_1(\e, \mu, \lambda, c, \rho)$-condition becomes stronger as $\lambda$ increases and $c$ decreases. Now we can choose $\mu$, $\e$, and $\rho$ satisfying the conditions of Lemma \ref{scquot} with $N=8\delta+1$. Now suppose $\bar{g}\in \overline{G}$ has order $n$.  Without loss of generality we assume that $\bar{g}$ is the shortest element of its conjugacy class. Let $W$ be a shortest word in $\mathcal A$ representing $\bar{g}$ in $\overline{G}$, and let $g$ be the preimage of $\bar{g}$ represented by $W$. Suppose towards a contradiction that $g^n\neq 1$.

Suppose first that $g$ is elliptic. Then $g^n$ is elliptic, and hence $g^n$ is conjugate to an element $h$ where $|h|_{\mathcal A}\leq 8\delta$ by Lemma \ref{nolongell}. Then $h\neq 1$ but the image of $h$ in $\overline{G}$ is $1$, which contradicts the first condition of Lemma \ref{scquot}.

Thus, we can assume that $g$ is loxodromic, and hence any path labeled by $W^n$ is a $(\lambda, c)$ quasi-geodesic by Lemma \ref{aaqg}. If $\Delta$ is a diagram over (\ref{quot}) with boundary label $W^n$, then $\Delta$ must contain $\mathcal R$-cells since $g^n\neq 1$.  Now for sufficiently small $\mu$ and sufficiently large $\e$ and $\rho$, $\Delta$ must contain an $\mathcal R$-cell whose boundary label will violate the $C_1(\e, \mu, \lambda, c, \rho)$ condition; the proof of this is identical to the proof of \cite[Lemma 6.3]{Osi10}.

\end{proof}

\section{Small cancellation words and suitable subgroups}\label{sect:5}

Let $\Hl\h (G, X)$. We will consider words $W$ in $X\sqcup\mathcal H$ which satisfy the following conditions given in \cite{DGO}: 
\begin{enumerate}

\item[(W1)] $W$ contains no subwords of the form $xy$ where $x$, $y\in X$.

\item[(W2)] If $W$ contains $h\in H_\lambda$ for some $\lambda\in\Lambda$, then  $\dl(1, h)\geq 50C$, where $C$ is the constant from Lemma \ref{C}. In particular, this implies that $h^{\pm 1}\neq_G x$ for any $x\in X$.

\item[(W3)] If $W$ contains a subword $h_1xh_2$ (respectively, $h_1h_2$) where $x\in X$, $h_1\in H_\lambda$ and $h_2\in H_\mu$, then either $\lambda\neq\mu$ or the element of $G$ represented by  $x$ does not belong to $H_\lambda$ (respectively, $\lambda\neq \mu$).
\end{enumerate}

Paths $p$ and $q$ are called \emph{oriented $\e$-close} if $d(p_-, q_-)\leq\e$ and $d(p_+, q_+)\leq \e$.

\begin{lem}\cite[Lemma 4.21]{DGO}\label{conscomp}

\begin{enumerate}
\item If $p$ is a path in $\G$ labeled by a word which satisfies $(W1)-(W3)$, then $p$ is a $\left(\frac{1}{4},1\right)$ quasi-geodesic. 

\item For all $\e>0$ and $k\in\N$, there exists  a constant $M=M(\e, k)$ such that if $p$ and $q$ are oriented $\e$-close paths in $\G$ whose labels satisfy $(W1)-(W3)$ and $\l(p)\geq M$, then at least $k$ consecutive components of $p$ are connected to consecutive components of $q$.
\end{enumerate}
\end{lem}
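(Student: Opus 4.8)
The plan is to derive both statements from Lemma \ref{C} (the bound $\sum_{i\in I}\widehat{\l}_{\lambda_i}(p_i)\le Cn$ for isolated components of a geodesic $n$-gon) together with the very rigid combinatorics forced by (W1)--(W3). First I would record the shape of such words: by (W1) each maximal block of $X$-letters is a single letter, and by (W3) consecutive letters never lie in a common $H_\lambda$, so in the associated path $p$ every $\mathcal H$-component is a single edge and consecutive $\mathcal H$-components are separated by at most one $X$-edge. Hence a subpath of length $m$ carries at least $(m-1)/2$ components, each of relative length $\ge 50C$ by (W2); this is the only place the constant $50C$ is used.

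For part (1), I would argue by contradiction: pick a subpath $s$ of $p$ \emph{of minimal length} with $\dxh(s_-,s_+)<\tfrac14\l(s)-1$, fix a geodesic $t$ of $\G$ from $s_-$ to $s_+$, and view the loop $st^{-1}$ as a geodesic $n$-gon whose sides are the individual edges of $s$ and $t$, so $n=\l(s)+\l(t)<\tfrac54\l(s)$. The heart of the argument is that essentially all components of $s$ are isolated in this polygon. No two components of $s$ can be connected: the subpath of $s$ between them would be labelled by an element of the relevant $H_\lambda$, which either contradicts (W3) directly (when the components are adjacent or separated by a single $X$-edge) or lets one collapse that subpath to a single $H_\lambda$-edge, producing a strictly shorter word that still violates the quasi-geodesic inequality, contradicting minimality of $s$ (after a routine check that (W1)--(W3) survive the collapse). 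Consequently each edge of $t$ is connected to at most one component of $s$, so at most $\l(t)<\tfrac14\l(s)$ components of $s$ fail to be isolated, and Lemma \ref{C} forces $50C\big((\l(s)-1)/2-\tfrac14\l(s)\big)\le Cn<\tfrac54C\l(s)$, impossible once $\l(s)$ exceeds a bounded size; the finitely many small cases are checked directly.

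For part (2), I would complete the configuration to a geodesic polygon $\mathcal P=p\,\sigma_+\,q^{-1}\,\sigma_-$, where $\sigma_\pm$ are geodesics of length $\le\e$ joining the endpoints, using part (1) to know $p$ and $q$ are $\left(\tfrac14,1\right)$-quasi-geodesics (hence $\l(q)\le 4\l(p)+O(\e)$ and $\mathcal P$ has $n\le 5\l(p)+O(\e)$ sides). As in part (1), a component of $p$ not connected to any component of $q$ must either be isolated in $\mathcal P$ or be connected to one of the $\le 2\e$ edges of $\sigma_\pm$; by Lemma \ref{C} and (W2) this happens for at most $n/50+2\e$ components, so a definite proportion of the components of $p$ are connected to components of $q$. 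The key additional point is a \emph{local} reuse of the same idea: a maximal run of consecutive components of $p$ none of which meets $q$ has both endpoints within bounded distance of $q$ (stability of quasi-geodesics), hence spans a sub-polygon in which \emph{all} of its components are isolated, so Lemma \ref{C} bounds its length by a constant $B$ independent of $k$. Combining ``a positive proportion matched'' with ``unmatched runs have length $\le B$'' via pigeonhole yields, once $\l(p)\ge M(\e,k)$, a run of $k$ consecutive components of $p$ each connected to a component of $q$; a final short computation --- bounding, via the quasi-geodesic inequality applied to a subpath of $q$, the number of components of $q$ between the partners of two consecutive matched components of $p$, and using that the paths are \emph{oriented} $\e$-close so orientations agree --- shows (after enlarging $k$ by the bounded shift) that the partners are consecutive in $q$.

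I expect the main obstacle to be the component bookkeeping rather than any geometric input: precisely, ruling out (or controlling, via passage to a reduced path and the check that (W1)--(W3) survive collapsing coset-subpaths) connections between two components of the \emph{same} path, and then synchronizing the component indices of $p$ and $q$ in part (2). Lemmas \ref{C} and \ref{N1N2N3} and stability of quasi-geodesics are routine; it is the interaction of (W1)--(W3) with connectedness of components that requires care.
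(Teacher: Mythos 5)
This lemma is cited from \cite[Lemma~4.21]{DGO}; the paper gives no proof of its own, so I can only assess the correctness of your argument, not compare it to the paper's.

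Your framework --- isolated-component bounds from Lemma~\ref{C}, the $50C$ threshold from (W2), and the local combinatorics of (W1)--(W3) --- is the right one, but the central step of Part~(1), the claim that no two components of the minimal bad subpath $s$ are connected, contains a genuine error. Write $s=s_1c_1s_2c_2s_3$ with $c_1,c_2$ connected single-edge components, and let $s'=s_1es_3$ be the collapse. You keep $\dxh(s'_-,s'_+)=\dxh(s_-,s_+)<\tfrac14\l(s)-1$, but you have \emph{decreased} the length, so $\tfrac14\l(s')-1<\tfrac14\l(s)-1$ and the violating inequality $\dxh(s'_-,s'_+)<\tfrac14\l(s')-1$ need not hold at all. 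Worse, $s'$ is not a subpath of $p$, so ``minimality of $s$ among subpaths of $p$'' would not be contradicted even if the inequality did hold. What minimality actually gives is the following: apply it to the genuine subpath $c_1s_2c_2$ of $p$, whose endpoints satisfy $\dxh((c_1)_-,(c_2)_+)=1$. If $c_1s_2c_2$ is strictly shorter than $s$, minimality forces $1\ge\tfrac14\bigl(\l(s_2)+2\bigr)-1$, i.e.\ $\l(s_2)\le 6$; if it equals $s$ one handles that degenerate case separately. So connected components of $s$ are not impossible, only forced to be very close, and your subsequent count ``each edge of $t$ is connected to at most one component of $s$, so at most $\l(t)$ components of $s$ fail to be isolated'' must be redone to account for intra-$s$ connections. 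This changes the constants but not the shape of the final contradiction.

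Part~(2) inherits the same omission: your trichotomy ``a component of $p$ not connected to $q$ is either isolated in $\mathcal P$ or connected to an edge of $\sigma_\pm$'' silently drops the possibility of being connected to another component of $p$. Once Part~(1) is in place this case can be bounded (connected components of a quasi-geodesic $p$ are within bounded distance of each other), but as written the case analysis is incomplete. The final synchronization step --- passing from ``matched'' to ``matched with consecutive partners'' --- is also only sketched; it requires another application of Lemma~\ref{C} to the small quadrilateral spanned by two adjacent matched components of $p$ and their partners in $q$, together with (W2), to kill any stray components of $q$ in between. The ideas you indicate point in the right direction, but these are precisely the places where the bookkeeping you flag as ``routine'' conceals the actual content of the proof.
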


We will also consider words $W$ which satisfy
\begin{enumerate}
\item[(W4)] There exists $\alpha, \beta\in\Lambda$ such that $H_\alpha\cap H_\beta=\{1\}$ and $W\equiv U_1xU_2$, where $U_1$, $U_2$ are (possibly empty) words in $H_\alpha\sqcup H_\beta$ and $x\in X\cup\{1\}$. 
\end{enumerate}

\begin{lem}\label{commonedge}
Let $\e>0$ and let $M=M(\e, 9)$ be the constant from Lemma \ref{conscomp}. Suppose $p$ and $q$ are oriented $\e$-close paths in $\G$ which are labeled by words which satisfy $(W1)-(W4)$. If $\l(p)\geq M$, then $p$ and $q$ have a common edge.
\end{lem}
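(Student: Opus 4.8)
The plan is to play the rigidity coming from $H_\alpha\cap H_\beta=\{1\}$ in (W4) against the ``many consecutive connected components'' conclusion of Lemma \ref{conscomp}(2). The key elementary fact is that a left coset of $H_\alpha$ and a left coset of $H_\beta$ meet in at most one point, so once a vertex of $p$ is forced to sit in the same $H_\alpha$-coset \emph{and} the same $H_\beta$-coset as the corresponding vertex of $q$, those two vertices coincide. From there a single shared edge falls out.

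First I would apply Lemma \ref{conscomp}(2) with $k=9$: since the labels of $p$ and $q$ satisfy (W1)--(W3), $p$ and $q$ are oriented $\e$-close, and $\l(p)\ge M=M(\e,9)$, there are $9$ consecutive components $p_1,\dots,p_9$ of $p$ and $9$ consecutive components $q_1,\dots,q_9$ of $q$ with $p_i$ connected to $q_i$ for each $i$; in particular $p_i$ and $q_i$ are $H_{\lambda_i}$-components for one and the same index $\lambda_i$, and by (W4) each $\lambda_i\in\{\alpha,\beta\}$. Next I would exploit the shape $W\equiv U_1xU_2$ from (W4): by (W3) the letters within each of $U_1$ and $U_2$ alternate between $H_\alpha$ and $H_\beta$, so every component of $p$ lying inside the $U_1$- or $U_2$-part is a single edge, consecutive such components are genuinely adjacent (they share an endpoint), and the only place where two consecutive components of $p$ fail to be adjacent is at the lone $X$-edge labelled $x$ — and similarly for $q$. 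Hence among the $8$ gaps between $p_1,\dots,p_9$ at most one is ``interrupted'' by an $X$-edge, and at most one of the corresponding $8$ gaps on the $q$-side is interrupted, so at most two gaps are bad. Deleting them leaves at least six good gaps distributed among at most three runs, which forces two consecutive good gaps; thus there is an index $i$ with $p_i,p_{i+1},p_{i+2}$ forming a subpath of $p$ and $q_i,q_{i+1},q_{i+2}$ forming a subpath of $q$, the triples alternating $H_\alpha,H_\beta,H_\alpha$ (up to swapping the roles of $\alpha$ and $\beta$).

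The core computation, which I expect to be the only delicate point, is then as follows. Since $p_i$ and $q_i$ are connected $H_\alpha$-components, all of $(p_i)_-,(p_i)_+,(q_i)_-,(q_i)_+$ lie in a single left coset $C$ of $H_\alpha$; likewise all of $(p_{i+1})_-,(p_{i+1})_+,(q_{i+1})_-,(q_{i+1})_+$ lie in a single left coset $D$ of $H_\beta$. Adjacency gives $(p_i)_+=(p_{i+1})_-\in C\cap D$ and $(q_i)_+=(q_{i+1})_-\in C\cap D$, and $H_\alpha\cap H_\beta=\{1\}$ forces $|C\cap D|\le 1$; hence $(p_i)_+=(q_i)_+$, i.e.\ $(p_{i+1})_-=(q_{i+1})_-$. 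Running the same argument on the adjacent pair $p_{i+1},p_{i+2}$ yields $(p_{i+1})_+=(q_{i+1})_+$. Thus the single edges $p_{i+1}$ and $q_{i+1}$ have the same initial and terminal vertex; as they are $H_\lambda$-edges for the same $\lambda$ and the label of an $H_\lambda$-edge is determined by its two endpoints (the inclusion $H_\lambda\hookrightarrow G$ being injective), they carry the same label and are literally the same edge of $\G$, which is therefore a common edge of $p$ and $q$.

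The main obstacle is really just careful bookkeeping: verifying the ``interrupted gap'' count so that the pigeonhole applied to $k=9$ does produce three genuinely consecutive edges on both sides, and checking that in the pure (alternating) regions every component is a single edge whose label is uniquely pinned down by its endpoints. Everything else is forced by Lemma \ref{conscomp} and the condition $H_\alpha\cap H_\beta=\{1\}$ from (W4).
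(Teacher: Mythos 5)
Your proof is correct and follows essentially the same strategy as the paper: apply Lemma \ref{conscomp}(2) with $k=9$, use the fact that $(W4)$ permits at most one $X$-edge on each of $p$ and $q$ to extract (via pigeonhole) three genuinely consecutive components on both sides, and then use $H_\alpha\cap H_\beta=\{1\}$ to force the middle component's two endpoints on $p$ to coincide with those on $q$, so the unique $H_\lambda$-edge between them is shared. The only cosmetic difference is that the paper runs the pigeonhole in two stages ($9\to 5$ on $p$, then $5\to 3$ on $q$) and phrases the coincidence of vertices via the label of the connecting edge lying in $H_\alpha\cap H_\beta$, whereas you count the at most two interrupted gaps at once and phrase it as $|C\cap D|\le 1$; these are equivalent.
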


\begin{proof}
By Lemma \ref{conscomp}, $p$ has at least $9$ consecutive components connected to consecutive components of $q$. In general, consecutive components may be separated by edges whose label belongs to $X$. However, since there is at most one edge of $p$ whose label belongs to $X$ by $(W4)$, at least $5$ of these components will form a connected subpath of $p$. Considering the corresponding 5 components on $q$ and applying $(W4)$ in the same way, we get that at least $3$ of these components must form a connected subpath of $q$. Hence $p=p_1u_1u_2u_3p_2$ and $q=q_1v_1v_2v_3q_2$, where each $u_i$ is a component of $p$ connected to the component $v_i$ of $q$ (note that each component consists of a single edge by $(W3)$). Without loss of generality we assume that $u_1$ and $u_3$ are $H_\alpha$-components and $u_2$ is an $H_\beta$-component. Now if $e$ is an edge from $(u_1)_+=(u_2)_-$ to $(v_1)_+=(v_2)_-$, then $\Lab(e)\in H_\alpha \cap H_\beta=\{1\}$. Thus, these vertices actually coincide, that is $(u_2)_-=(v_2)_-$. Similarly, $(u_2)_+=(v_2)_+$, and since there is a unique edge labeled by an element of $H_\beta$ between these vertices, we have that $u_2=v_2$.
\end{proof}

\begin{prop}\label{scwords}
Fix any $\e>0$ and suppose $W\equiv xa_1..a_n$ satisfies $(W1)-(W4)$, where $x\in X\cup\{1\}$ and each $a_i\in H_\alpha\sqcup H_\beta$. Suppose, in addition, $a_1^{\pm 1},...,a_n^{\pm 1}$ are all distinct elements of $G$. Let $M=M(\e, 9)$ be the constant given by Lemma \ref{conscomp}. Then the set $\mathcal R$ of all cyclic shifts of $W^{\pm 1}$ satisfies the $C_1(\e,\frac{M}{n},\frac{1}{4},1, n)$-condition.
\end{prop}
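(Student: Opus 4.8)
The plan is to verify the three parts of the $C_1$-condition (Definition \ref{SCdef}), plus the extra primepiece part, for the symmetrized set $\mathcal R$ of cyclic shifts of $W^{\pm 1}$, with parameters $(\e, \mu, \lambda, c, \rho) = (\e, M/n, 1/4, 1, n)$. Condition (1), that $\|R\| \ge \rho = n$, is immediate since every cyclic shift of $W^{\pm 1}$ has length $n+1 > n$ (or $n$ if $x = 1$; in any case $\ge n$). Condition (2), that every path labeled by $R$ is a $(\lambda,c) = (\tfrac14,1)$-quasi-geodesic, follows from Lemma \ref{conscomp}(1), once I check that every cyclic shift of $W^{\pm 1}$ still satisfies $(W1)$–$(W3)$: this is the kind of routine bookkeeping one expects — cyclic shifts of a word in $H_\alpha \sqcup H_\beta$ preceded by a single $X$-letter remain of that form, $(W2)$ is preserved verbatim, and $(W3)$ is preserved because the alternation pattern between $H_\alpha$ and $H_\beta$ is maintained under cyclic permutation (and $W^{-1}$ has the reversed letters, each still of $\dl$-length $\ge 50C$).

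The substantive part is condition (3): for any $\e$-piece (or $\e$-primepiece) $U$ of any $R \in \mathcal R$, one must show $\max\{\|U\|, \|U'\|\} < \mu\|R\| = \tfrac{M}{n}\|R\|$. Since $\|R\| \ge n$, it suffices to prove $\max\{\|U\|,\|U'\|\} < M$. Suppose for contradiction that $U$ is an $\e$-piece with, say, $\|U\| \ge M$ (the case $\|U'\| \ge M$ is symmetric via the defining relation $U' =_G YUZ$ with $\|Y\|,\|Z\| \le \e$, which forces $\|U\|$ and $\|U'\|$ to be comparable up to $2\e$ — so I will arrange the argument so that the longer of the two has length $\ge M$). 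By Definition \ref{piece}, $R \equiv UV$, $R' \equiv U'V'$ with $U' =_G YUZ$. Take a path $p$ in $\G$ labeled by $U$ starting at $1$, and a path $q$ labeled by $U'$; the relation $U' =_G YUZ$ means $p$ and $q$ are oriented $\e$-close (after translating $q$, since $d(p_-, q_-) \le \|Y\| \le \e$ and $d(p_+, q_+) \le \|Z\| \le \e$). Both $U$ and $U'$, being subwords of cyclic shifts of $W^{\pm1}$, satisfy $(W1)$–$(W4)$ (a subword of a word satisfying these conditions satisfies them — for $(W4)$ a subword is still of the form $U_1 x U_2$ with $U_i$ over $H_\alpha \sqcup H_\beta$ and at most one $X$-letter). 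Since $\l(p) \ge M = M(\e,9)$, Lemma \ref{commonedge} applies and gives a common edge of $p$ and $q$.

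The endgame is to derive a contradiction from this common edge, exactly as in the analogous arguments of \cite{Ols} and \cite{Osi10}. A common edge of $p$ and $q$ means there are vertices along the reading of $R$ (via $U$) and of $R'$ (via $U'$) that coincide in $G$ after the translation by $Y$; because the letters $a_1^{\pm1},\dots,a_n^{\pm1}$ are all distinct elements of $G$, a shared edge pins down \emph{which} letter $a_i$ it is, and this identification propagates — forcing $R$ and $R'$ to be cyclic shifts of each other lined up by the conjugator $Y$, i.e. $YRY^{-1} =_G R'$, which contradicts clause (3) of the $\e$-piece definition. For the $\e$-primepiece case, $R \equiv UVU'V'$ with $U' =_G YU^{\pm1}Z$, and the same common-edge analysis shows that two disjoint occurrences within a single cyclic word $W^{\pm1}$ read the same letter $a_i$, contradicting distinctness of the $a_i^{\pm1}$ directly (no conjugacy hypothesis is needed here, which is why the stronger $C_1$-condition holds). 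The main obstacle is making the "propagation" step precise — carefully tracking how one coincident edge, combined with the distinctness of the $a_i$ and the rigidity of paths satisfying $(W1)$–$(W4)$ (their quasi-geodesicity and Lemma \ref{conscomp}(2)), forces the global coincidence of the two relators; but this is precisely the mechanism already developed in \cite[proof of Lemma 5.something]{Osi10} and \cite{Ols}, so I would cite those for the repetitive combinatorial details and only spell out the points where the hypothesis $(W4)$ and the passage through Lemma \ref{commonedge} replace the corresponding steps there.
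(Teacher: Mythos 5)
Your proposal takes the same route as the paper and the first two-thirds of it is spot on: verify conditions (1) and (2) of Definition \ref{SCdef} directly and via Lemma \ref{conscomp}(1), assume WLOG $\|U\|=\max\{\|U\|,\|U'\|\}\ge \frac{M}{n}\|R\|\ge M$, produce oriented $\e$-close paths $p,q$ labeled by $U,U'$, and apply Lemma \ref{commonedge} to get a shared edge $e$. Where you diverge is in the endgame: you frame the passage from ``one shared edge'' to ``$YRY^{-1}=_GR'$'' as the \emph{main obstacle}, to be handled by citing the combinatorics in \cite{Ols,Osi10} and possibly invoking quasi-geodesicity or Lemma \ref{conscomp}(2). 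The paper's argument here is shorter and purely algebraic, and the distinctness hypothesis kills the propagation problem outright. Since $a_1^{\pm1},\dots,a_n^{\pm1}$ are pairwise distinct, the label $\Lab(e)$ appears exactly once in $W^{\pm1}$, so the decompositions $R\equiv U_1\Lab(e)U_2V$ and $R'\equiv U_1'\Lab(e)U_2'V'$ force $R$ and $R'$ to be the same cyclic word aligned at that single letter, i.e.\ $U_2VU_1\equiv U_2'V'U_1'$. Reading around the cycle $s\,p_1\,q_1^{-1}$ gives $Y=_GU_1'U_1^{-1}$, and then
$YRY^{-1}=_G U_1'\,\Lab(e)\,U_2VU_1(U_1')^{-1}=_G U_1'\,\Lab(e)\,U_2'V'=_GR'$,
contradicting clause (3) of the $\e$-piece definition in one line. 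For the $\e$-primepiece case the contradiction is even more immediate: $U$ and $U'$ are disjoint subwords of a single $R$, yet the shared edge means they contain a common letter of $\mathcal A$, which is impossible because each letter appears at most once in $R$ and never together with its inverse. So there is no residue of ``repetitive combinatorial detail'' to outsource — one common edge plus distinctness is decisive.
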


\begin{proof}
The proof is similar to the proof of \cite[Theorem 7.5]{Osi10}. Clearly $\mathcal R$ satisfies the first condition of Definition \ref{SCdef}. Lemma \ref{conscomp} gives that $\mathcal R$ satisfies the second condition of Definition \ref{SCdef}. Now suppose $U$ is an $\e$-piece of some $R\in\mathcal R$. In the notation of Definition \ref{piece}, we assume without loss of generality that $\|U\|=\max\{\|U\|, \|U^\prime\|\}$. Assume
\begin{equation}\label{bigpiece}
\|U\|\geq\frac{M}{n}\|R\|\geq M.
\end{equation} 

By the definition of an $\e$-piece, there are oriented $\e$-close paths $p$ and $q$ in $\G$  such that $\Lab(p)\equiv U$, $\Lab(q)\equiv U^\prime$. (\ref{bigpiece}) gives that $p$ and $q$ satisfy the conditions of Lemma \ref{commonedge}, and thus $p$ and $q$ share a common edge $e$. Thus, we can decompose $p=p_1ep_2$ and $q=q_1eq_2$; let $U_1\Lab(e)U_2$ be the corresponding decomposition of $U$ and $U_1^\prime\Lab(e)U_2^\prime$ the corresponding decomposition of $U^\prime$. Let $s$ be a path from $q_-$ to $p_-$ such that $\l(s)\leq\e$, and let  $Y=\Lab(s)$. Then 
\[
R\equiv U_1\Lab(e)U_2V
\] 
and 
\[
R^\prime\equiv U_1^\prime\Lab(e)U_2^\prime V^\prime.
\] 

Since $\Lab(e)$ only appears once in $W^{\pm 1}$, we have that $R$ and $R^\prime$ are cyclic shifts of the same word and
\[
U_2VU_1\equiv U_2^\prime V^\prime U_1^\prime.
\]

Also $Y=_GU_1^\prime U_1^{-1}$ since this labels the cycle $sp_1q_1^{-1}$. Thus,
\[
YRY^{-1}=_GU_1^\prime U_1^{-1}U_1\Lab(e)U_2VU_1(U_1^\prime)^{-1}=_GU_1^\prime\Lab(e)U_2^\prime V^\prime=_GR^\prime
\]

which contradicts the definition of a  $\e$-piece. 

Similarly, if $U$ is an $\e$-primepiece, then $R\equiv UVU^\prime V^\prime$, and arguing as above we get that $U$ and $U^\prime$ share a common letter from $X\sqcup \mathcal H$. However each letter $a\in X\sqcup\mathcal H$ appears at most once in $R$, and if $a$ appears then $a^{-1}$ does not.
\end{proof}

\paragraph{Suitable subgroups.}

Our goal now will be to describe the structure of suitable subgroups. As we will see, it is this structure which allows us to find words satisfying the conditions of Proposition \ref{scwords} with respect to an appropriate generating set. 

Fix $\mathcal A\subset G$ such that $\Ga$ is hyperbolic and $G$ acts acylindrically on $\Ga$. For the rest of this section, unless otherwise stated a subgroup will be called non-elementary if it is non-elementary with respect to the action of $G$ on $\Ga$. Similarly, an element will be called loxodromic if it is loxodromic with respect to this action. In particular, all loxodromic elements will satisfy WPD. 

\begin{lem}\label{nonelsub}
Suppose $S$ is a non-elementary subgroup of $G$. Then for all $k\geq 1$, $S$ contains pairwise non-commensurable loxodromic elements $f_1,..., f_k$, such that $E_G(f_i)=E_G^+(f_i)$.
\end{lem}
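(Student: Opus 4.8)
The plan is to build the $f_i$ successively so as to control commensurability while preserving the property $E_G(f_i)=E_G^+(f_i)$. Since $S$ is non-elementary, Theorem \ref{subah} gives infinitely many pairwise independent loxodromic elements of $S$; in particular $S$ contains at least two independent loxodromic elements, say $a$ and $b$. The first observation is that for a loxodromic element $g$, the index $[E_G(g):E_G^+(g)]$ is at most $2$: indeed $E_G^+(g)$ is precisely the kernel of the natural action of $E_G(g)$ on the two-point boundary $\{g^{\pm\infty}\}$, using the characterization of $E_G(g)$ in Lemma \ref{E(h)}, and $E_G^+(g)=C_G(g^r)$ for some $r$. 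So the condition $E_G(f)=E_G^+(f)$ says exactly that no element of $E_G(f)$ reverses the orientation of the axis of $f$; equivalently, no element of $G$ conjugates a nonzero power of $f$ to a negative power.

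The main step is a \emph{replacement trick}: given any loxodromic $f\in S$, I want to produce a loxodromic $f'\in S$, still in a controlled position relative to previously chosen elements, with $E_G(f')=E_G^+(f')$. The idea is to take $f'=f^N\cdot c^M$ (or a similar product) for a suitably chosen loxodromic or independent element $c\in S$ and large exponents $N,M$; by a standard ping-pong / quasi-geodesic argument (using Lemma \ref{conscomp}-style stability of paths, or directly working in $\Ga$ with the acylindricity hypothesis), for large $N,M$ the element $f'$ is loxodromic and $E_G(f')$ is ``small"—in fact one can arrange $E_G(f')=E_G^+(f')$ because a boundary-flip of $f'$ would force an unexpected coincidence of translation axes that ping-pong excludes. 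Alternatively, and perhaps more cleanly, one notes that if $E_G(f)\neq E_G^+(f)$ then $f$ is conjugate to $f^{-1}$, and one replaces $f$ by $f\cdot g f g^{-1}$ for an independent loxodromic $g$: the axis of the product is ``asymmetric" so no boundary flip survives. I expect this is the technical heart of the argument and will need the acylindricity of the action (via Lemma \ref{cyc} giving a uniform lower bound on translation lengths) to make the exponent estimates uniform.

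Once I can produce a single loxodromic $f_1\in S$ with $E_G(f_1)=E_G^+(f_1)$, the non-commensurability of a whole family follows inductively using Remark \ref{rem:lox} in spirit, or more elementarily as follows. Having chosen pairwise non-commensurable loxodromics $f_1,\dots,f_j$ in $S$ with the plus-property, pick any loxodromic $g\in S$ independent from all of them (possible since $S$ is non-elementary, hence has infinitely many pairwise independent loxodromics and only finitely many are commensurable to the $f_i$). Then for large $n$ the element $f_{j+1}:=g^n\cdot f_1$ (or $g\cdot f_1^n$, whichever the ping-pong estimate favors) is loxodromic, has $E_G(f_{j+1})=E_G^+(f_{j+1})$ by the replacement trick applied in this configuration, and is non-commensurable with each $f_i$: a commensurability $f_{j+1}^p\sim f_i^q$ would by Lemma \ref{E(h)} force the fixed-point pairs on $\partial\Ga$ to match up, contradicting that $f_{j+1}$'s axis is ``close to" $g$'s axis over a long stretch while $f_i$ is independent from $g$. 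Carrying this out for $j=1,\dots,k-1$ produces $f_1,\dots,f_k$ as required. The routine parts (verifying loxodromicity of products, the index-$2$ fact, the boundary-dynamics characterization of commensurability) I would cite from Lemma \ref{E(h)}, Lemma \ref{loxorell}, and Theorem \ref{subah}; the one genuinely delicate estimate is the quasi-geodesic stability needed to guarantee the plus-property survives in the product, and that is where I would spend the real work.
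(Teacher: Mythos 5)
Your proposal correctly identifies the goal (produce loxodromics in $S$ with the ``plus-property'' $E_G(f)=E_G^+(f)$ and with pairwise non-commensurability) and correctly observes that the plus-property means no element of $G$ reverses the axis of $f$. But the technical heart of your argument --- the ``replacement trick'' --- does not work as stated, and this is precisely where the paper's proof does something that your plan does not replicate.

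The difficulty is this. Take either of your proposed replacements, $f'=f^Nc^M$ or $f'=f\cdot gfg^{-1}$, and write $f'=ab$ with $a,b$ suitable elements of distinct hyperbolically embedded subgroups (which is how one would set up the quasi-geodesic stability argument you invoke). If some $t\in E_G(f')$ satisfies $t^{-1}(f')^nt=(f')^{-n}$, then the paths labeled by $(ab)^n$ and $(b^{-1}a^{-1})^n$ are oriented $\e$-close, and Lemma \ref{conscomp} gives consecutive components of one connected to consecutive components of the other. The trouble is that the periodic ``type'' sequence of $(ab)^n$ is $\ldots 1\,2\,1\,2\,\ldots$ and that of $(b^{-1}a^{-1})^n$ is $\ldots 2\,1\,2\,1\,\ldots$; these share every length-$2$ substring, so the combinatorics give no contradiction and the minus sign is not ruled out. (Connectedness of components is coset-level and cannot distinguish an $a$-edge from an $a^{-1}$-edge.) Concretely, if $t$ flips $f$ then $gtg^{-1}$ flips $gfg^{-1}$, and nothing \emph{a priori} prevents the existence of a single element swapping the two factors and hence flipping $f'$; your appeal to ``asymmetry of the axis'' is an assertion, not an argument. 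The paper sidesteps this by building $f_i=abc_i$ with $a,b,c_i$ drawn from \emph{three} distinct hyperbolically embedded subgroups $H_1,H_2,H_3$ (obtained as elementary closures $E_G(gh^{n_j})$ via Lemma \ref{lox} and Corollary \ref{heGX}): now $(abc)^n$ has type sequence $\ldots 123123\ldots$ while $(abc)^{-n}$ has $\ldots 321321\ldots$, which share \emph{no} length-$2$ substrings, so Lemma \ref{conscomp} forces the $+$ sign. This three-subgroup trick is the missing idea.

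Once you have the three-subgroup mechanism, the non-commensurability of the family is also handled more cleanly than in your sketch: the paper fixes $a\in H_1\cap S$, $b\in H_2\cap S$ with large relative length, and then uses Lemma \ref{lox} and Remark \ref{rem:lox} to choose $c_1,\dots,c_k\in H_3\cap S$ making $f_i=abc_i$ simultaneously non-commensurable, loxodromic, and WPD --- there is no need for the inductive ``avoid finitely many commensurability classes'' step or the separate ping-pong estimate you anticipate. Your outline also calls on Lemma \ref{cyc} for uniform translation-length bounds, but that is not actually needed here; the relevant uniformity comes from the relative metrics on $H_1,H_2,H_3$ and Lemma \ref{C}. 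In short: the scaffolding of your plan is sound, but the single delicate step you flag as ``the real work'' is exactly the step for which your proposed technique fails, and the fix is the three-component construction.
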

\begin{proof}
We will basically follow the proof of \cite[Lemma 6.16]{DGO}. By Theorem \ref{subah}, since $S$ is non-elementary, it contains a loxodromic element $h$, and an element $g$ such that $g\notin E_G(h)$. By Lemma \ref{lox}, for sufficiently large $n_1, n_2, n_3$, $gh^{n_1}, gh^{n_2}, gh^{n_3}$ are pairwise non-commensurable loxodromic elements with respect to $\Gamma(G, \mathcal A\sqcup E_G(h))$, and by Lemma \ref{A1} these elements are loxodromic with respect to $\Ga$. Thus, letting $H_i=E_G(gh^{n_i})$, we get that $\{H_1, H_2 ,H_3\}\h (G, \mathcal A)$ by Corollary \ref{heGX}. Now we can choose $a\in H_1\cap S$, $b\in H_2\cap S$ which satisfy $\widehat{d}_1(1, a)\geq 50C$ and  $\widehat{d}_2(1, b)\geq 50C$, where $C$ is the constant given by Lemma \ref{C}. Then $ab$ cannot belong to $H_3$ by Lemma \ref{C}, so by Lemma \ref{lox} we can find $c_1$,...,$c_k\in H_3\cap S$ such that $\widehat{d}_3(1, c_i)\geq 50C$ and the elements $f_i=abc_i$ are non-commensurable, loxodromic WPD elements with respect to the action of $G$ on $\Gamma(G, \mathcal A_1)$, where $\mathcal A_1=\mathcal A \sqcup H_1\sqcup H_2\sqcup H_3$. Next we will show that $E_G(f_i)=E_G^+(f_i)$. Suppose that $t\in E_G(f_i)$. Then for some $n\in\N$, $t^{-1}f_i^nt=f_i^{\pm n}$. Let $\e=|t|_{\mathcal A_1}$. Then there are oriented $\e$-close paths $p$ and $q$ labeled by $(abc_i)^n$ and $(abc_i)^{\pm n}$. Passing to a multiple of $n$, we can assume that $n\geq M$ where $M=M(\e, 2)$ is the constant provided by Lemma \ref{conscomp}. Then the labels of $p$ and $q$ satisfy $(W1)$-$(W3)$, so we can apply Lemma \ref{conscomp} to get that $p$ and $q$ have two consecutive components. But then the label of $q$ must be $(abc_i)^n$, because the sequences 123123... and 321321... have no common subsequences of length 2. Thus, $t^{-1}f_i^nt=f_i^{n}$, hence $t\in E_G^+(f_i)$. Finally, note that each $f_i$ is loxodromic with respect to the action of $G$ on $\Ga$ by Lemma \ref{A1}.
\end{proof}

Now given a subgroup $S\leq G$, let $\mathcal L_S=\{h\in S\;|\; h\text{ is loxodromic and }E_G(h)=E_G^+(h)\}$. Now define $K_G(S)$ by
\[
K_G(S)=\bigcap_{h\in\mathcal L_S}E_G(h).
\]
 
 The following lemma shows that $K_G(S)$ can be defined independently of $\Ga$.
\begin{lem}\label{K(S)}
Let $S$ be a non-elementary subgroup of $G$. Then $K_G(S)$ is the maximal finite subgroup of $G$ normalized by $S$. In addition, for any infinite subgroup $H\leq S$ such that $H\h G$, $K_G(S)\leq H$.
\end{lem}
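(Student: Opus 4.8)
The plan is to verify the three assertions in order: $K_G(S)$ is finite, it is normalized by $S$, and it is the maximal such subgroup; then finally that any infinite hyperbolically embedded $H \le S$ contains it.

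\emph{Finiteness.} By Lemma \ref{nonelsub}, $\mathcal L_S$ is non-empty and in fact contains pairwise non-commensurable loxodromic elements; pick two such elements $f_1, f_2 \in \mathcal L_S$. Since $f_1$ and $f_2$ are non-commensurable loxodromic WPD elements, Lemma \ref{E(h)} gives that $E_G(f_1) \cap E_G(f_2)$ contains no element $g$ with $g^{-1} f_i^k g = f_i^m$ for nonzero $k,m$ unless the two elementary closures would force $f_1, f_2$ commensurable; more directly, $E_G(f_1) \cap E_G(f_2)$ is finite because by Lemma \ref{he-wpd} $\{E_G(f_1), E_G(f_2)\} \h G$, and then Lemma \ref{finint}(2) (applied with $g = 1$, $\lambda \ne \mu$) gives $|E_G(f_1) \cap E_G(f_2)| < \infty$. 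Since $K_G(S) \le E_G(f_1) \cap E_G(f_2)$, we conclude $K_G(S)$ is finite.

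\emph{Normality and maximality.} Each $E_G(h)$ is a \emph{maximal} elementary subgroup containing $h$, and it is a standard fact (from Lemma \ref{E(h)}) that for $s \in S$ and $h \in \mathcal L_S$ one has $s E_G(h) s^{-1} = E_G(shs^{-1})$; moreover $shs^{-1} \in S$ is again loxodromic with $E_G(shs^{-1}) = E_G^+(shs^{-1})$ (this condition is conjugation-invariant), so $shs^{-1} \in \mathcal L_S$. Hence conjugation by $s$ permutes the family $\{E_G(h) : h \in \mathcal L_S\}$, and therefore fixes its intersection $K_G(S)$. This proves $S$ normalizes $K_G(S)$. Conversely, if $F$ is any finite subgroup of $G$ normalized by $S$, then for every $h \in \mathcal L_S$ the subgroup $\langle h \rangle F$ is elementary (an extension of a finite group by $\Z$ acting on it — up to finite index the action is trivial, so it is virtually cyclic), so $F \le E_G(h)$ by maximality of the elementary closure; taking the intersection over $h \in \mathcal L_S$ gives $F \le K_G(S)$. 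Thus $K_G(S)$ is the maximal finite subgroup normalized by $S$.

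\emph{The hyperbolically embedded case.} Let $H \le S$ be infinite with $H \h G$. The key point is that $H$ must contain a loxodromic element lying in $\mathcal L_S$: by Theorem \ref{Ahyp} we may pass to a generating set making the action acylindrical with $H$ hyperbolically embedded and infinite, hence non-degenerate, and Lemma \ref{lox} produces loxodromic WPD elements inside $H$; by Lemma \ref{nonelsub}'s argument (or by directly passing to suitable powers as in that proof) we may arrange such an element $h_0 \in H$ with $E_G(h_0) = E_G^+(h_0)$. Since $H \h G$, Lemma \ref{finint}(1) applies: for any $g \notin H$, $|H \cap H^g| < \infty$, which forces $E_G(h_0) = H$ to fail in general, but what we actually need is different — I claim $H \le E_G(h_0)$ is false in general, so instead the argument should run: $K_G(S) \le E_G(h_0)$, and $E_G^+(h_0) = C_G(h_0^r)$ for some $r$ by Lemma \ref{E(h)}, so $K_G(S)$ commutes with $h_0^r \in H$; since $K_G(S)$ is finite and normalized by $S \ge H$, the subgroup $K_G(S) H$ is a well-defined subgroup with $K_G(S)$ finite normal in it, and because $H \h G$ one shows $K_G(S) \le H$ by observing that a finite normal subgroup of the ambient hyperbolically embedded $H$-structure must already lie in $H$ (using that $H^g \cap H$ is finite for $g \notin H$ and $K_G(S) \le H^g \cap H$ since $K_G(S)$ is normal in $K_G(S)H$ and lies in every conjugate $H^g$ for $g \in K_G(S)H$, combined with $K_G(S)$ being normalized by all of $S$). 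The main obstacle I anticipate is making this last step clean: extracting $K_G(S) \le H$ from hyperbolic embeddedness requires a careful use of Lemma \ref{finint} together with the normality of $K_G(S)$ in $\langle K_G(S), H\rangle$, and getting the bookkeeping of cosets right is where the real work lies.
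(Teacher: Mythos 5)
The first three assertions (finiteness, normality, maximality) are handled correctly, and your route to maximality — showing $\langle h\rangle F$ is elementary and invoking maximality of $E_G(h)$ — is a valid variant of the paper's argument (the paper instead notes that since $h$ normalizes the finite group $N$, some power $h^n$ centralizes $N$, so $N\leq C_G(h^n)\leq E_G(h)$ via Lemma~\ref{E(h)}(2)).

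The last assertion, $K_G(S)\leq H$, has a genuine gap, and your approach goes off in the wrong direction. You try to produce a loxodromic element $h_0\in H\cap\mathcal L_S$, but Lemma~\ref{lox} does not give you this: it requires $H$ to be \emph{finitely generated} (not assumed here), and more importantly the loxodromic elements it produces have the form $gh_i$ with $g\notin H$, so they do not lie in $H$. You also cannot expect to find such $h_0$ in $H$ in general. Your fallback bookkeeping is circular: you assert ``$K_G(S)\leq H^g\cap H$ since $K_G(S)$ is normal in $K_G(S)H$,'' but normality of $K_G(S)$ in $K_G(S)H$ gives no containment of $K_G(S)$ in $H$ or its conjugates — that containment is exactly what you are trying to prove. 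The paper's argument is much shorter and sidesteps loxodromic elements entirely: since $H\leq S$ normalizes the finite group $K_G(S)$, the homomorphism $H\to\mathrm{Aut}(K_G(S))$ has finite image, so a finite-index (hence infinite) subgroup $H_0\leq H$ centralizes $K_G(S)$. Now if $k\in K_G(S)\setminus H$, then $H_0=H_0^k\subseteq H\cap H^k$, which is infinite, contradicting Lemma~\ref{finint}(1). Hence $K_G(S)\leq H$. The centralizing-finite-index-subgroup observation together with a single application of Lemma~\ref{finint} is the whole argument; no loxodromic elements are involved.
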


\begin{proof}
By Lemma \ref{nonelsub}, $\mathcal L_S$ contains non-commensurable elements $f_1$ and $f_2$. Then by Lemma \ref{finint} $K_G(S)\subseteq E_G(f_1)\cap E_G(f_2)$ is finite. $K_G(S)$ is normalized by $S$ as the set $\mathcal L_S$ is invariant under conjugation by $S$ and for each $g\in S$, $h\in\mathcal L_S$, $E_G(g^{-1}hg)=g^{-1}E_G(h)g$. Now suppose $N$ is a finite subgroup of $G$ such that for all $g\in S$, $g^{-1}Ng=N$. Then for each $h\in\mathcal L_S$, there exists $n$ such that $N\leq C_G(h^n)$, and thus $N\leq E_G(h)$ for all $h\in\mathcal L_S$.

Suppose now that $H\leq S$ and $H\h G$. Then a finite-index subgroup of $H$ centralizes $K_S(G)$, and hence $K_S(G)\leq H$ by Lemma \ref{finint}.
\end{proof}

In \cite{DGO}, it is shown that every $G\in\X$ contains a maximal finite normal subgroup, called the \emph{finite radical} of $G$ and denoted by $K(G)$. In our notation, $K(G)$ is the same as $K_G(G)$. Now if $S$ is a non-elementary subgroup of $G\in\X$, then $S\in\X$, so $S$ has a finite radical $K(S)$. Clearly $K(G)\cap S\leq K(S)\leq K_G(S)$, but in general none of the reverse inclusions hold. Indeed suppose $S\in\X$ with $K(S)\neq\{1\}$. Let $G=(S\times A)\ast H$, where $A$ is finite and $H$ is non-trivial. Then $K(G)=\{1\}$ and $K_G(S)=K(S)\times A$.

\begin{lem}\label{yi}
Let $S$ be a non-elementary subgroup of $G$. Then we can find non-commensurable, loxodromic elements $h_1,...,h_m$ such that $E_G(h_i)=\langle h_i\rangle\times K_G(S)$.
\end{lem}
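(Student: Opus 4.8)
The plan is to produce the $h_i$ by sharpening the construction used in the proof of Lemma~\ref{nonelsub}. Write $K=K_G(S)$. Observe first that passing to powers is useless, since $E_G(g^n)=E_G(g)$ for all $n\ne0$ by Lemma~\ref{E(h)}, so genuinely new elements must be built and they must be built so that (i) $E_G(h_i)=E_G^+(h_i)$, (ii) $h_i$ generates $E_G^+(h_i)$ modulo torsion and centralizes the torsion subgroup, and (iii) that torsion subgroup is exactly $K$ rather than something larger. By Lemma~\ref{nonelsub}, $S$ contains a loxodromic $f\in\mathcal L_S$, and by Lemma~\ref{E(h)} we have $E_G^+(f)=C_G(f^r)$; thus $E_G(f)=E_G^+(f)$ is virtually cyclic with no orientation reversal, its maximal finite subgroup $T_f$ is normal with $E_G^+(f)/T_f\cong\Z$, and $K\le T_f$ since $K$ is finite and $f\in\mathcal L_S$. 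Using $K=\bigcap_{h\in\mathcal L_S}E_G(h)$, any subgroup $U$ with $K\subsetneq U\le T_f$ fails to lie in some $E_G(h)$ with $h\in\mathcal L_S$; iterating inside the finite group $T_f$ yields finitely many pairwise non-commensurable elements $h_1',\dots,h_k'\in\mathcal L_S$ (with $h_1'=f$) such that $E_G(h_1')\cap\cdots\cap E_G(h_k')=K$. Enlarging $\mathcal A$ via Lemma~\ref{joinell} and Lemma~\ref{ee}, we may assume by Corollary~\ref{heGX} that the $E_G(h_j')$ are hyperbolically embedded with respect to a common generating set, so we work in $\Gamma(G,\mathcal A\sqcup\mathcal H)$ with $\mathcal H=\bigsqcup_j E_G(h_j')$.

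Next I would write down the elements. For each $i=1,\dots,m$ let $h_i=a_1a_2\cdots a_s$, where each $a_t$ is a single element of one of the $E_G(h_j')$, consecutive $a_t$ lie in distinct subgroups, every $E_G(h_j')$ occurs among the letters, each $a_t$ has large relative length, and the cyclic sequence of subgroup-indices has no nontrivial self-overlap or inverse-overlap; choosing genuinely different index patterns for different $i$ keeps $h_1,\dots,h_m$ pairwise non-commensurable. By Remark~\ref{rem:lox} such letters exist and each $h_i$ is loxodromic and WPD, and the normal form of $h_i$ satisfies conditions (W1)--(W3) of Section~\ref{sect:5}.

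Finally I would compute $E_G(h_i)$ by a connection argument. Given $t\in E_G(h_i)$, a sufficiently large power of the relation $t^{-1}h_i^{\,n}t=h_i^{\pm n}$ produces oriented $\e$-close paths labelled by powers of the normal form of $h_i$, and Lemma~\ref{conscomp} supplies as many consecutive connected components as desired; as in Lemma~\ref{nonelsub} the index patterns of $h_i$ and $h_i^{-1}$ share no long common subsequence, so the sign is $+$, and, taking enough components to span a full period of $h_i$, the connections force the $t^{-1}$-translate to be the original path shifted by an integral number of periods. This gives $E_G(h_i)=E_G^+(h_i)=\langle h_i\rangle\cdot P_i$ with $P_i$ finite and $h_i$ generating modulo $P_i$; and since conjugation by an element of $P_i$ must carry the letter of $h_i$ belonging to $E_G(h_j')$ into the same left coset of $E_G(h_j')$, one gets $P_i\le\bigcap_j E_G(h_j')=K$. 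Conversely $K\le E_G(h_i)$ because $h_i\in\mathcal L_S$, and $\langle h_i\rangle\cap K=1$ since $h_i$ has infinite order; moreover $E_G^+(h_i)=C_G(h_i^{\,r'})$ (Lemma~\ref{E(h)}) shows that some power of $h_i$ centralizes $K$, and the alignment above upgrades this to $h_i$ itself, after replacing $h_i$ by $h_i\kappa$ for a suitable $\kappa\in K$ if necessary. Hence $P_i=K$ and $E_G(h_i)=\langle h_i\rangle\times K=\langle h_i\rangle\times K_G(S)$.

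The main obstacle is the last step: one must choose the normal form of $h_i$ (a suitable small-cancellation word) and the number of connected components demanded of Lemma~\ref{conscomp} so that a single argument simultaneously rules out orientation reversal, aligns $E_G(h_i)$ along $\langle h_i\rangle$, and drives the torsion down to exactly $K$ rather than to a larger finite subgroup of $T_f$, all while preserving pairwise non-commensurability of the $m$ elements. This is the common refinement of the argument in Lemma~\ref{nonelsub} and of the quasi-geodesic analysis in Olshanskii's small-cancellation theory, and it is where essentially all the technical work lies.
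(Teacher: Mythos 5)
Your overall strategy matches the paper's: find $f_1,\dots,f_k\in\mathcal L_S$ with $\bigcap_j E_G(f_j)=K_G(S)$, form ``zig-zag'' words through the hyperbolically embedded subgroups $E_G(f_j)$, and compute $E_G(h_i)$ via the connection argument of Lemma~\ref{conscomp}. However, you omit the one condition on the letters that makes the endgame work. The paper chooses each letter $a_j=f_j^{n_j}$ so that, in addition to having large relative length, \emph{$E_G(f_j)=C_G(a_j)$} (Lemma~\ref{E(h)} guarantees such an exponent). This condition is used twice and both uses are essential. First, in the connection argument it forces the label $c$ of the initial connecting edge $e_0$ to commute with $\Lab(u_1)=a_1$, hence $\Lab(e_1)=c$, and inductively $\Lab(e_j)=c$ for every $j$; this is exactly what puts $c$ in $\bigcap_j E_G(f_j)=K_G(S)$. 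Without it, the connecting labels $\Lab(e_j)$ may be different elements at each step, and your conclusion ``$P_i\le\bigcap_j E_G(h_j')=K$'' does not follow --- each $\Lab(e_j)$ only sits in the pairwise intersection $E_G(f_{\sigma(j)})\cap E_G(f_{\sigma(j+1)})$, which is finite but a priori larger than $K$. Second, since $K\le E_G(f_j)=C_G(a_j)$, every element of $K$ commutes with every letter $a_j$, hence with $h_i$, which is precisely what gives the \emph{direct} product $\langle h_i\rangle\times K$ rather than a semidirect product.

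Your attempted patch at the end --- ``replacing $h_i$ by $h_i\kappa$ for a suitable $\kappa\in K$'' to upgrade centralization from $h_i^{r'}$ to $h_i$ --- does not work. If $K$ is abelian and $h_i$ acts on it nontrivially by conjugation, then for any $\kappa\in K$ and $p\in K$ one has $(h_i\kappa)^{-1}p(h_i\kappa)=\kappa^{-1}(h_i^{-1}ph_i)\kappa=h_i^{-1}ph_i\ne p$, so no choice of $\kappa$ makes $h_i\kappa$ centralize $K$. The direct-product structure cannot be recovered by tweaking $h_i$ after the fact; it has to be built in by choosing the $a_j$ to centralize $E_G(f_j)$. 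With that condition added (and then taking $h_i=a_1\cdots a_{k-1}a_k^{l_i}$ for distinct large $l_i$ to get non-commensurability via Lemma~\ref{lox} and Remark~\ref{rem:lox}), your argument closes and coincides with the paper's.
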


\begin{proof}
First, since $K_G(S)$ is finite, we can find non-commensurable elements $f_1$,...,$f_k\in\mathcal L_S$ such that $K_G(S)=E_G(f_1)\cap...\cap E_G(f_k)$, and we can further assume that $k\geq 3$. By Lemma \ref{heGX}, $\{E_G(f_1),...,E_G(f_k)\}\h(G,\mathcal A)$. Let $\mathcal A_1=\mathcal A\sqcup E_G(f_1)\sqcup...\sqcup E_G(f_k)$, and consider the action of $G$ on $\Gamma(G,\mathcal A_1)$. For each $1\leq i\leq k$ set $a_i=f_i^{n_i}$  where $n_i$ is chosen such that
\begin{enumerate}
\item $E_G(f_i)=C_G(a_i)$.
\item $\widehat{d}_i(1, a_i)\geq 50C$. 
\item $h=a_1...a_k$ is a loxodromic WPD element with respect to $\Gamma(G, \mathcal A_1)$.

\end{enumerate}

(Here $\widehat{d}_i$ denotes the relative metric on $ E_G(f_i)$). The first condition can be ensured for each $a_i$ by Lemma \ref{E(h)}. Passing to a sufficiently high multiple of an exponent which satisfies the first condition gives an exponent which satisfies the first two conditions. We now fix $n_1,...,n_{k-1}$ such that the corresponding $a_1,...,a_{k-1}$ satisfy the first two conditions. Then $a_1...a_{k-1}\notin E_G(f_k)$ by Lemma \ref{C}, so by Lemma \ref{lox} and Remark \ref{rem:lox}, we can choose $n_k$ a sufficiently high multiple of an exponent which satisfies the first two conditions such that all three conditions are satisfied. We will show that, in fact, $E_G(h)=\langle h\rangle\times K_S(G)$. Let $t\in E_G(h)$, and let $\e=|t|_{\mathcal A_1}$. Then by Lemma \ref{E(h)}, there exists $n$ such that  
\begin{equation}\label{ygn}
t^{-1}h^nt=h^{\pm n}. 
\end{equation}
Up to passing to a multiple of $n$, we can assume that
\[
n\geq \frac{M}{k}
\]
where $M=M(\e, k)$ is the constant provided by Lemma \ref{conscomp}. Now (\ref{ygn}) gives that there are oriented $\e$-close paths $p$ and $q$ in $\Gamma(G, \mathcal A_1)$, such that $p$ is labeled by $(a_1...a_k)^n$ and $q$ is labeled by $(a_1...a_k)^{\pm n}$; notice that the labels of these paths satisfy the conditions $(W1)-(W3)$. Furthermore, there is a path $r$ connecting $p_-$ to $q_-$ such that $\Lab(r)=t$. Now we can apply Lemma \ref{conscomp} to get $k$ consecutive components of $p$ connected to consecutive components of $q$. As in the proof of Lemma \ref{nonelsub}, this gives that $q$ is labeled by $(a_1...a_k)^n$ (not $(a_1...a_k)^{-n}$) since $k\geq 3$. Let $p=p_0u_1...u_kp_1$ and $q=q_0v_1...v_kq_1$, where each $u_i$ is a component of $p$ connected to the component $v_i$ of $q$ (see Figure \ref{fig}). Let $e_0$ be the edge which connects $(u_1)_-$ and $(v_1)_-$, and let $e_i$ be the edge which connects $(u_i)_+$ to $(v_i)_+$. Let $c=\Lab(e_0)$.

\begin{figure}
\centering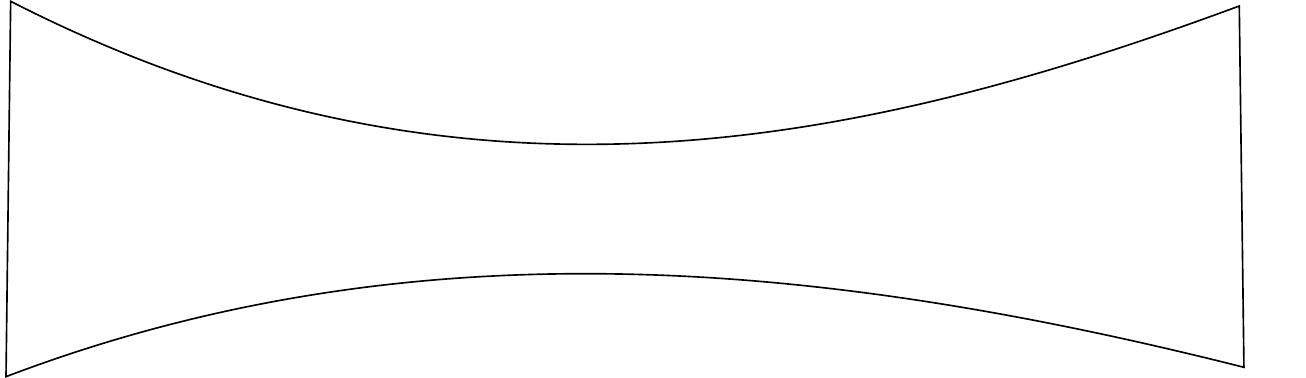
\caption{}\label{fig}
\end{figure}

Since $E_G(f_i)=C_G(a_i)$ for each $1\leq i\leq k$, we get that $c$ commutes with $\Lab(u_1)=\Lab(v_1)$. Thus, $c=\Lab(e_1)$, and repeating this argument we get that $c=\Lab(e_i)$ for each $0\leq i\leq k$. Thus, $c\in E_G(f_1)\cap...\cap E_G(f_k)=K_G(S)$. Now observe that $\Lab(p_0)=(a_1...a_k)^la_1....a_j$ and $\Lab(q_0)=(a_1...a_k)^ma_1....a_j$ for some $m, l\in\N\cup\{0\}$ and $0\leq j\leq k$. Now $rq_0e_0^{-1}p_0^{-1}$ is a cycle in $\Gamma(G, \mathcal A_1)$ and $c$ commutes with each $a_i$, so we get that
 \[
 t=(a_1...a_k)^la_1....a_jca_j^{-1}...a_1^{-1}(a_1...a_k)^{-m}=h^{l-m}c.
 \]
Thus, we have shown that $E_G(h)=\langle h\rangle K_G(S)$. Finally, note that all elements of $K_G(S)$ commute with each $a_i$ and hence commute with $h$. Therefore, $E_G(h)=\langle h\rangle\times K_G(S)$. Now if we set $h_i=a_1...a_k^{l_i}$ for sufficiently large $l_i$, the elements $h_1...h_m$ will all be non-commensurable, loxodromic, WPD elements with respect to $\Gamma(G, \mathcal A_1)$ by Lemma \ref{lox} and Remark \ref{rem:lox}, and the same proof will show that each $h_i$ will satisfy $E_G(h_i)=\langle h_i\rangle\times K_G(S)$. It only remains to note that these elements are all loxodromic with respect to $\Ga$ by Lemma \ref{A1}.

\end{proof}

Recall that a subgroup $S$ of $G$ which is non-elementary (with respect to $\Ga$) is called \emph{suitable} (with respect to $\mathcal A$) if $S$ does not normalize any finite subgroups of $G$. By Lemma \ref{K(S)}, $S$ is suitable if and only if $K_G(S)=\{1\}$. The next two results characterize suitable subgroups by the cyclic hyperbolically embedded subgroups they contain. The first is an immediate corollary of Lemma \ref{yi} and Corollary \ref{heGX}.

\begin{cor}\label {suitsubc}
Suppose $S$ is suitable with respect to $\mathcal A$. Then for all $k\in\N$, $S$ contains non-commensurable, loxodromic elements $h_1,...,h_k$ such that $E_G(h_i)=\langle h_i\rangle$ for $i=1,...,k$. In particular, $\{\langle h_1\rangle,...,\langle h_k\rangle\}\h (G,\mathcal A)$. 
\end{cor}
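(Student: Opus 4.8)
The plan is to deduce the corollary directly from Lemma~\ref{yi} and Corollary~\ref{heGX}, the only genuinely new input being the translation of suitability into the condition $K_G(S)=\{1\}$.

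First I would unwind the hypotheses. Since $S$ is suitable with respect to $\mathcal A$, it is in particular non-elementary with respect to the action of $G$ on $\Ga$, and by assumption $S$ normalizes no finite subgroup of $G$. By Lemma~\ref{K(S)}, the subgroup $K_G(S)=\bigcap_{h\in\mathcal L_S}E_G(h)$ is a finite subgroup of $G$ normalized by $S$; as $S$ normalizes no nontrivial finite subgroup, this forces $K_G(S)=\{1\}$. (This is exactly the remark made in the paragraph preceding the statement.)

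Next I would feed the non-elementary subgroup $S$ into Lemma~\ref{yi}: for the given $k$, it produces pairwise non-commensurable loxodromic elements $h_1,\dots,h_k\in S$ with $E_G(h_i)=\langle h_i\rangle\times K_G(S)$ for each $i$. Substituting $K_G(S)=\{1\}$ yields $E_G(h_i)=\langle h_i\rangle$, which is the first assertion of the corollary. For the ``in particular'' clause, I would observe that each $h_i$ is loxodromic with respect to the acylindrical action of $G$ on $\Ga$, hence a loxodromic WPD element (every loxodromic element of an acylindrical action satisfies WPD), and the $h_i$ are pairwise non-commensurable by construction. Corollary~\ref{heGX} then applies verbatim and gives $\{E_G(h_1),\dots,E_G(h_k)\}\h(G,\mathcal A)$, i.e. $\{\langle h_1\rangle,\dots,\langle h_k\rangle\}\h(G,\mathcal A)$.

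There is essentially no obstacle at this stage: all the technical work — constructing loxodromic elements whose elementary closure is pinned down to $\langle h_i\rangle\times K_G(S)$ via the small-cancellation words of Lemma~\ref{conscomp} — has already been carried out in Lemma~\ref{yi}, and the hyperbolic embedding is immediate from Corollary~\ref{heGX}. The only point requiring care is the identification ``suitable $\iff K_G(S)=\{1\}$'', which is furnished by Lemma~\ref{K(S)}; everything else is bookkeeping.
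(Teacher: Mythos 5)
Your proposal is correct and follows precisely the route the paper takes: the paper simply states that the first assertion is an immediate corollary of Lemma~\ref{yi} and Corollary~\ref{heGX}, and your write-up spells out exactly that deduction, including the identification of suitability with $K_G(S)=\{1\}$ via Lemma~\ref{K(S)} and the observation that loxodromic elements of the acylindrical action on $\Ga$ satisfy WPD so that Corollary~\ref{heGX} applies.
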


\begin{lem}\label{suitsubl}
If $S$ contains an infinite order element $h$ such that $\langle h\rangle$ is a proper subgroup of $S$ and $\langle h\rangle\h (G, X)$, then $S$ is suitable with respect to $\mathcal A$ for some $\mathcal A\supseteq X$.
\end{lem}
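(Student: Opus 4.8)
The plan is to manufacture the generating set $\mathcal A$ from the cyclic subgroup $\langle h\rangle$ using Theorem \ref{Ahyp}, and then to verify the three clauses in the definition of a suitable subgroup in turn; the last two clauses will fall out of Lemma \ref{lox} and Lemma \ref{K(S)} respectively, so essentially all of the work is already in hand.

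\emph{Producing $\mathcal A$.} First note that $\langle h\rangle$ is non-degenerate in $(G,X)$: it is infinite since $h$ has infinite order, and it is proper in $G$ since it is properly contained in $S\le G$. I would apply Theorem \ref{Ahyp} to the singleton collection $\{\langle h\rangle\}$ to obtain $Y\supseteq X$ with $\langle h\rangle\h(G,Y)$ such that $G$ acts acylindrically on $\Gamma(G,Y\sqcup\langle h\rangle)$, a graph which is then in particular hyperbolic. Now set $\mathcal A=Y\cup\langle h\rangle\subseteq G$. Passing from $\Gamma(G,Y\sqcup\langle h\rangle)$ to $\Ga$ only deletes parallel edges and hence leaves the word metric on the vertex set $G$ unchanged; so $\Ga$ is hyperbolic, the action of $G$ on $\Ga$ is acylindrical (as in the proof of Lemma \ref{joinell}), and $\mathcal A\supseteq X$. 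This establishes clause (1), and it shows the two actions have the same loxodromic elements.

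\emph{The action of $S$ is non-elementary.} Because $\langle h\rangle$ is a proper subgroup of $S$, choose $g\in S\setminus\langle h\rangle$. Since $\langle h\rangle\h(G,Y)$ is non-degenerate and finitely generated, Lemma \ref{lox} supplies $h_1,h_2\in\langle h\rangle$ such that $gh_1$ and $gh_2$ are non-commensurable loxodromic WPD elements for the action of $G$ on $\Gamma(G,Y\sqcup\langle h\rangle)$; both lie in $S$. Non-commensurable loxodromic WPD elements are independent, so $S$ has at least three limit points on the boundary, i.e.\ the action of $S$ on $\Gamma(G,Y\sqcup\langle h\rangle)$, equivalently on $\Ga$, is non-elementary. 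This is clause (2). I expect this to be the conceptual heart of the argument: the hypotheses force $S$ to be non-elementary even though they say nothing about $S$ directly, because Lemma \ref{lox} produces the needed loxodromic elements inside $\langle h,g\rangle\le S$.

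\emph{$S$ normalizes no non-trivial finite subgroup.} Now that $S$ is non-elementary with respect to $\Ga$, Lemma \ref{K(S)} applies and tells us that $K_G(S)$ is the maximal finite subgroup of $G$ normalized by $S$, and that $K_G(S)\le H$ for every infinite subgroup $H\le S$ with $H\h G$. Taking $H=\langle h\rangle$, which is infinite, contained in $S$, and hyperbolically embedded in $G$ by hypothesis, yields $K_G(S)\le\langle h\rangle$; as $K_G(S)$ is finite and $\langle h\rangle$ is infinite cyclic, $K_G(S)=\{1\}$. Therefore $S$ normalizes no non-trivial finite subgroup of $G$, which is clause (3), and $S$ is suitable with respect to $\mathcal A$. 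The only slightly delicate bookkeeping point is the identification of $\Gamma(G,Y\sqcup\langle h\rangle)$ with the honest Cayley graph $\Ga$; beyond that, each step is a direct application of results already established, so I do not anticipate a genuine obstacle.
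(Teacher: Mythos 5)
Your argument is correct and follows essentially the same route as the paper, with one small variation in how non-elementarity of the $S$-action is concluded. The paper first observes via Lemma \ref{finint} that $\langle h\rangle$ has infinite index in any subgroup of $G$ properly containing it, so $S$ (which properly contains the infinite cyclic $\langle h\rangle$) is not virtually cyclic; it then produces a single loxodromic $gh^n\in S$ from Lemma \ref{lox} and invokes the trichotomy of Theorem \ref{subah}. You instead pull \emph{two} non-commensurable loxodromic WPD elements $gh_1,gh_2\in S$ out of Lemma \ref{lox} and read off non-elementarity directly from the boundary. That works, but notice it quietly uses the fact that, for an acylindrical action, two non-commensurable loxodromic elements have disjoint limit sets (so that you really do get four, hence at least three, limit points for $S$). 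This is true and standard for acylindrical actions, but it is not one of the lemmas the paper has collected, and the detour through Lemma \ref{finint} and Theorem \ref{subah} is exactly how the paper avoids needing it. Everything else — using Theorem \ref{Ahyp} to manufacture $\mathcal A\supseteq X$, noting that $\Gamma(G,Y\sqcup\langle h\rangle)$ and $\Gamma(G,\mathcal A)$ carry the same word metric so hyperbolicity and acylindricity transfer, and applying Lemma \ref{K(S)} with $H=\langle h\rangle$ to force $K_G(S)\le\langle h\rangle$ and hence $K_G(S)=\{1\}$ — matches the paper's proof.
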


 \begin{proof}
Note that  Lemma \ref{finint} gives that $\langle h\rangle$ does not have finite index in any subgroup of $G$, so $S$ is not virtually cyclic. By Theorem \ref{Ahyp}, there exists $X\subseteq Y\subseteq G$ such that $\langle h\rangle\h (G, Y)$ and the action of $G$ on $\Gamma(G, Y\sqcup\langle h\rangle)$ is acylindrical; set $\mathcal A=Y\sqcup\langle h\rangle$. Now if $g\in S\setminus \langle h\rangle$, then there exists $n\in\N$ such that $gh^n$ is loxodromic with respect to $\Gamma(G, \mathcal A)$  by Lemma \ref{lox}. Since $S$ is not virtually cyclic, the action of $S$ on $\Gamma(G, \mathcal A)$ is non-elementary by Theorem \ref{subah}. Finally, by Lemma \ref{K(S)}, $K_G(S)$ is a finite subgroup of $\langle h\rangle$, thus $K_G(S)=\{1\}$.
 
 \end{proof}

The next lemma follows from Lemma \ref{suitsubl} and Lemma \ref{hehe}.

\begin{lem}\label{suithe}
Suppose $H\in\X$, $S$ is a suitable subgroup of $H$, and $H\h G$. Then $S$ is a suitable subgroup of $G$. 
\end{lem}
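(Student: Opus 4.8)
The plan is to chain together Corollary~\ref{suitsubc}, Lemma~\ref{hehe}, and Lemma~\ref{suitsubl}, exactly the two ingredients advertised just before the statement. Since $S$ is a suitable subgroup of $H$, by definition there is a generating set $\mathcal B$ of $H$ with respect to which $S$ is suitable; in particular $\Gamma(H,\mathcal B)$ is hyperbolic, the action of $H$ on it is acylindrical, $S$ is non-elementary for this action, and $S$ normalizes no finite subgroup of $H$. First I would apply Corollary~\ref{suitsubc} with $\mathcal A$ there replaced by $\mathcal B$ (and $k=1$, with $G$ there playing the role of $H$) to obtain a loxodromic element $h\in S$ with $E_H(h)=\langle h\rangle$; in particular $\langle h\rangle\h(H,\mathcal B)$. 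Being loxodromic, $h$ has infinite order (its translation length is positive), and since $S$ is non-elementary it is not virtually cyclic by Theorem~\ref{subah}, so the infinite cyclic group $\langle h\rangle$ is a proper subgroup of $S$.

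Next I would invoke transitivity of hyperbolic embeddings. We have $\langle h\rangle\h H$ and, by hypothesis, $H\h G$, so Lemma~\ref{hehe} (with $n=1$, $H_1=H$, $K^1_1=\langle h\rangle$) yields $\langle h\rangle\h G$, say $\langle h\rangle\h(G,X)$ for some relative generating set $X\subseteq G$. Note that $S\le H\le G$, so $\langle h\rangle$ really is a proper subgroup of the subgroup $S\le G$ containing the infinite order element $h$.

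Finally, the hypotheses of Lemma~\ref{suitsubl} are now met for the ambient group $G$: the subgroup $S\le G$ contains an infinite order element $h$ with $\langle h\rangle$ a proper subgroup of $S$ and $\langle h\rangle\h(G,X)$. Hence Lemma~\ref{suitsubl} produces a generating set $\mathcal A\supseteq X$ of $G$ with respect to which $S$ is suitable, which is precisely the conclusion. I do not expect a genuine obstacle here, as the argument is just a matter of quoting the previously established structural results in the correct order; the only points needing a word of care are verifying that the element extracted from Corollary~\ref{suitsubc} has infinite order and generates a proper subgroup of $S$ (both consequences of $S$ being non-elementary, hence not virtually cyclic), and keeping straight that Corollary~\ref{suitsubc} is applied inside $H$ while Lemma~\ref{suitsubl} is applied inside $G$.
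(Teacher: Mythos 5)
Your proof is correct and follows exactly the route the paper intends: the text before the lemma explicitly says it "follows from Lemma \ref{suitsubl} and Lemma \ref{hehe}," and you have filled in the one additional ingredient (Corollary \ref{suitsubc}, applied inside $H$) needed to produce the infinite-order element $h\in S$ with $\langle h\rangle\h H$. Your checks that $h$ has infinite order and that $\langle h\rangle\lneq S$ (both via $S$ being non-elementary, hence not virtually cyclic) are exactly the right points of care.
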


Notice that a group $G\in\X$ will contain suitable subgroups if and only if $K(G)=\{1\}$. However, the following lemma shows that for most purposes this is a minor obstruction; recall that $\X_0$ denotes the class of $G\in\X$ such that $G$ has no finite normal subgroups, or equivalently $K(G)=\{1\}$.

\begin{lem}\label{k(g)}
Let $G\in\X$. Then $G/K(G)\in\X_0$.
\end{lem}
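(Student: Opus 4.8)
The plan is to establish the two defining properties of $\X_0$ for $\bar G:=G/K(G)$ separately: that $\bar G$ is acylindrically hyperbolic, and that $\bar G$ has no non-trivial finite normal subgroup.

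First I would produce an explicit acylindrical action of $\bar G$ on a hyperbolic Cayley graph. By Theorem \ref{thmAH} there is a generating set $\mathcal A$ of $G$ with $\Ga$ hyperbolic and the $G$-action on it non-elementary and acylindrical; since $K(G)$ is finite, hence a bounded subset of $\Ga$, Lemma \ref{joinell} lets me replace $\mathcal A$ by $\mathcal A_1:=\mathcal A\cup K(G)$ while keeping all three properties. Write $\pi\colon G\to\bar G$ for the quotient map and $\bar{\mathcal A}_1:=\pi(\mathcal A_1)$, a symmetric generating set of $\bar G$. Then $\pi$ induces a graph morphism $\Gamma(G,\mathcal A_1)\to\Gamma(\bar G,\bar{\mathcal A}_1)$ whose vertex fibres are exactly the cosets of $K(G)$, each of diameter at most $1$ because $K(G)\subseteq\mathcal A_1$. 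A one-letter-at-a-time path-lifting argument shows this morphism is a quasi-isometry, so $\Gamma(\bar G,\bar{\mathcal A}_1)$ is hyperbolic and, being quasi-isometric to $\Gamma(G,\mathcal A_1)$, unbounded.

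The hard part is checking that the $\bar G$-action on $\Gamma(\bar G,\bar{\mathcal A}_1)$ is acylindrical, but this is really just bookkeeping about the bounded fibres: given $\e$, take the $R,N$ furnished by acylindricity of the $G$-action for the parameter $\e+1$; if $\bar h\in\bar G$ moves vertices $\pi(x),\pi(y)$ by at most $\e$ (with $d(\pi(x),\pi(y))\ge R$), then lifting the two displacement paths shows that any chosen representative $h$ of $\bar h$ moves $x,y$ by at most $\e+1$ in $\Gamma(G,\mathcal A_1)$, and distinct elements of $\bar G$ are represented by elements of $G$ in distinct $K(G)$-cosets, so there are at most $N$ such $\bar h$. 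Finally $\bar G$ is not virtually cyclic (otherwise $G$, a finite-by-virtually-cyclic extension of it, would be virtually cyclic, contradicting $G\in\X$) and it acts with unbounded orbits (transitively on the unbounded graph), so Theorem \ref{subah} gives that the action is non-elementary; hence $\bar G\in\X$. Alternatively one could push a loxodromic WPD element of $G$ forward through $\pi$, checking loxodromicity and the WPD condition by the same lifting estimates, and then apply Theorem \ref{thmAH}(2).

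Second, triviality of the finite radical is formal: if $\bar N\trianglelefteq\bar G$ is finite, then $\pi^{-1}(\bar N)$ is normal in $G$ and is an extension of the finite group $\bar N$ by the finite group $K(G)$, hence finite; maximality of the finite radical forces $\pi^{-1}(\bar N)=K(G)$, so $\bar N=\{1\}$. Thus $K(\bar G)=\{1\}$, and combined with the first part this gives $\bar G\in\X_0$.
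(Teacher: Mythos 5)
Your proof is correct and takes essentially the same approach as the paper's: pick $\mathcal A$ via Theorem \ref{thmAH}, enlarge it to contain $K(G)$ via Lemma \ref{joinell}, use that the quotient map has fibres of diameter at most $1$ to get a quasi-isometry and transfer hyperbolicity and acylindricity, and finish with maximality of $K(G)$. The only difference is that you spell out the path-lifting bookkeeping behind the acylindricity transfer where the paper writes ``it is easy to see,'' but the underlying argument is identical.
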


\begin{proof}
By Theorem \ref{thmAH}, we can assume that for some generating set $\mathcal A$, $\Ga$ is hyperbolic and the action of $G$ on $\Ga$ is acylindrical. By Lemma \ref{joinell}, we can assume that $K(G)\subseteq \mathcal A$. Let $G^\prime=G/K(G)$, and let $\mathcal A^\prime$ be the image of $\mathcal A$ in $G^\prime$. Let $x^\prime, y^\prime\in G^\prime$, and let $x, y\in G$ be preimages of $x^\prime$ and $y^\prime$ respectively. Clearly $d_{\mathcal A}(x, y)\geq d_{\mathcal A^\prime}(x^\prime, y^\prime)$. Also, for some $k\in K(G)$, $d_{\mathcal A^\prime}(x^\prime, y^\prime)=d_{\mathcal A}(x, yk)\geq d_{\mathcal A}(x, y)-1$ since $K(G)\subseteq \mathcal A$. Thus,
\[
d_{\mathcal A}(x, y)\geq d_{\mathcal A^\prime}(x^\prime, y^\prime)\geq d_{\mathcal A}(x, y)-1.
\]

Combining this with the fact that each element of $G^\prime$ has finitely many preimages in $G$, it is easy to see that acylindricity of the action of $G$ on $\Ga$ implies that the action of $G^\prime$ on $\Gamma(G^\prime, \mathcal A^\prime)$ is acylindrical. Since $K(G)$ is finite, these spaces are quasi-isometric, hence $\Gamma(G^\prime, \mathcal A^\prime)$ is hyperbolic and non-elementary. Finally maximality of $K(G)$ gives that $G^\prime$ contains no finite normal subgroups, thus $G^\prime\in\X_0$.

\end{proof}

\section{Suitable subgroups of HNN-extensions and amalgamated products}\label{HNN&amal}

In this section we will show that suitable subgroups can be controlled with respect to taking certain HNN-extensions and amalgamated products.

\begin{lem}\label{HNNgen}
Suppose $G\in \X$, $\mathcal A_0\subseteq G$, and $S\leq G$ is suitable with respect to $\mathcal A_0$. Suppose also that $A$ and $B$ are cyclic subgroups of $G$. Then there exists $\mathcal A_0\subseteq \mathcal A\subseteq G$ such that $A\cup B\subseteq \mathcal A$ and $S$ is suitable with respect to $\mathcal A$.
\end{lem}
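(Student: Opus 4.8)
The plan is to reduce to a situation where Lemma~\ref{suitsubl} applies. Since $S$ is suitable with respect to $\mathcal A_0$, by Corollary~\ref{suitsubc} we may fix a loxodromic element $h\in S$ with $E_G(h)=\langle h\rangle$, so in particular $\langle h\rangle\h (G,\mathcal A_0)$ (viewing $\Gamma(G,\mathcal A_0)$ as a relative generating set for the hyperbolically embedded subgroup). Now apply Lemma~\ref{ee} with the finite list of elements being a generator $a$ of $A$ and a generator $b$ of $B$: there exists $Y\supseteq \mathcal A_0$ such that $\langle h\rangle\h (G,Y)$ and both $a$ and $b$ are elliptic with respect to the action of $G$ on $\Gamma(G, Y\sqcup\langle h\rangle)$. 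Being elliptic means $A$ and $B$ each lie in a bounded subset $B_0$ of $\Gamma(G, Y\sqcup\langle h\rangle)$; set $B_1 = B_0\cup A\cup B$, still bounded.

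Next I would set $\mathcal A = (Y\sqcup\langle h\rangle)\cup B_1$. By Lemma~\ref{joinell} — or more directly since adjoining a bounded subset is a $G$-equivariant quasi-isometry — $\Gamma(G,\mathcal A)$ is hyperbolic with the same loxodromic elements as $\Gamma(G, Y\sqcup\langle h\rangle)$; in particular $h$ is still loxodromic there. We have $A\cup B\subseteq B_1\subseteq\mathcal A$ and $\mathcal A_0\subseteq\mathcal A$ by construction, so two of the three required properties hold. It remains to check that $S$ is suitable with respect to $\mathcal A$, i.e.\ that $S$ acts non-elementarily on $\Gamma(G,\mathcal A)$ and normalizes no nontrivial finite subgroup of $G$.

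For suitability I would invoke the machinery already assembled: since $\langle h\rangle\h(G,Y)$, Lemma~\ref{suitsubl} applies with $X=Y$ once we know $\langle h\rangle$ is a proper subgroup of $S$ (which holds because $S$ is non-elementary, hence not virtually cyclic, hence strictly contains the cyclic group $\langle h\rangle$). This produces $\mathcal A'\supseteq Y$ with $S$ suitable with respect to $\mathcal A'$; and inspecting the proof of Lemma~\ref{suitsubl}, $\mathcal A'$ is obtained from $Y$ by the process of Theorem~\ref{Ahyp} and then adjoining $\langle h\rangle$. The one point requiring care — and the step I expect to be the main obstacle — is that I actually need suitability for the specific generating set $\mathcal A = (Y\sqcup\langle h\rangle)\cup B_1$ rather than for some abstract $\mathcal A'$, so I cannot simply quote Lemma~\ref{suitsubl} verbatim. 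Instead I would argue directly: $\Gamma(G,\mathcal A)$ is hyperbolic and $G$ acts acylindrically on it (apply Theorem~\ref{Ahyp} to $\langle h\rangle\h(G, Y\cup B_1)$, noting $\langle h\rangle\h(G,Y\cup B_1)$ by Lemma~\ref{finsymdif} or \ref{joinell}, and absorb the resulting set into $\mathcal A$); since $S$ is not virtually cyclic and contains the loxodromic element $h$, Theorem~\ref{subah} gives that the $S$-action is non-elementary; and $K_G(S)$ is a finite subgroup of $E_G(h)=\langle h\rangle$, hence trivial, so $S$ normalizes no nontrivial finite subgroup by Lemma~\ref{K(S)}. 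Keeping the bounded sets $B_1$ and the acylindricity-producing set $Y\cup(\text{stuff})$ organized so that a single final generating set $\mathcal A$ simultaneously contains $\mathcal A_0\cup A\cup B$ and witnesses all three conditions is the only genuinely fiddly part; everything else is a direct appeal to earlier lemmas.
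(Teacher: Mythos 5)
Your overall strategy is the same as the paper's (pick a loxodromic $h\in S$ with $\langle h\rangle\h(G,\mathcal A_0)$ via Corollary~\ref{suitsubc}, use Lemma~\ref{ee} to make $A$ and $B$ elliptic, use Theorem~\ref{Ahyp} for acylindricity, and then absorb the bounded set $A\cup B$ via Lemma~\ref{joinell}), but there is one genuine gap in the non-elementarity step, and it is not a technicality.

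You assert that $h$ is loxodromic with respect to the action of $G$ on $\Gamma(G, Y\sqcup\langle h\rangle)$, and hence on $\Gamma(G,\mathcal A)$. That is false. Once $\langle h\rangle$ is placed inside the generating set, every power $h^n$ satisfies $|h^n|_{Y\sqcup\langle h\rangle}\leq 1$, so $\tau(h)=0$ and $h$ is \emph{elliptic} in $\Gamma(G, Y\sqcup\langle h\rangle)$. (Being loxodromic with respect to $\Gamma(G,\mathcal A_0)$ or $\Gamma(G,Y)$ does not carry over, because you are enlarging the generating set by $\langle h\rangle$, not passing to a subset — Lemma~\ref{A1} goes the wrong way here, and Lemma~\ref{joinell} only preserves the loxodromics of $\Gamma(G, Y\sqcup\langle h\rangle)$, where $h$ is already elliptic.) So Theorem~\ref{subah} cannot be applied with $h$ as the witness loxodromic element, and the conclusion that $S$ acts non-elementarily on $\Gamma(G,\mathcal A)$ is unjustified as written.

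The fix — and what the paper actually does — is to choose an auxiliary $g\in S\setminus\langle h\rangle$ and invoke Lemma~\ref{lox} to produce some $n$ for which $gh^n$ is loxodromic with respect to the action on $\Gamma(G, Y\sqcup\langle h\rangle)$; this is the element that survives adding the bounded set $A\cup B$ (Lemma~\ref{joinell}) and witnesses the non-elementary action of $S$ on $\Gamma(G,\mathcal A)$. You in fact recite this mechanism when you sketch the proof of Lemma~\ref{suitsubl}, but you did not import it into your ``direct'' argument, where it is indispensable. The remaining pieces of your argument (the containments $\mathcal A_0\subseteq\mathcal A$ and $A\cup B\subseteq\mathcal A$, the hyperbolicity and acylindricity via Theorem~\ref{Ahyp} and Lemma~\ref{joinell}, and the triviality of $K_G(S)$) are all sound and match the paper.
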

\begin{proof}
By Corollary \ref{suitsubc}, $S$ contains an infinite order element $y$ such that $\langle y\rangle\h (G, \mathcal A_0)$, and an element $g\in S\setminus\langle y\rangle$. By Lemma \ref{ee}, we can find a subset $\mathcal A_0\subseteq Y_0\subset G$ such that $\langle y\rangle\h (G, Y_0)$ and $A$ and $B$ are both elliptic with respect to the action of $G$ on $\Gamma(G, Y_0\sqcup\langle y\rangle)$. By Theorem \ref{Ahyp}, we can find $Y\supseteq Y_0$ such that $\langle y\rangle\h(G, Y)$, and the action of $G$ on $\Gamma(G, Y\sqcup\langle y\rangle)$ is acylindrical. Clearly $A$ and $B$ are still elliptic with respect to $\Gamma(G, Y\sqcup\langle y\rangle)$. By Lemma \ref{lox}, for some $n\in\N$, $gy^n$ is loxodromic with respect to $\Gamma(G, Y\sqcup\langle y\rangle)$. Thus, the action of $S$ on $\Gamma(G, Y\sqcup\langle y\rangle)$ is non-elementary by Theorem \ref{subah}. Letting $\mathcal A=(Y\cup A\cup B)\sqcup\langle y\rangle$, Lemma \ref{joinell} gives that $\Ga$ is hyperbolic, the action of $G$ on $\Ga$ is acylindrical, and the action of $S$ is still non-elementary, hence $S$ is suitable with respect to $\mathcal A$.
\end{proof}

\begin{prop}\label{HNNsuit}
Suppose $S$ is a suitable subgroup of a group $G\in\X$. Then for any isomorphic cyclic subgroups $A$ and $B$ of $G$, the corresponding HNN-extension $G\ast_{A^t=B}$ belongs to $\X$ and contains $S$ as a suitable subgroup. 
\end{prop}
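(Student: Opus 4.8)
The plan is to build the HNN-extension $\widehat G = G\ast_{A^t=B}$ as a quotient of a free product with a stable letter, and to exhibit it as an acylindrically hyperbolic group by finding a suitable cyclic hyperbolically embedded subgroup inside it, then invoke Lemma \ref{suitsubl} (together with Lemma \ref{suithe}) to conclude. First I would apply Lemma \ref{HNNgen} to choose a generating set $\mathcal A$ of $G$ such that $A\cup B\subseteq\mathcal A$ and $S$ is suitable with respect to $\mathcal A$; in particular $\Ga$ is hyperbolic, the action of $G$ on $\Ga$ is acylindrical, and (by Corollary \ref{suitsubc}) $S$ contains non-commensurable loxodromic elements $h_1,\dots,h_k$ with $E_G(h_i)=\langle h_i\rangle$, so that $\{\langle h_1\rangle,\dots,\langle h_k\rangle\}\h(G,\mathcal A)$. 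The point of arranging $A\cup B\subseteq\mathcal A$ is that the associated cyclic subgroups then lie in a bounded subset of $\Ga$, so adjoining the stable letter $t$ is a ``small'' perturbation from the metric point of view.

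Next I would realize $\widehat G$ concretely. Work in $\widehat G$ with generating set $\widehat{\mathcal A}=\mathcal A\cup\{t^{\pm1}\}$. The natural strategy, following Osin's treatment of HNN-extensions in the relatively hyperbolic setting, is to show that the Cayley graph $\Gamma(\widehat G,\widehat{\mathcal A})$ is hyperbolic and that a suitable element of $G$ (or of $\langle t\rangle$) becomes loxodromic and satisfies WPD there; then Theorem \ref{thmAH} gives $\widehat G\in\X$. Concretely I expect to use the following mechanism: the subgroups $\{\langle h_1\rangle,\dots,\langle h_k\rangle\}\h(G,\mathcal A)$ remain hyperbolically embedded in $\widehat G$ with respect to $\mathcal A\cup\{t^{\pm1}\}$, because an HNN-extension over a subgroup $A$ that is elliptic/bounded in $\Ga$ does not destroy the relative isoperimetric inequality (the relators of the HNN presentation involve only $t$ and the finitely many letters of $A\cup B$, which have bounded $\mathcal A$-length, so one can run the argument of Lemma \ref{scquot}-style van Kampen diagram surgery, or more directly cite the HNN-extension result of \cite{DGO} on hyperbolically embedded subgroups). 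Having $\langle h_1\rangle\h(\widehat G,\widehat{\mathcal A})$ with $\langle h_1\rangle$ of infinite order and proper in $S\le\widehat G$ (it is proper since $S$ is non-virtually-cyclic), Lemma \ref{suitsubl} immediately yields that $S$ is suitable in $\widehat G$ with respect to some generating set, which also gives $\widehat G\in\X$. Finally, to see that $S$ is a suitable subgroup of $\widehat G$ in the sense required, I note that $S\le G\le\widehat G$, that $S$ is non-elementary for the relevant action (the loxodromic elements $h_1,h_2\in S$ witness this via Theorem \ref{subah}), and that $K_{\widehat G}(S)=\{1\}$: any finite subgroup of $\widehat G$ normalized by $S$ would, by Lemma \ref{K(S)} applied inside $\widehat G$, be contained in $\langle h_1\rangle$, hence trivial.

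The main obstacle I anticipate is the first half — verifying that the hyperbolically embedded structure $\{\langle h_i\rangle\}\h(G,\mathcal A)$ genuinely persists after forming the HNN-extension, i.e. that $\widehat G$ together with this peripheral collection satisfies a linear relative isoperimetric inequality (Theorem \ref{AHliniso}). The cleanest route is probably to set up the relative presentation of $\widehat G$: starting from the strongly bounded relative presentation $\langle \mathcal A\cup\{t^{\pm1}\}, \{\langle h_i\rangle\}\mid \mathcal Q\rangle$ of $G$ augmented by the HNN relators $\{t^{-1}at\,(\phi(a))^{-1}\mid a\in A\}$ (a strongly bounded set, since $A$ and $B$ lie in $B_{\mathcal A}(1)$ up to enlarging $\mathcal A$), and then either (i) cite the HNN-extension stability result from \cite{DGO} directly, or (ii) reprove it by the same van Kampen diagram argument used in Lemma \ref{ee} and Lemma \ref{scquot}: a minimal diagram over the combined presentation has its $t$-edges grouped into ``$t$-bands'' whose lengths are controlled by the relative area, and removing an innermost such band reduces to a diagram over $G$. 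I would lean on option (i) for brevity since the excerpt permits citing earlier results, remarking that the elliptic-type hypothesis on $A,B$ (arranged via Lemma \ref{HNNgen}) is exactly what is needed for those results to apply. Once $\widehat G\in\X$ is established, the ``$S$ is suitable'' conclusion is essentially formal from Lemmas \ref{K(S)}, \ref{suitsubl}, and \ref{suithe}.
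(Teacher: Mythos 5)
Your plan is essentially the paper's, but two points need flagging. First, your preferred shortcut, option (i) --- citing an HNN-extension stability result for hyperbolically embedded subgroups from \cite{DGO} --- is not available: no such statement appears there, which is precisely why Proposition \ref{HNNsuit} has to be proved from scratch in this paper. You would have to run option (ii), the $t$-band van Kampen surgery, which is indeed what the paper does. Second, the paper frames that surgery in the \emph{absolute} rather than \emph{relative} setting. Instead of verifying a relative isoperimetric inequality for $\langle h_1\rangle$ in $G_1=G\ast_{A^t=B}$ directly, the paper establishes four separate facts: hyperbolicity of $\Gamma(G_1,\mathcal A\cup\{t\})$ via $t$-bands in ordinary van Kampen diagrams (here $A\cup B\subseteq\mathcal A$ forces each $t$-band to be a single cell, which makes the area estimate immediate); loxodromicity of $h$ via Britton's lemma (shortest $\mathcal A\cup\{t\}$-words representing elements of $G$ contain no $t$-letters, so quasi-geodesics of $\Gamma(G,\mathcal A)$ stay quasi-geodesic); the WPD condition for $h$ by a second $t$-band argument showing that the relevant elements $g$ in the WPD set actually lie in $G$, reducing the count to the one in $G$; and $E_{G_1}(h)=E_G(h)=\langle h\rangle$ via conjugacy in HNN-extensions together with Lemma \ref{E(h)}. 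Only then does Corollary \ref{heGX} yield $\langle h\rangle\h(G_1,\mathcal A\cup\{t\})$, and Lemma \ref{suitsubl} finishes.

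Your relative-presentation framing would, if actually carried out, compress these four into a single verification of a strongly bounded relative presentation satisfying a linear relative isoperimetric inequality; Theorem \ref{AHliniso} then packages the coned-off hyperbolicity and local finiteness of the relative metric together, with no separate WPD step. That consolidation is appealing, but you do not execute the $t$-band surgery in the relative diagram setting, and the WPD verification and the check that $E_{G_1}(h)$ does not acquire new elements are genuine pieces of work in the paper's version, not formalities hidden behind a citation. A minor slip at the end: what licenses Lemma \ref{suitsubl} is $\langle h_1\rangle\h G_1$ together with $\langle h_1\rangle$ being a \emph{proper} subgroup of $S$, not merely $G_1\in\X$; you state the correct form earlier in the paragraph, so this is imprecision rather than a logical gap.
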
 

\begin{proof}
By Lemma \ref{HNNgen}, there exists $\mathcal A\subseteq G$ such that $S$ is suitable with respect to $\mathcal A$ and $A \cup B\subseteq \mathcal A$. Then Corollary \ref{suitsubc} gives that $S$ contains an element $h$ which is loxodromic with respect to $\Ga$ and which satisfies $E_G(h)=\langle h\rangle$.

Let $G_1$ denote the HNN-extension $G\ast_{A^t=B}$; we identify $G$ with its image in $G_1$. We will first show that $\Gamma(G_1, \mathcal A\cup\{t\})$ is a hyperbolic metric space. Since $\Ga$ is hyperbolic, by Theorem \ref{liniso} there exists a bounded presentation of $G$ of the form
\begin{equation}\label{Zpres}
G=\langle \mathcal A\;|\;\mathcal O\rangle
\end{equation}
such that for any word $W$ in $\mathcal A$ such that $W=_G 1$, the area of $W$ over the presentation (\ref{Zpres}) is at most $L\|W\|$ for some constant $L$. Then $G_1$ has the presentation
\begin{equation}\label{HNNpres}
G_1=\langle \mathcal A\cup\{t\}\;|\; \mathcal O\cup\{a^t=\phi(a)\;|\; a\in A\}\rangle
\end{equation}
where $\phi\colon A\to B$ is an isomorphism. Note that (\ref{HNNpres}) is still a bounded presentation, as we only added relations of length $4$ (we use here that $A\cup B\subseteq\mathcal A$). We will show that (\ref{HNNpres}) still satisfies a linear isoperimetric inequality, which is enough to show that $\Gamma(G_1, \mathcal A\cup\{t\})$ is a hyperbolic metric space by Theorem \ref{liniso}.

Let $W$ be a word in $\mathcal A\cup\{t\}$ such that $W=_{G_1} 1$. Let $\Delta$ be a minimal diagram over (\ref{HNNpres}). Note that every cell $\Pi$ of $\Delta$ which contains an edge labeled by $t$ forms a square with exactly two $t$-edges on $\partial\Pi$; we call such cells \emph{$t$-cells}. Hence these $t$-edges of $\Pi$ must either lie on $\partial\Delta$ or on the boundary of another $t$-cell. It follows that $\Pi$ belongs to a maximal, connected collection of $t$-cells, which we will call a $t$-band. Since $\Delta$ is minimal, it is well-known (and easy to prove) that every $t$-band of $\Delta$ starts and ends on $\partial\Delta$. Furthermore, since $A\cup B\subseteq \mathcal A$, minimality of $\Delta$ gives that each $t$-band consists of a single cell. Let $\Pi_1,...,\Pi_m$ denote the $t$-bands of $\Delta$. Then $\Delta\setminus\bigcup\Pi_i$ consists of $m+1$ connected components $\Delta_1,...,\Delta_{m+1}$ such that each $\Delta_i$ is a diagram over (\ref{Zpres}). Thus, for each $i$, $Area(\Delta_i)\leq L\l(\partial\Delta_i)$. Clearly $m\leq\l(\partial\Delta)$, and it is easy to see that
\[
\sum_{i=1}^{m+1}\l(\partial\Delta_i)=\l(\partial\Delta).
\]
It follows that
\[
Area(\Delta)=\sum_{i=1}^{m+1}Area(\Delta_i)+m\leq\sum_{i=1}^{m+1}L\l(\partial\Delta_i)+\l(\partial\Delta)\leq(L+1)\l(\partial\Delta).
\]
Thus, $Area(W)\leq (L+1)\|W\|$, and hence $\Gamma(G_1, \mathcal A\cup\{t\})$ is a hyperbolic metric space by Theorem \ref{liniso}.

Next, we will show that $h$ is loxodromic with respect to the action of $G_1$ on $\Gamma(G_1, \mathcal A\cup\{t\})$. Observe that any shortest word $W$ in $\mathcal A\cup\{t\}$ which represents an element of $G$ contains no $t$ letters. Indeed by Britton's Lemma if $W$ represents an element of $G$ and contains $t$ letters, then it has a subword of the form $t^{-1}at$ for some $a\in A$ or a subword of the form $tbt^{-1}$ for some $b\in B$. However, since $A\cup B\subseteq \mathcal A$, each of these subwords can be replaced with a single letter of $A\cup B$, contradicting the fact that $W$ is a shortest word. Since $h$ is loxodromic, if $W$ is the shortest word in $\mathcal A$ representing $h$ in $G$ then any path $p$ labeled by $W^n$ is quasi-geodesic in $\Gamma(G, \mathcal A)$. It follows that $p$ is still quasi-geodesic in $\Gamma(G_1, \mathcal A\cup\{t\})$ thus $h$ is loxodromic in $G_1$.

Finally, we will show that $h$ satisfies the WPD condition (\ref{WPDeq}) of Definition \ref{defn:WPD}; clearly it suffices to verify (\ref{WPDeq}) with $x=1$. Let $\e>0$, and choose $M$ such that if $x_1$ and $x_2$ satisfy $|x_i|_{\mathcal A} \leq \e$ for $i=1,2$, then $x_1Ax_2\cup x_1Bx_2\subseteq B_{\mathcal A}(M)$. Now choose $N_0$ such that for all $N\geq N_0$, $h^N\notin B_{\mathcal A}(M)$. Suppose $N\geq N_0$, and $g\in G_1$ such that $d_{\mathcal A\sqcup\{t\}}(1, g)\leq \e$ and $d_{\mathcal A\sqcup\{t\}}(h^N, gh^N)\leq \e$. Consider the quadrilateral $s_1p_1s_2(p_2)^{-1}$ in $\Gamma(G_1, \mathcal A\cup\{t\})$ where $\l(s_i)\leq\e$, $\Lab(s_1)=g$, and $\Lab(p_i)=h^N$. Without loss of generality, we assume each $s_i$ and each $p_i$ is a geodesic. As shown above, this means that no edges of $p_i$ are labeled by $t^{\pm 1}$. 

Suppose that $s_1$ contains an edge labeled by $t^{\pm 1}$. Filling this quadrilateral with a van Kampen diagram $\Delta$, for each edge of $s_1$ labeled by $t^{\pm 1}$, there exists a $t$-band connecting this edge to an edge of $s_2$. Let $e$ be the last $t$-edge of $s_1$, and let $r_1$ be the subpath of $s_1$ from $e_+$ to $(s_1)_+$. Similarly, let $r_2$ be the subpath of $s_2$ from $(s_2)_-$ to $f_-$, where $f$ is the $t$-edge of $s_2$ connected to $e$ by a $t$-band. Let $q$ be the path from $e_+$ to $f_-$ given by the $t$-band joining these edges. This means that $\Lab(q)$ is an element of $A$ or $B$; for concreteness we assume it is equal to an element $a\in A$. Note that $r_1$ and $r_2$ cannot contain edges labeled by $t^{\pm 1}$, otherwise $e$ would not be the last $t$-edge of $s_1$. Let $x_i$ be the element of $G$ given by $\Lab(r_i)$ for $i=1,2$. Then $p_1r_2q^{-1}r_1$ forms a cycle in $\Gamma(G_1, \mathcal A\cup\{t\})$, and moreover no edge of this cycle is labeled by $t^{\pm 1}$. Thus,
\[
h^N=x_1^{-1}ax_2^{-1}
\]
where this equality holds in $G$. However, this violates our choice of $N$. Therefore, $s_1$ must not contain any $t$-letters, and hence $g\in G$. Thus,
\[
\{g\in G_1\;|\;d_{\mathcal A\sqcup\{t\}}(1, g)<\e, d_{\mathcal A\sqcup\{t\}}(h^N, gh^N)<\e\}\subseteq \{g\in G\;|\;d_{\mathcal A}(1, g)<\e, d_{\mathcal A}(h^N, gh^N)<\e\},
\]
and this last set is finite (for sufficiently large $N$) because $h$ satisfies WPD with respect to the action of $G$ on $\Gamma(G, \mathcal A)$. Thus, $h$ is a loxodromic, WPD element with respect to the action of $G_1$ on $\Gamma(G_1, \mathcal A\cup\{t\})$, hence $G_1\in\X$ by Theorem \ref{thmAH}.
 
Since $h$ is loxodromic with respect to the action of $G$ on $\Gamma(G, \mathcal A)$, it is not conjugate with any elliptic element; in particular it is not conjugate with any element of $A$ or $B$. It follows from Lemma \ref{E(h)} (and conjugacy in HNN-extensions, for example \cite[Lemma 2.14]{HO1}) that $E_{G_1}(h)=E_G(h)=\langle h\rangle$. Therefore $S$ is a suitable subgroup of $G_1$ by Lemma \ref{suitsubl}. 
\end{proof}

We now prove a similar result for amalgamated products using a standard retraction trick. The following lemma is a simplification of \cite[Lemma 6.21]{DGO}; recall that a subgroup $H\leq G$ is called a \emph{retract} if there exists a homomorphism $r\colon G\to H$ such that $r^2=r$.
\begin{lem}\label{heret}
Suppose $G$ is a group, $R$ a subgroup which is a retract of $G$, and $H\leq R$ such that $H\h G$. Then $H\h R$.
\end{lem}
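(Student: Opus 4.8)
The plan is to deduce Lemma \ref{heret} from the characterization of hyperbolically embedded subgroups via relative isoperimetric inequalities (Theorem \ref{AHliniso}), using the retraction $r\colon G\to R$ to pull back relative van Kampen diagrams from $G$ to $R$. First I would fix the data: since $H\h G$, there is a relative generating set $X\subseteq G$ with $H\h(G,X)$, a strongly bounded relative presentation $\langle X, H\mid \mathcal Q\rangle$ of $G$, and a constant $L$ such that every word $W$ in $X\sqcup H$ with $W=_G 1$ satisfies $Area^{rel}(W)\le L\|W\|$. Set $X_R=r(X)\subseteq R$; since $H\le R$ and $r$ restricts to the identity on $H$ (as $H\le R$ and $r^2=r$ forces $r|_R=\mathrm{id}$), the set $X_R\sqcup \mathcal H$ generates $R$, so $X_R$ is a relative generating set of $R$ with respect to $H$.

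Next I would set up the map on relative presentations. Applying $r$ letterwise gives a map from words in $X\sqcup\mathcal H$ to words in $X_R\sqcup\mathcal H$ (fixing letters of $\mathcal H$, sending a letter $x\in X$ to the letter $r(x)\in X_R$, with the convention that if $r(x)=1$ the letter is deleted — this only shortens words). Under this map each relator $R_i\in\mathcal Q$ goes to a word $r(R_i)$ in $X_R\sqcup\mathcal H$ which equals the identity in $R$ (since $r$ is a homomorphism and $R_i=_G1$); let $\mathcal Q_R$ be the set of these images together with the (finitely many, by strong boundedness) words needed so that $\langle X_R,\mathcal H\mid \mathcal Q_R\rangle$ is a relative presentation of $R$ — equivalently one checks directly that $\ker(F_R\twoheadrightarrow R)$ is normally generated by $\{r(R_i)\}$, where $F_R=(\ast_\lambda H_\lambda)\ast F(X_R)$. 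This presentation is still strongly bounded: the lengths $\|r(R_i)\|\le\|R_i\|$ are uniformly bounded, and the set of $\mathcal H$-letters appearing is unchanged.

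Now given any word $V$ in $X_R\sqcup\mathcal H$ with $V=_R1$, I want to bound $Area^{rel}_R(V)$ linearly. Regard $V$ also as a word in $X\sqcup\mathcal H$ (via $X_R\subseteq G$, since $R\le G$); then $V=_R1$ implies $V=_G1$, so by Theorem \ref{AHliniso} applied in $G$ there is an expression $V=_{F}\prod_{i=1}^{k}f_i^{-1}R_i^{\pm1}f_i$ with $k\le L\|V\|$. Apply $r$ letterwise to this equation. On the left, $r$ fixes $V$ since every letter of $V$ lies in $X_R\sqcup\mathcal H$ and $r$ is the identity on $R\supseteq X_R\cup\mathcal H$; on the right we get $V=_{F_R}\prod_{i=1}^k r(f_i)^{-1} r(R_i)^{\pm1} r(f_i)$, an expression over the relators $\mathcal Q_R$ of length $k\le L\|V\|$. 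Hence $Area^{rel}_R(V)\le L\|V\|$, and the relative Dehn function of $R$ with respect to $X_R$ and $\mathcal H$ is linear. Also, hyperbolicity of $\Gamma(R,X_R\sqcup\mathcal H)$ follows since, by Theorem \ref{AHliniso} again, a strongly bounded relative presentation with a linear relative isoperimetric inequality is exactly the condition $\Hl\h(R,X_R)$. Therefore $H\h R$.

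I expect the main obstacle to be the bookkeeping around applying $r$ to the word equation \eqref{prod}: one must be careful that $r$ genuinely fixes $V$ (which uses $H\le R$ so that the $\mathcal H$-letters are not moved, together with $r|_R=\mathrm{id}$) and that $r$ sends the free-product factorization in $F$ to a valid factorization in $F_R$ over the new relator set — in particular verifying that $\ker(F_R\to R)=\ll\{r(R_i)\}\rr$, rather than merely a quotient of it, so that $\mathcal Q_R$ really is a relative presentation. This is where one invokes that $r$ is a \emph{retraction} and not just any homomorphism: $R$ is the image of $r$, so relations in $R$ are precisely $r$-images of relations in $G$. Once that identification is in place, everything else is the routine length estimate above, and since \cite[Lemma 6.21]{DGO} already records this, I would keep the argument brief.
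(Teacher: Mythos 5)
The paper gives no proof of this lemma; it simply cites \cite[Lemma 6.21]{DGO}, so there is no internal argument to compare against. Your strategy---push the relative isoperimetric inequality of Theorem~\ref{AHliniso} from $G$ to $R$ via the retraction---is a natural one, but as written it has a genuine gap.

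The problem is the step ``Regard $V$ also as a word in $X\sqcup\mathcal H$.'' You have set $X_R=r(X)$, and $V$ is a word over the alphabet $X_R\sqcup\mathcal H$. But $X_R$ is a subset of $R\subseteq G$, not a subset of the alphabet $X$: there is no reason that $r(x)$ should be a letter of $X\sqcup\mathcal H$. The parenthetical ``via $X_R\subseteq G$'' only says the letters of $V$ are \emph{elements} of $G$, which is not the same as being \emph{letters} of the alphabet $X\sqcup\mathcal H$ over which the free product $F=(\ast_\lambda H_\lambda)\ast F(X)$ and the relative presentation of $G$ are built. So the expression $V=_F\prod f_i^{-1}R_i^{\pm1}f_i$ is not meaningful for $V$ as given. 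To turn $V$ into an honest word over $X\sqcup\mathcal H$ one must replace each letter $r(x)\in X_R$ by a word over $X\sqcup\mathcal H$ representing it; the length of that word is $|r(x)|_{X\sqcup\mathcal H}$, which has no uniform bound, since $r(X)$ need not lie in any bounded ball of $\Gamma(G,X\sqcup\mathcal H)$. After rewriting, your estimate $k\le L\|V\|$ becomes $k\le L\cdot(\text{length of the rewritten word})$, which is no longer linear in $\|V\|$, so the desired linear relative isoperimetric inequality for $R$ does not follow. The same confusion reappears when you ``apply $r$ letterwise'' and assert that $r$ fixes $V$: if $V$ really were a word over $X\sqcup\mathcal H$ (as required to place it in $F$), then $\bar r$ would move its $X$-letters into $X_R$, and $\bar r(V)$ would not equal $V$ letter-for-letter. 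The argument thus treats $V$ simultaneously as a word over two different alphabets.

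To make this approach work one would need to arrange $r(X)\subseteq X$ while preserving the hypothesis $H\h(G,X)$, and that is precisely where the real content of the lemma lies: replacing $X$ by $X\cup r(X)$ changes $\Gamma(G,X\sqcup\mathcal H)$ in an uncontrolled way, and none of Lemma~\ref{joinell}, Lemma~\ref{finsymdif}, or Lemma~\ref{ee} apply, since $r(X)$ need not be bounded in $\Gamma(G,X\sqcup\mathcal H)$, nor finite, nor a finite set to be made elliptic. So the key step --- producing a relative generating set for $R$ compatible with the one for $G$ --- is exactly the step missing from your proposal, and it is not cosmetic.

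Two smaller points. Your claim that $r|_R=\mathrm{id}$ requires $r$ to be surjective onto $R$; the paper's phrasing of ``retract'' leaves this implicit, but your reading is the standard one and is fine. And the assertion that $X_R\sqcup\mathcal H$ generates $R$, with $r$ acting as the identity on $\mathcal H$-letters since $\mathcal H\subseteq H\le R$, is correct.
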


The following is well-known; it can be easily derived from the proof of \cite[ Chapt. IV, Theorem 2.6]{LS}.
 \begin{thm}\label{rtrick}
Let $P=A\ast_{K=J}B$, and let $G=(A\ast B)\ast_{K^t=J}$; that is, $G$ is an HNN extension of the free product $A\ast B$. Then $P$ is naturally isomorphic to the retract $\langle A^t, B\rangle\leq G$.

\end{thm}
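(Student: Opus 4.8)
The plan is to prove Theorem \ref{rtrick}, which asserts that for $P = A\ast_{K=J}B$ and $G = (A\ast B)\ast_{K^t=J}$, the amalgamated product $P$ is naturally isomorphic to the subgroup $\langle A^t, B\rangle \leq G$, and moreover this subgroup is a retract of $G$. The overall strategy is to exhibit two homomorphisms in opposite directions and check they are mutually inverse, then produce the retraction explicitly.

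First I would define a homomorphism $\psi\colon P \to G$. By the universal property of amalgamated products, it suffices to give homomorphisms $A \to G$ and $B \to G$ that agree on $K$ (identified with $J$). I would send $B \to G$ by the inclusion $B \hookrightarrow A\ast B \hookrightarrow G$, and send $A \to G$ by $a \mapsto t^{-1}at = a^t$, i.e. the composition of inclusion with conjugation by $t$. These agree on the amalgamated subgroup precisely because of the HNN relation $k^t = \phi(k) \in J$ for $k \in K$: conjugating $K \leq A$ by $t$ lands it in $J \leq B$ via the identification $K \cong J$. Hence $\psi$ is well-defined, and its image is exactly $\langle A^t, B\rangle$. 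Next I would define a homomorphism $\rho\colon G \to P$. Using the universal property of HNN-extensions, I need a homomorphism $A\ast B \to P$ together with an element of $P$ conjugating the image of $K$ to the image of $J$ compatibly. I take the homomorphism $A\ast B \to P$ to be the natural quotient map $\iota$ (inclusion of each free factor followed by the amalgamation map), and I take the stable letter $t$ to map to $1 \in P$; this is legitimate because in $P$ we already have $\iota(K) = \iota(J)$ as subgroups with the identity conjugating one to the other, so the required HNN relation $\rho(k^t) = \rho(\phi(k))$ becomes $\iota(k) = \iota(\phi(k))$, which holds by definition of $P$.

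Then I would check $\rho\circ\psi = \mathrm{id}_P$. On generators from $B$ this is immediate since $\psi(b) = b$ and $\rho(b) = \iota(b)$, and $\iota$ restricted to $B$ is injective (indeed $\iota$ is the defining embedding). On generators from $A$, $\rho(\psi(a)) = \rho(t^{-1}at) = \rho(t)^{-1}\rho(a)\rho(t) = \iota(a)$ since $\rho(t) = 1$. Since $A$ and $B$ generate $P$, this shows $\rho\circ\psi = \mathrm{id}$. In particular $\psi$ is injective, so $P \cong \langle A^t, B\rangle$ naturally. Finally, to see that $\langle A^t, B\rangle$ is a retract of $G$: set $r = \psi\circ\rho\colon G \to \langle A^t, B\rangle \leq G$. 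Then $r$ fixes $\langle A^t, B\rangle$ pointwise because $\rho\circ\psi = \mathrm{id}$ on $P$, so $r^2 = \psi\circ(\rho\circ\psi)\circ\rho = \psi\circ\rho = r$, giving the retraction.

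Most of this is formal manipulation with universal properties, so there is no single hard computational step; the only genuine point requiring care is verifying that $\psi$ is well-defined, i.e. that conjugation by $t$ carries $K \leq A$ into $B \leq G$ in a way compatible with the amalgamation identification $K = J$. This is exactly where the structure of $G$ as $(A\ast B)\ast_{K^t = J}$ is used, and it is essentially tautological once the HNN presentation is written down. Since the excerpt only asks us to record that this is well-known and derivable from \cite[Chapt. IV, Theorem 2.6]{LS}, I would present the argument at roughly this level of detail, citing that reference for the underlying normal-form facts about amalgamated products and HNN-extensions that guarantee the maps $\iota$ and the inclusions are injective.
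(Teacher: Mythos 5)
Your proof is correct. The paper itself gives no argument for Theorem \ref{rtrick}, merely citing \cite[Ch. IV, Thm. 2.6]{LS} as a source from which it can be derived, so there is no competing proof in the paper to compare against; your verification via the universal properties of amalgamated products and HNN-extensions is the standard and natural way to make the claim precise, and the three steps (well-definedness of $\psi$, well-definedness of $\rho$ with $\rho(t)=1$, and the identity $\rho\circ\psi=\mathrm{id}_P$ yielding both the isomorphism and the idempotent $r=\psi\circ\rho$) are all carried out correctly.
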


\begin{prop}\label{amalsuit}
Suppose $A\in\X$ and $S$ is a suitable subgroup of $A$. Let $P=A\ast_{K=\phi(K)}B$, where $K$ is cyclic. Then $P\in\X$ and $S$ is a suitable subgroup of $P$.
\end{prop}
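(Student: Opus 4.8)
The plan is to derive Proposition~\ref{amalsuit} from Proposition~\ref{HNNsuit} using the retraction trick of Theorem~\ref{rtrick}. Since $A\in\X$ and $S$ is suitable in $A$, we first observe that $A\ast B$ contains $S$ as a suitable subgroup: indeed $A\h A\ast B$ (this is standard, and follows for instance from the fact that a free factor is hyperbolically embedded, or directly by noting that any suitable subgroup of a free factor remains suitable via Lemma~\ref{suithe}), so Lemma~\ref{suithe} gives that $S$ is suitable in $A\ast B$. Next, since $K$ is cyclic, $K^t=\phi(K)$ realizes an isomorphism between cyclic subgroups of $A\ast B$, so Proposition~\ref{HNNsuit} applies to the HNN-extension $G=(A\ast B)\ast_{K^t=\phi(K)}$: we conclude $G\in\X$ and $S$ is a suitable subgroup of $G$.

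It then remains to transfer suitability back from $G$ to the retract $P=\langle A^t,B\rangle\leq G$, which is naturally isomorphic to $A\ast_{K=\phi(K)}B$ by Theorem~\ref{rtrick}. By Corollary~\ref{suitsubc}, $S$ (viewed inside $G$, and using that $S\leq A\leq A\ast B$, which is contained in $P=\langle A^t,B\rangle$ after the identification of $A$ with $A^t$ in $G$ — more precisely $S$ is conjugate into $P$, or we simply replace $S$ by its image under the retraction, noting $r$ fixes $A\ast B$ appropriately) contains an infinite order element $h$ with $\langle h\rangle\h(G,\mathcal A)$ for suitable $\mathcal A$ and $E_G(h)=\langle h\rangle$. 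Applying Lemma~\ref{heret} with the retraction $r\colon G\to P$ from Theorem~\ref{rtrick}, we get $\langle h\rangle\h P$. Since $\langle h\rangle$ has infinite index in $S$ (because $E_G(h)=\langle h\rangle$ and $S$ is non-elementary, so in particular not virtually cyclic, and $\langle h\rangle$ cannot have finite index in $S$ by Lemma~\ref{finint}) and $S\leq P$, Lemma~\ref{suitsubl} yields that $S$ is suitable in $P$ with respect to some generating set containing the relevant $X$. In particular $P\in\X$.

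The one point requiring care — and the main obstacle — is the bookkeeping around the identification $P\cong\langle A^t,B\rangle$: in $G$ the copy of $A$ inside $P$ is $A^t$, not $A$ itself, so $S\leq A$ lands inside $P$ only after conjugating by $t$. This is harmless since conjugation by $t$ is an automorphism of $G$ and suitability is preserved under automorphisms (or, equivalently, one can work throughout with $S^t\leq A^t$ and note $S^t$ is suitable in $A^t$ whenever $S$ is suitable in $A$, as $A\cong A^t$ via this conjugation). Alternatively, one composes everything with the retraction $r$, which restricts to an isomorphism on $\langle A^t,B\rangle$ and allows one to pull the hyperbolically embedded cyclic subgroup back cleanly. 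Once this identification is pinned down, the argument is a direct chain of the cited lemmas; no new estimates are needed.
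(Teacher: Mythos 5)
Your proposal follows the same route as the paper: reduce to the HNN-extension $G=(A\ast B)\ast_{K^t=\phi(K)}$ via Lemma~\ref{suithe} and Proposition~\ref{HNNsuit}, identify $P$ with the retract $\langle A^t,B\rangle\leq G$ using Theorem~\ref{rtrick}, and transfer a hyperbolically embedded cyclic subgroup from $G$ down to the retract via Lemma~\ref{heret}, finishing with Lemma~\ref{suitsubl}. The one point on which you should tighten the writing is the one you already flag in the last paragraph: since $S\leq A$ but only $A^t$ lives inside $\langle A^t,B\rangle$, you cannot apply Lemma~\ref{heret} to $\langle h\rangle$ directly. The clean fix (which is what the paper does) is to pass from $\langle h\rangle\h G$ to $\langle h^t\rangle\h G$ via Lemma~\ref{conjhe}, then apply Lemma~\ref{heret} to $\langle h^t\rangle\leq\langle A^t,B\rangle$, conclude $S^t$ is suitable in $\langle A^t,B\rangle$, and transport back to $P$ through the isomorphism of Theorem~\ref{rtrick}. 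Your parenthetical that ``$r$ fixes $A\ast B$ appropriately'' is also slightly off: the retraction sends $a\mapsto a^t$ for $a\in A$, so it moves $A$, which is exactly why the conjugation by $t$ appears. None of this is a gap in the argument, just bookkeeping that should be stated as you correct it at the end rather than glossed in the middle.
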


\begin{proof}
Clearly, $A\h A\ast B$, so by Lemma \ref{suithe}, if $S$ is suitable in $A$, then $S$ is suitable in $A\ast B$. By the previous lemma, $S$ is suitable in the HNN extension $G=(A\ast B)\ast_{K^t=\phi(K)}$. By Theorem \ref{rtrick} $P$ is isomorphic to $\langle A^t, B\rangle\leq G$ via an isomorphism which sends $A$ to $A^t$ and $B$ to $B$. Furthermore, $\langle A^t, B\rangle$ is a retract of $G$. Thus if $h\in S\leq A$ satisfies $\langle h\rangle\h G$, then $\langle h^t\rangle\h G$ by Lemma \ref{conjhe} and $\langle h^t\rangle\h\langle A^t, B\rangle$ by Lemma \ref{heret}. Thus $S^t$ is a suitable subgroup of $\langle A^t, B\rangle$ by Lemma \ref{suitsubl}, and passing to $P$ through the isomorphism gives the desired result.

\end{proof}



\section{Main theorem and applications}\label{sect:7}
 
\begin{thm}\label{scthm}
Suppose $G\in\X$ and $S$ is suitable with respect to $\mathcal A$. Then for any $\{t_1,...,t_m\}\subset G$ and $N\in\N$, there exists a group $\overline{G}$ and a surjective homomorphism $\gamma\colon G\to \overline{G}$ which satisfy
\begin{enumerate}
\item[(a)] $\overline{G}\in\X$.
\item[(b)] $\gamma|_{B_{\mathcal A}(N)}$ is injective.
\item[(c)] $\gamma(t_i)\in\gamma(S)$ for $i=1,...,m$.
\item[(d)] $\gamma(S)$ is suitable with respect to $\mathcal A^\prime$, where $\gamma(\mathcal A)\subseteq\mathcal A^\prime$.
\item[(e)] Every element of $\overline{G}$ of order $n$ is the image of an element of $G$ of order $n$.
\end{enumerate} 
\end{thm}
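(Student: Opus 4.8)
The plan is to realize $\overline{G}$ as a small cancellation quotient $G/\ll\mathcal R\rr$ where $\mathcal R$ is built from carefully chosen words, combining the structural results on suitable subgroups (Section \ref{sect:5}) with the quotient results (Section \ref{sectscq}). First I would invoke Corollary \ref{suitsubc} to find, for a sufficiently large $k$ to be determined, non-commensurable loxodromic elements $h_1,\dots,h_k\in S$ with $E_G(h_i)=\langle h_i\rangle$, so that $\{\langle h_1\rangle,\dots,\langle h_k\rangle\}\h(G,\mathcal A)$. After possibly passing to powers and enlarging $\mathcal A$ to some $\mathcal A_1$ using Theorem \ref{Ahyp} and Lemma \ref{joinell} (keeping the action acylindrical and $B_{\mathcal A}(N)$ embedded), I would arrange that $\langle h_1\rangle$ and $\langle h_2\rangle$ play the role of the subgroups $H_\alpha$, $H_\beta$ in conditions $(W1)$--$(W4)$; since these are infinite cyclic and non-commensurable, $H_\alpha\cap H_\beta=\{1\}$, which is exactly what $(W4)$ requires.

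The core construction is: for each $t_i$ choose a relator $R_i$ of the form guaranteed to make $\gamma(t_i)$ land in $\gamma(S)$. Concretely, following the pattern in \cite{Osi10} and Proposition \ref{scwords}, one writes a word $W_i\equiv t_i a_1^{(i)}\cdots a_{n_i}^{(i)}$ where the $a_j^{(i)}$ are high powers of the $h_1,h_2$ (chosen so all $a_j^{(i)}$ and their inverses are distinct elements, so $(W2)$ holds with $\widehat d_\lambda(1,a_j^{(i)})\ge 50C$, and so that $(W1)$, $(W3)$, $(W4)$ hold); imposing $W_i=_{\overline G}1$ forces $\gamma(t_i)=\gamma((a_1^{(i)}\cdots a_{n_i}^{(i)})^{-1})\in\gamma(S)$, giving (c). Let $\mathcal R$ be the symmetrized closure of $\{W_1,\dots,W_m\}$; by Proposition \ref{scwords} each $\mathcal R$-family satisfies a $C_1(\e,\mu,\frac14,1,\rho)$-condition, and by taking the $n_i$ large enough (so $\mu=M/n_i$ is small and $\rho=n_i$ is large) the combined $\mathcal R$ satisfies the $C_1(\e,\mu,\frac14,1,\rho)$-condition for whatever $\e,\mu,\rho$ are demanded by Lemmas \ref{scquot} and \ref{tor} applied with $\lambda=\frac14$, $c=1$, and $N$ as given. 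Then: Lemma \ref{scquot}(1) gives (b) (injectivity on $B_{\mathcal A}(N)$, noting $\mathcal A\subseteq\mathcal A_1$ so this also covers $B_{\mathcal A}(N)$ after checking the metric comparison), Lemma \ref{scquot}(2) gives $\{\gamma(\langle h_i\rangle)\}\h\overline G$ hence a non-degenerate hyperbolically embedded subgroup and so $\overline G\in\X$ by Theorem \ref{thmAH} — that is (a); and Lemma \ref{tor} gives (e), that no new torsion appears.

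For (d) I would argue as follows: at least one of the cyclic subgroups, say $\langle h_3\rangle$, should be chosen so that its high-power generator is \emph{not} used in any of the relators $W_i$ (this is why we take $k\ge 3$, in the spirit of Lemma \ref{nonelsub}); then $\gamma$ restricted to $\langle h_3\rangle$ is injective and one shows $\gamma(h_3)$ is still loxodromic with $E_{\overline G}(\gamma(h_3))=\langle\gamma(h_3)\rangle$ with respect to the enlarged Cayley graph $\Gamma(\overline G,\mathcal A')$ where $\mathcal A'=\gamma(\mathcal A_1)\sqcup\gamma(\langle h_1\rangle)\sqcup\cdots$. Since $\gamma(h_3)\in\gamma(S)$, Lemma \ref{suitsubl} then shows $\gamma(S)$ is suitable with respect to some $\mathcal A'\supseteq\gamma(\mathcal A)$, giving (d). The main obstacle I anticipate is the bookkeeping needed to make the $C_1$-condition for the \emph{combined} symmetrized set $\mathcal R=\bigcup_i\mathcal R_i$ work: pieces shared between relators built from \emph{different} $W_i$'s must also be controlled, which is why all the $a_j^{(i)}$ across all $i$ should be taken to be distinct elements with large $\widehat d$-length, and the relevant powers chosen successively (using Remark \ref{rem:lox}); verifying that Proposition \ref{scwords} and the arguments of \cite[Lemma 6.3, Theorem 7.5]{Osi10} go through uniformly for this larger set, together with tracking that all the various enlargements of $\mathcal A$ preserve acylindricity and the embedding of $B_{\mathcal A}(N)$, is the technically delicate part, while conditions (a), (c), (e) follow fairly directly once $\mathcal R$ is in place.
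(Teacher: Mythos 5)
Your proposal follows the same overall strategy as the paper --- a small cancellation quotient built from words of the form $t_i h_1^{m_1}h_2^{l_1}\cdots$ with all the machinery of Section \ref{sectscq} and Proposition \ref{scwords} --- but diverges at two points. First, the paper reduces immediately to the case $m=1$ by induction (the composition $\gamma_2\circ\gamma_1$ remains injective on $B_{\mathcal A}(N)$ because $\gamma_1(B_{\mathcal A}(N))\subseteq B_{\mathcal A'}(N)$ and the suitable set is enlarged at each step), which means a \emph{single} word $W\equiv t^{-1}h_1^{m_1}h_2^{l_1}\cdots h_1^{m_n}h_2^{l_n}$ suffices and Proposition \ref{scwords} applies verbatim. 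You instead try to impose all $m$ relators simultaneously, which forces you to re-prove a multi-word version of Proposition \ref{scwords} controlling $\e$-pieces \emph{across} different $W_i$; you correctly flag this as the delicate part, but it is entirely avoidable and the paper avoids it. Second, for conclusion (d) you propose reserving a third element $h_3$ not appearing in the relators and re-deriving that $\gamma(h_3)$ is loxodromic with the right elementary closure in the quotient, which is more work than needed: Lemma \ref{scquot}(2) already gives $\langle\gamma(h_1)\rangle\h(\overline G,\gamma(\mathcal A))$, Lemma \ref{scquot}(1) gives injectivity of $\gamma$ on $\langle h_1\rangle\cup\langle h_2\rangle$ (so $\langle\gamma(h_1)\rangle$ is infinite and $\gamma(h_2)\notin\langle\gamma(h_1)\rangle$), and Lemma \ref{suitsubl} then hands you suitability of $\gamma(S)$ directly. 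Your route would work but buys you nothing, and the two extra lemmas you lean on (a multi-word Proposition \ref{scwords}, and preservation of the loxodromic/WPD structure of $h_3$) are precisely the parts the paper's shorter argument lets you skip.
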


\begin{proof}
Clearly it suffices to prove the theorem with $m=1$, and the general statement follows by induction. Since $S$ is suitable with respect to $\mathcal A$, by Corollary \ref{suitsubc} $S$ contains infinite order elements $h_1$ and $h_2$ such that $\{\langle h_1\rangle, \langle h_2\rangle\}\h (G,\mathcal A)$. Let $t=t_1$ and $\mathcal A_1=(\mathcal A\cup\{t^{\pm 1}\})\sqcup\langle h_1\rangle\sqcup\langle h_2\rangle$, and fix $\e$, $\mu$, and $\rho$ satisfying the conditions of Lemma \ref{scquot} and Lemma \ref{tor} for $\lambda=\frac{1}{4}$, $c=1$, and $N$. Choose $n$ such that $\frac{M}{2n}\leq\mu$ and $2n\geq \rho$, where $M=M(\e, 9)$ is the constant given by Lemma \ref{conscomp}. Now if $m_1,...,m_n$ and $l_1,...,l_n$ are sufficiently large, distinct positive integers, then the word
\[ 
W\equiv t^{-1}h_1^{m_1}h_2^{l_1}...h_1^{m_n}h_2^{l_n}
\]
 will satisfy all the assumptions of Proposition \ref{scwords} (here $W$ is considered as a word in $\mathcal A_1$). Thus, the set $\mathcal R$ of all cyclic shifts of $W^{\pm 1}$ satisfies the $C^\prime(\e,\frac{M}{2n},\frac{1}{4},1,2n)$-condition by Proposition \ref{scwords}. Let 
 \[
 \overline{G}=G/\ll\mathcal R\rr
 \]
 and let $\gamma\colon G\to \overline{G}$ be the natural homomorphism. Lemma \ref{scquot} gives that $\gamma$ is injective on $B_{\mathcal A_1}(N)$, and hence it is also injective on $B_{\mathcal A}(N)$. Lemma \ref{scquot} also gives that $\{\gamma(\langle h_1\rangle), \gamma(\langle h_2\rangle)\}\h\overline{G}$, thus $\overline{G}\in\X$ by Theorem \ref{thmAH}. Lemma \ref{tor} gives that  every element of $\overline{G}$ of order $n$ is the image of an element of $G$ of order $n$. Furthermore, since $t^{-1}h_1^{m_1}h_2^{l_2}...h_1^{m_n}h_2^{l_n}\in \mathcal R$, we have that $\gamma(t)=\gamma(h_1^{m_1}h_2^{l_2}...h_1^{m_n}h_2^{l_n})\in\gamma(S)$. Finally, Lemma \ref{scquot} gives that $\gamma(\langle h_1\rangle)=\langle \gamma(h_1)\rangle\h(\overline{G}, \gamma(\mathcal A))$. Since $\gamma(h_2)\notin\langle \gamma(h_1)\rangle$, $\gamma(S)$ is suitable with respect to $\mathcal A^\prime$ for some $\mathcal A^\prime\supseteq\gamma(\mathcal A)$ by Lemma \ref{suitsubl}.
 
\end{proof}

\begin{rem}\label{remsamecond}
Since the proof uses the same small cancellation conditions as \cite{Ols} and \cite{Osi10}, it follows from these papers that if $G$ is non-virtually-cyclic and hyperbolic, then $\overline{G}$ can be chosen non-virtually-cyclic and hyperbolic, and if $G$ is hyperbolic relative to $\Hl$, then $\overline{G}$ can be chosen hyperbolic relative to $\{\gamma(H_{\lambda})\}_{\lambda\in\Lambda}$.
\end{rem}

Note that we can always choose $N$ such that $B_{\mathcal A}(N)$ contains any given finite subset of $G$. Also, if $G$ is finitely generated, we can choose $\{t_1,...,t_m\}$ to be a generating set of $G$, and we get that $\gamma|_S$ is surjective. If $G$ is countable but not finitely generated, we can apply this theorem inductively to to get a similar result, although the limit group may not be acylindrically hyperbolic.

\begin{cor}\label{ontosuit}
Suppose $G\in\X$ is countable and $S$ is suitable with respect to $\mathcal A$. Then for any $N\in\N$, there exists a non-virtually-cyclic group $Q$ and a surjective homomorphism $\eta\colon G\to Q$ such that 
\begin{enumerate}
\item
$\eta|_S$ is surjective.
\item
$\eta|_{B_{\mathcal A}(N)}$ is injective.
\end{enumerate}

\end{cor}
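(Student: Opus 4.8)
The idea is to iterate Theorem \ref{scthm} countably many times, exhausting $G$ by a nested sequence of finite subsets. Since $G$ is countable, fix an enumeration $G=\{g_1, g_2, \dots\}$ and set $F_k=\{g_1,\dots,g_k\}$. We will build a sequence of groups and surjections
\[
G=G_0 \xrightarrow{\ \gamma_1\ } G_1 \xrightarrow{\ \gamma_2\ } G_2 \xrightarrow{\ \gamma_3\ } \cdots
\]
together with suitable subgroups $S_k\le G_k$ and generating sets $\mathcal A_k$ such that $S_0=S$, $\mathcal A_0=\mathcal A$, and at stage $k\ge 1$ we apply Theorem \ref{scthm} to the group $G_{k-1}\in\X$, the suitable subgroup $S_{k-1}$ (suitable with respect to $\mathcal A_{k-1}$), the finite set of elements consisting of the images in $G_{k-1}$ of $F_k$, and an integer $N_k$ chosen large enough that $B_{\mathcal A_{k-1}}(N_k)$ contains $B_{\mathcal A_{k-1}}(\sum_{j<k}N_j)$ (more precisely, large enough that the composite map from $G_0$ will be injective on $B_{\mathcal A}(N)$; see below). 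Theorem \ref{scthm} produces $G_k\in\X$, a surjection $\gamma_k\colon G_{k-1}\to G_k$, a suitable subgroup $S_k:=\gamma_k(S_{k-1})$ with suitable generating set $\mathcal A_k\supseteq \gamma_k(\mathcal A_{k-1})$, with $\gamma_k$ injective on $B_{\mathcal A_{k-1}}(N_k)$ and with the image of each $g\in F_k$ landing in $S_k$.

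Now let $Q$ be the direct limit of this system, with $\eta\colon G\to Q$ the natural map; equivalently $Q=G/\bigcup_k \Ker(\gamma_k\circ\cdots\circ\gamma_1)$. I would verify the three required properties in turn. First, \emph{$\eta|_S$ is surjective}: given any $q\in Q$, lift it to some $g\in G$; then $g=g_k$ for some $k$, so by construction the image of $g$ in $G_k$ lies in $S_k=\gamma_k\cdots\gamma_1(S)$, hence $q=\eta(g)$ lies in $\eta(S)$. Second, \emph{$\eta|_{B_{\mathcal A}(N)}$ is injective}: the point is that $\gamma_1$ is already injective on $B_{\mathcal A}(N)$ if we require $N_1\ge N$, and each subsequent $\gamma_k$ is injective on a ball in $\Gamma(G_{k-1},\mathcal A_{k-1})$ containing the image of $B_{\mathcal A}(N)$, because $\mathcal A_{k-1}\supseteq\gamma_{k-1}\cdots\gamma_1(\mathcal A)$, so the image of $B_{\mathcal A}(N)$ under $\gamma_{k-1}\cdots\gamma_1$ sits inside $B_{\mathcal A_{k-1}}(N)$; choosing each $N_k\ge N$ makes every $\gamma_k$ injective there, and an injective composition of maps injective on a fixed finite set remains injective on that set, so $\eta=\varinjlim(\gamma_k\cdots\gamma_1)$ is injective on $B_{\mathcal A}(N)$. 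Third, \emph{$Q$ is not virtually cyclic}: $\eta(S)=\eta|_S(S)$, and $S$ is non-elementary, so it contains non-commensurable loxodromic elements; but actually the cleanest route is to observe that at each finite stage $S_k$ is suitable hence non-elementary hence not virtually cyclic, and $S_k$ surjects onto $\eta(S)=Q$ under the remaining maps... hmm, more carefully: $\eta(S)$ need not be all of $Q$. Instead, note $\eta|_S$ surjective means $Q=\eta(S)$, and each $\gamma_k\cdots\gamma_1$ restricted to $S$ has image $S_k$; so $Q=\varinjlim S_k$. If $Q$ were virtually cyclic it would be finitely generated, so $Q$ would equal the image of some $S_k$ already (a virtually cyclic group is Noetherian, or: a surjection of $S_k$ onto $Q$ with $Q$ f.g. means $Q$ is a quotient of the f.g. group... actually $S_k$ may not be f.g.). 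The safe argument: pick two non-commensurable loxodromic elements $f,f'$ of $S$ with $\langle f\rangle,\langle f'\rangle$ hyperbolically embedded (Corollary \ref{suitsubc}); their images are infinite order and generate a non-virtually-cyclic subgroup because hyperbolic embeddedness is inherited along the construction — indeed at each stage Theorem \ref{scthm}(d) plus Lemma \ref{suitsubl} keeps $S_k$ suitable, so $S_k$, and hence $Q=\varinjlim S_k$, is never virtually cyclic; alternatively invoke that a proper quotient where $\eta|_S$ is onto and $S$ suitable forces $Q$ non-elementary by tracking Corollary \ref{suitsubc} through the limit. I would make this precise by showing the images $\gamma_k\cdots\gamma_1(h_1), \gamma_k\cdots\gamma_1(h_2)$ stay non-commensurable and of infinite order, using injectivity on ever-larger balls.

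\emph{Main obstacle.} The subtle point is not the diagonal bookkeeping — that is routine — but ensuring that $Q$ is non-virtually-cyclic, i.e.\ that we do not accidentally collapse $S$ to something elementary in the limit. Each finite stage is fine since $S_k$ is suitable, but a naive limit of suitable subgroups need not be suitable (indeed $Q$ itself need not lie in $\X$, as the remark after the theorem warns). The fix is to fix once and for all, at stage $1$, two infinite-order elements $h_1, h_2$ of $S$ with $\{\langle h_1\rangle,\langle h_2\rangle\}\h(G,\mathcal A)$ and a radius $N$ large enough that $B_{\mathcal A}(N)$ contains $h_1^{\pm 1}, h_2^{\pm 1}$ and witnesses their non-commensurability and infinite order (e.g.\ contains $h_1^i h_2^j$ for a few small $i,j$); then permanently requiring each $N_k\ge N$ forces $\eta$ injective on that ball, so $\eta(h_1),\eta(h_2)$ remain distinct infinite-order non-commensurable elements of $Q$, whence $\langle\eta(h_1),\eta(h_2)\rangle$ is not virtually cyclic and neither is $Q$. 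So the real work is choosing the $N_k$ monotonically and large enough to simultaneously guarantee (i) injectivity on $B_{\mathcal A}(N)$ and (ii) each step sees the prescribed generators $g_k$ — both of which Theorem \ref{scthm} grants us for free at each stage once $N_k$ is taken large enough.
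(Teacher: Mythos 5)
Your overall strategy — iterating Theorem \ref{scthm} over an enumeration of $G$ and taking the direct limit — is exactly the paper's approach, and you correctly identify the genuinely subtle step: ensuring the limit $Q$ is not virtually cyclic. But your proposed fix has a real gap.

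You want to fix infinite-order elements $h_1,h_2\in S$ with $\{\langle h_1\rangle,\langle h_2\rangle\}\h(G,\mathcal A)$ and choose $N$ (or $N_k$) so that $B_{\mathcal A}(N)$ ``witnesses their non-commensurability and infinite order.'' A ball of \emph{finite} radius cannot do this: infinite order of $\eta(h_1)$ requires $\eta(h_1^m)\ne 1$ for \emph{every} $m$, and unless the whole cyclic subgroup $\langle h_1\rangle$ sits inside $B_{\mathcal A}(N)$, some power $h_1^m$ lies outside the ball, where Theorem \ref{scthm} gives you no protection — the very first quotient $\gamma_1$ could already kill it before your ``ever-larger balls'' arrive. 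Growing $N_k$ does not rescue this, because injectivity guarantees at stage $k$ say nothing about what stages $1,\dots,k-1$ have already collapsed. Non-commensurability is even worse: it quantifies over all conjugators in $Q$, which you cannot confine to a ball at all. (And as a minor point, Theorem \ref{scthm} as stated does not let you prescribe which $h_1,h_2\in S$ the proof uses at each stage, so ``tracking'' specific $h_i$ through the iteration is not available as a black-box application.)

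The paper's resolution is Lemma \ref{HNNgen}: before starting the induction, enlarge $\mathcal A$ (preserving suitability of $S$) so that $\mathcal A$ contains two entire infinite cyclic subgroups $\langle f\rangle$ and $\langle g\rangle$ with $\langle f\rangle\cap\langle g\rangle=\{1\}$. Then $\langle f\rangle\cup\langle g\rangle\subseteq B_{\mathcal A}(1)\subseteq B_{\mathcal A}(N)$ — the ball contains infinitely many elements precisely because $\mathcal A$ is infinite — so the fixed-radius injectivity maintained at every stage already gives injectivity of $\eta$ on all of $\langle f\rangle\cup\langle g\rangle$. Hence $\eta(f),\eta(g)$ have infinite order and $\langle\eta(f)\rangle\cap\langle\eta(g)\rangle=\{1\}$, which is enough (and weaker than non-commensurability) to rule out $Q$ being virtually cyclic. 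You should invoke Lemma \ref{HNNgen}; without it, the argument does not close.
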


\begin{proof}

By Lemma \ref{HNNgen}, without loss of generality, we can assume that $\mathcal A$ contains infinite cyclic subgroups $\langle f\rangle$ and $\langle g\rangle$ such that $\langle f\rangle\cap\langle g\rangle=\{1\}$.

Let $G=\{1=g_0, g_1,...\}$. Let $G_0=G$, and define a sequence of quotient groups
\[
...\twoheadrightarrow G_i\twoheadrightarrow G_{i+1}\twoheadrightarrow...
\]
where the induced map $\eta_i \colon G\twoheadrightarrow G_i$ satisfies
\begin{enumerate}

\item $\eta_i(S)$ is suitable with respect to $\mathcal A_i$, where $\eta_i(\mathcal A)\subseteq \mathcal A_i$.

\item $\eta_i(g_i)\in\eta_i(S)$.

\item $\eta_i|_{B_{\mathcal A}(N)}$ is injective.

\end{enumerate}
Given $G_i$, we apply Theorem \ref{scthm} to $G_i$ with $t=\eta_i(g_{i+1})$ and suitable subgroup $\eta_i(S)$, and let $G_{i+1}=\overline{G_i}$. Theorem \ref{scthm} gives that the map $\gamma\colon G_i\to G_{i+1}$ will be injective on $B_{\mathcal A_i}(N)$ which contains $B_{\eta_i(\mathcal A)}(N)$, and further for some $\mathcal A_{i+1}\supset \gamma(\mathcal A_i)$, $\gamma(\eta_i(S))$ is suitable with respect to $\mathcal A_{i+1}$. Hence the induced quotient map $\eta_{i+1}=\gamma\circ\eta_i$ will satisfy all of the above conditions. Let $Q$ be the direct limit of this sequence, that is, $Q=G_0/\bigcup_{i=1}^{\infty}\ker \eta_i$. Let $\eta\colon G\twoheadrightarrow Q$ be the induced epimorphism. Then for each $g_i\in G$, $\eta_i(g_i)\in\eta_i(S)$, thus $\eta(g_i)\in \eta(S)$. It follows that $\eta|_S$ is surjective. Finally, $\eta|_{B_{\mathcal A}(N)}$ is injective, since each $\eta_i$ is  injective on $B_{\mathcal A}(N)$. Thus $\eta$ is injective on $\langle f\rangle\cup\langle g\rangle\subseteq B_{\mathcal A}(N)$, so $Q$ is not virtually cyclic.
\end{proof}

\begin{cor}\label{commonq}
Let $G_1,G_2\in\X$ with $G_1$ finitely generated, $G_2$ countable. Then there exists a non-virtually cyclic group $Q$ and surjective homomorphisms $\alpha_i\colon G_i\to Q$ for $i=1,2$. In addition, if $G_2$ is finitely generated, then we can choose $Q\in\X_0$, and if $K(G_i)=\{1\}$, then for any finite subset $\mathcal F_i\subset G_i$, we can choose $\alpha_i$ to be injective on $\mathcal F_i$.
\end{cor}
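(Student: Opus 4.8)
The plan is to build $Q$ as a quotient of the free product $D=G_1\ast G_2$ onto which both factors surject, using Theorem \ref{scthm} (equivalently, the iteration behind Corollary \ref{ontosuit}) as the engine.

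\emph{Reduction.} If $K(G_i)\neq\{1\}$, replace $G_i$ by $G_i/K(G_i)\in\X_0$ (Lemma \ref{k(g)}); a common quotient of the $G_i/K(G_i)$ pulls back to one of the $G_i$, so we may assume $K(G_1)=K(G_2)=\{1\}$ (this is consistent with the last assertion, where $K(G_i)=\{1\}$ is hypothesized, so no reduction is performed there). Put $D=G_1\ast G_2$. It is well known and easy to check that $G_1\h(D,G_2)$ (the Cayley graph $\Gamma(D,G_2\sqcup G_1)$ is hyperbolic, and $\widehat d$ is trivially locally finite on $G_1$), and $G_1$ is infinite and proper, so $D\in\X$ by Theorem \ref{thmAH}; moreover $K(D)=\{1\}$. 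Since $K(G_i)=\{1\}$, each $G_i$ is suitable in itself by Lemma \ref{K(S)}, hence suitable in $D$ by Lemma \ref{suithe}.

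\emph{Iteration.} We construct quotients $D=D^{(0)}\twoheadrightarrow D^{(1)}\twoheadrightarrow\cdots$ with composites $\eta_n\colon D\to D^{(n)}$ preserving the invariant that $D^{(n)}\in\X$ and that both $\eta_n(G_1)$ and $\eta_n(G_2)$ are suitable in $D^{(n)}$. Enumerating the elements of $G_1$ and $G_2$, at step $n$ we take the next unprocessed element, say $g\in G_1$, and apply Theorem \ref{scthm} to $D^{(n)}$ with suitable subgroup $S=\eta_n(G_2)$ and $t=\eta_n(g)$, obtaining $\gamma_{n+1}\colon D^{(n)}\to D^{(n+1)}$ with $D^{(n+1)}\in\X$, $\gamma_{n+1}(\eta_n(g))\in\gamma_{n+1}(\eta_n(G_2))$, $\gamma_{n+1}(\eta_n(G_2))$ suitable, and $\gamma_{n+1}$ injective on a ball of prescribed radius. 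Then $\eta_{n+1}(G_2)$ is again suitable and now contains $\eta_{n+1}(g)$. To keep $\eta_{n+1}(G_1)$ suitable, use that (Corollary \ref{suitsubc}) $\eta_n(G_1)$ contains an infinite-order element $a$ with $\langle a\rangle$ proper in it and $\langle a\rangle\h(D^{(n)},X)$ for some $X$; carrying $\langle a\rangle$ along as one of the hyperbolically embedded subgroups in the small-cancellation argument of Lemma \ref{scquot} behind Theorem \ref{scthm}, parts (1)--(2) of Lemma \ref{scquot} give that $\gamma_{n+1}$ is injective on $\langle a\rangle$ and $\langle\gamma_{n+1}(a)\rangle\h D^{(n+1)}$, so Lemma \ref{suitsubl} shows $\eta_{n+1}(G_1)$ is suitable (it is not virtually cyclic, e.g. because $\gamma_{n+1}$ is injective on a large ball). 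The case $g\in G_2$ is symmetric. The one genuinely technical point is arranging the cyclic subgroups witnessing suitability of \emph{both} $\eta_n(G_1)$ and $\eta_n(G_2)$ to be simultaneously hyperbolically embedded with respect to a single generating set of $D^{(n)}$ before each quotient; this is done via Theorem \ref{Ahyp}, Corollary \ref{heGX} and Lemma \ref{hehe}, after passing to pairwise non-commensurable witnesses using Lemma \ref{nonelsub}.

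\emph{Conclusion.} If $G_2$ is finitely generated, then $D$ is finitely generated, so after finitely many steps every generator of each factor has been forced into the other factor's image; taking $Q=D^{(n)}\in\X$ for the terminal $n$, the maps $\alpha_i=\eta_n|_{G_i}$ are surjective, and since $Q$ equals its own suitable subgroup $\eta_n(G_2)$ we get $K(Q)=\{1\}$, i.e. $Q\in\X_0$. If $G_2$ is only countable, let $Q$ be the direct limit of the $D^{(n)}$ and $\alpha_i$ the induced maps; each $\alpha_i$ is onto because every element of the other factor enters $\alpha_i(G_i)$ at a finite stage. For non-virtual-cyclicity, fix independent loxodromics $f,g\in D$ at stage $0$; since $K(D)=\{1\}$ their infinite cyclic subgroups meet trivially, and choosing the injectivity radii to tend to infinity (with the generating sets increasing) keeps $\alpha_1$ injective on $\langle f\rangle\cup\langle g\rangle$, so $Q$ contains two infinite cyclic subgroups with trivial intersection and hence is not virtually cyclic. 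Finally, when $K(G_i)=\{1\}$ no radical quotient was taken, so $\mathcal F_i\subseteq D$; choosing each injectivity radius large enough to contain the image of $\mathcal F_i$ makes $\alpha_i$ injective on $\mathcal F_i$. The main obstacle is the technical point highlighted above: maintaining suitability of \emph{both} images through every small-cancellation quotient, which forces one to track the hyperbolically embedded cyclic subgroups of both factors rather than using Theorem \ref{scthm} purely as a black box.
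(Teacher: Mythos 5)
Your approach reaches the same conclusion but takes a genuinely different — and in one place more delicate — route than the paper. The paper's proof is asymmetric: it forms $F=G_1\ast G_2$, gets $\{\langle f_1\rangle,\langle f_2\rangle,\langle h_1\rangle,\langle h_2\rangle\}\h F$ (with $f_i\in G_1$, $h_i\in G_2$) from the clean free-product structure via Lemma \ref{hehe}, and then applies Theorem \ref{scthm} \emph{once} to the whole finite generating set of $G_1$ with $S=\langle h_1,h_2\rangle$. This produces $F'$ with the crucial inclusion $\gamma(G_1)\subseteq\gamma(S)\subseteq\gamma(G_2)$ while, via the $\langle f_i\rangle$ that were carried along through Lemma \ref{scquot}, also keeping $\gamma(G_1)$ suitable. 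From that point on the paper only ever needs \emph{one} suitable subgroup: applying Corollary \ref{ontosuit} to $F'$ with $\gamma(G_1)$ as the suitable subgroup makes $\eta\gamma|_{G_1}$ surjective, and then $\eta\gamma|_{G_2}$ is surjective for free because $\gamma(G_1)\subseteq\gamma(G_2)$. The alternation between factors that your proof performs never occurs.

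By contrast, you alternate: at each stage you push one element of one factor into the image of the other, and therefore you must keep \emph{both} $\eta_n(G_1)$ and $\eta_n(G_2)$ suitable through every small-cancellation quotient. You correctly identify this as the technical crux, and the mechanism you propose — carrying a hyperbolically embedded cyclic subgroup of the "other" factor through Lemma \ref{scquot} — is exactly the trick the paper uses. But you use it at every step, and the step you gloss over is the real work: at an arbitrary stage $n$ you need cyclic subgroups $\langle a\rangle\leq\eta_n(G_1)$ and $\langle h_1\rangle,\langle h_2\rangle\leq\eta_n(G_2)$ that are \emph{simultaneously} hyperbolically embedded with respect to a \emph{single} relative generating set of $D^{(n)}$, which in particular requires arranging $a,h_1,h_2$ to be pairwise non-commensurable loxodromics for one and the same acylindrical Cayley graph. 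Invoking Lemma \ref{nonelsub}, Corollary \ref{heGX} and Theorem \ref{Ahyp} makes this plausible, but one would need to argue (e.g.\ via Lemma \ref{lox} and Remark \ref{rem:lox}, choosing the $h_i$ far from $a$ in the relative metric) that the witnesses from the two factors can be made non-commensurable and housed in a common generating set; at stage $0$ this is easy because $F=G_1\ast G_2$ is a free product, but at a general stage $n$ it is an extra lemma. The paper's one-step set-up deliberately avoids ever having to do this again after the free product. Your conclusion paragraph is correct modulo this; the non-virtual-cyclicity argument ("injectivity radii tend to infinity... keeps $\alpha_1$ injective on $\langle f\rangle\cup\langle g\rangle$") needs, as in Corollary \ref{ontosuit}, that the cyclic subgroups $\langle f\rangle,\langle g\rangle$ are contained in the generating set (Lemma \ref{HNNgen}) so that they sit inside a ball of bounded radius; you gesture at this with "generating sets increasing" but should say it. In short: your route works, but it re-solves at every step a problem the paper solves once, and the one genuinely hard lemma in your version is left as a sketch.
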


\begin{proof}
Since each $G_i$ can be replaced with $G_i/K(G_i)$ by Lemma \ref{k(g)}, it suffices to assume $K(G_i)=\{1\}$ for $i=1, 2$. Let $\mathcal F_i$ be any finite subset of $G_i$. Let $F=G_1\ast G_2$, and let $\iota_i\colon G_i\to F$ be the natural inclusion. We will identify $G_1$ and $G_2$ with their images in $F$. By Corollary \ref{suitsubc}, there exist infinite order elements $f_1$, $f_2\in G_1$ such that $\{\langle f_1\rangle, \langle f_2\rangle\}\h G_1$ and infinite order elements $h_1$, $h_2\in G_2$ such that $\{\langle h_1\rangle, \langle h_2\rangle\}\h G_2$. Since $\{G_1, G_2\}\h F$, Lemma \ref{hehe} gives that $\{\langle f_1\rangle, \langle f_2\rangle, \langle h_1\rangle, \langle h_2\rangle\}\h F$. Thus, $S=\langle h_1, h_2\rangle$ is suitable in $F$ by Lemma \ref{suitsubl}.

Let $t_1,...,t_m$ be a finite generating set of $G_1$. By Theorem \ref{scthm}, there exists a group $F^\prime$ and a surjective homomorphism $\gamma\colon F\to F^\prime$ such that $\gamma|_{\mathcal F_1\cup\mathcal F_2}$ is injective and $\gamma(t_i)\in\gamma(S)$ for each $1\leq i\leq m$. In particular, $\gamma(G_1)\subseteq \gamma(S)\subseteq \gamma(G_2)$. 

It is clear from the proof of Theorem \ref{scthm} that $F^\prime$ can be formed by setting each $t_i$ equal to a small cancellation word in $\{h_1, h_2\}$. Since $\{\langle f_1\rangle, \langle f_2\rangle, \langle h_1\rangle, \langle h_2\rangle\}\h F$, it follows from Lemma \ref{scquot} that we can choose $F^\prime$ such that $\{\langle \gamma(f_1)\rangle, \langle \gamma(f_2)\rangle, \langle \gamma(h_1)\rangle, \langle \gamma(h_2)\rangle\}\h F^\prime$. Since $f_1, f_2\in G_1$ and $\{\langle \gamma(f_1)\rangle, \langle \gamma(f_2)\rangle\}\h F^\prime$, Lemma \ref{suitsubl} gives that $\gamma(G_1)$ is suitable in $F^\prime$.

Now applying Corollary \ref{ontosuit} to $F^\prime$ with $\gamma(G_1)$ as a suitable subgroup gives a non-virtually cyclic group $Q$ and a surjective homomorphism $\eta\colon F^\prime\to Q$, such that $\eta|_{\gamma(G_1)}$ is surjective and $\eta|_{\gamma(\mathcal F_1)\cup\gamma(\mathcal F_2)}$ is injective. Now since $\gamma(G_1)\subseteq \gamma(G_2)$ and $\eta|_{\gamma(G_1)}$ is surjective, it follows that each of the compositions

\[
G_i\stackrel{\iota_i}\hookrightarrow F\stackrel{\gamma}\twoheadrightarrow F^\prime\stackrel{\eta}\twoheadrightarrow Q
\]
is surjective. Furthermore, each composition $\eta\circ\gamma\circ\iota_i$ is injective on $\mathcal F_i$. Now if $G_2$ is finitely generated, then $F^\prime$ is finitely generated and we can apply Theorem \ref{scthm} to $F^\prime$ with a generating set of $F^\prime$ as a finite set of elements to get $Q$ such that the image of $G_1$ maps onto $Q$. Then we will also get that $Q\in\X$ and the image of $G_1$ is a suitable subgroup, thus $Q\in\X_0$.
\end{proof}

\paragraph{Frattini subgroups.}

Recall that  $Fratt(G)=\{g\in G\;|\; g \text{ is a non-generator of } G\}$, where an element $g\in G$ is called a \emph{non-generator} if for all $X\subseteq G$ such that $\langle X\rangle=G$, we have $\langle X\setminus \{g\}\rangle=G$. Conversely, if $X$ is a generating set of $G$ such that $\langle X\setminus \{g\}\rangle\neq G$, then we say that $g$ is an \emph{essential member} of the generating set $X$.

\begin{lem}\label{fratquot}
Let $\phi\colon G\to G^\prime$ be a homomorphism. If $\phi(g)\notin Fratt(\phi(G))$, then $g\notin Fratt(G)$.
\end{lem}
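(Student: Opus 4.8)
The plan is to pull a suitable generating set back along $\phi$. First I would dispose of a degenerate case: since the identity element is a non-generator of every group, the hypothesis $\phi(g)\notin Fratt(\phi(G))$ already forces $\phi(g)\neq 1$, hence $g\notin\Ker\phi$. By the definition of the Frattini subgroup, $\phi(g)\notin Fratt(\phi(G))$ means there is a generating set $Y$ of $\phi(G)$ with $\langle Y\setminus\{\phi(g)\}\rangle\neq\phi(G)$; note that necessarily $\phi(g)\in Y$, since otherwise removing $\phi(g)$ from $Y$ would change nothing.

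Next I would lift $Y$ to $G$. For each $y\in Y$ fix a preimage $\widetilde y\in G$ with $\phi(\widetilde y)=y$, choosing in particular $\widetilde y=g$ for the element $y=\phi(g)$, and set $\widetilde Y=\{\widetilde y\mid y\in Y\}$ and $X=\widetilde Y\cup\Ker\phi$. I would then check that $X$ generates $G$: given $h\in G$, the element $\phi(h)$ lies in $\phi(G)=\langle Y\rangle$, so $\phi(h)=\phi(w)$ for some word $w$ in $\widetilde Y$, whence $hw^{-1}\in\Ker\phi\subseteq\langle X\rangle$ and therefore $h\in\langle X\rangle$.

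Finally I would show that $g$ is an essential member of the generating set $X$. Because distinct elements of $Y$ have distinct $\phi$-images, the element $g$ occurs in $\widetilde Y$ only as the chosen preimage of $\phi(g)$; combined with $g\notin\Ker\phi$, this gives $X\setminus\{g\}=(\widetilde Y\setminus\{g\})\cup\Ker\phi$, so that $\phi\bigl(\langle X\setminus\{g\}\rangle\bigr)=\langle Y\setminus\{\phi(g)\}\rangle\neq\phi(G)$. Hence $\langle X\setminus\{g\}\rangle\neq G$, i.e. $g$ is an essential member of $X$, and so $g\notin Fratt(G)$.

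The argument is entirely elementary; the only points that require a little care are the degenerate case $\phi(g)=1$ and the verification that after deleting $g$ the remaining set still surjects onto $\Ker\phi$ but not onto $\phi(G)$ — which is precisely why $\Ker\phi$ is folded into $X$ from the outset. I do not anticipate any genuine obstacle here.
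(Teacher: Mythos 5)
Your proof is correct and follows essentially the same route as the paper: lift the witnessing generating set $Y$ to $G$ (with $g$ as the chosen preimage of $\phi(g)$), adjoin $\Ker\phi$ to obtain a generating set of $G$, and observe that deleting $g$ still fails to generate because its $\phi$-image fails to generate $\phi(G)$. You merely spell out a few details the paper leaves implicit, such as the observation that $\phi(g)\neq 1$ (hence $g\notin\Ker\phi$) and the verification that the lifted set together with the kernel generates $G$.
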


\begin{proof}
Suppose $\phi(g)$ is an essential member of a generating set $Y$ of $\phi(G)$. Choose $X\subseteq G$ such that $g\in X$, $\phi(X)=Y$, and $\phi|_X$ is injective. Then $g$ is an essential member of the generating set $X\cup\ker(\phi)$ of $G$.
\end{proof}

\begin{thm}\label{thm:Fratt}
Let $G\in\X$ be countable. Then $Fratt(G)\leq K(G)$; in particular, the Frattini subgroup is finite.
\end{thm}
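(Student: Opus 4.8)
The plan is to reduce to the case $K(G)=\{1\}$ and then, for each nontrivial $g$, realize $g$ as an essential member of some generating set of a well‑chosen quotient of $G$, transferring this back to $G$ via Lemma \ref{fratquot}.

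First I would carry out the reduction. By Lemma \ref{k(g)} the group $\overline{G}:=G/K(G)$ lies in $\X_0$, and by Lemma \ref{fratquot} the image of $Fratt(G)$ in $\overline{G}$ is contained in $Fratt(\overline{G})$. Hence it suffices to prove $Fratt(H)=\{1\}$ for every countable $H\in\X_0$: this yields $Fratt(G)\le K(G)$, and $Fratt(G)$ is then finite since $K(G)$ is. So assume $K(G)=\{1\}$, fix $g\in G\setminus\{1\}$, and aim to show $g\notin Fratt(G)$. I would use the elementary fact that $g\notin Fratt(G)$ if and only if there is a proper subgroup $H<G$ with $\langle H,g\rangle=G$ (given such $H$ one has $g\notin H$, and $g$ is essential in the generating set $H\cup\{g\}$); combined with Lemma \ref{fratquot}, it is therefore enough to construct an epimorphism $\eta\colon G\twoheadrightarrow Q$ together with a proper subgroup $H_0<Q$ such that $\langle H_0,\eta(g)\rangle=Q$ (this forces $\eta(g)\ne 1$ and gives $\eta(g)\notin Fratt(Q)$).

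To build $Q$: by Theorem \ref{thmAH} and Lemma \ref{joinell}, fix a generating set $\mathcal A$ with $\Ga$ hyperbolic, the action of $G$ on $\Ga$ non-elementary and acylindrical, and $g\in B_{\mathcal A}(1)$; since $K_G(G)=K(G)=\{1\}$, $G$ is suitable with respect to $\mathcal A$, and by Corollary \ref{suitsubc} and Lemma \ref{suitsubl} I would fix a proper suitable subgroup $S\le G$ with $g\notin S$ (for instance one generated by sufficiently deep pairwise non-commensurable loxodromic elements with cyclic elementary closures). If $G$ is finitely generated, pick generators $t_1,\dots,t_m$ of $G$ distinct from $g$ and apply Theorem \ref{scthm} to $G$ with suitable subgroup $S$, elements $t_1,\dots,t_m$, and $N$ large: this produces $\gamma\colon G\twoheadrightarrow\overline G$ with $\overline G\in\X$, $\gamma|_{B_{\mathcal A}(N)}$ injective (so $\gamma(g)\ne 1$), $\gamma(t_i)\in\gamma(S)$ for all $i$, and $\gamma(S)$ suitable in $\overline G$; since each $\gamma(t_i)\in\gamma(S)$ we obtain $\overline G=\langle\gamma(g),\gamma(t_1),\dots,\gamma(t_m)\rangle=\langle\gamma(g),\gamma(S)\rangle$. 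For general countable $G$ one iterates over an enumeration of $G$, at each stage absorbing the next generator (never $g$) into the image of $S$ and passing to the direct limit, exactly as in the proof of Corollary \ref{ontosuit}; the resulting $Q$ then satisfies $Q=\langle\eta(g),\eta(S)\rangle$ with $\eta(g)\ne 1$.

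The step I expect to be the main obstacle is showing that $H_0:=\eta(S)$ is a \emph{proper} subgroup of $Q$; granting this, $\langle H_0,\eta(g)\rangle=Q$ closes the argument. The key input is that the defining relators introduced by Theorem \ref{scthm} identify each absorbed generator with a fixed small-cancellation word in loxodromic elements of $S$, so every new relator is a word in the letters of $S$ and none involves $g$. I would then argue, paralleling the proof of Lemma \ref{scquot}(1) via van Kampen diagrams over the relative presentation and the $C(\e,\mu,\lambda,c,\rho)$-condition — and using that $\eta$ can be made injective on arbitrarily large balls while preserving the relevant hyperbolically embedded cyclic subgroups — that $\eta(g)\notin\eta(S)$: an identity $\eta(g)=\eta(s)$ with $s\in S$ would push $gs^{-1}$ into the normal closure of the relators, which the small-cancellation hypotheses exclude for appropriate parameters. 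Since $Q=\langle\eta(g),\eta(S)\rangle$ and $\eta(g)\notin\eta(S)$, the subgroup $H_0=\eta(S)$ is proper, and the proof is complete. In the non-finitely-generated case this properness must be maintained at every stage of the iteration, so it has to be built into the inductive hypothesis.
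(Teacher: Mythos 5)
Your reduction to $K(G)=\{1\}$ and the overall strategy of finding a quotient $\eta\colon G\twoheadrightarrow Q$ in which $\eta(g)$ is essential, then pulling back via Lemma \ref{fratquot}, are both sound and match the paper's outline. However, the construction of $Q$ contains a fatal flaw, and you have correctly identified where the trouble lies but not that it is unfixable with your setup.

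The problem is that once you apply Theorem \ref{scthm} (or iterate as in Corollary \ref{ontosuit}) absorbing a \emph{generating} set $t_1,\dots,t_m$ of $G$ into $\gamma(S)$, the image $\gamma(S)$ can no longer be proper: since $\langle t_1,\dots,t_m\rangle = G$ and $\gamma$ is surjective, you automatically have $\gamma(S)\supseteq\langle\gamma(t_1),\dots,\gamma(t_m)\rangle = \gamma(G)=\overline G$, so $\gamma(S)=\overline G$. Thus $H_0=\eta(S)$ is the whole of $Q$, not a proper subgroup, and your equality $Q=\langle H_0,\eta(g)\rangle$ gives no information about $\eta(g)$ being essential. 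The small-cancellation hypotheses cannot rescue this: they guarantee injectivity of $\gamma$ on a fixed ball, but they do not prevent $\gamma(g)$ from lying in $\gamma(S)$ — on the contrary, the relators are specifically designed so that \emph{every} generator of $G$ lands in $\gamma(S)$, and $g$ is a word in those generators.

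The paper avoids this entirely by choosing $S$ to \emph{contain} $g$. Since $K(G)=\{1\}$, Corollary \ref{suitsubc} supplies an infinite order $h$ with $\langle h\rangle\h G$ and $g\notin\langle h\rangle$; then $S=\langle g,h\rangle$ is suitable by Lemma \ref{suitsubl}. Applying Corollary \ref{ontosuit} yields a non-virtually-cyclic quotient $Q$ with $\eta|_S$ surjective, so $Q=\langle\eta(g),\eta(h)\rangle$. Because $Q$ is not cyclic, $\langle\eta(h)\rangle\ne Q$, which makes $\eta(g)$ essential in the generating set $\{\eta(g),\eta(h)\}$. No argument about $\eta(g)\notin\eta(S)$ is needed, and the ``main obstacle'' you flagged dissolves. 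You should replace your choice of $S$ (with $g\notin S$) by one with $g\in S$, at which point the rest of your outline goes through and coincides with the paper's proof.
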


\begin{proof}
First, we assume that $K(G)=\{1\}$ and let $g\in G\setminus\{1\}$. Since $K(G)=\{1\}$, Corollary \ref{suitsubc} gives that $G$ contains infinite order elements $h_1$ and $h_2$ such that $\langle h_1\rangle\cap \langle h_2\rangle=\{1\}$ and $\{\langle h_1\rangle, \langle h_2\rangle\}\h G$. In particular, this means that $G$ contains some infinite order element $h$ such that $\langle h\rangle\h G$ and $g\notin \langle h\rangle$. Let $S=\langle g, h\rangle$. By Lemma \ref{suitsubl}, $S$ is a suitable subgroup of $G$. Now we can apply Corollary \ref{ontosuit} to find a non-virtually-cyclic group $Q$  and a homomorphism $\eta\colon G\to Q$ such that $\eta|_S$ is surjective, thus $Q$ is generated by $X=\{\eta(g), \eta(h)\}$. Now $\eta(g)$ is an essential member of the generating set $X$ since $Q$ is not cyclic, so $\eta(g)\notin Fratt(\eta(G))$. Therefore $g\notin Fratt(G)$ by Lemma \ref{fratquot}.

Now consider any countable $G\in\X$ and let $g\in G\setminus K(G)$. By Lemma \ref{k(g)}, $G/K(G)\in\X_0$, so as above the image of $g$ does not belong to $Fratt(G/K(G))$. Hence by Lemma \ref{fratquot} $g\notin Fratt(G)$.

\end{proof}

\paragraph{Topology of marked group presentations.}

Recall that $\mathcal G_k$ denotes the set of \emph{marked $k$-generated groups}, that is 
\[
\mathcal G_k=\{(G, X)\;|\; \text{$X\subseteq G$ is an ordered set of $k$ elements and } \langle X\rangle= G\}.
\]

Each element of $\mathcal G_k$ can be naturally associated to a normal subgroup $N$ of the free group on $k$ generators by the formula
\[
G=F(X)/N.
\] 

Given two normal subgroups $N$, $M$ of the free group $F_k$, we can define a distance
\[
d(N, M)=\begin{cases} \max\left\{\frac{1}{\|W\|}\;|\; W\in N \Delta M\right\}&\text{ if } M\neq N \\
 0 &\text{ if } M=N

\end{cases}
\]

This defines a metric (and hence a topology) on $\mathcal G_k$. It is not hard to see that this topology is equivalent to saying that a sequence $(G_n, X_n)\rightarrow (G, X)$ in $\mathcal G_k$ if and only if there are functions $f_n\colon\Gamma(G, X_n)\to\Gamma(G, X)$ which are label-preserving isometries between increasingly large neighborhoods of the identity.

Recall that given a class of groups $\mathcal X$, $[\mathcal X]_k=\{(G, X)\in\mathcal G_k\;|\; G\in\mathcal X\}$. In case $\mathcal X$ consists of a single group $G$, we denote $[\mathcal X]_k$ by $[G]_k$. Also, $[\mathcal X]=\bigcup_{i=1}^\infty[\mathcal X]_k$ and $\overline{[\mathcal X]}=\bigcup_{i=1}^\infty\overline{[\mathcal X]}_k$, where $\overline{[\mathcal X]}_k$ denotes the closure of $[\mathcal X]_k$ in $\mathcal G_k$.

\begin{thm}\label{dense}
Let $\mathcal C$ be a countable subset of $[\X_0]$. Then there exists a finitely generated group $D$ such that $\mathcal C\subset \overline{[D]}$.
\end{thm}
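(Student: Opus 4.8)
The plan is to reduce the statement to a concrete approximation target and then build $D$ by an infinite, carefully interleaved sequence of small cancellation quotients, in the spirit of the construction behind Corollary \ref{ontosuit}. First enumerate $\mathcal C$ with infinite multiplicity as a sequence $(G_n,X_n)_{n\ge1}$, so that every member of $\mathcal C$ equals $(G_n,X_n)$ for infinitely many $n$; write $k_n=|X_n|$, and note $k_n\ge2$ since no group in $\X_0$ is virtually cyclic. It will suffice to produce one finitely generated group $D$, in fact generated by two elements, together with for each $n$ a surjection $\pi_n\colon G_n\twoheadrightarrow D$ that is injective on the ball $B_{X_n,G_n}(n)$. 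Indeed, setting $Z_n:=\pi_n(X_n)$, a generating $k_n$–tuple of $D$, a standard argument shows that a surjection injective on a ball of radius $n$ restricts to a label–preserving isomorphism $B_{X_n,G_n}(r)\cong B_{Z_n,D}(r)$ for all $r\le n/2$, so whenever $(G,X)=(G_n,X_n)$ for $n$ in an infinite set we obtain $(D,Z_n)\to(G,X)$ in $\mathcal G_{k_n}$ along that set, hence $(G,X)\in\overline{[D]}$, and therefore $\mathcal C\subseteq\overline{[D]}$.

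To build $D$, I would start from $\mathbb G_0=(\ast_{n\ge1}G_n)\ast F$ with $F=F(s_1,s_2)$ free of rank two. Each free factor is hyperbolically embedded in $\mathbb G_0$; since each $G_n\in\X_0$ is suitable in itself, Corollary \ref{suitsubc} provides infinite order elements in $G_n$ generating cyclic hyperbolically embedded subgroups, and combining this with Lemma \ref{hehe}, Lemma \ref{suitsubl} and Theorem \ref{thmAH} gives $\mathbb G_0\in\X$ with $S:=F$ a two–generated suitable subgroup. Now form a chain of surjections $\mathbb G_0\twoheadrightarrow\mathbb G_1\twoheadrightarrow\cdots$, each obtained from the previous group by one application of Theorem \ref{scthm} (equivalently of the small cancellation quotients of Lemma \ref{scquot}), and set $D:=\varinjlim_j\mathbb G_j$. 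The stages are interleaved so as to accomplish two tasks. First, every generator $x_{m,i}$ of every $G_m$ is, at some stage, identified with a (long) small cancellation word in the current image of $S$ via Theorem \ref{scthm}(c); the image of $S$ remains suitable throughout by Theorem \ref{scthm}(d), and in the limit $D$ is generated by the images of $s_1,s_2$, hence finitely generated. Second, infinitely many stages are reserved for each index $m$; at the first such stage, performed \emph{before} any generator of $G_m$ has been identified with an $S$–word, we identify $s_1$ and $s_2$ with words in the current image of $G_m$, again by Theorem \ref{scthm}(c) applied to a suitable subgroup of that image (which exists by Corollary \ref{suitsubc} and Lemma \ref{suithe}). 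This forces the image of $G_m$ in $D$ to contain the images of $s_1,s_2$, hence to equal $D$, so the composite $\pi_m\colon G_m\hookrightarrow\mathbb G_0\twoheadrightarrow D$ is onto. Injectivity of $\pi_m$ on $B_{X_m,G_m}(m)$ is arranged by first adjoining $X_m$ to the generating set of the relevant $\mathbb G_j$ via Lemma \ref{joinell} and then requiring the radius parameter in Theorem \ref{scthm}(b) to exceed $m$ at every later stage; because each imposed relator involves at most one of the $G_m$'s and because $m$ runs over an infinite set for each member of $\mathcal C$, the needed parameters stay bounded at each stage while the resulting injectivity radii tend to infinity.

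The main obstacle is the bookkeeping that keeps this induction running for infinitely many stages. At each stage one must preserve the hypotheses under which Theorem \ref{scthm} and Lemma \ref{scquot} apply: the finitely many ``active'' subgroups (the current image of $S$ and the images of those $G_m$ that have been introduced but not yet processed) must form a hyperbolically embedded collection with respect to which the group acts acylindrically, so that Theorem \ref{Ahyp} and Lemma \ref{scquot} remain available; the imposed relators must be chosen as in Proposition \ref{scwords}, alternating with long syllables between two hyperbolically embedded cyclic subgroups, and one must carefully manage which subgroups are kept hyperbolically embedded at each stage — in particular an active $G_m$ must be dropped from the collection exactly when processing it begins, so that it may then be collapsed into the image of $S$ without contradicting the finiteness of intersections from Lemma \ref{finint}. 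One must also verify that passing to the direct limit does not collapse too much — the crucial point is that $\pi_m$ is injective on $B_{X_m,G_m}(m)$ in $D$ itself and not merely at finite stages, which is why every later stage must keep the radius parameter of Theorem \ref{scthm}(b) above all ``budgets'' committed so far. Granting that such a sequence of quotients can be organized, the final verifications — that $D$ is two–generated and that each $\pi_m$ is a surjection injective on the prescribed ball — are routine, and the reduction of the first paragraph completes the proof.
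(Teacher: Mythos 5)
Your reduction in the opening paragraph is correct and matches the paper's: it suffices to produce a finitely generated $D$ together with surjections $\pi_n\colon G_n\twoheadrightarrow D$ injective on $B_{X_n}(n)$ (in fact injectivity on $B_{X_n}(n)$ already gives $d\bigl((G_n,X_n),(D,\pi_n(X_n))\bigr)\le 1/n$, with no factor of two needed).

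The construction of $D$, however, departs from the paper's and contains a genuine gap. The paper builds $D$ as the direct limit of a chain $Q_1\twoheadrightarrow Q_2\twoheadrightarrow\cdots$, where each $Q_{m+1}$ is produced by Corollary \ref{commonq} as a common quotient of $Q_m$ and the \emph{freshly introduced} $G_{m+1}$, starting from the two-factor free product $Q_m\ast G_{m+1}$; Corollary \ref{commonq} packages the small cancellation and suitability bookkeeping once and for all, so the induction in Theorem \ref{dense} only needs to preserve injectivity on a growing finite set. You instead form the single infinite free product $\mathbb G_0=(\ast_n G_n)\ast F(s_1,s_2)$ up front and interleave applications of Theorem \ref{scthm}. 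The problem arises at your ``surjectivity stage'' for $m$: there you apply Theorem \ref{scthm} with a suitable subgroup $S_m$ sitting inside the image of $G_m$ and with $t_1=s_1$, $t_2=s_2$. Part (d) of Theorem \ref{scthm} then certifies only that $\gamma(S_m)$ remains suitable; it says nothing about $\langle\gamma(s_1),\gamma(s_2)\rangle$, which is the subgroup you need to stay suitable in order to run the subsequent ``generator stages.'' Your assertion that ``the image of $S$ remains suitable throughout by Theorem \ref{scthm}(d)'' is therefore unjustified precisely at those stages, since $S$ is not the subgroup fed to Theorem \ref{scthm} there. Closing the gap would take an additional argument---for instance, arranging that the small cancellation words representing $\gamma(s_1)$ and $\gamma(s_2)$ generate a non-elementary subgroup with trivial finite radical, or restructuring so that each application of Theorem \ref{scthm} establishes both $\gamma(s_1),\gamma(s_2)\in\gamma(G_m)$ and $\gamma(G_m)\subseteq\gamma(S)$ simultaneously, as Corollary \ref{commonq} effectively does. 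The paper's device of introducing each $G_m$ only at stage $m$, via a fresh two-factor free product with a suitable subgroup of the current $Q_m$, avoids this issue entirely: the suitable subgroup tracked at each stage is always exactly the one that Theorem \ref{scthm} and Corollary \ref{ontosuit} certify.
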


\begin{proof}
We begin by enumerating the set $\mathcal C\times\N=\{((G_1, X_1), n_1),...\}$

Let $Q_1=G_1$, and suppose we have defined groups $Q_1,..., Q_m$ and for each $Q_k$, we have surjective homomorphisms $\alpha_{(k,k)}\colon G_k\twoheadrightarrow Q_k$ and $\beta_{(k-1,k)}\colon Q_{k-1}\twoheadrightarrow Q_k$.

For $i\leq j$, let $\beta_{(i,j)}$ be the natural quotient map from $Q_i$ to $Q_j$, and let $\alpha_{(i,j)}=\beta_{(i,j)}\circ\alpha_{(i,i)}$. Suppose that for each $1\leq k\leq m$, $Q_k$ satisfies
\begin{enumerate}

\item $Q_k\in \X_0$.

\item for each $1\leq i\leq k$, $\alpha_{(i, k)}|_{B_{X_{i}}(n_i)}$ is injective.

\end{enumerate}

Let $\mathcal F=\cup_{i=1}^{m-1}\alpha_{(i, m)}(B_{X_{i}}(n_i))\subset Q_m$; note that $\mathcal F$ is finite since it is a finite union of finite sets. Now, by Corollary \ref{commonq} there exists a group $Q_{m+1}$ and surjective homomorphisms $\beta_{(m, m+1)}\colon Q_m\twoheadrightarrow Q_{m+1}$ and $\alpha_{(m+1,m+1)}\colon G_{m+1}\twoheadrightarrow Q_{m+1}$, such that $Q_{m+1}\in \X_0$, $\beta_{(m, m+1)}$ is injective on $\mathcal F$ and $\alpha_{(m+1,m+1)}$ is injective on $B_{X_{m+1}}(n_{m+1})$. Thus the above conditions are satisfied for $Q_{m+1}$. 

\begin{center}
$\begin{array}[c]{ccccccccccc}
G_1 && G_2 &&&& G_m && \\
\phantom{\scriptstyle{\alpha_{(1,1)}}}\twoheaddownarrow\scriptstyle{\alpha_{(1,1)}} && \phantom{\scriptstyle{\alpha_{(1,1)}}}\twoheaddownarrow\scriptstyle{\alpha_{(2,2)}} &&&& \phantom{\scriptstyle{\alpha_{(1,1)}}}\twoheaddownarrow\scriptstyle{\alpha_{(m,m)}} &&\\
Q_1 & \stackrel{\beta_{(1,2)}}{\twoheadrightarrow} & Q_2 & \stackrel{\beta_{(2,3)}}{\twoheadrightarrow} & ... & \stackrel{\beta_{(m-1,m)}}{\twoheadrightarrow} &  Q_m & \twoheadrightarrow & ... & \twoheadrightarrow D
\end{array}$ 
\end{center}

Now define $D$ to be the direct limit of the sequence $Q_1,...$. That is, $D=Q_1/\bigcup_{n=1}^\infty\ker\beta_{1,n}$. Let $\eta_i\colon G_i\twoheadrightarrow D$ denote the composition of $\alpha_{(i,i)}$ and the natural quotient map from $Q_i$ to $D$. Let $Y_i=\eta_i(X_{i})$. We will show that $\eta_i$ bijectively maps $B_{X_{i}}(n_i)\subset \Gamma(G_i, X_{i})$ to $B_{Y_i}(n_i)\subset \Gamma(D, Y_i)$. Clearly $\eta_i$ is surjective. now suppose $g, h\in B_{X_{i}}(n_i)$, $g\neq h$ and $\eta_i(g)=\eta_i(h)$. This means that $\alpha_{(i,i)}(gh^{-1})\in \bigcup_{n=i}^\infty\ker\beta_{i,n}$, thus there must exist some $k\geq i$ such that $\beta_{(i,k)}(\alpha_{(i,i)}(g))=\beta_{(i,k)}\alpha_{(i,i)}(h)$. But this means that $\alpha_{(i, k)}(g)=\alpha_{(i, k)}(h)$, which contradicts one of our inductive assumptions. Thus, $\eta_i$ bijectively maps $B_{X_{i}}(n_i)$ to $B_{Y_i}(n_i)$.

Now let $(G, X)\in\mathcal C$, and let $((G_{i_j}, X_{i_j}), n_{i_j})$ be the subsequence corresponding to $(G, X)$. Note that each $X_{i_j}=X$, so $\eta_{i_j}$ bijectively maps $B_{X}(n_{i_j})\subset\Gamma(G, X)$ to $B_{Y_{i_j}}(n_{i_j})\subset\Gamma(D, Y_{i_j})$.

Therefore,

\[
\lim_{j\rightarrow \infty}(D, Y_{i_j})=(G, X).
\]

\end{proof}

\paragraph{Exotic quotients.}
Recall that a group $G$ is called \emph{verbally complete} if for any $k\geq 1$, any $g\in G$, and any freely reduced word $W(x_1,..., x_k)$ there exist $g_1,...,g_k\in G$ such that $W(g_1,..., g_k)=g$ in the group $G$.

\begin{thm}\label{thm:vcquot}
Let $G\in\X$ be countable. Then $G$ has a non-trivial finitely generated quotient $V$ such that $V$ is verbally complete.
\end{thm}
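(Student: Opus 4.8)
The plan is to realize $V$ as a direct limit $G=G_0\twoheadrightarrow G_1\twoheadrightarrow\cdots$, processing one at a time all pairs $(g,W)$ with $g\in G$ and $W$ a nonempty freely reduced word in finitely many variables; at the stage handling $(g,W)$ we force the image of $g$ to become a value of $W$, doing so in a way that also places that image inside a fixed $2$-generated suitable subgroup (so the limit is finitely generated) and so that the relevant identity persists (so the limit is verbally complete). By Lemma \ref{k(g)} we may replace $G$ by $G/K(G)$ and assume $G\in\X_0$: a verbally complete finitely generated quotient of $G/K(G)$ is one of $G$, and $G/K(G)$ is still infinite. Then $G$ is suitable in itself, so by Corollary \ref{suitsubc} together with Lemma \ref{suitsubl} it contains a $2$-generated suitable subgroup $S_0$.

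Inductively I would build surjections $\phi_n\colon G_{n-1}\twoheadrightarrow G_n$ with $G_n\in\X_0$ and a coherent chain of $2$-generated suitable subgroups $S_n:=\phi_n(S_{n-1})\le G_n$ (the image of $S_0$), keeping each $\phi_n$ injective on a ball large enough, after enlarging the relative generating set by Lemma \ref{joinell}, to keep a fixed nontrivial $\bar h\in G$ alive. At the stage handling $(g,W)$, with $\bar g$ the image of $g$ in $G_{n-1}$, if $\bar g=1$ there is nothing to do; otherwise I would first go \emph{up} to the amalgam $P=G_{n-1}\ast_{\langle\bar g\rangle=\langle\overline W\rangle}B$, where $\overline W=W(x_1,\dots,x_k)$ and $B=F(x_1,\dots,x_k)$ if $\bar g$ has infinite order, while if $\bar g$ has finite order $B$ is a one-relator quotient of $F(x_1,\dots,x_k)$ in which the image of $W$ has order $\operatorname{ord}(\bar g)$; the amalgamation runs along the cyclic subgroup $\langle\bar g\rangle\cong\langle\overline W\rangle$, so Proposition \ref{amalsuit} gives $P\in\X$ with $S_{n-1}$ suitable in $P$, and in $P$ one has $\bar g=W(x_1,\dots,x_k)$. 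Then I would come back \emph{down} by applying Theorem \ref{scthm} to $(P,S_{n-1})$ with $t_i=x_i$: since $P=\langle G_{n-1},x_1,\dots,x_k\rangle$ and each $\gamma(x_i)\in\gamma(S_{n-1})\subseteq\gamma(G_{n-1})$, the resulting quotient $\overline P$ equals $\gamma(G_{n-1})$ and so is a quotient $G_n$ of $G_{n-1}$; put $S_n=\gamma(S_{n-1})$, which is suitable in $G_n$ by part (d) of Theorem \ref{scthm}, whence $K(G_n)\le K_{G_n}(S_n)=\{1\}$ by Lemma \ref{K(S)} and so $G_n\in\X_0$. In $G_n$ the image of $g$ equals $W(\gamma(x_1),\dots,\gamma(x_k))$, which is a value of $W$ and lies in $S_n$.

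Setting $V=\varinjlim G_n$, I would then check: $V$ is a quotient of $G$; $V\ne\{1\}$ since $\bar h$ survives every $\phi_n$; $V$ is $2$-generated, since every element of $V$ is the image of some $g\in G$, which at the stage handling $(g,x_1)$ was placed into $S_n$ and hence lies in the image $\bar S$ of $S_0$ in $V$, so $V=\bar S$; and $V$ is verbally complete, since for a nonempty freely reduced $W$ and any $v\in V$, writing $v$ as the image of some $g\in G$, the stage handling $(g,W)$ made the image of $g$ in $G_n$ a value of $W$, an identity that persists in $V$ (the case $v=1$ being trivial).

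The step I expect to be the main obstacle is the ``going up'' construction when $\bar g$ has finite order: one needs a group $B$ with elements $v_1,\dots,v_k$ such that $W(v_1,\dots,v_k)$ has order exactly $\operatorname{ord}(\bar g)$, which is what allows the cyclic amalgam to be formed. Writing $W=V^d$ with $V$ not a proper power and taking $B=F(x_1,\dots,x_k)/\langle\!\langle V^{d\operatorname{ord}(\bar g)}\rangle\!\rangle$, the standard description of torsion in one-relator groups supplies this. The remaining work is bookkeeping: parts (a) and (d) of Theorem \ref{scthm}, Proposition \ref{amalsuit}, and Lemma \ref{K(S)} are precisely what make membership in $\X_0$ and suitability of the distinguished $2$-generated subgroup stable along the construction, while the injectivity built into Theorem \ref{scthm} together with Lemma \ref{joinell} keeps the limit nontrivial. (One could instead try to present $V$ directly as a small-cancellation quotient with relators $W(u_1,\dots,u_k)\bar g^{-1}$, the $u_i$ long words in hyperbolically embedded cyclic subgroups and Proposition \ref{scwords} supplying the $C_1$-condition; but the small-cancellation hypotheses fail when $W$ has a repeated variable or is a proper power, so the amalgam route is preferable.)
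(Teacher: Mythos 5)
Your proposal is correct and follows essentially the same route as the paper's proof: pass to $G/K(G)$, fix a two-generated suitable subgroup $S$, and at each stage first pass ``up'' to an amalgamated product over the cyclic subgroup generated by the target element (with $J$ free or a one-relator group supplying the right torsion), then ``down'' via Theorem \ref{scthm}, taking a direct limit at the end. The only cosmetic differences are in the presentation of the one-relator group $J$ (the paper writes $\langle x_1,\dots,x_m \mid v_{n+1}^k=1\rangle$, which is the same group you describe) and in the non-triviality argument (the paper argues that $\alpha_n(S)$ being suitable, hence infinite, rules out $\alpha_n(h)=\alpha_n(h')=1$, rather than tracking injectivity on balls); both are sound.
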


\begin{proof}
By Lemma \ref{k(g)} we can assume $K(G)=\{1\}$. By Corollary \ref{suitsubc}, $G$ contains an infinite order element $h$ such that $\langle h\rangle \h G$. Let $h^\prime\in G\setminus \langle h\rangle$, and let $S=\langle h, h^\prime\rangle$. Then $S$ is a suitable subgroup by Lemma \ref{suitsubl}. Enumerate all pairs $\{(g_1, v_1),...\}$ where $g_i\in G$ and $v_i=v_i(x_1,...)$ is a non-trivial freely reduced word in $F(x_1,...)$. Let $G(0)=G$, and suppose we have constructed $G(n)$ and a surjective homomorphism $\alpha_n\colon G\twoheadrightarrow G(n)$ satisfying
\begin{enumerate}
\item $G(n)\in\X$.

\item $\alpha_n(S)$ is a suitable subgroup  of $G(n)$.

\item The equation $g_i=v_i(x_1,...)$ has a solution in $G(n)$ for each $1\leq i\leq n$. 

\item $\alpha_n(g_i)\in\alpha_n(S)$ for each $1\leq i\leq n$.

\end{enumerate}

Given $G(n)$, choose $m$ such that $v_{n+1}$ is a word in $x_1,..., x_m$, and let $J=F(x_1,..., x_m)$ if $g_{n+1}$ has infinite order, and $J=\langle x_1,...x_m\;|\; v_{n+1}^k=1\rangle$ if $g_{n+1}$ has order $k$. In the case where $g_{n+1}$ has order $k$, it is well-known that the order of $v_{n+1}$ in $J$ is $k$ (see \cite[ Chapt. IV, Theorem 5.2]{LS}). Thus the amalgamated product $G(n+\frac{1}{2})=G(n)\ast_{g_{n+1}=v_{n+1}} J$ is well-defined in either case. By Lemma \ref{amalsuit}, $\alpha_n(S)$ is a suitable subgroup of $G(n+\frac{1}{2})$, so we can apply Theorem \ref{scthm} to get a group $G(n+1)\in\X$ and a surjective homomorphism $\gamma\colon G(n+\frac12)\to G(n+1)$ such that $\gamma(\alpha_n(S))$ is suitable, and $\{\gamma(x_1),...,\gamma(x_m),\gamma( g_{n+1})\}\subset \gamma(\alpha_n(S))$. Since $G(n+\frac12)$ is generated by $\{G(n), x_1,...,x_m\}$ and $\gamma(x_i)\in\gamma(G(n))$ for each $1\leq i\leq m$, it follows that the restriction of $\gamma$ to $G(n)$ is surjective. Thus there is a natural quotient map $\alpha_{n+1}\colon G\twoheadrightarrow G(n+1)$. Since $g_{n+1}=v_{n+1}(x_1,...)$ has a solution in $G(n+\frac12)$, it also has a solution in $G(n+1)$; the other inductive assumptions follow from Theorem \ref{scthm}. Let $V$ be the direct limit of the sequence $G(0),...$, and let $\alpha\colon G\to V$ be the natural quotient map. For each $g\in G$, there exists $n$ such that $\alpha_n(g)\in\alpha_n(S)$; thus, the restriction of $\alpha$ to $S$ is surjective, so $V$ is two-generated. Also for any non-trivial, freely reduced word $v(x_1,...)$ in $F(x_1,...)$ and any $g\in G$, there exists $n$ such that $g=v(x_1,...)$ has a solution in $G(n)$, and hence this equation has a solution in $V$. Thus $V$ is verbally complete. Finally, suppose $V$ is trivial. Then for some $n$, $\alpha_n(h)=\alpha_n(h^\prime)=1$; but since $S=\langle h, h^\prime\rangle$, this means that $\alpha_n(S)=\{1\}$, contradicting the fact that $\alpha_n(S)$ is a suitable subgroup  of $G(n)$. Hence $V$ is non-trivial.
\end{proof}

\begin{thm}\label{conjquot}
Let $G\in\X$ be countable. Then $G$ has an infinite, finitely generated quotient $C$ such that any two elements of $C$ are conjugate if and only if they have the same order and $\pi(C)=\pi(G)$. In particular, if $G$ is torsion free, then $C$ has two conjugacy classes.
\end{thm}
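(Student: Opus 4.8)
The plan is to mimic the strategy used in the proof of Theorem \ref{thm:vcquot}, building $C$ as a direct limit of acylindrically hyperbolic groups by an iterated construction that successively forces pairs of same-order elements to become conjugate, while never introducing new torsion. First I would reduce to the case $K(G)=\{1\}$ by Lemma \ref{k(g)}, noting that passing to $G/K(G)$ does not change $\pi(G)$ modulo a harmless adjustment (one must be slightly careful here, since killing $K(G)$ could in principle remove some finite orders; but we can start the iteration from $G$ itself and only quotient by $K$ at stages where it is safe, or instead observe that orders of elements of $K(G)$ will be reintroduced below via the amalgamation/HNN steps that realize commensurability — this is a point to check carefully). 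Using Corollary \ref{suitsubc}, fix an infinite-order $h\in G$ with $\langle h\rangle\h G$ and some $h'\in G\setminus\langle h\rangle$, and set $S=\langle h,h'\rangle$, which is suitable by Lemma \ref{suitsubl}. Enumerate all pairs $(a_i,b_i)$ of elements of $G$ (or rather of elements that will appear in the limit), and inductively construct quotients $G=G(0)\twoheadrightarrow G(1)\twoheadrightarrow\cdots$ with surjections $\alpha_n\colon G\twoheadrightarrow G(n)$ such that $G(n)\in\X$, $\alpha_n(S)$ is suitable in $G(n)$, each $\alpha_n(a_i)$ and $\alpha_n(b_i)$ with the same order have been made conjugate for $i\le n$, and $\pi(G(n))=\pi(G)$.

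The key step at each stage is: given $G(n)$ and a pair $g_1,g_2\in G(n)$ of the same order $k$ (possibly $\infty$), form an extension in which $g_1$ and $g_2$ become conjugate. If $k=\infty$ one uses the HNN-extension $G(n)\ast_{\langle g_1\rangle^t=\langle g_2\rangle}$ (via the map $g_1\mapsto g_2$); by Proposition \ref{HNNsuit} this belongs to $\X$, contains $\alpha_n(S)$ as a suitable subgroup, and creates no new torsion (since HNN-extensions over cyclic subgroups add no torsion). If $k<\infty$, one instead embeds $G(n)$ into an amalgamated product $G(n)\ast_{\langle g_1\rangle = \langle g_2\rangle}(\text{finite cyclic stabilizer gadget})$ realizing the conjugacy, or — more robustly — first passes through a group where $g_1,g_2$ have infinite-order "roots" is not available; instead use the standard trick of amalgamating with a finite dihedral or symmetric group containing two conjugate elements of order $k$, applying Proposition \ref{amalsuit} (the amalgamated subgroup $\langle g_1\rangle\cong\Z/k\Z$ is cyclic as required), then noting that $\pi$ is unchanged because the new factor's element orders are already present or can be controlled. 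After this amalgamation step, apply Theorem \ref{scthm} with $S$'s image as the suitable subgroup and with $t$-elements chosen to be the extra generators introduced by the amalgamation, so that $G(n+1)=\overline{G(n+\frac12)}$ is again two-generated-over-$S$, lies in $\X$, has $\alpha_n(S)$-image suitable, and — crucially — by part (e) of Theorem \ref{scthm} every element of order $m$ in $G(n+1)$ lifts to one of order $m$ in $G(n+\frac12)$, so $\pi(G(n+1))=\pi(G(n+\frac12))=\pi(G)$. Taking $C$ to be the direct limit $G(0)/\bigcup_n\ker\alpha_n$, every enumerated same-order pair eventually becomes conjugate, $\pi(C)=\pi(G)$, $C$ is finitely generated (in fact $2$-generated, since $\alpha|_S$ is onto), and $C$ is infinite because $\alpha_n(S)$ is suitable (hence non-elementary, hence infinite) at every stage. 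When $G$ is torsion free, $\pi(G)=\{1,\infty\}$, so $C$ has exactly the conjugacy class of $1$ and one class of infinite-order elements, i.e.\ two conjugacy classes.

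The main obstacle I expect is the bookkeeping around $\pi(C)=\pi(G)$ in the presence of torsion, specifically ensuring the amalgamation gadgets used to conjugate finite-order elements neither create new orders nor (via the reduction to $K(G)=\{1\}$) lose orders; this requires choosing the auxiliary finite groups to have element orders already lying in $\pi(G)$ and verifying that the amalgamated subgroup is genuinely cyclic so that Proposition \ref{amalsuit} applies. A secondary technical point is that the enumeration must be arranged so that pairs arising in later quotients (involving new generators from earlier amalgamations) are also eventually handled — the standard diagonal enumeration over $G\times G$ composed with the surjections $\alpha_n$ resolves this, exactly as in the proof of Theorem \ref{thm:vcquot}. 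Everything else is a routine assembly of Proposition \ref{HNNsuit}, Proposition \ref{amalsuit}, Theorem \ref{scthm}, and a direct-limit argument.
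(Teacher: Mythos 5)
Your overall scaffolding (direct limit, alternating extension/small-cancellation steps, suitable subgroups preserved by HNN/amalgam) matches the paper, but you leave two of the real difficulties unresolved and introduce one needless complication.

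First, the finite-order case does not require dihedral or symmetric-group ``gadgets.'' Proposition \ref{HNNsuit} is stated for any isomorphic \emph{cyclic} subgroups $A,B$, finite or infinite, so the paper simply forms the HNN-extension $G(n)\ast_{\langle\alpha_n(g_{n+1})\rangle^t=\langle\alpha_n(f_k)\rangle}$ over the two order-$k$ cyclic subgroups; this already belongs to $\X$, preserves suitability, and introduces no new torsion. Your detour through amalgamation with a dihedral gadget is both unnecessary and riskier, since you would then need to argue separately that the gadget's element orders lie in $\pi(G)$ and that the amalgamated subgroup is cyclic; the paper avoids all of this.

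Second, the $\pi(C)=\pi(G)$ bookkeeping, which you correctly flag as the main obstacle, has a specific mechanism in the paper that you do not supply. The paper fixes once and for all a set $\mathcal O=\{f_k\mid k\in\pi(G)\}$ of canonical representatives, using Lemma \ref{nolongell} to choose the finite-order $f_k$ inside $B_{\mathcal A}(8\delta)$. It then always applies Theorem \ref{scthm} with $N=8\delta$, so that each quotient map is injective on $B_{\mathcal A}(8\delta)$ and hence preserves the order of every $f_k$; combined with Theorem \ref{scthm}(e), which prevents new torsion, this pins down $\pi(G(n))=\pi(G)$ at every stage. Moreover, instead of enumerating pairs $(a_i,b_i)$, the paper enumerates single elements $g_i$ and conjugates each $\alpha_n(g_i)$ to the canonical $\alpha_n(f_k)$ of the same order; transitivity then gives conjugacy of all same-order elements. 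Your pair-based enumeration can be made to work, but without the short canonical representatives you have no control on $\pi$. Finally, the reduction when $K(G)\neq\{1\}$ is not resolved by your ``quotient by $K$ only when safe'' idea; the paper instead replaces $G$ at the outset by $G'=G/K(G)\ast\bigl(\ast_{n\in\pi(K(G))}\Z/n\Z\bigr)$, which has $K(G')=\{1\}$ and $\pi(G')=\pi(G)$, and chooses the suitable $S$ inside $G/K(G)$ so that the resulting $C$ is a quotient of $G/K(G)$ and hence of $G$. These three points are the content of the proof; as written your proposal identifies where the difficulty lies but does not close it.
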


\begin{proof}
We first assume $K(G)=\{1\}$.  As in the previous Theorem, Corollary \ref{suitsubc} and Lemma \ref{suitsubl} imply that $G$ contains a two-generated suitable subgroup $S$. Let $\mathcal A\subseteq G$ be a generating set of $G$ such that $S$ is suitable with respect to $\mathcal A$. By Lemma \ref{nolongell}, for all $k\in\pi(G)\setminus\{\infty\}$, there exists $f_k\in G$ such that $f_k$ has order $k$ and $|f_k|_{\mathcal A}\leq 8\delta$, where $\delta$ is the hyperbolicity constant of $\Ga$. By Lemma \ref{HNNgen}, we can assume that $\mathcal A$ contains an infinite cyclic subgroup $\langle f_{\infty}\rangle$.  Let $\mathcal O=\{f_k\;|\;k\in\pi(G)\}$. Now enumerate $G$ as $\{1=g_0, g_1,...\}$. Let $G(0)=G$, and suppose we have constructed $G(n)$ and a surjective homomorphism $\alpha_n\colon G\twoheadrightarrow G(n)$ satisfying:

\begin{enumerate}
\item $G(n)\in\X$.

\item $\alpha_n(S)$ is a suitable subgroup  of $G(n)$.

\item $\pi(G(n))=\pi(G)$, and for all $k\in\pi(G)$, $\alpha_n(f_k)$ has order $k$.

\item For each $1\leq i\leq n$, $\alpha_n(g_i)$ is conjugate to an element of $\alpha_n(\mathcal O)$ and $\alpha_n(g_i)\in\alpha_n(S)$.

\end{enumerate}
We construct $G(n+1)$ in two steps. First, if $\alpha_n(g_{n+1})$ is conjugate to an element of $\alpha_n(\mathcal O)$, set $G(n+\frac12)=G(n)$. Otherwise, choose $k\in\pi(G)$ such that $\alpha_n(g_{n+1})$ has order $k$, and let $G(n+\frac{1}{2})$ be the HNN-extension $G\ast_{\alpha_n(g_{n+1})^t=\alpha_n(f_k)}$. We identify $G(n)$ with its image inside $G(n+\frac12)$, and by Lemma \ref{HNNsuit}, $\alpha_n(S)$ is a suitable subgroup of $G(n+\frac{1}{2})$. 

Applying Theorem \ref{scthm} to $G(n+\frac{1}{2})$ with $\alpha_n(S)$ as a suitable subgroup and $\{t, \alpha_n(g_{n+1})\}$ (or just $\{\alpha_n(g_{n+1})\}$ if $G(n+\frac12)=G(n)$) as a finite set of elements and $N=8\delta$ produces a group $G(n+1)\in \X$ and a surjective homomorphism $\gamma\colon G(n+\frac12)\twoheadrightarrow G(n+1)$, such that $\gamma(t)$, $\gamma(\alpha_n(g_{n+1}))\in\gamma(\alpha_n(S))$ and $\gamma(\alpha_n(S))$ is a suitable subgroup of $G(n+1)$.  Since $G(n+\frac12)$ is generated by $G(n)$ and $t$ and $\gamma(t)\in\gamma(G(n))$, it follows that the restriction of $\gamma$ to $G(n)$ is surjective. Let $\alpha_{n+1}=\gamma\circ\alpha_n$.

Note that for each $f_k\in\mathcal O$ and each $1\leq j\leq k$, $f_k^j$ is conjugate to an element inside $B_{\mathcal A}(8\delta)$ and since $\alpha_{n+1}$ is injective on $B_{\mathcal A}(8\delta)$, the order of $\alpha_{n+1}(f_k)$ is $k$. Applying this along with the last condition of Theorem \ref{scthm} gives that $\pi(G(n+1))=\pi(G)$. Thus $G(n+1)$ will satisfy the inductive assumptions. 

Let $C$ be the direct limit of the sequence $G(1),...$, and let $\alpha\colon G\twoheadrightarrow C$ be the natural quotient map. First note that for each $g_i\in G$, $\alpha_i(g_i)\in\alpha_i(S)$, thus $\alpha(g_i)\in\alpha(S)$. Therefore the restriction of $\alpha$ to $S$ is surjective; in particular, $C$ is two-generated. By condition (3), $\alpha(f_k)$ has order $k$ and $\pi(C)=\pi(G)$. 

Suppose $x$ and $y$ are elements of order $k$ in $C$. Let $g_i$ be a preimage of $x$ and $g_j$ a preimage of $y$ in $G$. Then in $G(i)$, $\alpha_i(g_i)$ is conjugate to $\alpha_i(f_{k^\prime})$ for some $f_{k^\prime}\in\mathcal O$, hence $x$ is conjugate to $\alpha(f_{k^\prime})$. Since $f_{k^\prime}$ and $\alpha(f_{k^\prime})$ have the same order, we get that $k=k^\prime$. Thus $x$ is conjugate to $\alpha(f_k)$, and by the same argument so is $y$. Thus $x$ and $y$ are conjugate.

Finally, in order to remove the assumption that $K(G)=\{1\}$, we replace $G$ with 
\[
G^\prime=G/K(G)\ast\left(\ast_{n\in\pi(K(G))}\mathbb Z/n\mathbb Z \right).
\]
That is, $G^\prime$ is the free product of $G/K(G)$ and cyclic groups which each correspond to the order of an element of $K(G)$. Note that $K(G^\prime)=\{1\}$ and $\pi(G^\prime)=\pi(G)$. Lemma \ref{suithe} gives that any suitable subgroup of $G/K(G)$ is still suitable in $G^\prime$. Hence the two-generated suitable subgroup $S$ can be chosen as a subgroup of $G/K(G)$. Then applying the above construction yields the desired group $C$ and quotient map $\alpha$. Since the restriction of $\alpha$ to $S$ is surjective, $C$ is a quotient of $G/K(G)$ and hence a quotient of $G$.

\end{proof}

Given a subset $\mathcal S\subseteq \mathcal G_k$, a group property is said to be \emph{generic in S} if this property holds for all groups belonging to some dense $G_\delta$ subset of $\mathcal S$. 
Let $\X_{tf}$ denote the class of torsion free acylindrically hyperbolic groups. A version of the following corollary was suggested for relatively hyperbolic groups in the final paragraph of \cite{OOK}, and our proof is essentially the same as the proof sketched there.

\begin{cor}\label{generic}
A generic group in $\overline{[\X_{tf}]}_k$ has two conjugacy classes.
\end{cor}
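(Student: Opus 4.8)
The plan is to prove that the set
\[
\mathcal D=\{(C,Y)\in\overline{[\X_{tf}]}_k\mid C\text{ has exactly two conjugacy classes}\}
\]
is a dense $G_\delta$ subset of $\overline{[\X_{tf}]}_k$; since a property is \emph{generic} precisely when it holds on such a set, this is exactly what must be shown. We may assume $k\ge 2$: for $k\le 1$ every $1$-generated group is cyclic, so $[\X_{tf}]_k=\emptyset$ and the statement is vacuous, while for $k\ge 2$ the free group $F_k$ lies in $\X_{tf}$, so $\overline{[\X_{tf}]}_k$ is nonempty.

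First I would check that $\mathcal D$ is $G_\delta$. The key observation is that for a fixed word $w$ in the $k$ generators, the set $\{(C,Y)\in\mathcal G_k\mid w=_C 1\}$ is clopen, because two markings that agree on a ball of radius at least $\|w\|$ agree on whether $w$ represents the identity. Hence, for fixed words $w_1,w_2$, the set on which $w_1$ is conjugate to $w_2$, namely $\bigcup_u\{(C,Y)\mid u^{-1}w_1uw_2^{-1}=_C 1\}$ over all words $u$, is a countable union of clopen sets and therefore open; consequently
\[
V_{w_1,w_2}=\{(C,Y)\mid w_1=_C 1\ \text{or}\ w_2=_C 1\ \text{or}\ w_1\sim_C w_2\}
\]
is open. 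Since $Y$ generates $C$, the intersection $\bigcap_{w_1,w_2}V_{w_1,w_2}$ is exactly the set of $(C,Y)$ for which $C$ has at most two conjugacy classes; intersecting it with the clopen set of nontrivial marked groups and with $\overline{[\X_{tf}]}_k$ exhibits $\mathcal D$ as a $G_\delta$ subset of $\overline{[\X_{tf}]}_k$.

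The substantive step is density, and for this it suffices to approximate points of the dense subset $[\X_{tf}]_k$. So I would fix $(G,X)\in[\X_{tf}]_k$ and $N\in\N$ and produce $(C,\alpha(X))\in\mathcal D$ agreeing with $(G,X)$ on the ball of radius $N$. Since $G$ is countable and torsion free, Theorem \ref{conjquot} produces a quotient $\alpha\colon G\to C$ with two conjugacy classes. I would run the construction in the proof of Theorem \ref{conjquot} verbatim, but at the $i$-th application of Theorem \ref{scthm} I would choose the small cancellation parameter large enough that the resulting quotient map is injective not only on the ball used in that proof but also on the finite set $\alpha_i(B_X(2N))\cup B_{\alpha_i(X)}(2i)$; this is legitimate because Theorem \ref{scthm} permits that parameter to be arbitrarily large. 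Each $\alpha_i\colon G\to G(i)$ is surjective and each $G(i)$ is torsion free (torsion-freeness survives the HNN and amalgam steps over cyclic subgroups and the quotients of Theorem \ref{scthm} by condition (e)) and acylindrically hyperbolic, so $(G(i),\alpha_i(X))\in[\X_{tf}]_k$. The extra injectivity then guarantees, on the one hand, that $\alpha$ is injective on $B_X(2N)$, so $(C,\alpha(X))$ agrees with $(G,X)$ on the $N$-ball, and, on the other hand, that for every $R$ the maps $G(i)\to C$ are eventually injective on $B_{\alpha_i(X)}(2R)$, so that $(G(i),\alpha_i(X))\to(C,\alpha(X))$ in $\mathcal G_k$ and hence $(C,\alpha(X))\in\overline{[\X_{tf}]}_k$. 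Thus $(C,\alpha(X))\in\mathcal D$ lies arbitrarily close to $(G,X)$, proving that $\mathcal D$ is dense; being a dense $G_\delta$, it witnesses the genericity of having two conjugacy classes.

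I expect the only real difficulty to be the bookkeeping in the third step: re-running the proof of Theorem \ref{conjquot} while carrying the extra injectivity requirements along, verifying both that they can always be met (using the freedom in the parameter of Theorem \ref{scthm}) and that they propagate correctly through the directed system, so that the limit $C$ with its natural marking genuinely lies in $\overline{[\X_{tf}]}_k$ rather than merely being approximated by it. The $G_\delta$ computation is routine.
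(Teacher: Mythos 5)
Your proof is correct and follows essentially the same route as the paper: you show density by re-running the construction in the proof of Theorem \ref{conjquot}, using the freedom in Theorem \ref{scthm} to make the successive quotient maps injective on ever-larger balls, so that the limit group $C$ with two conjugacy classes lies both in $\overline{[\X_{tf}]}_k$ (being a limit of torsion-free acylindrically hyperbolic marked groups) and arbitrarily close to the starting marked group $(G,X)$. The only difference in presentation is that the paper simply cites \cite{OOK} for the fact that groups with two conjugacy classes form a $G_\delta$ subset of $\mathcal G_k$, whereas you reprove this directly via the clopen/open decomposition of the conjugacy and triviality conditions; this is a welcome bit of self-containment but not a different approach. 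Your observation that each $G(i)$ remains torsion-free (via condition (e) of Theorem \ref{scthm} and the fact that the HNN/amalgam steps over cyclic subgroups add no torsion) makes explicit a point the paper leaves implicit, and your remark that the statement is vacuous for $k\le 1$ is correct.
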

\begin{proof}
In \cite{OOK}, it is shown that groups which have two conjugacy classes form a $G_\delta$ subset of $\mathcal G_k$. Hence we only need to show that such groups are dense in $\overline{[\X_{tf}]}_k$.

Let $G\in\X_{tf}$ be generated by $X=\{x_1,...,x_k\}$. Fix $N\in\N$ and let $G=G(1), G(2),....$ be the sequence constructed in the proof of Theorem \ref{conjquot}. By Theorem \ref{scthm}, we can ensure that the quotient map $G(i)\twoheadrightarrow G(i+1)$ is injective on $B_X(N+i)$, where the set $X$ is identified with its image in each quotient. It follows that $B_X(N+i)$ in $G(i)$ maps bijectively onto $B_X(N+i)$ in $C$, thus 
\[
\lim_{i\rightarrow\infty}(G_i, X)=(C, X)
\]
where this limit is being taken in $\mathcal G_k$. Hence, $C\in\overline{[\X_{tf}]}_k$; furthermore, since $B_X(N)$ in $G$ maps bijectively onto $B_X(N)$ in $C$, $d((G, X), (C, X))\leq \frac1N$. Since $N$ is arbitrary, we get that groups with two conjugacy classes are dense in $\overline{[\X_{tf}]}_k$. Hence a generic group in $\overline{[\X_{tf}]}_k$ has two conjugacy classes.
\end{proof}

Finally, the proof of Corollary \ref{gentopcc} is simply a combination of Corollary \ref{generic} and \cite[Theorem 1.6]{OOK}.


\vspace{1cm}

\noindent {\bf M. Hull:  } MSCS UIC 322 SEO, M/C 249, 851 S. Morgan St.,
Chicago, IL 60607-7045, USA.\\
email: {\it mbhull@uic.edu}


\begin{thebibliography}{99}

\bibitem{AMO}
Arzhantseva, G., Minasyan, A., Osin, D.,
The SQ-universality and residual properties of relatively hyperbolic groups.
{\it J. Algebra} {\bf 315} (2007), no. 1, 165-177. 

\bibitem{BE}
Laurent Bartholdi, Anna Erschler, Ordering the space of finitely generated groups, arXiv:1301.4669.

\bibitem{BelOsi}
Belegradek, Igor, Osin, Denis,
Rips construction and Kazhdan property (T). 
{\it Groups Geom. Dyn.} {\bf 2} (2008), no. 1, 1-12. 

\bibitem{BelSz}
Belegradek, Igor, Szczepa\'nski, Andrzej,
Endomorphisms of relatively hyperbolic groups.
With an appendix by Oleg V. Belegradek.
{\it Internat. J. Algebra Comput.} {\bf 18} (2008), no. 1, 97-110.

\bibitem{BF}
M. Bestvina, K. Fujiwara,
Bounded cohomology of subgroups of mapping class groups,
{\it Geom. Topol.}
{\bf 6} (2002), 69-89.

\bibitem{BH}
M. Bridson, A. Haefliger, Metric spaces of non--positive
curvature, Springer, 1999.

\bibitem{Bow}
B.H. Bowditch,
Relatively hyperbolic groups.
{\it Internat. J. Algebra Comput.} {\bf22} (2012), no. 3, 1250016, 66 pp. 

\bibitem{Bow2} 
B.H. Bowditch, Tight geodesics in the curve complex. {\it Invent. Math.} {\bf 171} (2008), no. 2,
281-300.

 
\bibitem{CG}
Christophe Champetier and Vincent Guirardel, Limit groups as limits of free groups, {\it Israel J. Math.} {\bf146} (2005), 1-75.




\bibitem{DGO}
F. Dahmani, V. Guirardel, D. Osin, Hyperbolically embedded subgroups and rotating families in groups acting on hyperbolic spaces,  arXiv:1111.7048.


\bibitem{GZ}
Gelander, Tsachik, \.Zuk, Andrzej,
Dependence of Kazhdan constants on generating subsets.
{\it Israel J. Math.} {\bf 129} (2002), 93-98.

\bibitem{GW}
E. Glassner, B. Weiss, Topological groups with Rohlin properties, {\it Colloq. Math.} {\bf 110}
(2008), 51-80.

\bibitem{Gr2}
M. Gromov, Hyperbolic groups, \textit{Essays in Group Theory,} MSRI Series, Vol.8, (S.M.
Gersten, ed.), Springer, 1987, 75--263.

\bibitem{HN}
G. Higman and B. H. Neumann, On two questions of It\^o, \emph{J. London Math. Soc.} \textbf{29}
(1954), 84-88.

\bibitem{HNN}
G. Higman, B.H. Neumann, H. Neumann, Embedding theorems for groups, \emph{J. London Math. Soc.} \textbf{24} (1949), 247-254.

\bibitem{HO1}
M. Hull and D. Osin, Conjugacy growth of finitely generated groups. {\it Adv. Math.} {\bf 235} (2013), 361-389.


\bibitem{K}
Kapovich, Ilya, The Frattini subgroups of subgroups of hyperbolic groups, \emph{J. Group Theory} \textbf{6} (2003), no. 1, 115-126. 

\bibitem{OOK}
A. A. Klyachko, A. Yu. Olshanskii, D. V. Osin, On topologizable and non-topologizable groups, arXiv:1210.7895.

\bibitem{Lon} D. Long, A note on the normal subgroups of mapping class groups. \emph{Math. Proc. Camb. Phil. Soc.} vol. 99 (1986), 295-303.

\bibitem{L}
A. Lubotzky, Discrete Groups, Expanding Graphs and Invariant Measures, {\it Progr. Math.}, Vol. 125, Birkh\"auser, 1994.

\bibitem{LS}
R.C. Lyndon, P.E. Shupp, Combinatorial Group Theory,
Springer-Verlag, 1977.

\bibitem{MO}
K.V. Mikhajlovskii, A. Yu. Olshanskii, Some constructions relating to hyperbolic
groups, Geometry and cohomology in group theory (Durham, 1994), 263-290, London
Math. Soc. Lecture Note Ser., 252, Cambridge Univ. Press, Cambridge, 1998.


\bibitem{M2}
Minasyan, Ashot,
Groups with finitely many conjugacy classes and their automorphisms,
{\it Comment. Math. Helv.} {\bf 84} (2009), no. 2, 259-296. 

\bibitem{M}
Minasyan, Ashot, On residualizing homomorphisms preserving quasiconvexity. \emph{Comm. Algebra} \textbf{33} (2005), no. 7, 2423-2463. 


\bibitem{OM}
A. Minasyan, D. Osin, Acylindrically hyperbolic groups acting on trees, in preparation.

\bibitem{Ols92}
A.Yu. Olshanskii, Periodic factor groups of hyperbolic groups, {\it Mat. Sbornik}
\textbf{182} (1991), 4, 543-567 (in Russian), English translation in {\it Math. USSR
Sbornik} \textbf{72} (1992), 2, 519-541.

\bibitem{Ols}
A.Yu. Olshanskii,  On residualing homomorphisms and
$G$--subgroups of hyperbolic groups, \textit{Internat. J.
Algebra Comput.} {\bf 3}
(1993), 4, 365--409.

\bibitem{OlsS}
Olshanskii A. Yu., Sapir, M. V., On $F_k$-like groups. (Russian) {\it Algebra Logika} {\bf 48} (2009), no. 2, 245--257, 284, 286--287; translation in {\it Algebra Logic} {\bf 48} (2009), no. 2, 140-146. 

\bibitem{Osi02}
Osin, D. V. Kazhdan constants of hyperbolic groups. {\it  Funktsional. Anal. i Prilozhen.} {\bf 36} (2002), no. 4, 46--54; translation in {\it Funct. Anal. Appl.} {\bf 36} (2002), no. 4, 290-297. 


\bibitem{Osip}
D. Osin, Acylindrically hyperbolic groups, arXiv:1304.1246.

\bibitem{Osi06}
D. Osin, Elementary subgroups of relatively hyperbolic groups and bounded generation,
{\it Internat. J. Algebra Comput.}, {\bf 16} (2006), no. 1, 99-118.

\bibitem{Osi10}
D. Osin,
Small cancellations over relatively hyperbolic groups and embedding theorems,
\textit{Ann. of Math.}
{\bf 172} (2010), no. 1, 1-39.

\bibitem{OsiSon}
D. Osin, D. Sonkin, Uniform Kazhdan groups, arXiv:math/0606012. 


\bibitem{Oz}
Ozawa, Narutaka, There is no separable universal $II_1$-factor. 
{\it Proc. Amer. Math. Soc.} {\bf 132} (2004), no. 2, 487-490.



\bibitem{S}
Sela, Z.
Acylindrical accessibility for groups. 
{\it Invent. Math.} {\bf 129} (1997), no. 3, 527-565.

\bibitem{Sis}
A. Sisto, Contracting elements and random walks,  arXiv:1112.2666.

\bibitem{Wa}
Whittemore, Alice, On the Frattini subgroup. \emph{Trans. Amer. Math. Soc.} \textbf{141} (1969) 323-333.
\end{thebibliography}
\end{document}